\definecolor{darkblue}{rgb}{0.0, 0.0, 0.55}
\definecolor{bordeaux}{rgb}{0.34, 0.01, 0.1}
\renewcommand{\subset}{\subseteq}
\DeclareFontFamily{U}{mathx}{}
\DeclareFontShape{U}{mathx}{m}{n}{<-> mathx10}{}
\DeclareSymbolFont{mathx}{U}{mathx}{m}{n}
\DeclareMathAccent{\widehat}{0}{mathx}{"70}
\DeclareMathAccent{\widecheck}{0}{mathx}{"71}
\newcommand{\mycontentsbox}{%
	{
		\parskip=-1.0pt
		\newpage\printindex\tableofcontents\addcontentsline{toc}{section}{Table of Contents}}}
\def\enddoc@text{\ifx\@empty\@translators \else\@settranslators\fi
	\ifx\@empty\addresses \else\@setaddresses\fi
	\newpage\mycontentsbox
}
\numberwithin{equation}{section}
\newsavebox\myboxA
\newsavebox\myboxB
\newlength\mylenA
\newcommand*\xoverline[2][0.75]{%
	\sbox{\myboxA}{$\m@th#2$}%
	\setbox\myboxB\null
	\ht\myboxB=\ht\myboxA%
	\dp\myboxB=\dp\myboxA%
	\wd\myboxB=#1\wd\myboxA
	\sbox\myboxB{$\m@th\overline{\copy\myboxB}$}
	\setlength\mylenA{\the\wd\myboxA}
	\addtolength\mylenA{-\the\wd\myboxB}%
	\ifdim\wd\myboxB<\wd\myboxA%
	\rlap{\hskip 0.5\mylenA\usebox\myboxB}{\usebox\myboxA}%
	\else
	\hskip -0.5\mylenA\rlap{\usebox\myboxA}{\hskip 0.5\mylenA\usebox\myboxB}%
	\fi}
\theoremstyle{definition}
\newtheorem*{rep@def}{\rep@title}
\newcommand{\newrepdefinition}[2]{%
	\newenvironment{rep#1}[1]{%
		\def\rep@title{#2 \ref{##1}}%
		\begin{rep@def}}%
		{\end{rep@def}}}
\theoremstyle{plain}
\newtheorem*{rep@thm}{\rep@title}
\newcommand{\newreptheorem}[2]{%
	\newenvironment{rep#1}[1]{%
		\def\rep@title{#2 \ref{##1}}%
		\begin{rep@thm}}%
		{\end{rep@thm}}}
\theoremstyle{plain}
\newtheorem{theorem}            {Theorem}[section]
\newtheorem{corollary}          [theorem]{Corollary}
\newtheorem{proposition}        [theorem]{Proposition}
\newtheorem{lemma}              [theorem]{Lemma}
\theoremstyle{definition}
\newtheorem{definition}         [theorem]{Definition}
\newtheorem{remark}         [theorem]{Remark}
\newtheorem{example}            [theorem]{Example}
\newcommand{\Gat}{\Gamma}
\newcommand{\gat}{\gamma}
\newcommand{\PhiG}{\Gat} 
\newcommand{\gv}{{\tt{g}}}
\newcommand{\hv}{{\tt{h}}}
\newcommand{\rv}{{\tt{r}}}
\newcommand{\uapprox}{\underset{\raisebox{0.5ex}{\scriptsize $u$}}{\approx}}
\def\cA{ {\mathcal A} }
\def\cB{ {\mathcal B} }
\def\cC{ {\mathcal C} }
\def\cD{ {\mathcal D} }
\def\cE{ {\mathcal E} }
\def\cH{ {\mathcal H} }
\def\cK{ {\mathcal K} }
\def\cN{ {\mathcal N} }
\def\cR{{ \mathcal R }}
\def\cS{{\mathcal S} }
\def\cV{{\mathcal V}}
\def\cY{\mathcal Y}
\def\cZ{ {\mathcal Z} }
\def\gtupn{\mathbb{S}_n^\gv}
\def\bbS{{\mathbb S}}
\def\gtup{\mathbb{S}^\gv}
\def\C{\mathbb C}
\def\R{\mathbb R}
\newcommand{\opco}{\operatorname{opco}}
\newcommand{\ovopco}{\overline{\opco}}
\newcommand{\matco}{\operatorname{matco}}
\def\range{\operatorname{range}}
\mathchardef\mhyphen="2D
\newcommand{\CB}{\textcolor{black}}
\newcommand{\CCBB}{\color{black}}
\def\ax{\langle x \rangle}
\newcommand{\py}{\tilde{z}}
\newcommand{\PGat}{\Phi_\Gat}
\newcommand{\rank}{\operatorname{rank}}
\newcommand{\MM}{\mathbb{M}}
\newcommand{\Mm}{M}
\newcommand{\Ii}{I}
\newcommand{\Coup}{\mathcal{C}_\Gamma}
\newcommand{\N}{\mathbb{N}}
\newcommand{\spann}{\operatorname{span}}
\newcommand{\df}[1]{{\bf{#1}}{\index{#1}}}
\newcommand{\Hilby}{Hilbertian\xspace}
\newcommand{\agler}{Agler\xspace}
\newcommand{\bdcpt}{closed and bounded\xspace}
\newcommand{\rA}{{\operatorname{A}}}
\newcommand{\vs}{\mathscr{V}}
\newcommand{\UCP}{\operatorname{UCP}}  
\newcommand{\siz}{\operatorname{size}}
\newcommand{\mconv}{\operatorname{matco}}
\newcommand{\conv}{\operatorname{conv}}
\newcommand{\LG}{L^\Gat}
\newcommand{\cHi}{\cH} 
\newcommand{\sa}{\operatorname{sa}}
\newcommand{\SOT}{\operatorname{sot}}
\DeclareMathSymbol{\mhyphen}{\mathord}{AMSa}{"39}
\newcommand{\ext}{\operatorname{ext}}
\newcommand{\wstar}{weak$^*$\xspace}
\newcommand{\sR}{\mathscr{R}}
\newcommand{\sC}{\mathscr{C}}
\newcommand{\Cb}{\operatorname{CB}}
\newcommand{\fin}{\operatorname{mat}}
\newcommand{\mat}{\operatorname{mat}}
\newcommand{\BG}{\mathscr{B}_\Gat}
\newcommand{\proj}{\operatorname{proj}}
\newcommand{\Gatconv}{\Gat\mhyphen\conv}
\newcommand{\wvphi}[1]{\varphi(#1)}
\DeclareMathOperator{\QM}{QM}
\newcommand{\nuv}{\nu}
\DeclareMathOperator{\col}{col}
\DeclareMathOperator{\rmSOT}{SOT}
\DeclareMathOperator{\rmWOT}{WOT}
\title[$\Gat$-convex sets]{Duality, extreme points and hulls for \\[1mm] noncommutative partial convexity}
\author[I.\ Klep]{Igor Klep${}^{1,Q}$}
\address{Igor Klep, Department of Mathematics, University of Ljubljana 
\& Famnit, University of Primorska, Koper 
\& Institute of Mathematics, Physics and Mechanics,
Ljubljana, Slovenia}
\email{igor.klep@fmf.uni-lj.si}
\thanks{${}^1$Supported by the Slovenian Research Agency 
	program P1-0222 and grants J1-50002,
	J1-2453, N1-0217,
J1-60011, J1-50001, and J1-3004.}
\author[S. McCullough]{Scott McCullough}
\address{Scott McCullough, Department of Mathematics\\
	University of Florida\\ Gainesville 
}
\email{sam@math.ufl.edu}
\thanks{}
\author[T.\ \v Strekelj]{Tea \v Strekelj${}^{2}$}
\address{Tea \v Strekelj, Famnit, University of Primorska, Koper, Slovenia}
\email{tea.strekelj@famnit.upr.si}
\thanks{${}^2$Supported by the Slovenian Research Agency 
	grant J1-60011.}
\thanks{${}^Q$This work was performed within the project COMPUTE, funded within the QuantERA II Programme that has received funding from the EU's H2020 research and innovation programme under the GA No 101017733 {\normalsize\euflag}}
\subjclass[2010]{46L07, 13J30, 46N10, 47L07, 52A30}
\keywords{partial convexity, biconvexity, bilinear matrix inequalities, 
noncommutative polynomial, 
	matrix convexity, linear pencil, 
	$\Gat$-convexity, free semialgebraic set, moment sequence, Hankel matrix, Krein-Milman theorem, Lasserre-Parrilo lift, Effros-Winkler  theorem}
\numberwithin{equation}{section}
\begin{document}
	
	\setcounter{tocdepth}{3}
	\contentsmargin{2.55em} 
	\dottedcontents{section}[3.8em]{}{2.3em}{.4pc} 
	\dottedcontents{subsection}[6.1em]{}{3.2em}{.4pc}
	\dottedcontents{subsubsection}[8.4em]{}{4.1em}{.4pc}
	
	\begin{abstract}
This article studies  generalizations of (matrix) convexity,  including partial convexity and biconvexity, 
under the umbrella of $\Gat$-convexity.
Here $\Gat$ is a tuple of free symmetric polynomials determining the geometry of a $\Gat$-convex set. 
The paper introduces the notions of $\Gat$-operator systems and $\Gat$-ucp maps and establishes	a Webster-Winkler type categorical duality between $\Gat$-operator systems and $\Gat$-convex sets. 
Next, a notion of an extreme point for $\Gat$-convex sets is defined, paralleling the concept of a free extreme point for a matrix convex set. To ensure the existence of such points,  the matricial sets considered are extended to include an operator level. It is shown that the  $\Gat$-extreme points of an operator $\Gat$-convex set $\bm{K}$ are in  correspondence
with the free extreme points of the operator convex hull of $\Gat(\bm{K}).$ From this result, a Krein-Milman  theorem for $\Gat$-convex sets follows.
Finally, relying on the results of Helton and the first two authors, 
a  construction of an approximation scheme for the $\Gat$-convex hull of the matricial positivity domain, 
{also known as a free semialgebraic set,}
$\cD_p$ of a free symmetric polynomial $p$ is given. The approximation consists of a decreasing family of $\Gat$-analogs of free spectrahedra, whose projections, under mild assumptions,  in the limit yield the $\Gat$-convex hull of $\cD_p.$
	\end{abstract}

	\maketitle

\newpage

	\section{Introduction} 
Convexity is a fundamental concept  that plays a pivotal role across various branches of mathematics and its applications
 including  optimization, economics, and geometric analysis.  A set is  \df{convex} if for any two points in the set the line segment joining them lies entirely within the set. This property not only simplifies mathematical models but also ensures the tractability of optimization problems by guaranteeing that local minima are global. Hence the elegant structure of convex sets and functions  allows for the development of robust analytical tools. 
{Understanding convexity is thus essential for advancing theory and practice across diverse scientific disciplines.}
Expanding upon the classical notion of convexity, noncommutative convexity \cite{Arv08,DK15,DK,DHM,DM05,HM12}, also known as matrix convexity \cite{EW,EH,EHKM,HKM16,WW,zalar}, concerns convexity in the setting of matrix spaces. 
Noncommutative convexity 
is intimately connected with the study of operator systems and spaces \cite{Pa,HKM13,HKM17}, and
has applications in  quantum physics \cite{Effros, AC}, and  control theory and systems engineering \cite{SIG}.

	In this article we study generalizations of (noncommutative) convexity. An example is given by partially convex sets, i.e., the sets that are convex in some of the coordinates with the others held fixed \cite{HHLM}. The generalized notions of convexity we consider are summarized under the term $\Gat$-convexity \cite{JKMMP, JKMMP22}, where $\Gat$ is a tuple of free noncommutative polynomials. 
 
	The choice of the tuple $\Gat$ determines the geometry of a $\Gat$-convex set. 
	For example, by choosing $\Gat= (x,y,xy+yx, i(xy-yx))$ one obtains the class of $xy$-convex sets.
	Such sets arise in the study of
bilinear matrix inequalities (BMIs), {which appear in control theory, optimization, and various applied mathematics fields \cite{KSVS04}.} These are 
finite sums  of the form
\begin{equation}
	\label{e:BMI}
	A_0 + \sum_j A_j x_j + \sum_k B_k y_k + \sum_{p,q} C_{pq} x_p y_q \succeq0
\end{equation}
for self-adjoint matrices $A_j,B_k,C_{pq}$ \cite{VAB}.

Another commonly studied type of $\Gat$-convex sets is defined by the tuple $\Gat=(x,y,y^2).$ In analogy with matrix convex sets and BMIs one considers sets of 
$(x,y)=(x_1,\dots,x_\gv,y)$ describable in the form
\begin{align}\label{eq:pconv}
A_0+\sum A_jx_j + By + C y^2\succeq0.
\end{align}
Such sets are convex in the variables $x$ whenever $y$ is held fixed.

An important area where matrix inequalities such as \eqref{e:BMI} and \eqref{eq:pconv} arise is engineering systems problems governed by a signal flow diagram. 
Such problems naturally give rise to two classes of variables. The system variables depend on the choice of system parameters and produce polynomial inequalities in the state variables. The algebraic form of these inequalities involves noncommutative polynomials and depends only upon the flow diagram, and not the particular choice of system variables. See  \cite{dOH, dOHMP} and the citations therein.  Convexity (or linearity, as in Linear Matrix Inequalities) in the state variables, for a given choice of system variables, is then an important optimization consideration.

{\CCBB

The map $\Gat$ may also be viewed as a noncommutative analog of a
feature map: one adjoins nonlinear polynomial functions of the variables and
then applies linear convex-analytic tools in the lifted coordinates. This
perspective also suggests possible connections with quantum information theory
and quantum machine learning, though we do not pursue such applications here.
}

Broadly speaking, our results are of three types. We establish a $\Gat$-analog of the Webster-Winkler \cite{WW} duality between matrix
convex sets and operator systems; introduce extreme points and
establish a Krein-Milman theorem for operator $\Gat$-convex sets; and construct a version of the
Lasserre-Parrilo spectrahedral lift \cite{Las09, Par06} of the $\Gat$-convex hull of a semialgebraic set.  The remainder of this introduction is organized as follows:
Subsections~\ref{subsec polys} and \ref{ssec:prelimGat} contain basic notation and terminology surrounding matrix and 
$\Gat$-convex sets;
the Hahn-Banach
separation theorems in each case are reviewed in  Subsection~\ref{sec:HB}. With this background, we then preview the main results of
the paper in Subsection~\ref{sec:main}.

\subsection{Free polynomials and their evaluations}
  \label{subsec polys}

For $n,m \in \N$ and a vector space $\vs,$ let  \df{$\Mm_{n,m}(\vs)$}  denote the space of $n \times m$ matrices  with entries from $\vs.$ In the case $\vs=\mathbb{C},$ we use the abbreviation $\Mm_n = \Mm_{n,n},$ \index{$\Mm_n$} and denote by $\Ii_n \in \Mm_n$  \index{$\Ii_n$} the identity matrix. When $\vs = \mathbb{C}^\gv$ for some $\gv \in \N,$ the space $\Mm_n(\C^\gv)$ is canonically identified with $\Mm_n^\gv,$ the set of $\gv$-tuples of $n \times n$ complex matrices. Denote by $\mathbb{S}_n \subseteq \Mm_n$ the set of self-adjoint $n \times n$ complex matrices, and let  $\mathbb{S}^\gv$ denote \index{$\mathbb{S}_n$} \index{$\mathbb{S}^\gv$}
 the graded set  $(\mathbb{S}_n^\gv)_n.$ Likewise, let  $\MM(\vs)=( \Mm_n(\vs))_n$ and $\MM^\gv=\MM(\C^\gv).$  \index{$\MM(\vs)$} 
\index{$\MM$}

Let $x=(x_1,\dots,x_\gv)$ denote a tuple of $\gv$ (freely) noncommuting variables and
let $\ax$ denote the semigroup of  words in the variables $x_1,\dots,x_\gv.$ We use \index{word}
$1$ to denote the empty word $\emptyset.$ Now denote by \df{$\C\ax$} the free algebra consisting of finite $\C$-linear
combinations of the words in the variables $x_1,\dots,x_\gv.$ \index{$\ax$} \index{$\C\ax$} 
An element $p\in \C\ax$ is called a \df{noncommutative (nc) polynomial},  or synonymously a \df{free polynomial},
and it is of the form
\begin{equation}
	\label{e:defp}
	p =\sum_{w\in\ax}  p_w w,
\end{equation}
where the sum is finite and $p_w\in \C$. \index{involution ${}^*$}
A natural involution ${}^*$ on the semigroup $\ax$ is defined by $x_j^*=x_j$ for $1\le j\le \gv$
and $(wu)^* = u^* w^*$ for $u,w\in\ax$. 
The involution ${}^*$ naturally extends to $\C\ax$ by
\[
p^* = \sum_{w\in \ax}^{{\text{finite}}} \overline{p_w} w^*,
\]
where $p$ is as in \eqref{e:defp}.

Free polynomials are   \df{evaluated} at an $X\in\gtup$. For a word 
\[
w = x_{j_1}\, x_{j_2} \, \cdots \, x_{j_N} \in \ax,
\]
and $X\in \gtupn$, 
\[
w(X) = X_{j_1}\, X_{j_2}\, \cdots\, X_{j_N} \in \Mm_n;
\]
and  for $p$ as in  \eqref{e:defp}, \index{$p(X)$}
\[
p(X) =\sum_{w\in\ax}^{{\text{finite}}} p_w w(X) \in \Mm_n.  
\]
Thus $p$ determines a (graded)  function $p:\gtup \to \mathbb{M}$,
where $\mathbb{M}=(\Mm_n)_n,$ and similarly,
an $\rv$-tuple $p=(p_1,\dots,p_\rv)\in \C\ax^{1\times \rv} = M_{1,\rv}(\C\ax)$ determines
a mapping $p:\gtup\to \mathbb{M}^\rv$.  \index{$M(\C)$}  \index{mapping}

A polynomial that is invariant under the involution ${}^*$ is called \df{symmetric}. It is well-known and easy to see that $p\in \C\ax$
is symmetric if and only if  it satisfies  $p(X)^* =   p(X)$
 for all $X\in\gtup.$
 In this case $p$ determines a mapping $p:\gtup \to \mathbb S = \mathbb{S}^1$.
 Since $x_j^*=x_j,$ the variables $x$ are referred to as \df{symmetric variables}.

More generally, a polynomial of the form \eqref{e:defp} with the coefficients $p_w$ lying in  $\cB(\cH),$
 the bounded operators on a Hilbert space $\cH,$ 
  is a $\cB(\cH)$-valued noncommutative polynomial and \df{$\cB(\cH)\otimes \C\ax$} is the space of $\cB(\cH)$-valued
  noncommutative polynomials. In the case   $\cH=\C^\mu$ for some $\mu \in \N,$ we obtain the space of all $\mu\times\mu$ 
   matrix-valued noncommutative polynomials, denoted by $ M_\mu(\C\ax).$  Any polynomial
$p \in \cB(\cH)\otimes\C\ax$ is \df{evaluated} at a tuple $X\in \gtupn$ using the (Kronecker) tensor  product,
\[
p(X) = \sum p_w\otimes w(X) \in \cB(\cH) \otimes M_n,
\]
and $p$ is \df{symmetric} if  $p(X)^*=p(X)$
  for all $X\in \gtup$. Equivalently, a polynomial
$p \in \cB(\cH)\otimes \C\ax,$ or $p\in M_\mu(\C\ax)$ in the case $\cH=\C^\mu,$  is \df{symmetric} if $p_{w^*}=p_w^*$ for all words $w$.\looseness=-1

\subsection{Preliminaries on \texorpdfstring{$\Gat$}{Gamma}-convexity}\label{ssec:prelimGat}
Fix $\gv\in\N$ and 
let $\Gat=(\gat_1,\dots,\gat_\rv)$ denote a tuple  of symmetric noncommutative polynomials
with $\gat_j=x_j$ for $1\le j\le \gv\le \rv$. As in Subsection \ref{subsec polys} we also use  $\PhiG:\gtup \to \mathbb{S}^\rv$
to denote the mapping \index{$\PGat$} \index{$\Gat$}
\begin{equation*}
	\PhiG(X) = (\gat_1(X),\dots,\gat_\rv(X)).
\end{equation*}
 For instance, in the case $\gv=2$ and $\rv=4$ for $\Gat(x,y)=(x,y,xy+yx, i(xy-yx))$  and $(X,Y)\in M_n^2,$ 
\[
 \PhiG(X,Y) = (X,Y,XY+YX, i(XY-YX))\in M_n^4.
\]

\begin{definition}\label{def gamma} Let $\bm{K}= (K_n)_n \subseteq \gtup,$ where
	$K_n \subseteq \gtupn$ for each positive integer $n,$ be a given graded set.
\begin{enumerate}
\item  The graded set $\bm K$ is \df{(uniformly) bounded} if there is an $R\in\R_{\geq0}$ such that 
\[
   \sum_{j=1}^\gv X_j^2 \preceq R^2
\]
 for all $n$ and $X\in K_n.$  Equivalently, with the \df{norm} \df{$\|X\|$} of $X\in  K_n$ defined as the norm of
 the $n\times n\, \gv$ matrix 
 \[
  \begin{pmatrix} X_1 & X_2 & \dots & X_{\gv}\end{pmatrix},
 \]
  the set $\bm K$ is bounded if there is an $R\ge 0$ such that $\|X\|\le R$ for all $X\in \bm K.$
 
 \item The set  $\bm K$ is a 
	\df{free set} if it is closed under direct sums and
	simultaneous  unitary conjugations.\footnote{
		This means that if  $X\in K_n$ and $Y\in K_m$, then $X\oplus Y\in K_{n+m};$
		if $U \in \Mm_n$ is unitary, then $U^*XU=(U^*X_1 U,\dots,U^*X_\gv U)\in K_n.$}
		
 \item  A pair $(X,V)$, where $X\in \gtupn$
	and $V:\C^m\to \C^n$ is an isometry, is a \df{$\Gat$-pair} if it satisfies
	\[
	V^* \PhiG(X)V= \PhiG(V^*XV).
	\]
	Let \df{$\Coup$} denote the collection of all $\PhiG$-pairs, \index{$\Coup$} 
	 parameterized over all choices of positive integers $n,m.$

 \item  The set  $\bm{K} \subseteq \gtup$ is a \df{$\Gat$-convex set} if it is free and if 
	\[
	X\in \bm{K} \, \text{ and } \, (X,V)\in \Coup \, \implies V^*XV\in \bm{K}.
	\]
	
 \item The \df{$\Gat$-convex hull} of a free set $\bm{K}$ is the smallest $\Gat$-convex set that contains $\bm{K}.$ 
 It is obtained as the intersection\footnote{\CCBB The universe $\MM^\gv$ is $\Gamma$-convex.} of all $\Gat$-convex sets containing $\bm{K}$ and is
	denoted  by $\Gat\mhyphen\conv (\bm{K}).$ \qed
\end{enumerate}
\end{definition}

\begin{remark} 
\mbox{}\par
	(a) Note that for every $n\times n$ unitary matrix $U$
	and  $X\in \gtupn$, the tuple $(X,U)$ is a $\Gat$-pair.
	
\smallskip

	(b) If an isometry $V$ reduces a tuple $X,$ then $(X,V)$ is a $\Gat$-pair. Indeed, writing the decomposition of $X$ with respect to the range of $V$ as $X=Y\oplus Z,$ 
	$$V^*\Gat(X)V = V^* (\Gat(Y)\oplus\Gat(Z))V = \Gat(Y) = \Gat(V^*(Y\oplus Z)V) = \Gat(V^*XV).$$

\smallskip

	(c) In the case when $\rv=\gv,$ and thus  $\Gat(x)=x,$
	 the notion of  $\Gat$-convexity reduces to  \df{matrix convexity}. (See Subsection~\ref{s:finite-dim-case}.)
	 \qed
\end{remark}
	 
\begin{definition}
Given a positive integer $k,$ tuples  $X^{(i)} \in K_{n_i}$ and $V_i \in \Mm_{n_i,n}$ for $1\le i\le k$
  such that $\sum_{i=1}^k V_i^* V_i=\Ii_n,$ the tuple 
$$
\sum_{i=1}^k V_i^\ast X^{(i)} V_i \in M_n^\gv
$$
is a \df{matrix convex combination} often abbreviated by setting 
\begin{equation}\label{eq:preGatpair}
X = \oplus_{i=1}^k X^{(i)} \quad
\text{ and }
\quad
V = \text{col}(V_1,\ldots,V_k) =
\begin{pmatrix}
	V_1 \\
	\vdots \\
	V_k	
\end{pmatrix},
\end{equation}
so that 
$$
V^\ast X V = \sum_{i=1}^k V_i^\ast X^{(i)} V_i.
$$
In particular, the condition  $\sum_{i=1}^k V_i^*V_i=\Ii_n$ is equivalent to $V$ being an isometry.

    A matrix convex combination of the form
	\begin{equation*}
		\sum_{i=1}^k V_i^\ast X^{(i)} V_i,
	\end{equation*}
	is a \df{$\Gamma$-convex combination} if  $(X, V)$ 
of \eqref{eq:preGatpair}
	is a $\Gamma$-pair. \qed
\end{definition}

The next two propositions \cite[Proposition 2.1, Proposition 2.2]{JKMMP} give alternative descriptions of the $\Gat$-convex hull of a free set. We include the easy proofs for self-containment.

\begin{proposition} \label{prop:gammahull}
	A free set $\bm{K} \subseteq \mathbb{S}^{\texttt{g}}$  is a $\Gamma$-convex set if and only if it is closed under  $\Gamma$-convex combinations. Moreover, 
	\[
	\Gat\mhyphen\conv(\bm{K}) =\{V^*XV: X\in \bm{K}, \, (X,V)\in\Coup \}.
	\] 
\end{proposition}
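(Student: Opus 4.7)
The proposition has two parts: the equivalence between $\Gat$-convexity and closure under $\Gat$-convex combinations, and the formula for the $\Gat$-convex hull. I would attack each separately.

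For the equivalence, the forward direction is the heart of the matter but is essentially tautological: given a $\Gat$-convex combination
\[
\sum_{i=1}^k V_i^* X^{(i)} V_i = V^* X V
\]
with $X^{(i)} \in \bm{K}$, and $(X,V)\in\Coup$ where $X=\oplus X^{(i)}$ and $V=\mathrm{col}(V_i)$, the freeness of $\bm{K}$ ensures $X\in \bm{K}$ (closure under direct sums) and then $\Gat$-convexity delivers $V^*XV\in\bm{K}$. For the reverse direction, specializing to $k=1$ recovers exactly Definition~\ref{def gamma}(4).

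For the hull formula, let $L := \{V^*XV : X \in \bm{K},\ (X,V)\in\Coup\}$. The easy inclusion $L\subseteq \Gat\mhyphen\conv(\bm{K})$ is immediate: any $\Gat$-convex set containing $\bm{K}$ must contain each such $V^*XV$. The work is in proving $L$ is itself a $\Gat$-convex set containing $\bm{K}$, whence $\Gat\mhyphen\conv(\bm{K})\subseteq L$. That $\bm{K}\subseteq L$ follows by taking $V$ to be the identity isometry (a trivial $\Gat$-pair, see Remark~\ref{rem:gpairs}(a)). For freeness, I would verify closure under direct sums and unitary conjugations by direct computation, leveraging the fact that $\Gat$ is a tuple of polynomials (so $\Gat(X_1\oplus X_2)=\Gat(X_1)\oplus\Gat(X_2)$ and $U^*\Gat(X)U=\Gat(U^*XU)$ for unitary $U$), which translates freeness of $\bm{K}$ into freeness of $L$.

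The main obstacle, though routine, is showing $L$ is $\Gat$-convex: suppose $Y=V^*XV\in L$ with $(X,V)\in\Coup$, and $(Y,W)\in\Coup$ for some isometry $W$. I need to exhibit $W^*YW=(VW)^*X(VW)$ as an element of $L$, i.e.\ show $(X,VW)\in\Coup$. Since $VW$ is an isometry (as a product of isometries), it suffices to chain the $\Gat$-pair identities:
\[
(VW)^*\Gat(X)(VW) = W^*\bigl(V^*\Gat(X)V\bigr)W = W^*\Gat(V^*XV)W = W^*\Gat(Y)W = \Gat(W^*YW),
\]
where the second equality uses $(X,V)\in\Coup$ and the last uses $(Y,W)\in\Coup$. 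Since $W^*YW=(VW)^*X(VW)$, this shows $(X,VW)\in\Coup$, so $W^*YW\in L$. Combined with freeness, this gives $\Gat$-convexity of $L$ by the equivalence already proved (or directly from Definition~\ref{def gamma}(4)), completing the argument.
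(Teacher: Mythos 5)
Your proof is correct and follows essentially the same route as the paper's (which simply asserts "it is easy to verify that $\mathscr{K}$ is a $\Gat$-convex set containing $\bm{K}$"); you have filled in the omitted verifications, and your composition argument showing $(X,V),(V^*XV,W)\in\Coup$ implies $(X,VW)\in\Coup$ is precisely the computation the paper left implicit.
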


\begin{proof}   The first statement is clear by the definition of a $\Gat$-convex set.
	For the second claim   it is easy to verify that
	$\mathscr{K} =\{V^*XV: X\in \bm{K}, \, (X,V)\in\Coup\}$ is a $\Gat$-convex set that contains $\bm{K}$. Since by definition, 
	$\Gat\mhyphen\conv (\bm{K})$ must contain $\mathscr{K},$ we have that $\mathscr{K} = \Gat\mhyphen\conv (\bm{K}).$ 
\end{proof}

In the case $r=\gv$ so that  $\Gat=x;$ that is, we are dealing with matrix convex sets, the $\Gat$-convex hull is the 
\df{matrix convex hull} of $\bm K$ and we use the standard notation \df{$\mconv(\bm K)$}
instead of $\Gat\mhyphen\conv (\bm{K})$.

\begin{proposition}
	\label{prop:jp}
	Suppose $\bm{K} \subset \gtup$ is a free set and $X\in \gtup$.  The point 
	$X$ is in $\Gat\mhyphen\conv(\bm{K})$ if and only if 
	$\PhiG(X)$ is in $\mconv(\PhiG(\bm{K})).$ Equivalently,
	\[
	\PhiG^{-1}(\mconv(\PhiG(\bm{K})))
	= \Gat\mhyphen\conv(\bm{K}).
	\]
\end{proposition}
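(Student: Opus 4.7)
The plan is to prove the two inclusions of the set equality separately, in each case reducing matters to the description of $\Gat$-convex hulls given in Proposition~\ref{prop:gammahull} (applied with the given tuple $\Gat$ for one direction, and with $\Gat=x$---that is, in the matrix convex setting---for the other).

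For the forward inclusion $\Gat\mhyphen\conv(\bm{K})\subseteq \PhiG^{-1}(\mconv(\PhiG(\bm{K})))$, take $X\in\Gat\mhyphen\conv(\bm{K})$. Proposition~\ref{prop:gammahull} lets me write $X=V^*YV$ for some $Y\in\bm{K}$ and $\Gat$-pair $(Y,V)$. The defining identity of a $\Gat$-pair then upgrades this to $\PhiG(X)=V^*\PhiG(Y)V$, exhibiting $\PhiG(X)$ as a (single-term) matrix convex combination of an element of $\PhiG(\bm{K})$; hence $\PhiG(X)\in\mconv(\PhiG(\bm{K}))$.

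For the reverse inclusion, I first note that $\PhiG(\bm{K})$ is itself a free set, since polynomial evaluation commutes with direct sums and simultaneous unitary conjugations. Applying the matrix convex case of Proposition~\ref{prop:gammahull} to the hypothesis $\PhiG(X)\in\mconv(\PhiG(\bm{K}))$ yields an isometry $V$ and a tuple $Y\in\bm{K}$ with
\[
\PhiG(X) = V^*\PhiG(Y)V.
\]
The key structural ingredient is the standing assumption that $\gat_j=x_j$ for $1\le j\le\gv$: reading off the first $\gv$ coordinates of this identity gives $X=V^*YV$. Substituting back, the equation becomes $V^*\PhiG(Y)V=\PhiG(V^*YV)$, which is precisely the $\Gat$-pair condition on $(Y,V)$. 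A final appeal to Proposition~\ref{prop:gammahull} then places $X$ in $\Gat\mhyphen\conv(\bm{K})$.

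Conceptually the argument is little more than bookkeeping and I do not anticipate any substantial obstacle. The only subtlety worth highlighting is the extraction step in the reverse direction, where the $\Gat$-pair condition emerges \emph{for free} from matching the first $\gv$ coordinates of $\PhiG(X)$ and $\PhiG(V^*YV)$; this is the small but decisive place where the hypothesis $\gat_j=x_j$ for $j\le\gv$ is used.
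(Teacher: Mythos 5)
Your proof is correct and follows essentially the same route as the paper's: Proposition~\ref{prop:gammahull} in both directions, with the $\Gat$-pair condition emerging in the reverse direction by comparing the first $\gv$ coordinates and using $\gat_j=x_j$ for $j\le\gv$. The one small addition is that you explicitly verify $\PhiG(\bm{K})$ is a free set before invoking the matrix convex case of Proposition~\ref{prop:gammahull} (the paper leaves this step implicit); that is a reasonable piece of care, since the hull description in that proposition is stated for free sets.
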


\begin{proof}
	If $X\in \Gat\mhyphen\conv (\bm{K}),$ then by Proposition \ref{prop:gammahull},  there 
	is a $Y$ in $\bm{K}$ and an isometry $V$ such that 
	$V^*\Gat(Y)V= \Gat(V^*YV)$ and $X=V^*YV$. Thus,  $\PhiG(X)=V^*\PhiG(Y)V,$ which implies
	$\PhiG(X) \in  \mconv(\PhiG(\bm{K})).$
	
	For the converse assume $\PhiG(X)\in \mconv(\PhiG(\bm{K})).$ Thus there is a $Y\in \bm{K}$  and 
	an isometry $V$ such that $\PhiG(X)  = V^*\PhiG(Y)V.$  
	A comparison of the first $\gv$  coordinates gives $X=V^*YV,$ from which we deduce that $(Y,V)$ is a $\Gat$-pair.
	Since $Y\in \bm{K}$ and $(Y,V)$ is a $\Gat$-pair,  
	Proposition \ref{prop:gammahull} implies  $X\in \Gat\mhyphen\conv (\bm{K})$.
\end{proof}

\begin{example}
	\label{eg:TV}  
	In the case of two variables $(x,y)=(x_1,x_2)$
	 {\CCBB by \df{$y^2$-convex} we mean $\Gat$-convex for $\Gat=(x,y,y^2)$.} It was shown in \cite[$\S$4]{JKMMP}, in this case, 
 $((X,Y),V)$ is a $\Gat$-pair if and only if
	the isometry $V$ reduces $Y$ and that a free set $\bm{K}$ is $y^2$-convex
	if and only if it is convex in $x$ for any fixed $y$. That is,
	 $(X_1,Y),(X_2,Y)\in K_n$ implies $(\frac{X_1+X_2}{2},Y)\in K_n$.
	 An example of a $y^2$-convex set is 
	 $$
	  \bm K = \{ (X,Y) \ | \ -(Y^2+I) \preceq X \preceq Y^2 + I, \ Y^2 \preceq I\},
	 $$
	 which we consider later in Example \ref{eg:parabola}. \CB{(See also Example \ref{e:not:vac} for a different example of a $y^2$-convex set)}.\qed
\end{example}

\begin{example}
	  In the case $\Gat(x,y)=(x,y,xy+yx,i(xy-yx)),$
	  a free set that is  $\Gat$-convex is known as an \df{$xy$-convex set}. An $xy$-convex set is convex in both $x$ and $y$ separately.
	 See \cite{HHLM,BM,DHM,JKMMP,BHM+} for more results on this topic.\qed
\end{example}

{\CCBB
\begin{example}\label{ex:x^2y^2}
Let $\Gamma(x,y)=(x,y,x^2,y^2)$.  We claim that a free set
$\bm K\subseteq \mathbb{S}^2$ is $\Gamma$-convex if and only if it is closed under
restrictions to common reducing subspaces.

Indeed, let $(X,Y)\in \mathbb S^2_n$ and let $V:\C^m\to\C^n$
be an isometry.  Then $((X,Y),V)$ is a $\Gamma$-pair if and only if
\[
        V^*X^2V=(V^*XV)^2
        \qquad\text{and}\qquad
        V^*Y^2V=(V^*YV)^2 .
\]
As in Example \ref{eg:TV}, this is equivalent to $\range(V)$ reducing both $X$ and $Y$.

Consequently, a free set $\bm K\subseteq \mathbb S^2$ is
$(x,y,x^2,y^2)$-convex precisely when
\[
        (X,Y)\in \bm K
        \quad\iff\quad
        (X|_{\mathcal M},Y|_{\mathcal M})\in \bm K
\]
for every common reducing subspace $\mathcal M$ for $X$ and $Y$.
\end{example}

\begin{example}\rm
\label{ex:matco-all}
  At the opposite extreme of Example~\ref{ex:x^2y^2}, if $\bm K$ is matrix convex, then it is $\Gat$-convex for every $\Gamma.$
\end{example}
}

\subsection{A Hahn-Banach separation theorem for 
\texorpdfstring{$\Gat$}{Gamma}-convex sets}
\label{sec:HB}
This section presents  a Hahn-Banach separation theorem of an outlier from a closed $\Gat$-convex set, where the separation is given by a $\Gat$-analog of a linear pencil. We continue to let 
 $\Gat(x) = (\gamma_1, \ldots, \gamma_\rv),$ where the $\gamma_i$ are symmetric and $\gamma_i=x_i$  for $i=1, \ldots, \gv.$

For a (complex) Hilbert space $\cH$, let \df{$\mathbb{S}_\cH$} denote the self-adjoint (hermitian) operators on $\cH$ and
 let \df{$\mathbb{S}_\cH^\gv$} denote the set of $\gv$-tuples of elements of $\mathbb{S}_\cH.$
 
\begin{definition}
	A \df{$\Gamma$-pencil} with coefficients $A=(A_0, A_1,\ldots, A_\rv) \in \mathbb{S}^{\rv+1}_\cH$ is the $\cB(\cH)$-valued affine linear  polynomial
	\begin{equation*} 
	\LG(x)=\LG_A(x)  = A_0 + \sum_{i=1}^\gv A_i\,x_i + \sum_{i=\gv+1}^\rv A_i \gamma_i(x).  \index{$\LG$}
	\end{equation*}
	 For a tuple $X\in \Mm_n^\gv,$
	\[
	 \LG(X) = A_0\otimes \Ii_n + \sum_{i=1}^\gv A_i\otimes X_i + \sum_{i=\gv+1}^\rv A_i\otimes \gamma_i(X) \in \cB(\cH)\otimes M_n 
	   \cong \cB(H\otimes  \C^n)\cong \cB(H^n).
	\]
	The  matricial \df{positivity domain} of $\LG,$ denoted by $\mathcal{D}^\Gat_A,$  is the graded set
	\[
	\mathcal{D}^\Gat_A = 
	   \left (\mathcal{D}^\Gat_A(n) \right )_n= ( \{X \in \mathbb{S}_n^\texttt{g} \ | \ \LG(X) \succeq 0\} )_n
	\]
	and is known  as a \df{$\Gat$-spectrahedron} when $\cH$ 
	 is finite-dimensional and a  \df{\Hilby $\Gat$-spectrahedron}  in case $\cH$ is potentially infinite-dimensional.
		A $\Gat$-pencil is called \df{monic} if $A_0 = I.$
	\qed
\end{definition}

In the case that $\gv=\rv$ (and $\Gat(x)=x$) the expression,
\[
 L(x) =L_A(x) = A_0 + \sum_{j=1}^ \gv A_j \, x_j
\]
 is a \df{linear pencil} and the positivity domain  $\cD^\Gat_A= \cD_A$ \index{$\cD_A$}  is a \df{\Hilby free spectrahedron}. In particular, a $\Gat$-pencil $\LG$ has the form $\LG=L_A\circ \Gat,$ for a tuple $A\in \mathbb{S}_\cH^{\rv+1}.$

As examples,  a  monic $\Gat$-pencil with $\Gat= (x,y,xy+yx, i(xy-yx))$ is of the form
\[
 \LG(x,y) = I + A_x x + A_y y + Bxy + B^*yx,
\]
where $A_x,A_y$ are self-adjoint; and a  monic $\Gat$-pencil with $\Gat=(x,y,y^2)$ is of the form
\[
\LG(x,y) = I +A_x x + A_y y + By^2,
\] where $A_x, A_y$, and $B$ are self-adjoint.

We now recall a weaker form of the finite-dimensional version of the Effros-Winkler \cite{EW} Hahn-Banach separation theorem for matrix convex sets given in \cite[Proposition 6.4]{HM12} (or see, e.g., \cite[Lemma 2.1(a)]{Kr}).
Typically,  an adjective describing a free set is a \df{level-wise description}. For instance, a matrix convex set $K\subseteq \mathbb{S}^\gv$
 is \df{closed} if each $K_n$ is closed (in any locally convex topology, equivalently norm topology)  on $M_n^\gv.$ 
 An exception to this convention is boundedness, since, as it turns out, a matrix convex set $\bm K \subseteq\mathbb{S}^\gv$  is level-wise bounded if
 and only if it is bounded. See Lemma~\ref{l:level:bounded}.    If the origin $0=(0,\dots,0)\in \C^\gv$
  is contained in $K_1,$ then $0\in M_n^\gv,$  the tuple of  zero matrices, is in $K_n$ for each $n.$  A tuple $Y\in \mathbb{S}^g$ has \df{size} $\ell$ 
  means $Y\in \mathbb{S}^\gv_\ell.$

\begin{theorem}
	\label{t:EW}
	Let $\bm{K} \subset \mathbb{S}^\gv$ be a closed matrix convex set containing the origin. If $\ell\in \N$ and
	 $Y\in \mathbb{S}^\gv_\ell \setminus K_\ell,$ then there is a monic linear pencil $L$ of size $\ell$  such that $L(X)\succeq 0$ for all $X$ in $\bm{K},$
	but $L(Y)\not\succeq 0$. 
\end{theorem}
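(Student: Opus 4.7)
The plan is to reduce the problem to a separation at level $\ell$ via matrix convexity, and then to apply a matricial Hahn-Banach at that single level. For the reduction, I would show that for any monic pencil $L(x) = I_\ell + \sum_{j=1}^{\gv} A_j\,x_j$ of size $\ell$, the condition $L(X)\succeq 0$ for all $X\in K_n$ and all $n$ is equivalent to $L(X)\succeq 0$ for all $X\in K_\ell$. The nontrivial direction exploits matrix convexity: for $n\geq\ell$ and $X\in K_n$, any unit vector $\xi\in\C^{\ell n}$ can be written as $(I_\ell\otimes V)\eta$ for some isometry $V\colon\C^\ell\to\C^n$ (choose $V$ so that its range contains the $\ell$ slices of $\xi$ in the second factor), and then a direct computation gives $\langle L(X)\xi,\xi\rangle = \langle L(V^*XV)\eta,\eta\rangle$, with $V^*XV\in K_\ell$ by matrix convexity. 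For $n<\ell$, pad $X$ by zeros to land in $K_\ell$ (using that $0\in\bm K$ and $\bm K$ is closed under direct sums), in which case $L(X\oplus 0)=L(X)\oplus I$ forces $L(X)\succeq 0$.

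With the reduction in hand, it remains to produce a monic size-$\ell$ pencil $L$ with $L\succeq 0$ on $K_\ell$ but $L(Y)\not\succeq 0$. Here I would first apply the finite-dimensional scalar Hahn-Banach theorem in the real inner-product space $(\mathbb{S}_\ell^{\gv},\langle\,\cdot\,,\,\cdot\,\rangle_{\tr})$, where $\langle X,Y\rangle_{\tr}=\sum_j\tr(X_jY_j)$. Since $K_\ell$ is closed, convex, and contains the origin while $Y\notin K_\ell$, this produces a tuple $C\in\mathbb{S}_\ell^{\gv}$ and a constant $\alpha>0$ with $\sum_j\tr(C_jX_j)\leq\alpha$ for all $X\in K_\ell$ and $\sum_j\tr(C_jY_j)>\alpha$; after rescaling, one may take $\alpha=1$.

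The main obstacle is lifting this scalar separation to a matricial one. To handle it, I would introduce the closed convex set
\[
\mathcal{P}=\bigl\{A\in\mathbb{S}_\ell^{\gv} : I_\ell\otimes I_\ell + \textstyle\sum_j A_j\otimes X_j\succeq 0\text{ for all }X\in K_\ell\bigr\}
\]
of admissible size-$\ell$ pencil coefficients at level $\ell$, and its matricial polar $\mathcal{P}^{\circ}\subset\mathbb{S}_\ell^{\gv}$, the set of tuples at which every such pencil is positive. Trivially $K_\ell\subseteq\mathcal{P}^{\circ}$; the reverse inclusion is exactly the sought separation in disguise. To prove $\mathcal{P}^{\circ}\subseteq K_\ell$, I would suppose $Y\in\mathcal{P}^{\circ}\setminus K_\ell$ and derive a contradiction by producing some $A\in\mathcal{P}$ with $L_A(Y)\not\succeq 0$. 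The construction leverages self-duality of the positive-semidefinite cone in $\mathbb{S}_\ell$ under the trace pairing: from the scalar separating tuple $C$ of the previous paragraph and a spectral decomposition of each $C_j$ into positive and negative parts, one builds the matricial coefficients $A_j$ so that the $\xi$ realising $L_A(Y)\not\succeq 0$ is a purified version of the separating trace functional. I expect this bipolar/duality step to be the main technical difficulty, as the passage from a scalar trace inequality to a matricial positive-semidefiniteness statement is the heart of the finite-dimensional Effros–Winkler theorem and requires a careful bookkeeping between trace pairings and matrix positivity.
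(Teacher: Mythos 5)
The paper does not actually prove this statement; it recalls it as the (weak, finite-dimensional) Effros--Winkler Hahn--Banach theorem, citing \cite{EW}, \cite[Proposition~6.4]{HM12}, and \cite[Lemma~2.1(a)]{Kr}. So there is no in-paper proof to compare against, and your proposal has to stand on its own. The first part of your proposal is fine: positivity of a monic size-$\ell$ pencil on all of $\bm K$ is indeed equivalent to positivity on $K_\ell$, by the isometry-compression computation for $n\geq\ell$ and padding by $0\in K_1$ for $n<\ell$. This reduction is correct, though it only repackages the statement rather than simplifying it, since one still needs the matrix convexity of $\bm K$ (not merely the Euclidean convexity of $K_\ell$) to produce the pencil.

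The gap is in the last step, which you yourself flag as the main difficulty. Scalar Hahn--Banach applied to $K_\ell\subseteq\mathbb{S}_\ell^\gv$ yields a tuple $C$ with $\sum_j\tr(C_jX_j)\leq 1$ for $X\in K_\ell$ and $\sum_j\tr(C_jY_j)>1$, but this is a strictly weaker piece of information than what a monic pencil of size $\ell$ encodes; setting $A_j=-C_j$ gives a pencil $I_\ell\otimes I+\sum_j A_j\otimes X_j$ whose \emph{trace} is nonnegative on $K_\ell$, not whose spectrum is. No spectral decomposition of the $C_j$ fixes this, because the tuple $C$ was chosen only to satisfy a one-dimensional constraint and generically will not be close (in any useful sense) to an admissible pencil. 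The proof that actually works separates a different object in a different space: one forms the cone
\[
\Gamma=\bigl\{\bigl(V^*V,\,V^*X_1V,\dots,V^*X_\gv V\bigr) : n\in\N,\ X\in K_n,\ V\in M_{n,\ell}\bigr\}\subseteq\mathbb{S}_\ell^{1+\gv},
\]
which is a convex cone because $\bm K$ is closed under direct sums, shows $(I_\ell,Y)\notin\overline{\Gamma}$ using that $K_\ell$ is closed and matrix convex (normalize $V$ when $V^*V\to I_\ell$), and applies scalar Hahn--Banach \emph{in $\mathbb{S}_\ell^{1+\gv}$} to separate $(I_\ell,Y)$ from $\overline{\Gamma}$ by a tuple $(A_0,A_1,\dots,A_\gv)$. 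The extra coordinate $V^*V$ produces the constant term $A_0$, and the identity
\[
\tr(A_0V^*V)+\sum_{j=1}^\gv\tr(A_jV^*X_jV)
=\Bigl\langle \bigl(A_0\otimes I+\textstyle\sum_jA_j\otimes X_j\bigr)\xi_V,\;\xi_V\Bigr\rangle,
\]
where $\xi_V\in\C^\ell\otimes\C^n$ is the vector whose $k$-th slice is $Ve_k$, is what converts the scalar cone separation into positive semidefiniteness of the pencil; monicity then follows by normalizing $A_0$ (which is $\succeq0$ since $0\in\bm K$). Your route omits both the augmentation by the $V^*V$ coordinate and this identity, and I do not see how to complete it without reintroducing them, at which point the scalar separation of $Y$ from $K_\ell$ becomes superfluous.
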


 Theorem~\ref{t:EW}  begets the following $\Gat$-analog of the Effros-Winkler matricial Hahn-Banach separation theorem.

\begin{theorem}[\protect{\cite[Theorem 1.4]{JKMMP}}]
	\label{t:thm:jp-intro} 
	Suppose $\Gamma(0)=0$ and $\bm{K}\subset \gtup$ is a  $\Gat$-convex set containing $0.$
	If the matrix
	convex hull of $\Gat(\bm{K}) \subset \mathbb{S}^\rv$ is closed and if 
	$Y\in \mathbb S_\ell^\gv \setminus K_\ell$, then there is a monic
	$\Gat$-pencil $\LG$ of size $\ell$ such that $\LG(X)$ is positive semidefinite for all $X$ in $\bm{K},$ but
	$\LG(Y)$ is not positive semidefinite.
\end{theorem}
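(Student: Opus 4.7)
The plan is to reduce to the matrix convex Effros--Winkler separation theorem (Theorem~\ref{t:EW}) applied to the matrix convex hull $\mconv(\Gat(\bm{K}))\subset \mathbb{S}^\rv$, and then pull the resulting linear pencil back through $\Gat$. The essential structural remark is that, by Definition~\ref{d:Gat-pencil}, every $\Gat$-pencil has the form $L\circ\Gat$ for a linear pencil $L$, and dually every outlier $Y\notin \bm{K}$ pushes forward via $\PGat$ to an outlier of $\mconv(\PGat(\bm{K}))$ by Proposition~\ref{prop:jp}. These two observations together make the theorem a direct consequence of the matrix convex case.

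First I would translate hypotheses and the outlier condition into the language of $\mconv(\PGat(\bm{K}))$. Since $\bm{K}$ is $\Gat$-convex, Proposition~\ref{prop:jp} gives $\bm{K} = \PGat^{-1}(\mconv(\PGat(\bm{K})))$, so $Y\notin K_\ell$ is equivalent to $\PGat(Y)\notin \mconv(\PGat(\bm{K}))_\ell$. Moreover, $0\in \bm{K}$ together with $\Gat(0)=0$ shows that $0 \in \mconv(\PGat(\bm{K}))$, and by hypothesis this set is closed.

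Next I would apply Theorem~\ref{t:EW} to the closed matrix convex set $\mconv(\PGat(\bm{K}))\subset \mathbb{S}^\rv$ and the outlier $\PGat(Y)\in \mathbb{S}_\ell^\rv$, producing a monic linear pencil
\[
L(z) = I + \sum_{i=1}^{\rv} A_i\, z_i
\]
of size $\ell$ with $L(Z)\succeq 0$ for every $Z\in \mconv(\PGat(\bm{K}))$ while $L(\PGat(Y))\not\succeq 0$. Setting $\LG := L\circ \Gat$, i.e.\
\[
\LG(x) = I + \sum_{i=1}^{\gv} A_i\, x_i + \sum_{i=\gv+1}^{\rv} A_i\, \gamma_i(x),
\]
yields a monic $\Gat$-pencil of size $\ell$ in the sense of Definition~\ref{d:Gat-pencil}. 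For any $X\in\bm{K}$ we have $\PGat(X)\in \PGat(\bm{K})\subseteq \mconv(\PGat(\bm{K}))$, whence $\LG(X) = L(\PGat(X))\succeq 0$; and $\LG(Y) = L(\PGat(Y))\not\succeq 0$ by construction, completing the separation.

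There is no serious obstacle: everything hinges on having the matrix convex hull $\mconv(\PGat(\bm{K}))$ closed so that the classical Effros--Winkler theorem applies, which is exactly the hypothesis assumed. If a harder step had to be identified, it would be verifying closure of this hull in particular geometric examples, but for the theorem as stated this is given for free.
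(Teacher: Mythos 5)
Your proof is correct and follows the same route as the paper: translate the outlier $Y\notin\bm{K}$ into $\PGat(Y)\notin\mconv(\PGat(\bm{K}))$ via Proposition~\ref{prop:jp}, apply the Effros--Winkler separation (Theorem~\ref{t:EW}) to obtain a monic linear pencil $L$, and define $\LG=L\circ\Gat$. The only cosmetic difference is that you spell out the equivalence $\bm{K}=\PGat^{-1}(\mconv(\PGat(\bm{K})))$ and the membership $0\in\mconv(\PGat(\bm K))$ slightly more explicitly than the paper does.
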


\begin{proof}
	By Proposition \ref{prop:jp}, we have $\PhiG(Y)\notin \matco(\PhiG(\bm{K}))_\ell.$ Since $\matco(\PhiG(\bm{K}))$ is, 
	 by assumption, closed, it is 
	a closed matrix convex subset of $\mathbb{S}^\rv$ containing $0.$ Thus, 
	by Theorem \ref{t:EW},
	 there is a monic linear pencil 
	\[
	L(z) = I_\ell +\sum_{j=1}^\rv A_jz_j
	\]
	of size $\ell$  such that $L(Z)\succeq 0$ for all 
	$Z\in \matco(\PhiG(\bm{K}))$, but
	$L(\PhiG(Y))\not\succeq 0$. Now  $\LG=L\circ \PhiG$ is  a monic $\Gat$-pencil
	of size $\ell$ that is positive semidefinite on $\bm{K},$ but 
    is not positive semidefinite at $Y.$
\end{proof}

\begin{remark}\label{rem:Kprime}
	Theorem \ref{t:thm:jp-intro} remains valid if 
  $\overline{\matco}(\Gat(\bm K))$ 
	 is replaced by any {closed} matrix convex 
	set $\bm{J}$ containing   $\Gat(\bm K)$  such that
	\[\pushQED{\qed}
	\Gat\mhyphen\conv (\bm{K}) =\PhiG^{-1}(\bm{J} \cap \range(\PhiG)).\qedhere \popQED
	\]
 \end{remark}

\subsection{Main results and guide to the paper} 
\label{sec:main}
This  paper is structured around three main themes.
First, it introduces a categorical duality in the $\Gat$-convex setting, 
relating 
$\Gat$-convex sets to 
$\Gat$-operator systems via the concept of 
$\Gat$-ucp maps.
Second, it explores the concept of extreme points in $\Gat$-convex sets $\bm{K}$, including a Krein-Milman-type theorem 
showing that under natural assumptions, the $\Gat$-extreme points of $\bm{K}$ span $\bm{K}$. Finally, it presents a free analog of the Lasserre-Parrilo lifts for $\Gat$-convex sets, constructing a sequence of free $\Gat$-spectrahedra whose projections offer increasingly better approximations and clamp down on the $\Gat$-convex hull of the free semialgebraic set $\cD_p=\{X\mid p(X)\succeq0\}$.

\subsubsection{Duality in the \texorpdfstring{$\Gat$}{Gamma}-convex setting}
Section \ref{sec dual} 
begins by revisiting 
the categorical duality between matrix convex sets and operator systems \cite[Proposition 3.5]{WW}. Subsection \ref{subsec:gama-dual} then introduces concrete $\Gat$-operator systems as well as $\Gat$-ucp maps, the $\Gat$ analogs of unital completely positive (ucp) maps.

For any tuple of self-adjoint operators  $A=(A_1,\dots,A_\gv) \in \mathbb{S}_\cH^\gv,$ let $\cR=\cR^\Gamma_A$
 denote the operator system \index{$\cR^\Gat_A$}
\begin{equation}\label{eq:gamaopsys1}
	\spann\{\Ii_\cH,\gamma_j(A) \ | \ j=1,\ldots,\rv\} \subseteq \cB(\cH).
\end{equation}
In particular, the dimension of $\cR,$ as a vector subspace of $\cB(\cH),$  is at most $\rv+1$.
In Proposition \ref{prop:bded} we show that if the \Hilby spectrahedron $\cD_{\Gat(A)}\subseteq \mathbb{S}^{\rv}$ is bounded, 
then the operator system $\cR$ in \eqref{eq:gamaopsys1}
has dimension $\rv+1.$ 

\begin{repdef}{def:g-opsys}
	Suppose  $A=(A_1,\dots,A_\gv) \in \mathbb{S}_\cH^\gv$. 
	\begin{enumerate}[(a)]
		\item The operator system $\cR^\Gat_A=$ span$\{\Ii_\cH,\gamma_j(A) \ | \ j=1,\ldots,\rv\}$ is called a \df{$\Gat$-operator system}.
		\item Let $\cR = \cR^\Gat_A$ be a $\Gat$-operator system and suppose $\cK$ is a Hilbert space. A  linear map $\varphi:\cR\to \cB(\cK)$ is a \df{$\Gamma$-ucp map}
		if it is ucp, and, for all $i,$
		\[ \pushQED{\qed}
		\varphi(\gamma_i(A))=\gamma_i(\varphi(A_1),\ldots,\varphi(A_\gv)). \qedhere \popQED
		\]
	\end{enumerate}
\end{repdef}
An \df{isomorphism of $\Gat$-operator systems} is, by definition,  a bijective $\Gat$-ucp map whose inverse is also $\Gat$-ucp.
To mimic the Webster-Winkler duality, we associate to any $\Gat$-operator system $\cR$ as in \eqref{eq:gamaopsys1} the graded set $\widecheck{\cR} = W^\Gat(A) = (W_n^\Gat(A))_n$ with \index{$\widecheck{\cR}$} \index{$W_n^\Gat(A)$} 
\begin{equation*}
	W_n^\Gat(A) = \{(\varphi(A_1),\ldots,\varphi(A_\gv)) \in \mathbb{S}^\gv_n \ | \ \varphi: \cR \to \Mm_n \text{ is $\Gat$-ucp}\}.
\end{equation*}
Proposition \ref{p:GammaUCP-convex} establishes that 
$\widecheck{\cR}$ is a compact $\Gat$-convex set.
Conversely, if  $\bm{K}$ is a compact $\Gat$-convex set, we set \index{$\widehat{Y}$} 
\begin{equation}\label{eq:not26}
\widehat{Y} = \bigoplus_{Y \in \bm{K}}Y \quad \text{ and }  \quad I = \bigoplus_{Y \in \bm{K}} \Ii_{\siz(Y)}
\end{equation}
and associate to $\bm{K}$ the $\Gat$-operator system \index{$\widehat{\bm K}$}
 $\widehat{\bm{K}}= \spann \{I,\gamma_1(\widehat{Y}),\ldots,\gamma_\rv(\widehat{Y})\},$
leading  to the following  duality between operations \, $\widehat{}$ \, and \, $\widecheck{}$ \,  in the $\Gat$-convex setting.
 A tuple $A\in \mathbb{S}_\cH^\gv$ is \df{semi-finite} if it is an at most countable direct sum of matrix tuples; that is, $A=\oplus_{j\in J} A^{(j)}$, where $J\subseteq\N$,
  and for each $j\in J$ there is a positive integer $n_j$ such that $A^{(j)}\in \mathbb{S}^\gv_{n_j}.$

\begin{repthm}{th:g-dual}[\textbf{Duality for $\Gat$-convex sets}]  The above operations  \, $\widehat{}$ \,
		and \, $\widecheck{}$ \, are dual to one another: 
	\begin{enumerate}[\rm (a)]
		\item 
		Suppose $A = (A_1,\ldots,A_\gv)\in \mathbb{S}_\cH^\gv$ is semi-finite  and let $\cR$ denote the span of $\{\Ii_\cH,\gamma_1(A),\ldots,\gamma_\rv(A)\}.$  If  {\CCBB 
         $I_\cH,\gat_1(A),\dots,\gat_r(A)$ is linearly independent,}
		then $\cR$ and $\widehat{\widecheck{\cR}}$ are isomorphic $\Gat$-operator systems. 
		
		\item   Suppose $\Gamma(0)=0$ and let $\bm{K}\subset \gtup$ denote a  \bdcpt  $\Gat$-convex set with  $0\in K_1.$ 
		{If $\bm K =\Gat^{-1}(\overline{\matco}(\Gat(\bm K))),$} 
		then 
		$\bm{K} = \widecheck{\widehat{\bm{K}}} = W^\Gat(\widehat{Y}) $
		for $\widehat{Y}$ defined as in equation~\eqref{eq:not26}.
	\end{enumerate}
\end{repthm}

{\CCBB The functorial, morphism-level formulation of the duality
of Theorem~\ref{th:g-dual} is recorded in
Appendix~\ref{app:categorical-duality} as Theorem \ref{th:gamma-categorical-duality}.}

We note that while the tuple $\widehat{Y}$ is not semi-finite, it is isomorphic, as a $\Gat$-operator system to
a $\Gat$-operator  system defined by a semi-finite tuple. That is, there exists  a semi-finite tuple $\widehat{E}$ acting on
a separable Hilbert space   $\mathscr{E}$
   such that $\spann\{I, \widehat{Y}\}$ and  $\spann\{I_{\mathscr{E}}, \widehat{E}\}$ are isomorphic
   as operator systems via the unital map that sends $\widehat{Y}_j$ to $\widehat{E}_j.$ (See Remark~\ref{r:relax:sep}.)

\subsubsection{Extreme points of \texorpdfstring{$\Gat$}{Gamma}-convex sets}
Section \ref{sec: g-ext} extends the concept of {free extreme points} of matrix convex sets to the  $\Gat$-convex setting.   Ensuring the  existence of such extreme points requires modifying the  definition of a matrix (or $\Gat$-)convex set to an \textit{operator \textnormal{(}$\Gat$-\textnormal{)}convex set} $\bm{K} = (K_n)_{n \in \N \cup \{\infty\}}$ to include an infinite-dimensional level $K_\infty \subseteq \cB(\cH)_{\sa}^\gv,$ where $\cH$ is an infinite-dimensional separable Hilbert space.
An operator ($\Gat$-)convex set is thus closed under generalized matrix convex combinations with operator coefficients.
The following definition is an adaptation of the concept of an \textit{nc extreme point}  from \cite{DK}, a notion closely related to that of a free extreme point, tailored to the study of operator convex sets explored here.

\begin{repdef}{def-gext}
	Let $\bm{K} = (K_n)_{n \in \N \cup \{\infty\}}$ be an operator $\Gat$-convex set (that is not assumed closed).  A tuple $X \in K_n$ for $n \in \N \cup \{\infty\}$ is a \df{$\Gat$-extreme point} if {in} any expression
	of $X$ as an operator  $\Gat$-convex combination
	\begin{equation*}
		X = \sum_{i=1}^k V_i^\ast X^{(i)} V_i,
	\end{equation*}
	where $X^{(i)} \in K_{n_i},$ the $V_i \in {M}_{n_i,n}$ are all nonzero with $\sum_{i=1}^k V_i^\ast V_i = {I}_n$, and 
 \[
    (X,V) = (\oplus_iX^{(i)},\,\text{col}(V_1,\ldots,V_k))
\] 
        is a $\Gat$-pair,
	{it is the case} that for each $i,$ the matrix $V_i$ is a  scalar multiple of an isometry $W_i \in {M}_{n_i,n}$ such
	that $(X^{(i)},W_i)$ is a $\Gat$-pair 
	and, with respect to the range of $V_i,$ 
	$$X^{(i)} = Y^{(i)} \oplus Z^{(i)}$$
	for some $Y^{(i)},Z^{(i)} \in \bm{K}$ with $Y^{(i)}$ unitarily equivalent to $X.$
	Denote the set of $\Gat$-extreme points of $\bm{K}$ by \df{$\Gat\mhyphen\ext(\bm{K})$}.\qed
\end{repdef}

Proposition \ref{tr22} is key to establishing the existence of $\Gat$-extreme points.
 It asserts that under mild assumptions, if $\bm{K}$ is an operator $\Gat$-convex set and
$\Gat(X)$ is a free extreme point of $\opco(\Gat(\bm{K})),$ the operator convex hull of  $\Gat(\bm{K}),$ then $X$ is a  $\Gat$-extreme point of  $\bm{K}.$
As in the case of matrix convexity, the (closed) operator convex hull of a graded set $\bm{S}=(S_n)_n \subseteq \mathbb{S}^\gv$ is, by definition,  
 the smallest (closed) operator convex set that contains $\bm{S}.$
Language such as the {\it $\tau$-closed} operator convex hull indicates that the closure is taken with respect to the topology $\tau,$ typically
either the strong operator topology (SOT) or the \wstar-topology (w${}^*$). 
 We use the notation $\ovopco^{\tau}(\bm S).$ {\CCBB We use the notation $\Gat\mhyphen \ovopco_{\tau}(\bm{S})$\footnote{See Remark~\ref{r:sub:sot:explained}
 for the rationale for 
 $\tau$ as a subscript, and not as a superscript.} for the smallest $\tau$-closed $\Gamma$-convex 
 set containing $\bm S.$}
Finally, we establish a Krein-Milman type theorem for $\Gat$-convex sets, whose proof relies critically on \cite[Theorem~6.4.2]{DK}.

 \begin{repthm}{th-gkm}[\textbf{Krein-Milman theorem for $\Gat$-convex sets}] Suppose $\Gat(0)=0.$
 	If  $\bm{K}$ is an SOT-closed and bounded operator $\Gat$-convex set that contains $0,$ 
 	then $\Gat\mhyphen\ext(\bm{K}) \neq \emptyset$ and 
 	\begin{equation*}
 		\bm{K} = 
 		\Gat\mhyphen \ovopco_{\SOT} (\Gat\mhyphen\ext(\bm{K})).
 	\end{equation*}
 \end{repthm}

\subsubsection{Lasserre-Parrilo lifts for \texorpdfstring{$\Gat$}{Gamma}-convex sets}
Section \ref{sec 7} provides a $\Gat$-analog of the results in \cite{HKM16}, namely a free version of the Lasserre-Parrilo construction in the $\Gat$-convex setting.
Fixing  a  symmetric matrix-valued noncommutative polynomial
$p \in M_{\mu}(\C\langle x, y \rangle),$ 
\begin{equation*}
	p(x) = \sum_{|\gamma| \leq \delta} p_\gamma \gamma, 
\end{equation*}
of degree $\leq \delta$ in $\gv$ variables $x,$ where  $p_\alpha \in \Mm_\mu,$ 
we give 
a construction of a sequence $\mathcal{D}^\Gat_{A^{(d)}}$ of free $\Gat$-spectrahedra
in increasingly many auxiliary variables
whose projections give better and better approximations to the  $\Gat$-convex hull of the operator positivity domain $\mathcal{D}_p^\infty$ of $p,$
\[
\mathcal{D}_p^\infty = 
(\{X \in \mathbb{S}_n^\texttt{g} \ | \ p(X) \succeq 0\})_{n\in\N\cup\{\infty\}}.
\]

Firstly, by leveraging the theory of moments, in Theorem \ref{th-lift} we employ free Hankel matrices to explicitly present
the operator $\Gat$-convex hull of a bounded\footnote{More precisely, for Archimedean $p$, see \eqref{eq:archpoly}}
$\cD_p^\infty$ as the projection of a \Hilby{} $\Gat$-spectrahedron. Second, truncating this construction yields the clamping down Theorem \ref{th-cd}: the operator $\Gat$-convex hull of 
$\cD_p^\infty$ is the decreasing intersection of an explicit countable family of projections of free $\Gat$-spectrahedra.

Finally, Subsection \ref{ssec:stop} gives two criteria for when this intersection stabilizes in finitely many steps (in which case the operator $\Gat$-convex hull of 
$\cD_p^\infty$ is the projection of a free $\Gat$-spectrahedron, a so-called $\Gat$-spectrahedrop).

\subsection*{Acknowledgment}
\CB{We} thank Ken Davidson for patiently corresponding with us during the preparation of this article.  
{We thank the three anonymous referees for their careful reading and thoughtful suggestions that improved the paper.}
 
\section{Categorical duality} \label{sec dual}

The categorical duality between compact matrix convex sets and operator systems was established in \cite{WW}. 
We refer to \cite[$\S 13$]{Pa} for a treatment of the  abstract characterization of operator systems via the  Choi-Effros axioms.
An \df{abstract operator system} $\cR$ is a matrix ordered $*$-vector space with an Archimedean matrix order unit. This means firstly, that $\cR$ is a $*$-ordered vector space; that is,  $\cR$ is a vector space equipped with an involution $*$ giving rise to a real subspace $\cR_{\sa} \subseteq \cR$ of self-adjoint elements. {\CCBB  The latter is equipped with a partial order $\le_\cR$, equivalently a pointed cone $\cR^+$ of positive elements ($x \in \cR^+$  if and only if $0 \le_\cR x$), as well as an Archimedean order unit
$e.$ That $e$ is an order unit means, for every $x \in \cR$, there exists a scalar $\lambda > 0$ such that 
\[
-\lambda e \le_\cR x \le_\cR \lambda e;
\]
 and the Archimedean modifier means if $x\in \cR$ and $x \le_\cR \epsilon e$ for all $\epsilon > 0$ then $x \le_\cR 0$. Next, a matrix ordering
 consists of a $*$-ordering on each $M_n(\cR)$ that makes $M_n(\cR)$
 a $*$-ordered vector space with Archimedean order unit $1_n\otimes e$
 such that the $*$-orderings at the various levels are further  compatible
 in that $a^* x a\in \cR_m^+$
for positive integers $m,$  for all positive integers $n$ and  $x\in \cR_n^+$ and all $n\times m$ scalar matrices $a.$}  Thus,  when dealing with $\cR$ we always also consider its associated family of matrix spaces $(M_n(\cR))_n=\MM(\cR).$

There is also the notion of a \df{concrete operator system}, namely a closed self-adjoint unital subspace of $\cB(\cH)$ for some Hilbert space  $\cH.$ By a result due to Choi-Effros (see \cite[Theorem 13.1]{Pa}), every abstract operator system $\cR$ has a concrete realization\footnote{There is a concrete operator system $\cS$ such that
$\cR$ and $\cS$ are isomorphic as operator systems as described below.}.
 On the other hand, it is easy to check that every concrete operator system satisfies the Choi-Effros axioms of an abstract operator system, since 
 $\cB(\cH)$ is an operator system by way of the identification of  $M_n(\cB(\cH))$ with $\cB(\oplus^n \cH).$

There is a duality between operator systems and matrix convex sets that we describe in Subsection~\ref{s:cat-dual-general}. In the case of matrix convex sets over
 the vector space $\C^\gv$ and finite-dimensional operator systems $\cR,$ certain topological considerations in the general setting disappear  and
 there are rather transparent proofs of the Webster-Winkler duality between convex sets and operator systems that we present in 
 Subsection~\ref{s:finite-dim-case}.  The duality for $\Gat$-convex sets and $\Gat$-operator systems appears   in Subsection~\ref{subsec:gama-dual}.

\subsection{Duality: matrix affine and ucp maps} 
\label{s:cat-dual-general}
{\CCBB In this subsection we describe the duality between matrix convex sets and operator systems
that mirrors the commutative theory of the Kadison duality between convex sets and function systems \cite{Alf71}.
As we expand on below with full details available in \cite{WW},}
 given a matrix convex set $\bm K,$  the set of matrix affine maps $\rA(\bm K)$ is an operator system; and given
 an operator system $\cR,$ the set $\UCP (\cR)$ of unital completely positive (ucp)  maps from $\cR$  into matrix algebras  is a matrix convex set.
 The operations $\bm K \mapsto \rA(\bm K)$ and $\cR\mapsto \UCP (\cR)$ \cite{WW} are dual to one another.

Given  (locally convex Hausdorff) vector spaces $\vs$ and $\vs^\prime$ and a matrix convex set $\bm K\subseteq \MM(\vs),$ 
 a \df{matrix affine map} $\theta:\bm K \to \MM(\vs^\prime)$ is a sequence of maps $\theta_n:K_n\to M_n(\vs^\prime)$ satisfying,
 \[
    \theta_n(\sum_{i=1}^m  V_i^* A^{(i)} V_i) = \sum_{i=1}^m V_i^* \theta_{n_i}\left(A^{(i)}\right) V_i,
 \]
 whenever $A^{(i)}\in K_{n_i}$ and $V_i\in M_{n_i,n}$ satisfy $\sum_{j=1}^m V_j^*V_j=\Ii_n.$
  Matrix convex sets  $\bm K \subseteq \MM(\vs)$ and $\bm K^\prime \subseteq \MM(\vs^\prime)$
   are \df{isomorphic as matrix convex sets}  if there exists a matrix affine homeomorphism $F:\bm K \to \bm K^\prime.$
    Thus $F$ is a bijective matrix affine map and each $F_n:K_n \to K^\prime_n$
     is a homeomorphism; that is,  $F=(F_n)_n$ is a sequence of homeomorphisms 
  satisfying 
 \[
  F_n(\sum_{i=1}^m V_i^* A^{(i)} V_i) = \sum_{i=1}^m V^*_i F_{n_i}\left(A^{(i)}\right) V_i,
 \]
 whenever $A^{(i)}\in K_{n_i}$ and $V_i\in M_{n_i,n}$ satisfy $\sum_{i=1}^m V_i^*V_i=\Ii_n.$

  Assuming  $\bm K$ is compact, meaning for each $n$ the set $K_n\subseteq M_n(\vs)$ is compact in the product topology, 
  let $\rA(\bm K,M_r)$ denote the continuous matrix affine mappings $\theta:\bm K\to \MM(M_r).$ In the case $r=1,$  so
   that $M_r=M_1=\C,$   let $\rA(\bm K)=\rA(\bm K,\C)$.  \index{$\rA(\bm K)$}

    The space
	$$
	\rA(\bm{K}) = \{\theta = (\theta_n: K_n \to \Mm_n)_{n \in \mathbb{N}} \ | \ \theta \text{ continuous matrix affine}\}
	$$ 
	is an abstract operator system. Indeed, the involution ${}^*$ on $\rA(\bm{K})$ is pointwise conjugation,
	\begin{equation}\label{eq:con}
	    \theta^*_n(X)= \theta_n(X)^*	
	\end{equation}
	for all $n$ and $X \in K_n$, and the positive cone $C_1$ consists of all $\theta \in \rA(\bm{K})$ such that
 $$
 \theta(X) \succeq 0
 $$
 for all $X \in \bm{K}.$  It remains to describe the cones $C_r$ for $M_r(\rA(\bm{K}))$ and an  Archimedean matrix order unit for $\MM(\rA(\bm K)).$ 
   After   identifying 
	$$
	M_r(\rA(\bm{K})) \cong \rA(\bm{K}, \Mm_r),
	$$
    the involution ${}^\ast$ on $M_r(\rA(\bm{K}))$ is defined as pointwise conjugation as in \eqref{eq:con}
 and
	the positive cone $C_r$ is defined to consist of all the maps $\theta \in \rA(\bm{K},\Mm_r)_{\sa} $ that are pointwise positive semidefinite,
	$$
	\theta_m(X) \succeq 0
	$$
	for all $m$ and $X \in K_m.$ It is easy to check that the cones $C_n$ are pointed; that is,  $C_r \cap (-C_r) = \{0\}$ holds for all $r,$ and they satisfy the compatibility condition $\alpha^* C_r\, \alpha \subseteq C_s$ for all $r,s$ and $r \times s$ complex matrices $\alpha.$ Further, letting  $e=(e_n)$ denote the continuous matrix
	 affine map  given by the sequence 
	 of constant functions $e_n:K_n\to M_n$ defined by $e_n(v)=\Ii_n,$ 
 the sequence of continuous affine maps   $\mathbb{I} = (e\otimes 1_r)_r$ 
  {from}  $(M_r(\rA(\bm K)))_r \cong (\rA(\bm K,M_r))_r$  
  is an  Archimedean matrix order unit.  If $\bm K$ and $\bm K^\prime$ are isomorphic as matrix convex sets via an affine homeomorphism $F,$ then $ \rA({\bm K}^\prime)\ni f\mapsto f\circ F \in \rA({\bm K})$ determines a complete order isomorphism. Thus $\rA({\bm K})$ and $\rA({\bm K}^\prime)$ are isomorphic as
  operator systems. 
 The first part of \cite[Proposition 3.5]{WW} states that every abstract operator system is of the form $\rA(\bm{K})$ for a compact matrix convex set $\bm{K}.$\looseness=-1

We  now turn to the space \df{$\UCP(\cR)$}  of ucp matrix-valued maps on an operator system $\cR.$
A linear map $\Phi:\cR \to \Mm_n$ is $k$-positive if its $k$-th ampliation $\Phi_k = \Phi \otimes \Ii_k : M_k(\cR) \to M_k(\Mm_n)$ is positive. Here  for any $B = (B_{i,j}) \in M_k(\cR),$
	$$\Phi_k(B)  = \big(\Phi(B_{i,j})\big)$$
and $\Phi_k$ is positive if $\Phi_k(B)$ is positive semidefinite whenever $B$ is.
 The map $\Phi$ is \df{completely positive} if it is $k$-positive for all $k.$ 
The \df{matrix state space} of $\mathcal{R}$ is the sequence $\UCP (\mathcal{R}) = (\UCP_n(\mathcal{R}))_n,$ where
	$$
	\UCP_n(\mathcal{R})  = \{\Phi:\mathcal{R} \to \Mm_n \ | \ \Phi \text{ \text{unital completely positive}}\},
	$$
     canonically identified as a subset of $M_n(\mathcal{R}^*).$  Given $\varphi\in \UCP_t(\cR)$ and an isometry $V:\C^s\to \C^t,$ the mapping
      $V^*\varphi V: \cR\to \Mm_s$ defined by $V^* \varphi V(R) = V^* \varphi(R) V$ for $R\in \cR$ is ucp. In this way, $\UCP(\cR)$ 
      is a \wstar-compact matrix convex set in $\MM(\cR^*).$
      Part (b) of \cite[Proposition 3.5]{WW} states that every compact matrix convex set is the matrix state space of an operator system.

Two operator  systems  $\cR$ and $\cR^\prime$ are \df{isomorphic as operator systems}  if there exists a linear isomorphism $G:\cR\to\cR^\prime$
  such that both $G$ and $G^{-1}$ are \CB{unital} completely positive. 
  Such a $G$ is a \df{complete order isomorphism}.

To any compact matrix convex set $\bm{K}$ we associate the operator system $\rA(\bm{K})$ and to any (abstract) operator system $\cR$ we associate the matrix state space $\UCP(\cR),$ which is a (\wstar) compact matrix convex set. By \cite[Proposition 3.5]{WW}, the operations UCP and $A$ are dual to each other: $\cR$ and $\rA(\UCP(\cR))$ are isomorphic operator systems and, on the other hand, $\bm{K}$ and $\UCP (\rA(\bm{K}))$ are matrix affinely homeomorphic compact matrix convex sets.\looseness=-1

\subsection{The finite-dimensional case}
\label{s:finite-dim-case} 
 By a \df{finite-dimensional matrix convex set}  we mean a matrix convex set $\bm K \subseteq \MM(\vs),$ where 
  $\vs$ is a finite-dimensional (locally convex Hausdorff) vector space. Often $\vs=\C^\gv$ so that $\bm K\subseteq \MM^\gv.$ Likewise, a \df{finite-dimensional
 operator system} $\cR$ is an operator system that is finite-dimensional as a vector space.
 Since $\cR$ is finite-dimensional, $\cR^*$ is finite-dimensional and locally convex.
  In finite dimensions topological
 considerations are trivial, since the topology on a finite-dimensional locally convex space is unique and determined
 by a norm.  See, for instance,  \cite[Theorem 1.21]{R}. Thus, there is no ambiguity when using terms such as closed and compact
  that apply level-wise to a matrix convex set $\bm K.$
 In the finite-dimensional setting of this section,   there are relatively simple proofs of the Webster-Winkler  duality described in the previous
 subsection, Subsection~\ref{s:cat-dual-general}, that we outline here in preparation for establishing similar  results
 in the $\Gat$-convex  setting.\looseness=-1
 
 Before proceeding we record a lemma. If $\bm K$ is a compact matrix convex set, then each $K_n$ is compact and hence bounded. More is true when $\bm K$
  is a subset of a finite-dimensional vector space.

 \begin{lemma}[\protect{\cite[Proposition 4.3 and Lemma 4.2]{HKM17}}]
 \label{l:level:bounded}
     If $\bm K\subseteq \mathbb{S}^\gv$ is a matrix convex set, then $\bm K$ is bounded
   if and only if $K_1$ is bounded and in this case, if $R$ is a bound for $K_{1},$ then 
    $\|Y_j\|\le R$ for all $n$ and $Y=(Y_1,\dots,Y_\gv)\in K_n,$ and 
     $\sqrt{\gv} \, R$ is a bound for $\bm K.$
 \end{lemma}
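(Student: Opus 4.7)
The plan is to prove the nontrivial direction: assuming $K_1$ is bounded by $R,$ deduce that every coordinate $Y_j$ of every tuple $Y=(Y_1,\dots,Y_\gv)\in K_n$ satisfies $\|Y_j\|\le R,$ from which the bound $\|Y\|\le \sqrt{\gv}\,R$ on the row-block matrix $(Y_1,\dots,Y_\gv)$ follows by a routine operator norm estimate. The reverse direction is immediate, since if $\bm K$ is bounded then in particular $K_1\subseteq \R^\gv$ is bounded.

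For the main direction, I would first observe how to bridge levels $n$ and $1$ using matrix convexity. Given $Y\in K_n$ and any unit vector $v\in \C^n,$ the $n\times 1$ matrix $V=v$ is an isometry, since $V^*V = v^*v = 1 = \Ii_1.$ Taking a trivial (one-term) matrix convex combination with this isometry, matrix convexity of $\bm K$ gives
\[
V^*YV = (v^*Y_1v,\dots,v^*Y_\gv v)\in K_1.
\]
Note that each $v^*Y_j v$ is real since $Y_j$ is self-adjoint, so this is a genuine element of $K_1\subseteq \R^\gv.$ The hypothesis that $R$ bounds $K_1$ (equivalently, $\sum x_j^2 \le R^2$ on $K_1$) now yields
\[
\sum_{j=1}^\gv (v^*Y_jv)^2 \le R^2,
\]
and in particular $|v^*Y_jv|\le R$ for every unit $v$ and every $j.$

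Since each $Y_j$ is self-adjoint, its operator norm equals the supremum of $|v^*Y_jv|$ over unit vectors $v,$ so $\|Y_j\|\le R$ for every $j.$ Finally, using the identification of $\|Y\|$ as the operator norm of the row block matrix $(Y_1,\dots,Y_\gv),$
\[
\|Y\|^2 = \bigl\|(Y_1,\dots,Y_\gv)(Y_1,\dots,Y_\gv)^*\bigr\| = \Bigl\|\sum_{j=1}^\gv Y_j^2\Bigr\| \le \sum_{j=1}^\gv \|Y_j\|^2 \le \gv R^2,
\]
so $\|Y\|\le \sqrt{\gv}\,R,$ as claimed. There is no real obstacle here: the only substantive step is recognizing that column vectors $v$ of length one are isometries into $\C^n$ and invoking matrix convexity to compress an arbitrary level-$n$ tuple down to a level-$1$ tuple; the remaining work is the standard passage between $\|Y_j\|,$ the numerical radius of a self-adjoint matrix, and the row-block norm.
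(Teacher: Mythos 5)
Your proof is correct and follows essentially the same route as the paper: compress a level-$n$ tuple down to level $1$ via a rank-one isometry $V_v:\C\to\C^n,$ deduce $|v^*Y_jv|\le R$ from the bound on $K_1,$ pass to $\|Y_j\|\le R$ via the self-adjoint numerical radius identity, and conclude $\|Y\|\le\sqrt{\gv}\,R$ from the $C^*$-identity $\|Y\|^2=\|\sum Y_j^2\|.$ The only cosmetic difference is that you bound $\|\sum Y_j^2\|$ by subadditivity of the norm whereas the paper uses the equivalent operator inequality $\sum Y_j^2\preceq \gv R^2 I.$
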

 
\begin{proof} 
   Suppose $R$ is a bound for $K_1$ and let $n$ and $Y=(Y_1,\dots,Y_\gv)\in K_n$ be given.
   Given a unit vector $h\in \C^n$ the map $V_h: \C\to \C^n$ defined by $V_h \, c=c\, h$ is an isometry. Thus 
\[
 V^*YV =(h^*Y_1h,\dots, h^*Y_\gv h) \in K_1
\]
 and consequently 
\[
   | h^* Y_j h |^2  \le \sum_{j=1}^\gv h^* Y_j h h^* Y_j h  = \|V^* Y V\| \le R^2.
\]
 Thus $|h^*Y_jh|\le R$ for each $j.$ Since $Y_j$ is self-adjoint, it follows that $\|Y_j\|\le R.$
 Finally, 
\[
 \sum Y_j^2 \le g R^2
\]
 and thus $\|Y\| \le \sqrt{\gv} R$ as claimed.
\end{proof}

 \subsubsection{Polar duals} 
 \label{sss:PD-MSS-MR}
   Recall, given a tuple $B=(B_1,\dots,B_\gv)\in \mathbb{S}_\cH^\gv,$  
\begin{equation*}
 L_B(x) =\Ii_\cH + \sum_{j=1}^\gv B_j x_j
\end{equation*}
 denotes the resulting (operator) \df{monic linear pencil}. \index{$L_B$}
 In particular, for a tuple $X\in \Mm_n^\gv,$
\[
 L_B(X) = \Ii_\cH \otimes \Ii_n + \sum_{j=1}^{\gv} B_j \otimes X_j
\]
 and the positivity domain of $L_B,$  \index{$\cD_B$} denoted $\mathcal{D}_B = \big(\mathcal{D}_{B}(n)\big)_n\subseteq\mathbb{S}^\gv,$  is defined as
\[
\mathcal{D}_{B}(n) = \{X \in \mathbb{S}^\gv_n \ | \ L_B(X) \succeq 0 \}.
\]

Given a tuple  $A=(A_1,\ldots, A_\gv)\in \mathbb{S}_\cH^\gv,$  the \df{polar dual} of $\cD_A$ (see \cite{HKM17}) is 
 the graded set $\cD_A^\circ = (\cD_A^\circ(n))_{n \in \N},$ where \index{$\cD_A^\circ$}
$$
\cD_A^\circ(n) =  \{B \in \mathbb{S}_n^\gv \ | \ 
L_B(X) \succeq 0 \text{ for all }  X \in \cD_A \}.
$$
More generally, the \df{polar dual} of a graded set  $\emptyset \ne \bm{S} =(S_n)_n \subseteq \gtup,$  is the graded set $\bm{S}^\circ = (S_n^\circ)_n,$ where
$$
S_n^\circ = \{B \in \gtupn \ | \ L_B(X) = \Ii_n \otimes \Ii_{\siz(X)} +  \sum_{i=1}^\gv B_i \otimes X_i \succeq 0 \text{ for all } X \in \bm{S}\}.
$$
It is  well known that the polar dual $\bm{S}^\circ$ is matrix convex as we now demonstrate.  Suppose $B^{(i)} = (B^{(i)}_1,\ldots,B^{(i)}_\gv) \in S_{n_i}^\circ$ and $V_i \in \Mm_{n_i,n}$ for $i=1,\ldots,k$ and  $\sum_{i=1}^k V_i^*V_i = \Ii_n.$ 
 To  show $\widetilde{B}=\sum_{i=1}^k V_i^*B^{(i)}V_i$ lies in $S_n^\circ,$  let $X \in S_r$ be given
and,  using the assumption that $L_{B^i}(X) \succeq 0$ for all $i,$  compute
\begin{align*}
	L_{\widetilde{B}}(X)  =& \Ii_n \otimes \Ii_r + \sum_{j=1}^\gv \left(\sum_{i=1}^k V_i^*B^{(i)}_jV_i \right) \otimes X_j\\
	=& \Ii_n \otimes \Ii_r + \sum_{i=1}^k \sum_{j=1}^\gv V_i^*B^{(i)}_jV_i  \otimes X_j\\
	=& \sum_{i=1}^k V_i^*V_i \otimes \Ii_r + \sum_{i=1}^k (V_i^* \otimes \Ii_n)\left( \sum_{j=1}^\gv B^{(i)}_j\otimes X_j\right)(V_i \otimes \Ii_n)\\
	=& \sum_{i=1}^k (V_i \otimes \Ii_n)^* L_{B^{(i)}}(X)(V_i \otimes \Ii_n) \succeq 0.
\end{align*}
It is now immediate that $\bm{S}^\circ$ is a closed matrix convex set containing $0.$  
For future use, we state the following result
summarizing some basic facts about the relations between $\bm S$ and $\bm S^\circ$ in the finite-dimensional setting.

\begin{lemma}[cf.~\protect{\cite[Proposition 4.3]{HKM17}}]
 \label{l:HKM17:4.3}
 Suppose $\bm S\subseteq \mathbb{S}^\gv.$ 
 \begin{enumerate}[\rm(1)]
  \item \label{i:4.3:i} $\bm S^\circ$ is a closed matrix convex set containing $0;$
  \item 
    If $0$ is in the interior of $S_1,$ then $\bm S^\circ$ is bounded;
  \item \label{i:4.3:iii} 
   If $S_1$ is bounded, then $0$ is in the interior of $\bm S^\circ;$
  \item \label{i:4.3:iv} If $\bm S$ is a closed matrix convex set and $0\in S_1,$ then $\bm S=\bm S^{\circ\circ};$
  \item \label{i:4.3:v}  Assuming $\bm S$ is a matrix convex set containing $0,$ if $0$ is in the interior of
  $(S^\circ)_1,$ then $\bm S$ is bounded.
 \end{enumerate}
\end{lemma}

\begin{proof}
  A proof of  item~\ref{i:4.3:i}  appeared before the statement of the lemma.  By \cite[Lemma~4.2]{HKM17}, if $0\in \R^\gv$ is in the interior
   of $S_1,$ then $0$ is in the interior of $\bm S.$ Hence by item~(2) of \cite[Proposition 4.3]{HKM17}, if $0$ is in the interior of $S_1,$
   then $\bm S^\circ$ is bounded.  If $S_1$ is bounded, then, by Lemma~\ref{l:level:bounded}, $\bm S$ is bounded and thus by
   item~(4) of \cite[Proposition 4.3]{HKM17}, $0$ is in the interior of $\bm S^\circ.$  Item~\ref{i:4.3:iv} is a consequence 
   of the Effros-Winkler Bipolar Theorem. Indeed, 
    it is immediate that $\bm S \subseteq \bm{S}^{\circ\circ}.$ On the other hand,
    if $B\in \bm{S}^{\circ\circ}\setminus \bm S,$  then, by Theorem~\ref{t:EW}, 
     there exists a tuple $X$ such that $L_X(\bm S)\succeq 0,$ but $L_X(B)\not\succeq0.$
    But $L_X(\bm S)\succeq 0$ is equivalent to $X\in \bm{S}^\circ$ and then $L_X(B)\not\succeq 0$ leads to the contradiction that $B\notin \bm{S}^{\circ\circ}.$
     Finally item~\ref{i:4.3:v} flows from items~\ref{i:4.3:iv} and \ref{i:4.3:iii}.
\end{proof}
     
     \begin{remark}\label{r:4.3}
    Since $0$ is in the interior of $\cD_A,$ the closed matrix convex set $\cD_A^\circ$ is bounded and hence (level-wise) compact.
   
   When $\bm{K}$ is a matrix convex set in a finite-dimensional space, 
   by restricting 
   $\bm{K}$ to the affine span of $K_1$ followed by a possible translation (see \cite[Theorem II.2.4]{Ba}),
    it can be assumed, without loss of generality,  that $0$ is in the interior of $\bm K.$
    \qed
\end{remark}

\subsubsection{The matrix state space and matrix range}
 By Remark~\ref{r:4.3}, a  finite-dimensional operator system $$\cR=\spann\{I_\cH,A_1,\dots,A_\gv\}$$ determined by the tuple $A\in\mathbb{S}^\gv_{\cH}$
 gives rise to the compact matrix convex set $\cD_A^\circ.$
 In Theorem~\ref{th:find} below we see that $\cD_A^\circ$ is isomorphic, as a matrix convex set, to $\UCP(\cR).$ Let $\widecheck{\cR}=\cD_A^\circ.$
 \index{$\widecheck{\cR}$} 

 The matrix state space $\UCP(\cR)$ of the operator system $\cR=\spann\{I_\cH,A_1,\dots,A_\gv\}$ is naturally identified with  the (joint) 
  \df{matrix range}  $W(A) =(W_n(A))_n$ of 
  the  tuple $A=(A_1,\dots,A_{\gv})\in \mathbb{S}_\cH^{\gv}$ \CB{(cf. \cite{A2, Far93, DDOSS17, LLPS18})} defined by
 \[
   W_n(A)  = \{(\varphi(A_1),\dots,\varphi(A_\gv)) \in \mathbb{S}_n^{\gv} : \varphi \in \UCP_n(\cR)\} \subseteq \mathbb{S}_n^\gv.  \index{$W(A)$}
 \]
  The matrix range $W(A)$ is easily seen to be matrix convex. In fact,  assuming $\cD_A$ is bounded,  the  matrix range is another description 
  of the polar dual $\cD_A^\circ$ of $\cD_A.$
  For the reader's convenience the presentation that follows includes a proof of this fact. See Proposition~\ref{p:Rcheck-WA}.
 It follows from the work of Arveson \cite{A2} that any compact matrix convex set in $\C$ is matrix affinely homeomorphic to the matrix range of a bounded operator on a separable Hilbert space. {\CCBB  An extension of this result to the case of a compact matrix convex set in $\C^\gv$ for $\gv \in \N$ is \cite[Proposition 3.5]{DDOSS17}.}\looseness=-1

 \subsubsection{Duality}

  Using Lemma~\ref{l:level:bounded}, given a compact matrix convex set $\bm K \subseteq \mathbb{S}^\gv$ and   setting $Y_0=I_{\siz(Y)},$ the
 direct sums
 \[
   \widehat{Y}_j =  \bigoplus_{Y \in \bm{K}}Y_j,
 \]
  produce a tuple of bounded operators and we  associate to $\bm K$ the operator system 
 \[
  \widehat{\bm K} = \spann \{\widehat{Y}_j: 0\le j \le \gv\}. 
 \]

Theorem~\ref{th:find}  below is a version of \cite[Proposition~3.5]{WW} tailored to the present finite-dimensional setting and incorporating the
spectrahedral point of view. It  states that the operations \, $\widehat{}$ \, and \, $\widecheck{}$ \, are dual to each other.
 A tuple $A\in \mathbb{S}_\cH^\gv$ is \df{semi-finite} if it is an at most countable direct sum of matrix tuples; that is, $A=\oplus_{j\in J} A^{(j)}$, where $J\subseteq\N$,
  and for each $j\in J$ there is a positive integer $n_j$ such that $A^{(j)}\in \mathbb{S}^\gv_{n_j}.$

\begin{theorem}\label{th:find} 
  Suppose $A_1,\dots,A_\gv\in \mathbb{S}_\cH$
  and let $\cR = \spann\{\Ii_\cH,A_1,\ldots,A_\gv\} \subseteq \cB(\cH)$ denote the resulting finite-dimensional operator system
   and $\cD_A$ the corresponding \Hilby  free spectrahedron.
 \begin{enumerate}[\rm (1)]\itemsep=5pt
   \item \label{i:bmK:i}  If $\cD_A$ is bounded, then $\cR$  and $\rA(\cD_A^\circ)$ are isomorphic operator systems;
   \item \label{i:bmK:ii}   $\UCP(\cR)$ and $W(A)$ are matrix affinely homeomorphic (isomorphic as matrix convex sets)
     and, if $\cD_A$ is bounded, then $\widecheck{\cR}:=\cD_A^\circ = W(A);$ and
   \item \label{i:bmK:iii}  If $\cD_A$ is bounded, then $\cR$ and $\widehat{\widecheck{\cR}}$   are isomorphic operator systems.
 \end{enumerate}
 
    Suppose $\bm{K}\subseteq \mathbb{S}^\gv$ is a matrix convex set.  
  \begin{enumerate}[\rm (a)]\itemsep=5pt
    \item \label{i:bmK:a}  If $\bm K$ is closed 
    and  $K_1$ contains a neighborhood of $0,$ 
    then there exists a semi-finite tuple $A=(A_1,\ldots,A_{\gv})\in \mathbb{S}_\cH$ 
      such that $\bm K$ is the \Hilby free spectrahedron $\cD_A;$ 
   \item \label{i:bmK:b} If $\bm K$ is compact  and  $K_1$ contains a neighborhood of $0,$
        then  $\rA(\bm K)$ and $\widehat{\bm K}$ are isomorphic operator systems; and
   \item \label{i:bmK:c}  If $\bm K$ is compact and $0\in K_1,$ then $\widecheck{\widehat{\bm K}}:=\cD^\circ_{\widehat{Y}}=\bm K.$
  \end{enumerate}
\end{theorem}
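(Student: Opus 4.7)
The plan is to reduce every claim to the Webster--Winkler duality recalled in Subsection~\ref{s:cat-dual-general} and the polar-dual calculus of Lemma~\ref{l:HKM17:4.3}. I would begin with item~\ref{i:bmK:ii}, since it supplies the concrete identification $\UCP(\cR)=W(A)=\cD_A^\circ$ on which the remaining items rest.

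For item~\ref{i:bmK:ii}, the evaluation
\[
\Phi:\UCP_n(\cR)\to W_n(A),\qquad \varphi\mapsto(\varphi(A_1),\dots,\varphi(A_\gv)),
\]
is surjective by definition of $W(A)$, matrix affine, and injective because $\cR=\spann\{\Ii_\cH,A_1,\dots,A_\gv\}$ and unitality pin down $\varphi$; \wstar-compactness of $\UCP(\cR)$ together with level-wise compactness of $W(A)$ upgrades $\Phi$ to a matrix affine homeomorphism. The inclusion $W(A)\subseteq\cD_A^\circ$ is immediate from applying $\varphi\otimes\mathrm{id}_{M_r}$ to $L_A(X)\succeq 0$ for $X\in\cD_A.$ For the reverse inclusion under boundedness of $\cD_A$, I would first record two consequences of that hypothesis: the set $\{\Ii_\cH,A_1,\dots,A_\gv\}$ is linearly independent (a nontrivial relation $c_0\Ii_\cH+\sum c_jA_j=0$ yields a half-line in $\cD_A(1)$), and $0\in\mathrm{int}(W_1(A))$ (otherwise a separating hyperplane produces some nonzero $\sum c_jA_j\succeq 0,$ again giving a half-line in $\cD_A(1)$). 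Given $B\in\cD_A^\circ(n),$ the prescription $\varphi(\Ii_\cH)=\Ii_n,$ $\varphi(A_j)=B_j$ is then a well-defined unital linear map $\cR\to M_n;$ to check complete positivity, I would take a self-adjoint positive element $T=\Ii_\cH\otimes P_0+\sum A_k\otimes P_k\in M_m(\cR),$ apply a scalar state $\rho_h(\,\cdot\,)=\langle\,\cdot\,h,h\rangle$ with $(\langle A_kh,h\rangle)_k=0$ to force $P_0\succeq 0,$ conjugate $T+\varepsilon(\Ii_\cH\otimes I_m)$ by $\Ii_\cH\otimes(P_0+\varepsilon I_m)^{-1/2}$ to land on a pencil $L_A$-positive on $\cD_A(m),$ apply $B\in\cD_A^\circ,$ conjugate back, and let $\varepsilon\to 0.$

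With item~\ref{i:bmK:ii} secured, item~\ref{i:bmK:i} follows from Webster--Winkler applied to $\cR$: $\cR\cong\rA(\UCP(\cR))\cong\rA(W(A))\cong\rA(\cD_A^\circ).$ For item~\ref{i:bmK:iii}, compactness of $\widecheck{\cR}=\cD_A^\circ$ (Lemma~\ref{l:HKM17:4.3}\ref{i:4.3:ii}) makes $\widehat Y$ a bounded operator tuple, and the unital $*$-linear map $\cR\to\widehat{\widecheck{\cR}}$ sending $A_j\mapsto\widehat Y_j$ is a complete order isomorphism: positivity of a self-adjoint $T=\Ii\otimes P_0+\sum A_k\otimes P_k\in M_m(\cR)$ is, by the same conjugation argument, equivalent to $L_A$-positivity on $\cD_A(m),$ which by bipolarity $\cD_A=\cD_A^{\circ\circ}$ (Lemma~\ref{l:HKM17:4.3}\ref{i:4.3:iv}) coincides with positivity of the corresponding combination $\Ii\otimes P_0+\sum\widehat Y_k\otimes P_k$ against all $Y\in\cD_A^\circ.$

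For Part~2, item~\ref{i:bmK:a} combines the bipolar identity $\bm K=\bm K^{\circ\circ}$ with separability: $\bm K^\circ$ is bounded (Lemma~\ref{l:HKM17:4.3}\ref{i:4.3:ii}), so each $K_n^\circ\subseteq\mathbb{S}_n^\gv$ admits a countable dense subset $D_n,$ and the semi-finite tuple $A:=\bigoplus_n\bigoplus_{B\in D_n}B$ satisfies $\cD_A=\bigcap_{n,B\in D_n}\cD_B=\bigcap_{n,B\in K_n^\circ}\cD_B=\bm K^{\circ\circ}=\bm K$ by continuity of $L_B(X)$ in $B$ on bounded sets. Item~\ref{i:bmK:c} is the one-line calculation $L_{\widehat Y}(X)=\bigoplus_{Y\in\bm K}L_Y(X)\succeq 0$ iff $L_Y(X)\succeq 0$ for every $Y\in\bm K,$ so $\cD_{\widehat Y}=\bm K^\circ$ and $\cD^\circ_{\widehat Y}=\bm K^{\circ\circ}=\bm K.$ Finally, for item~\ref{i:bmK:b}, the unital $*$-linear map $\widehat{\bm K}\to\rA(\bm K)$ sending $c_0\Ii+\sum c_j\widehat Y_j$ to the matrix affine function $X\mapsto c_0\Ii_{\siz X}+\sum c_jX_j$ is injective because $\widehat Y$ contains every $Y\in\bm K$ as a direct summand, and surjective because both sides are $(\gv+1)$-dimensional under $0\in\mathrm{int}(K_1)$: $\dim\widehat{\bm K}=\gv+1$ by the interior condition on $K_1,$ while $\dim\rA(\bm K)=\gv+1$ by combining items~\ref{i:bmK:a} and~\ref{i:bmK:i}. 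The main technical obstacle throughout is the reverse inclusion $\cD_A^\circ\subseteq W(A)$ in item~\ref{i:bmK:ii}: everywhere else the proof is essentially bookkeeping between polar duals, bipolarity, and Webster--Winkler, but that inclusion requires the genuine positivity/conjugation argument sketched above.
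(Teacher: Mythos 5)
Your proposal is broadly sound but departs from the paper's route in several places and has two genuine gaps.

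\textbf{Differences of approach.} For item~(\ref{i:bmK:i}) you invoke Webster--Winkler to deduce $\cR\cong\rA(\UCP(\cR))\cong\rA(\cD_A^\circ)$. This is exactly what Remark~\ref{r:weird-for-us} declares the paper is \emph{not} going to do: the authors' point is to prove Theorem~\ref{th:find} without the Webster--Winkler black box, so that the theorem provides an independent proof of that duality in the finite-dimensional setting. Your version inverts the logical dependence. It is not wrong, but it buys less: item~(\ref{i:bmK:i}) becomes a corollary of \cite[Proposition~3.5]{WW} rather than the self-contained alternative the paper advertises. For item~(\ref{i:bmK:a}), you route through the bipolar identity, separability of $\mathbb{S}_n^\gv,$ and continuity of $B\mapsto L_B(X),$ whereas the paper (Lemma~\ref{l:semi-finite}) applies Theorem~\ref{t:EW} to produce a separating pencil for each point outside $K_n$ and then extracts a countable subcover. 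Both ultimately rest on Effros--Winkler (yours via Lemma~\ref{l:HKM17:4.3}\ref{i:4.3:iv}), and yours is the tidier reduction. Your item~(\ref{i:bmK:c}) via the identity $L_{\widehat{Y}}(X)=\bigoplus_{Y\in\bm K}L_Y(X)$ and the shuffle is a clean one-line rewrite of the paper's Lemma~\ref{prop:setdu}.

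\textbf{Gap in item (\ref{i:bmK:ii}).} You propose to show $P_0\succeq 0$ from $T=\Ii_\cH\otimes P_0+\sum A_k\otimes P_k\succeq 0$ by applying a \emph{vector} state $\rho_h$ with $\langle A_kh,h\rangle=0$ for all $k.$ Such a vector $h$ need not exist: the joint numerical range $\{(\langle A_kh,h\rangle)_k:\|h\|=1\}$ is not convex in general, and boundedness of $\cD_A$ only forces $0$ to lie in its closed \emph{convex hull}, i.e., in $W_1(A)$. So you have the existence of a \emph{state} $\rho$ on $\cB(\cH)$ with $\rho(A_k)=0$ --- your observation $0\in\intr(W_1(A))$ delivers exactly that --- but not a vector state. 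The fix is either to use that general state (then $\rho(\Ii_\cH)P_0+\sum_k\rho(A_k)P_k=P_0\succeq 0$), or to do what the paper does in Proposition~\ref{p:DA-double-check}: argue by contradiction, picking a vector $v$ in the \emph{matrix} tensor factor with $\langle P_0v,v\rangle<0,$ compressing to $\langle P_0v,v\rangle\Ii_\cH+\sum\langle P_kv,v\rangle A_k\succeq 0,$ and deducing the half-line $t(\langle P_kv,v\rangle)_k\subseteq\cD_A(1),$ contradicting boundedness. The rest of your conjugation argument (conjugating by $(P_0+\varepsilon)^{-1/2}$ and letting $\varepsilon\to 0$) is fine once $P_0\succeq 0$ is secured. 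Note also that this conjugation step is implicitly carried by Proposition~\ref{prop:cp}, which the paper simply cites in Proposition~\ref{p:Rcheck-WA} and Lemma~\ref{l:bmK:iii}; you are re-deriving it, which is legitimate but should be stated as such, since item~(\ref{i:bmK:iii}) also depends on it.

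\textbf{Gap in item (\ref{i:bmK:b}).} You exhibit a unital $*$-linear bijection $\widehat{\bm K}\to\rA(\bm K)$ and argue surjectivity by matching dimensions, but you never check that this map and its inverse are completely positive. A linear bijection between operator systems of the same dimension is not automatically a complete order isomorphism, so the dimension count does not finish the job. The verification required is precisely the content of Proposition~\ref{p:DA-double-check}, applied with $\widehat{Y}$ in place of $A$; the paper shortcuts this by simply chaining items~(\ref{i:bmK:c}) and~(\ref{i:bmK:i}) to get $\rA(\bm K)=\rA(\cD_{\widehat{Y}}^\circ)\cong\spann\{\Ii,\widehat{Y}_1,\dots,\widehat{Y}_\gv\}.$ That is the route you should take here too --- you already have~(\ref{i:bmK:c}) and~(\ref{i:bmK:i}) in hand, so there is no need for an independent map.
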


\begin{remark}
\label{r:relax:sep}
 As it stands, the operators $\widehat{Y}$ generically act on a non-separable Hilbert space $\mathscr{Y}.$
 However, the resulting operator system admits a representation using a semi-finite tuple; 
  that is, there is a separable Hilbert space $\mathscr{E}$ and a semi-finite tuple $\widehat{E}$ acting on $\mathscr{E}$
   such that $\spann\{I_{\mathscr{Y}}, \widehat{Y}\}$ and  $\spann\{I_{\mathscr{E}}, \widehat{E}\}$ are isomorphic
   as operator systems via the unital map that sends $\widehat{Y}_j$ to $\widehat{E}_j.$ Indeed, simply choose a countable
    dense graded set $\bm F\subseteq \bm K$ (meaning $F_m$ is countable and dense in $K_m$
     for each $m$) and set $\widehat{E} = \oplus_{E\in \bm F} E.$ A variation on this construction, along with Theorem~\ref{t:EW},
    gives item~\ref{i:bmK:a} of Theorem \ref{th:find}, proved as Lemma~\ref{l:semi-finite} below.
 \qed
\end{remark}

\begin{remark}
  \label{r:weird-for-us}
     From item~\ref{i:bmK:a}, 
      there exists a separable Hilbert space $\cH$ and a tuple $A=(A_1,\dots,A_\gv)\in \mathbb{S}_\cH^\gv$ such that $\bm K =\cD_A.$
     However, to identify the operator system $\rA(\bm K)$ concretely,  one first uses item~\ref{i:bmK:c}
      of Theorem~\ref{th:find}, which gives $\bm K = \cD_{\widehat{Y}}^\circ.$ From Lemma~\ref{l:HKM17:4.3}\ref{i:4.3:v},
      the assumption that $0$ is in the interior of \CB{$K_1=\cD_{\widehat{Y}}^\circ(1)$} implies $\cD_{\widehat{Y}}$ is bounded  so that, by 
       item~\ref{i:bmK:i}, 
      $\rA(\bm K)$ is isomorphic, as an operator system, to $\cS=\spann\{\Ii,\widehat{Y}_1,\dots,\widehat{Y}_\gv\}.$
       This circuitous route to the duality expressed in item~\ref{i:bmK:b} offers insight into the perspective obtained by viewing matrix convex sets
        as \Hilby  spectrahedra.
        
       There are various strategies for establishing Theorem~\ref{th:find} using Webster-Winkler duality.
        For instance, using  items~\ref{i:bmK:b} and \ref{i:bmK:ii} and the Webster-Winkler duality between $\cR$ and $\rA(\UCP(\cR))$ gives 
        $\widehat{\widecheck{\cR}}\cong  \rA(\widecheck{\cR}) \cong  \rA(\UCP(\cR))\cong \cR.$    We will prove
         Theorem~\ref{th:find} without invoking duality and thus provide an alternate proof of Webster-Winkler
         duality in this finite-dimensional setting. 
     \qed
 \end{remark}

The proof of Theorem~\ref{th:find} occupies the remainder of this subsection.  After proving item~\ref{i:bmK:a} (see Lemma~\ref{l:semi-finite}), 
we state  Propositions~\ref{prop:bded} and \ref{prop:cp}, which connect completely positive maps between finite-dimensional 
 operator systems with \Hilby free spectrahedra. 
  Items~\ref{i:bmK:iii}  and \ref{i:bmK:c} are then established as Lemmas~\ref{l:bmK:iii} and \ref{prop:setdu} respectively,
   followed by  
  items~\ref{i:bmK:ii} and \ref{i:bmK:i} as Proposition~ \ref{p:Rcheck-WA} 
  and  \ref{p:DA-double-check}.
  As was noted in Remark~\ref{r:weird-for-us}, item~\ref{i:bmK:b} follows from 
   item~\ref{i:bmK:c} ($\bm K = \cD_{\widehat{Y}}^\circ$)
   and item~\ref{i:bmK:i}, which together give 
\[
   \rA(\bm K) \cong \rA(\cD_{\widehat{Y}}^\circ) \cong \spann \{\Ii, \widehat{Y}_1,\ldots, \widehat{Y}_\gv\}.
\]

\begin{lemma}
 \label{l:semi-finite}
   If $\bm K$ is a closed matrix convex set  such that $0$ is in the interior of $K_1,$ then there is a semi-finite tuple $A=(A_1,\dots,A_\gv)\in \mathbb{S}_\cH$
    such that $\bm K =\cD_A.$
\end{lemma}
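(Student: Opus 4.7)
The plan is to produce $A$ as a direct sum indexed by a countable dense subset of the polar dual $\bm{K}^\circ$. The key input is the bipolar identity $\bm{K} = \bm{K}^{\circ\circ}$ from Lemma~\ref{l:HKM17:4.3}\ref{i:4.3:iv}, which says that a point $X$ lies in $\bm{K}$ iff $L_B(X) \succeq 0$ for every $B \in \bm{K}^\circ$. So if one can replace the potentially uncountable test set $\bm{K}^\circ$ by a countable subset, the direct sum of the coefficients of those test pencils will assemble a single tuple $A$ whose monic pencil cuts out $\bm{K}$.

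First, I would invoke Lemma~\ref{l:HKM17:4.3}\ref{i:4.3:ii}: the assumption that $0$ lies in the interior of $K_1$ gives that $\bm{K}^\circ$ is bounded, so there is some $R \ge 0$ with $\|B\| \le R$ for every $B \in \bm{K}^\circ$. Next, since each level $K^\circ_n$ sits inside the finite-dimensional space $\mathbb{S}_n^\gv$ it is separable; pick a countable dense $F_n \subseteq K^\circ_n$, enumerate $\bigcup_n F_n = \{B^{(j)}\}_{j \in J}$ with $J \subseteq \N$ and $B^{(j)} \in \mathbb{S}_{n_j}^\gv$, and define the tuple $A = (A_1,\dots,A_\gv)$ on $\cH = \bigoplus_{j \in J} \C^{n_j}$ by
\[
A_i = \bigoplus_{j \in J} B^{(j)}_i, \qquad i=1,\dots,\gv.
\]
The uniform bound $\|B^{(j)}\| \le R$ is exactly what ensures each $A_i$ is a bounded self-adjoint operator, so $A \in \mathbb{S}_\cH^\gv$ is a genuine semi-finite tuple.

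To finish, I would show $\bm{K} = \cD_A$ using $L_A(X) = \bigoplus_j L_{B^{(j)}}(X)$, so that $X \in \cD_A$ iff $L_{B^{(j)}}(X) \succeq 0$ for every $j \in J$. The inclusion $\bm{K} \subseteq \cD_A$ is immediate from the bipolar identity since $\{B^{(j)}\} \subseteq \bm{K}^\circ$. For the reverse, given $X \in \cD_A$ and an arbitrary $B \in K^\circ_n$, I would approximate $B$ by a sequence in $F_n$ and use continuity of the map $B \mapsto L_B(X)$ together with closedness of the positive semidefinite cone to conclude $L_B(X) \succeq 0$; the bipolar identity then places $X$ back in $\bm{K}$.

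The only substantive issue is the density-plus-continuity step in the last paragraph, but it is mild: the pencil depends linearly (hence continuously) on its coefficients, and the PSD cone is closed, so nothing beyond the separability of each finite-dimensional level of $\bm{K}^\circ$ is needed. Everything else is bookkeeping on top of Lemma~\ref{l:HKM17:4.3}.
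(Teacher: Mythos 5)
Your proof is correct, and its overall architecture matches the paper's: pick a countable family of matrix tuples from the bounded polar dual $\bm K^\circ$, direct-sum them into a semi-finite tuple $A,$ use Lemma~\ref{l:HKM17:4.3}\ref{i:4.3:ii} (boundedness of $\bm K^\circ$) to ensure $A$ is a bounded operator, and verify $\bm K=\cD_A.$ But the mechanism for extracting the countable family is genuinely different. The paper applies the Effros--Winkler separation theorem (Theorem~\ref{t:EW}) directly: for each bad point $G\in\mathbb{S}_n^\gv\setminus K_n$ it produces a separating pencil $A_G,$ then invokes the Lindelöf (\/$\sigma$-compactness) property of open sets in Euclidean space to thin the open cover $\{U_G\}$ of $\mathbb{S}_n^\gv\setminus K_n$ to a countable subcover; density of the chosen coefficients in $\bm K^\circ$ plays no role. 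You instead route through the bipolar identity $\bm K=\bm K^{\circ\circ}$ of Lemma~\ref{l:HKM17:4.3}\ref{i:4.3:iv}, choose a countable dense subset of each finite-dimensional level $K_n^\circ,$ and close the gap with the observation that $B\mapsto L_B(X)$ is affine (hence continuous) and the PSD cone is closed. Your version is arguably a touch cleaner---it avoids the Lindelöf subcover argument entirely---at the modest cost of appealing to the bipolar (which is itself proved from Theorem~\ref{t:EW}) rather than to separation directly. One cosmetic nit: the inclusion $\bm K\subseteq\cD_A$ does not actually need the bipolar, only the definition of the polar dual; the bipolar is needed only for the reverse inclusion.
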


\begin{proof}
      Fix a positive integer $n$ and let $F_n=\mathbb{S}^\gv_n \setminus K_n.$ The
     set $F_n$ is an open subset of Euclidean space and hence every open cover of $F_n$ admits an at most countable subcover, since every open set in Euclidean space is a countable union of compact sets.
     For each $G\in F_n,$ there is, by Theorem~\ref{t:EW}, a tuple
      $A_G\in \mathbb{S}_n^\gv$ such that $L_{A_G}(\bm K)\succeq 0,$ but $L_{A_G}(G)\not\succeq 0.$ Let
       $U_G=\{Y\in \mathbb{S}_n^\gv: L_{A_G}(Y)\not\succeq 0\}\subseteq F_n$ and note $U_G$ is open.
        Since $F_n = \cup_{G\in F_n}  U_G,$ there is a countable set $G_n\subseteq F_n$ such that 
         $F_n =\cup_{G\in G_n} U_G.$  Let $\bm G=\cup G_n$ and observe, if $Y\in \mathbb{S}^\gv_\ell\setminus K_\ell,$
       then there is a $G\in G_\ell$ such that $Y\in U_G$ so that $L_{A_G}(Y)\not\succeq 0.$ 
       
        Since $0$ is in the interior of $K_1,$ the polar dual $K^\circ$ is bounded by Lemma~\ref{l:HKM17:4.3}.  Thus there is an $M$ such that $\|A_G\|\le M$
         for all $G\in \bm G.$  Hence the  operator  $A=\oplus_\ell \oplus_{G\in G_\ell} A_G$ acting on the separable Hilbert space 
        $\oplus_\ell \oplus_{G\in G_\ell} \C^\ell$ is bounded.  By construction, $\bm K =\cD_A.$   Indeed, if $Y\notin K_\ell,$ then there is
        $G\in F_\ell$ such that $Y\in U_G$ so that $L_{A_G}(Y)\not\succeq 0$ and thus $L_{A}(Y)\not\succeq 0.$ Hence $Y\notin \cD_A.$
         On the other hand, if $Y\in \bm K,$ then  $L_{A_G}(Y)\succeq 0$ for all $n$ and $G\in G_n$ and therefore $L_A(Y)\succeq 0.$
         Hence $Y\in \cD_A.$ 
\end{proof}

\begin{proposition}[{{\cite[Proposition 2.6]{HKM13}}}]\label{prop:bded}
	Let $A=(A_1,\dots,A_\gv) \in \mathbb{S}_\cH^\gv.$ 
	If the \Hilby free spectrahedron $\cD_A$ is bounded, then $\Ii_\cH,A_1,\ldots,A_\gv$ are linearly independent. 
\end{proposition}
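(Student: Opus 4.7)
The plan is to argue by contrapositive: assuming $\Ii_\cH, A_1, \ldots, A_\gv$ are linearly dependent, I will produce an unbounded subset of $\cD_A$. By Lemma~\ref{l:level:bounded} applied to the matrix convex set $\cD_A$, it suffices to exhibit an unbounded subset of the scalar level $\cD_A(1) \subseteq \R^\gv$.

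First, I reduce to a real linear dependence. A complex relation $c_0\Ii_\cH + \sum_{j=1}^\gv c_j A_j = 0$ with not all $c_j=0$ splits, via self-adjointness of the $A_j$ and the fact that $U+iV=0$ with $U,V$ self-adjoint forces $U=V=0$, into independently vanishing real and imaginary parts. Taking whichever piece is nontrivial yields a real relation $c_0\Ii_\cH + \sum c_j A_j = 0$ with $c_j \in \R$ not all zero, and moreover at least one of $c_1,\ldots,c_\gv$ must be nonzero (else $c_0\Ii_\cH = 0$ forces $c_0 = 0$ as well).

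The key step is a scalar-level computation. Set $\mathbf{c} = (c_1,\ldots,c_\gv) \in \R^\gv \setminus \{0\}$ and, for $t \in \R$, consider the tuple $X(t) = t\mathbf{c} \in \mathbb{S}_1^\gv$. Then
\begin{equation*}
L_A(X(t)) \;=\; \Ii_\cH + t\sum_{j=1}^\gv c_j A_j \;=\; (1 - tc_0)\Ii_\cH,
\end{equation*}
which is positive semidefinite on an unbounded half-line of $t$-values (all of $\R$ if $c_0=0$, and a semi-infinite interval otherwise). Since $\mathbf{c}\neq 0$, this exhibits an unbounded ray in $\cD_A(1)$, contradicting boundedness of $\cD_A$.

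I do not foresee any serious obstacle: the whole argument collapses to a one-dimensional observation that linear dependence produces a direction in $\R^\gv$ along which the pencil $L_A$ is affine in a single scalar parameter, hence positive semidefinite on an unbounded ray.
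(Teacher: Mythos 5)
Your proof is correct and follows the same route as the cited reference [HKM13, Proposition~2.6]: a linear dependence $c_0 \Ii_\cH + \sum c_j A_j = 0$ with some $c_j \neq 0$ ($j\geq 1$) makes $L_A$ constant along the ray $t\mathbf{c}$, which produces an unbounded half-line in $\cD_A(1)$. The careful reduction from a complex linear dependence to a real one (via self-adjointness of the $A_j$) is a nice detail worth keeping since the paper works over $\C$, though the citation of Lemma~\ref{l:level:bounded} is unnecessary: the direction you need (unboundedness of $\cD_A(1)$ forces unboundedness of $\cD_A$) is trivial and is not the content of that lemma.
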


     Proposition~\ref{prop:bded}  is proved,
	 under the assumption that $\cH$ is finite-dimensional as \cite[Proposition 2.6]{HKM13}, but the argument works just
	 as well in the case that $\cH$ is infinite-dimensional and the $A_j$ are self-adjoint operators instead of  matrices.

\begin{proposition}
  \label{prop:cp}
   Let $H,K$ denote  Hilbert spaces and 
    suppose  $A=(A_1,\ldots,A_\gv) \in \mathbb{S}_H^{\gv}$ and $B = (B_1,\ldots,B_\gv)\in \mathbb{S}_K^\gv.$ 
     Let $\cS_A = \spann\{I,A_1,\ldots,A_\gv\}$ and $\cS_B = \spann\{I,B_1,\ldots,B_\gv\}.$    
      If  $\cD_A$ is bounded, then the unital linear map $\tau: \cS_A \to \cS_B$ sending
	$$
	A_i \mapsto B_i
	$$
	for all $i$
	is completely positive if and only if $\cD_A \subseteq \cD_B.$
\end{proposition}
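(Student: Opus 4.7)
The plan is to treat each direction separately, using boundedness of $\cD_A$ to invoke Proposition~\ref{prop:bded}: this gives linear independence of $I_\cH, A_1,\dots,A_\gv$, so that the prescription $I_\cH\mapsto I_K$, $A_i\mapsto B_i$ extends uniquely to a unital linear map $\tau:\cS_A\to\cS_B$.

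For the forward direction, I would pick $X\in\cD_A(n)$ and view $L_A(X)\succeq 0$, after the canonical flip $\cB(\cH)\otimes M_n\cong M_n\otimes\cB(\cH)$, as a positive semidefinite element of $M_n(\cS_A)$. Because $\tau$ is completely positive, the ampliation $\tau\otimes\mathrm{id}_{M_n}$ is positive, and it sends this element to $L_B(X)$. Hence $L_B(X)\succeq 0$ and $X\in\cD_B$.

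For the converse, suppose $\cD_A\subseteq\cD_B$ and let $T\in M_n(\cS_A)_{\sa}$ with $T\succeq 0$. Using linear independence, write
\[
T \;=\; M_0\otimes I_\cH \;+\; \sum_{j=1}^\gv M_j\otimes A_j,
\]
with self-adjoint matrices $M_0,\dots,M_\gv\in M_n$. The strategy is to reduce to the defining inequality of $\cD_A$ by a perturbation. One first establishes that $M_0\succeq 0$: for each unit vector $\xi\in\C^n$, the real scalars $c_j:=\xi^{*}M_j\xi$ satisfy $c_0 I_\cH+\sum_j c_j A_j\succeq 0$, and any choice $c_0<0$ would give $\sum_j c_j A_j\succ 0$ and hence the unbounded ray $\{t(c_1,\dots,c_\gv):t\ge 0\}\subseteq\cD_A(1)$, contradicting boundedness of $\cD_A$. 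Consequently $M_0+\epsilon I_n\succ 0$ for every $\epsilon>0$, and conjugating $T+\epsilon(I_n\otimes I_\cH)\succeq 0$ by $(M_0+\epsilon I_n)^{-1/2}\otimes I_\cH$ yields
\[
I_n\otimes I_\cH \;+\; \sum_{j=1}^\gv N_j^\epsilon\otimes A_j \;\succeq\; 0,\qquad N_j^\epsilon:=(M_0+\epsilon I_n)^{-1/2}M_j(M_0+\epsilon I_n)^{-1/2}.
\]
Thus $(N_1^\epsilon,\dots,N_\gv^\epsilon)\in\cD_A(n)\subseteq\cD_B(n)$; conjugating back by $(M_0+\epsilon I_n)^{1/2}\otimes I_K$ and sending $\epsilon\to 0$ gives $(\tau\otimes\mathrm{id}_{M_n})(T)=M_0\otimes I_K+\sum_j M_j\otimes B_j\succeq 0$, completing the verification of complete positivity.

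The principal obstacle I anticipate is in the converse, where one must justify positivity of the leading coefficient $M_0$ before the perturbation trick can be applied. This is precisely the structural consequence of the boundedness hypothesis on $\cD_A$ beyond the well-definedness of $\tau$; the remaining steps are routine provided one keeps careful track of the flip between $\cB(\cH)\otimes M_n$ and $M_n\otimes\cB(\cH)$.
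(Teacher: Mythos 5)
Your proof is correct and is essentially the standard argument from the cited sources (HKM13, Theorem 3.5; Zalar, Theorem 2.5); the paper itself does not reprove the statement but simply points to those references. In fact, the two ingredients you identify — the canonical flip between $L_A(X)\succeq 0$ and positivity in $M_n(\cS_A)$, and in the converse the boundedness argument showing $M_0\succeq 0$ followed by the $\epsilon$-perturbation and conjugation by $(M_0+\epsilon I_n)^{-1/2}$ — are precisely the steps the paper itself reuses inline in its proof of Proposition~\ref{p:DA-double-check}.
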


Proposition ~\ref{prop:cp} is stated with  the assumption that $H,K$ are finite-dimensional
as \cite[Theorem~3.5]{HKM13},  though the argument works
similarly in infinite dimensions,
cf.~\cite[Theorem~2.5]{zalar} or \cite[Theorem~5.13(2)]{DDOSS17}.
Note that the boundedness
hypotheses together with Proposition~\ref{prop:bded} implies that the map determined by $A_i\to B_i$
 is well defined.

For $A = (A_1,\ldots,A_\gv) \in \mathbb{S}_\cH^{\gv},$ the Hilbertian spectrahedron  $\cD_A$
is a closed matrix convex set and $0$ is in the interior of $\cD_A(1).$ Hence its polar
dual $\cD_A^\circ$ is bounded by Lemma~\ref{l:HKM17:4.3}. 
This observation is
implicit in the statement of Lemma~\ref{l:bmK:iii} below.

\begin{lemma}
\label{l:bmK:iii}
 Fix $A = (A_1,\ldots,A_\gv) \in \mathbb{S}_\cH^{\gv},$  let $\cR = \spann\{\Ii_\cH,A_1,\ldots,A_\gv\}$ and let
 \[
	\widehat{B_j} = \bigoplus_{B \in \cD_A^\circ} B_j \quad \text{ and }\quad  I = \bigoplus_{B \in \cD_A^\circ} \Ii_{\siz(B)}
\]
 acting on the Hilbert space $\oplus_{B\in \cD_A^\circ} \C^{\siz(B)}.$
 
 If the \Hilby  free spectrahedron $\cD_A$ is bounded, 
   then $\cR$ and $\widehat{\widecheck{\cR}} = \spann\{I,\widehat{B}_1,\ldots,\widehat{B}_\gv\}$ are isomorphic operator systems.
\end{lemma}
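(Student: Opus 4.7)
The plan is to exhibit the unital linear map $\varphi: \cR \to \widehat{\widecheck{\cR}}$ defined by $\Ii_\cH \mapsto I$ and $A_i \mapsto \widehat{B}_i$, and show it is a complete order isomorphism by applying Proposition \ref{prop:cp} in both directions. The key identity that makes everything work is $\cD_{\widehat{B}} = \cD_A$, which simultaneously guarantees that $\varphi$ is well-defined as a bijection and that the inclusions needed for complete positivity are equalities.

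I would first establish this identity. Since $\widehat{B} = \oplus_{B \in \cD_A^\circ} B$ acts as a block diagonal operator, one computes
\[
L_{\widehat{B}}(Y) \;=\; \bigoplus_{B \in \cD_A^\circ} L_B(Y),
\]
so $Y \in \cD_{\widehat{B}}$ if and only if $L_B(Y) \succeq 0$ for every $B \in \cD_A^\circ$. Using the standard tensor flip identifying $L_B(Y)$ with $L_Y(B)$, this membership condition is exactly $Y \in (\cD_A^\circ)^\circ$. Since $\cD_A$ is a closed matrix convex set containing $0 \in \cD_A(1)$ (as $L_A(0)=\Ii_\cH$), the Effros-Winkler bipolar theorem, recorded in Lemma \ref{l:HKM17:4.3}\ref{i:4.3:iv}, gives $\cD_A = \cD_A^{\circ\circ}$, and hence $\cD_{\widehat{B}} = \cD_A$.

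Next I would verify the well-definedness of $\varphi$. Boundedness of $\cD_A$ combined with Proposition \ref{prop:bded} implies $\Ii_\cH, A_1, \ldots, A_\gv$ are linearly independent, so $\varphi$ is unambiguously defined. From the identity $\cD_{\widehat{B}} = \cD_A$, the spectrahedron $\cD_{\widehat{B}}$ is also bounded, and a second application of Proposition \ref{prop:bded} yields that $I, \widehat{B}_1, \ldots, \widehat{B}_\gv$ are linearly independent as well. Thus $\varphi$ is a unital linear bijection between two $(\gv+1)$-dimensional operator systems.

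Finally, Proposition \ref{prop:cp} applied to the pair $(A, \widehat{B})$ (whose hypothesis is the boundedness of $\cD_A$) shows $\varphi$ is completely positive since $\cD_A \subseteq \cD_{\widehat{B}}$; applied to the pair $(\widehat{B}, A)$ (whose hypothesis is the boundedness of $\cD_{\widehat{B}}$, which we have established) it shows $\varphi^{-1}$ is completely positive since $\cD_{\widehat{B}} \subseteq \cD_A$. Hence $\varphi$ is a complete order isomorphism, i.e.\ an operator system isomorphism. The only nontrivial step is the identity $\cD_{\widehat{B}} = \cD_A$; everything else is an immediate reduction to the already-stated Propositions \ref{prop:bded} and \ref{prop:cp} and to the bipolar relation from Lemma \ref{l:HKM17:4.3}.
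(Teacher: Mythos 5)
Your proof is correct and follows essentially the same route as the paper: define the unital linear map sending $A_i\mapsto\widehat{B}_i$, use Proposition~\ref{prop:bded} to justify well-definedness and bijectivity, and use Proposition~\ref{prop:cp} together with the spectrahedral identity $\cD_{\widehat{B}}=\cD_A$ to get complete positivity in both directions. The only cosmetic difference is that you package $\cD_{\widehat{B}}=\cD_A$ as a single bipolar identity $(\cD_A^\circ)^\circ = \cD_A$ up front, whereas the paper derives $\cD_A\subseteq\cD_{\widehat B}$ directly from the definition of the polar dual and then invokes the bipolar theorem only for the reverse inclusion; the content is identical.
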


\begin{proof}
     Since  $\cD_A$ is assumed bounded,  Proposition~\ref{prop:bded} implies 
    the operators $\Ii_\cH,A_1,\ldots,A_\gv$ are linearly independent. Hence
    there is a linear map  $\tau: \cR \to \widehat{\widecheck{\cR}}$ determined by
	\begin{align*}
		\Ii_n &\mapsto I\\
		 A_i &\mapsto \widehat{B}_i.
	\end{align*}
 Let $X\in \cD_A$ be given. By the definition of the polar dual, $L_B(X)\succeq 0$ for all $B\in \cD_A^\circ.$  Hence $L_{\widehat{B}}(X)\succeq 0$
 and thus $\cD_A\subseteq \cD_{\widehat{B}}.$ By Proposition~\ref{prop:cp}, the map $\tau$ is ucp.  On the other hand, if $X\in \cD_{\widehat{B}},$
 then $L_B(X)\succeq 0$ for all $B\in \cD_A^\circ$ and therefore $X\in \cD_A^{\circ\circ}=\cD_A$ by Lemma~\ref{l:HKM17:4.3}.
 Hence $\cD_{\widehat{B}} \subseteq \cD_A.$  Consequently, $\cD_{\widehat{B}}$ is bounded and thus $\Ii,\widehat{B}_1,\dots,\widehat{B}_\gv$ is an independent set by Proposition~\ref{prop:bded}. Hence $\tau$ is onto and  $\tau^{-1}$ is also ucp and therefore
  $\cR$ and $\widehat{\widecheck{\cR}}$ are isomorphic operator systems.
\end{proof}

\begin{lemma}\label{prop:setdu}
	If  $\bm{K} \subseteq \gtup$ is a compact matrix convex set and $0 \in K_1,$  then  $\bm K = \cD^\circ_{\widehat{Y}},$ where
	$$
	\widehat{Y} = \bigoplus_{Y \in \bm{K}}Y.%
	$$
 \end{lemma}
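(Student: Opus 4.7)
The key observation driving the plan is that evaluating the monic linear pencil at a direct sum splits as a direct sum of pencil evaluations. Concretely, for any $X \in \mathbb{S}_n^\gv$, the block-diagonal structure of $\widehat{Y}$ gives
\[
 L_{\widehat{Y}}(X) \;=\; \Ii \otimes \Ii_n + \sum_{j=1}^{\gv} \widehat{Y}_j \otimes X_j \;=\; \bigoplus_{Y\in \bm{K}} L_Y(X),
\]
(up to the standard shuffle of tensor factors). So the plan is first to use this identity to identify the \Hilby free spectrahedron $\cD_{\widehat{Y}}$ with the polar dual $\bm{K}^\circ$: positivity of $L_{\widehat{Y}}(X)$ is equivalent to simultaneous positivity of $L_Y(X)$ for every $Y \in \bm{K}$, which is precisely the definition of $X \in \bm{K}^\circ$.

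With $\cD_{\widehat{Y}} = \bm{K}^\circ$ in hand, taking polar duals yields $\cD_{\widehat{Y}}^\circ = \bm{K}^{\circ\circ}$. The second step is then to invoke the bipolar theorem, Lemma~\ref{l:HKM17:4.3}\ref{i:4.3:iv}, whose hypotheses (closed matrix convex set containing $0$) are met since $\bm{K}$ is compact (hence closed) and $0 \in K_1$ by assumption. That gives $\bm{K}^{\circ\circ} = \bm{K}$ and finishes the argument.

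There is really no serious obstacle: everything reduces to the tensor-product computation splitting the pencil as a direct sum, together with a direct appeal to a previously established result. The only point that warrants a brief mention is that $\widehat{Y}$ a priori acts on a non-separable Hilbert space (indexed by all of $\bm{K}$), but this is harmless because the polar dual is defined purely in terms of the condition $L_Y(X)\succeq 0$ for $Y\in \bm K$, and the direct sum description above holds verbatim for arbitrary index sets. (Separability, when needed elsewhere, is recovered as in Remark~\ref{r:relax:sep} by passing to a countable dense subfamily.)
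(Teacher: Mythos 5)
Your proof is correct, and it takes a noticeably more direct route than the paper's for the inclusion $\cD_{\widehat{Y}} \subseteq \bm K^\circ$. The paper proves $\bm K^\circ \subseteq \cD_{\widehat{Y}}$ exactly as you do, by the block-diagonal decomposition, but for the reverse containment it argues by contradiction: it supposes $B \in \cD_{\widehat{Y}}(n)\setminus K_n^\circ$, invokes the Effros--Winkler separation theorem (Theorem~\ref{t:EW}) to produce a monic pencil $L_A$ with $L_A\succeq 0$ on $\bm K^\circ$ and $L_A(B)\not\succeq 0$, observes $A\in\bm K^{\circ\circ}=\bm K$, and then derives a contradiction because $A$ is a summand of $\widehat{Y}$. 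You notice that this detour is unnecessary: the identity $L_{\widehat{Y}}(B) = \bigoplus_{Y\in\bm K} L_Y(B)$ (together with the canonical shuffle $L_Y(B)\cong L_B(Y)$) is an \emph{equivalence}, not merely a one-way implication, so $\cD_{\widehat{Y}} = \bm K^\circ$ follows in both directions simultaneously from the definition of the polar dual. Your version is thus more elementary and avoids a Hahn--Banach step; what the paper's route offers in return is essentially just a further illustration of the separation machinery, at no gain for this particular lemma. Both arguments then close with the bipolar theorem, Lemma~\ref{l:HKM17:4.3}\ref{i:4.3:iv}, whose hypotheses are satisfied since $\bm K$ is compact (hence closed) and $0\in K_1$.
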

 
 Note that  the compactness assumption ensures that $\widehat{Y}$ is a bounded operator.
 
\begin{proof}  
         By Lemma~\ref{l:HKM17:4.3},  $\bm{K}$ equals its bipolar $\bm{K}^{\circ\circ}.$ We will show that $\bm{K}^\circ$ equals $\cD_{\widehat{Y}}.$
	Letting $\mathscr{Y}$ denote the Hilbert space that $\widehat{Y}$ acts on, note that by definition, for any $n$ and $B \in {K}^\circ_n,$ 
	$$
	L_B(\widehat{Y})=I_n \otimes I_{\mathscr{Y}}  +  \sum_{i=1}^\gv B_i \otimes \widehat{Y}_i \succeq 0,
	$$
	which (after applying the canonical shuffle) implies that $B$ lies in $\cD_{\widehat{Y}}.$ 
 Hence, $\bm K^\circ \subseteq \cD_{\widehat{Y}}.$ 
 
 Now suppose  $B \in \cD_{\widehat{Y}}(n)\setminus {K}^\circ_n.$ Since $\bm{K}^\circ$ is closed, by the Hahn-Banach Theorem \ref{t:EW}, there is a monic linear pencil $L_A$ defined by a tuple $A\in\mathbb{S}_n^\gv$ such that $L_A$ is positive semidefinite on $\bm K^\circ,$ but not positive semidefinite at  $B.$ Since $L_A$ is positive semidefinite on $\bm K^\circ,$ the tuple $A$ belongs to $\bm K^{\circ\circ} = \bm K.$ But then $A$ is a direct summand in $\widehat{Y},$ so $L_A(B) \succeq 0,$ which is a contradiction. Hence, $\bm K^\circ =\cD_{\widehat{Y}}$ and $\bm K= \bm K^{\circ\circ} = \cD_{\widehat{Y}}^\circ.$
\end{proof}

Proposition~\ref{p:Rcheck-WA} below interprets Proposition~\ref{prop:cp} as follows \CB{(see also \cite[Example 1.9]{Kir24})}. 
  
 \begin{proposition}
  \label{p:Rcheck-WA}
   For $A=(A_1,\dots,A_\gv)\in \mathbb{S}_\cH^\gv,$ 
   the (graded) sets  $W(A)$ and $\UCP(\cR)$ are isomorphic as matrix convex sets.
    Moreover, if $\cD_A$ is bounded, then $W(A)=\cD_A^\circ.$ 
  \end{proposition}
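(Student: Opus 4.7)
The plan is to first establish the matrix convex set isomorphism $\UCP(\cR)\cong W(A)$ via the natural evaluation map, and then, under the boundedness hypothesis on $\cD_A,$ to identify this common object with $\cD_A^\circ$ through Proposition~\ref{prop:cp}.

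For each $n$ define $\Psi_n:\UCP_n(\cR)\to W_n(A),$ $\varphi\mapsto(\varphi(A_1),\dots,\varphi(A_\gv)).$ Surjectivity is immediate from the definition of $W_n(A),$ and injectivity holds because any $\varphi\in\UCP_n(\cR)$ is linear, unital (so $\varphi(\Ii_\cH)=\Ii_n$ is forced), and hence determined by its values on the spanning set $\{\Ii_\cH,A_1,\dots,A_\gv\}$ of $\cR.$ Linearity of each $\varphi$ gives matrix affinity of $\Psi=(\Psi_n)_n$: for $\varphi^{(i)}\in\UCP_{n_i}(\cR)$ and $V=\col(V_1,\dots,V_k)$ an isometry,
\[
\Psi_n\Bigl(\sum_{i=1}^k V_i^\ast \varphi^{(i)} V_i\Bigr) = \sum_{i=1}^k V_i^\ast \Psi_{n_i}(\varphi^{(i)}) V_i.
\]
Since $\cR$ has dimension at most $\rv+1,$ both $\UCP_n(\cR)\subseteq \Mm_n(\cR^\ast)$ and $W_n(A)\subseteq\mathbb{S}_n^\gv$ lie in finite-dimensional Hausdorff spaces, and $\Psi_n$ is a continuous linear bijection, hence a homeomorphism. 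Therefore $\UCP(\cR)$ and $W(A)$ are isomorphic as matrix convex sets.

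Now assume $\cD_A$ is bounded and prove the two inclusions giving $W(A)=\cD_A^\circ.$ For $B=(\varphi(A_1),\dots,\varphi(A_\gv))\in W_n(A)$ and $X\in\cD_A(m),$ the element $L_A(X)=\Ii_\cH\otimes \Ii_m+\sum_i A_i\otimes X_i$ lies in $\cR\otimes \Mm_m$ and is positive semidefinite; applying the $m$-positive map $\varphi\otimes\mathrm{id}_{\Mm_m}$ yields $L_B(X)\succeq 0,$ so $B\in\cD_A^\circ(n).$ Conversely, let $B\in\cD_A^\circ(n),$ which is precisely the inclusion $\cD_A\subseteq\cD_B.$ Boundedness of $\cD_A$ together with Proposition~\ref{prop:bded} guarantees that $\Ii_\cH,A_1,\dots,A_\gv$ are linearly independent, so the unital linear map $\tau:\cR\to\spann\{\Ii_n,B_1,\dots,B_\gv\}\subseteq \Mm_n$ sending $A_i\mapsto B_i$ is well defined. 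Proposition~\ref{prop:cp}, whose hypothesis is precisely boundedness of $\cD_A$ together with $\cD_A\subseteq\cD_B,$ then gives that $\tau$ is completely positive; being unital, $\tau\in\UCP_n(\cR)$ with $\Psi_n(\tau)=B,$ so $B\in W_n(A).$

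The only nontrivial step is the second inclusion $\cD_A^\circ\subseteq W(A),$ which requires extracting a ucp map on $\cR$ realizing the prescribed dual tuple; this is exactly where the boundedness assumption on $\cD_A$ is needed and where Proposition~\ref{prop:cp} does the work. The remaining verifications---matrix affinity of $\Psi$ and the compatibility of $\varphi\otimes\mathrm{id}$ with $L_A(X)$---are routine.
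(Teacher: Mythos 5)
Your proof follows the same route as the paper's: the evaluation map $\varphi\mapsto(\varphi(A_1),\dots,\varphi(A_\gv))$ furnishes the matrix affine bijection, and Propositions~\ref{prop:bded} and~\ref{prop:cp} supply the identification with $\cD_A^\circ$. However, one step needs tightening. The assertion that ``$\Psi_n$ is a continuous linear bijection, hence a homeomorphism'' is not a valid implication here, since $\Psi_n$ is the restriction of a linear map to a proper subset, and a continuous bijection between subsets of finite-dimensional spaces need not have a continuous inverse (take, for instance, the coordinate projection $(x,y)\mapsto x$ restricted to $\{(x,0):0\le x\le 1\}\cup\{(t,1):1<t\le 2\}$). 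The correct justification, which is what the paper supplies, is that $\UCP_n(\cR)$ is closed and bounded (ucp maps have norm one) and hence compact in the finite-dimensional space $\Mm_n(\cR^*)$; a continuous injection from a compact space into a Hausdorff space is automatically a homeomorphism onto its image. Additionally, to conclude ``isomorphic as matrix convex sets'' you should verify (or cite from Subsection~\ref{s:cat-dual-general}) that both sides are in fact matrix convex---the key point, which the paper checks explicitly inside its proof, being that $V^*\varphi V$ is ucp whenever $\varphi$ is ucp and $V$ is an isometry. Also a small typo: in the finite-dimensional matrix convex setting $\cR=\spann\{\Ii_\cH,A_1,\dots,A_\gv\}$ has dimension at most $\gv+1$, not $\rv+1$.
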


 \begin{proof}
    For notational ease, let $A_0=I_\cH.$
    If $\varphi\in \UCP_t(\cR)$ and $V: \C^s\to \C^t$ is an isometry, then 
    $V^*\varphi V: \cR\to M_s$ defined by $V^*\varphi V(R) = V^* \varphi(R)V$  for $R\in \cR$ is easily
     seen to be unital and completely positive. Indeed, for $R\in M_m(\cR)$ given by
  \[
     R = \sum Y_j\otimes A_j, 
  \]
   where $(Y_0,Y_1,\ldots, Y_\gv)\in \mathbb{S}_m^{\gv},$ 
 \[
  1_m\otimes (V^*\varphi V) (R) = \sum Y_j \otimes V^* \varphi(A_j)V
   = (\Ii_m \otimes V)^* (1_m\otimes \varphi)(R)  (\Ii_m\otimes V).
 \]
  Thus, if $R\succeq 0,$ then, since $\varphi$ is ucp, $1_m\otimes \varphi (R)$ is positive semidefinite 
   and hence so is $1_m\otimes (V^*\varphi V)(R).$ Therefore, $V^*\varphi V\in \UCP_s(\cR)$
   and $\UCP(\cR)$ is matrix convex. It now follows that  $W(A)$ is matrix convex
    since $V^* \varphi(A_j) V= V^*\varphi V(A_j)$ for each $1\le j\le \gv.$
    
   The sets $\UCP_t(\cR)$ are bounded since ucp maps have norm one. It is straightforward
   to see the (norm) limit of ucp maps $\varphi_k:\cR\to M_t$ is again ucp.  Hence $\UCP_t(\cR)$
    is closed and hence compact.
       
     Given a positive integer $t,$ define $\Psi_t:\UCP_t(\cR)\to W_t(A)$ as follows. For
     $\varphi\in \UCP_t(\cR)$, let\looseness=-1
  \[ 
   \Psi_t(\varphi) = \big (\varphi(A_1),\ldots,\varphi(A_{\gv}) \big ) \in W_t(A).
  \]
   From the definitions, $\Psi_t$ is bijective.  It is immediate that $\Psi_t$ is continuous
   and thus its range,  $W_t(A),$ is  compact and $\Psi_t$ is a homeomorphism.
   
    To complete the proof that $\UCP(\cR)$ and $W(A)$ are isomorphic as matrix convex
    sets, it remains to show that $\Psi$ is matrix affine. To this end, suppose $r$ 
    is a positive integer  and finitely many $V_i:\C^{s_i}\to \C^t$
     and $\varphi_i \in \UCP_{s_i}(\cR)$ such that $\sum V_i^* V_i=I$ 
     are given and observe
   \[
    \Psi_t(\sum V_i^* \varphi_i V)  = [\sum V_i^*\varphi_i V] (A) = \sum V_i^* \varphi_i(A) V_i =\sum V_i^* \Psi_{s_i}(\varphi_i) V_i.
   \]
   Thus $\Psi_t$ is matrix affine.

   Now suppose $\cD_A$ is bounded.  Using Proposition~\ref{prop:cp}, $B\in \cD_A^\circ(t)$  if and only if
  \[
    L_B(X)=\Ii_t\otimes \Ii_n +\sum_{j=1}^{\gv} B_j\otimes X_j \succeq 0
  \]
   for all $n$ and $X\in \cD_A(n)$  if and only if  $\cD_A\subseteq \cD_B$ if and only if 
    the unital map $\tau:\cR\to M_t$ sending $A_i$ to $B_i$ is completely positive if and only if 
  \[
   \big (\tau(A_1),\ldots,\tau(A_{\gv}) \big )  = (B_1,\dots,B_\gv) \in W_t(A).
  \]
 Hence $W(A)=\cD_A^\circ.$
  \end{proof}

\begin{proposition}
\label{p:DA-double-check}
	Let $A = (A_1,\ldots,A_\gv) \in \mathbb{S}_\cH^{\gv}$ and define $\cR = \spann\{\Ii_\cH,A_1,\ldots,A_\gv\}.$ 
   If $\cD_A$ is bounded, then $\cR$ and the space $\rA(\cD_A^\circ)$ of matrix affine maps on the polar dual of $\cD_A$ are isomorphic operator systems.
\end{proposition}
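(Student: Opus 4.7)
The plan is to construct an explicit complete order isomorphism $\tau\colon \cR \to \rA(\cD_A^\circ)$ directly, avoiding Webster--Winkler duality as suggested in Remark~\ref{r:weird-for-us}. Since $\cD_A$ is bounded, Proposition~\ref{prop:bded} ensures that $\Ii_\cH, A_1,\ldots,A_\gv$ are linearly independent, so I can unambiguously define
\[
\tau\bigl(r_0 \Ii_\cH + \sum_{j=1}^\gv r_j A_j\bigr)_n(B)
:= r_0 \Ii_n + \sum_{j=1}^\gv r_j B_j,
\qquad B \in \cD_A^\circ(n).
\]
A quick check against the definition of a matrix affine map shows that each $\tau(R)$ lies in $\rA(\cD_A^\circ)$, and $\tau$ is linear and unital.

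Bijectivity of $\tau$ comes next. Injectivity follows because, by Lemma~\ref{l:HKM17:4.3}\ref{i:4.3:iii}, the boundedness of $\cD_A$ places $0$ in the interior of $\cD_A^\circ(1)$, so the functions $1, b_1,\ldots,b_\gv$ are linearly independent on $\cD_A^\circ(1)$. For surjectivity, given $\theta \in \rA(\cD_A^\circ)$, the scalar level $\theta_1$ is a continuous affine map on the full-dimensional convex body $\cD_A^\circ(1) \subset \R^\gv$ and so has the form $\theta_1(b) = c_0 + \sum c_j b_j$ for unique $c_0,\ldots,c_\gv \in \C$. To lift this to higher levels, I would apply the matrix affine property to compressions $V = h\colon \C \to \C^n$ by unit vectors $h$, giving
\[
h^* \theta_n(B) h \;=\; \theta_1(h^* B h) \;=\; h^* \bigl(c_0 \Ii_n + \sum_j c_j B_j\bigr) h
\]
for every unit vector $h \in \C^n$ and every $B \in \cD_A^\circ(n)$. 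Polarization then forces $\theta_n(B) = c_0 \Ii_n + \sum_j c_j B_j$, whence $\theta = \tau\bigl(c_0 \Ii_\cH + \sum_j c_j A_j\bigr)$.

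To upgrade $\tau$ to a complete order isomorphism I would invoke Lemma~\ref{l:bmK:iii}, which furnishes a complete order isomorphism $\sigma\colon \cR \to \widehat{\widecheck{\cR}}$ with $\sigma(A_j) = \widehat{B}_j = \bigoplus_{B \in \cD_A^\circ} B_j$ and $\sigma(\Ii_\cH) = I$. For any $T = T_0 \otimes \Ii_\cH + \sum_j T_j \otimes A_j \in M_r(\cR),$ a direct computation gives
\[
\sigma_r(T)
\;=\; \bigoplus_{B \in \cD_A^\circ} \Bigl(T_0 \otimes \Ii_{\siz(B)} + \sum_j T_j \otimes B_j\Bigr)
\;=\; \bigoplus_{B \in \cD_A^\circ} \tau_r(T)(B).
\]
Since positivity of a direct sum is equivalent to pointwise positivity of each summand, $T \succeq 0$ in $M_r(\cR)$ holds if and only if $\tau_r(T)(B) \succeq 0$ for every $B \in \cD_A^\circ$, which is precisely the positive-cone condition on $\tau_r(T) \in M_r(\rA(\cD_A^\circ)) \cong \rA(\cD_A^\circ, M_r)$. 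Combined with the fact that $\sigma$ is itself a complete order isomorphism, this makes $\tau$ one as well.

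The step I expect to be the main obstacle is the surjectivity argument: extracting $\theta_n$ from $\theta_1$ rests simultaneously on $\cD_A^\circ(1)$ having nonempty interior (so the scalar affine function yields a well-defined coefficient tuple) and on having enough isometries $h\colon\C\to\C^n$ whose compressions land back in $\cD_A^\circ(1),$ to pin down $\theta_n$ pointwise via the matrix affine identity and polarization. Once this is in hand, the complete-positivity step reduces transparently to Proposition~\ref{prop:bded} and Lemma~\ref{l:bmK:iii}.
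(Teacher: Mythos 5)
Your proof is correct, and it genuinely departs from the paper's argument in two places. For surjectivity, the paper quotes the proof of the Webster--Winkler duality theorem to identify $\rA(\cD_A^\circ)$ with the continuous affine functions on $\cD_A^\circ(1)$ and then counts dimensions; you instead reconstruct any $\theta$ directly from its scalar level $\theta_1$ by compressing with unit vectors $h\colon\C\to\C^n$ and polarizing, which is more self-contained and more elementary (the key being that, over $\C$, $h^*Mh=0$ for all $h$ forces $M=0$). For complete positivity, the paper gives a hands-on argument in both directions: it extracts $B_0 \succeq 0$ from a positive $\widetilde B \in M_m(\cR)$ using boundedness of $\cD_A(1)$, perturbs to $B_0 + \epsilon$ to factor out $B_{0,\epsilon}^{\pm 1/2}$, and uses the definition of the polar dual in one direction and the Effros--Winkler bipolar theorem in the other. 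You short-circuit all of this by routing through Lemma~\ref{l:bmK:iii}: the identification $\sigma_r(T) \uapprox \bigoplus_{B\in\cD_A^\circ}\tau_r(T)(B)$ turns positivity in $M_r(\cR)$ into pointwise positivity of $\tau_r(T)$ over $\cD_A^\circ$, which is exactly the defining condition for the cone in $M_r(\rA(\cD_A^\circ))$. That is a clean factoring of the work already done in Lemma~\ref{l:bmK:iii} (which itself uses Proposition~\ref{prop:cp} and the bipolar theorem), so the same ingredients appear, just packaged differently. The one minor point worth making explicit is that you also need $\tau_r$ to be a $*$-map (so that positivity of $T$ yields self-adjointness of $\tau_r(T)$, as required by membership in $C_r$); this is immediate since the $B_j$ are self-adjoint and the involution on $\rA(\cD_A^\circ,M_r)$ is pointwise adjoint, but it should be said. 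The paper's approach is self-contained and makes the role of the bipolar theorem visible; yours is shorter, avoids the $\epsilon$-perturbation, and makes the logical dependence on Lemma~\ref{l:bmK:iii} transparent (with no circularity, since that lemma precedes this proposition in the paper).
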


\begin{proof}
  The boundedness assumption on $\cD_A$ implies that $\Ii_\cH,A_1,\dots,A_\gv$ are linearly independent by Proposition~\ref{prop:bded}. 
   Thus we may construct a map 
     $\Phi: \cR \to \rA(\cD_A^\circ)$ sending a $\tilde{b}\in \cR$ to a matrix affine map 
      $\Phi[{\tilde{b}}]=(\Phi[{\tilde{b}}]_t)_{t \in \N}:\cD_A^\circ \to \MM$ as follows.    Given 
  \[
 \tilde{b}=\lambda\, \Ii_\cH + \sum_{i=1}^\gv b_i A_i \in \cR, 
 \] 
  a positive integer $t$ and $X \in \cD_A^\circ(t) \subseteq \mathbb{S}_t^\gv,$  let
 	\begin{equation*}
		\Phi[{\tilde{b}}]_t (X) 
		    = \lambda \, I_t  + \sum_{i=1}^\gv b_i X_i \in \Mm_t.
	\end{equation*}
  Given 
  finitely many $V_j:\C^{s_i}\to \C^t$ and $X^{(j)}\in \cD_A^\circ(s_j)$
   such that $\sum V_j^* V_j =I_t,$ 
  \[
   \Phi[\tilde{b}]_t(\sum V_j^* X^{(j)} V_j) = \lambda I_t +\sum_{i,j}  b_i  V_j^* X^{(j)}_i V_j =\sum_j V_j^* (\lambda I_t +\sum b_i X^{(j)}_i ) V_j
    = \sum V_j^* \Phi[\tilde{b}]_{s_j}(X) V_j.
  \]
   Thus $\Phi[\tilde{b}]$ is matrix affine. From the construction, $\Phi$ is continuous.

	To prove injectivity of $\Phi$ suppose $\Phi[{\tilde{b}}]$ is the zero map for some $\tilde{b}=\lambda + \sum_{i=1}^\gv b_i A_i.$ 
	In particular, $$\lambda + \sum_{i=1}^\gv b_i x_i=0$$ for all $x \in \cD_A^\circ(1).$ 
	Plugging in $x=0$ we obtain that $\lambda=0.$ Since $\cD_A$ is bounded, we have by 
	 Lemma~\ref{l:HKM17:4.3} that $0$ is in the interior of $\cD_A^\circ(1).$ 
	 Hence $b_i=0$ for $1\le i\le \gv$ and thus $\tilde{b}=0.$

	Next, we show that $\Phi$ is bijective. {\CCBB By the discussion preceding the proof of the duality}  \cite[Proposition 3.5]{WW}, the  operator system $\rA(\cD_A^\circ)$ is isomorphic (as a vector space) to the space of continuous affine maps on $\cD_A^\circ (1).$ Hence, $\rA(\cD_A^\circ)$ and $\cR$  have the same dimension $\gv + 1,$ which proves that the injective map $\Phi$ is bijective. 
	
	We now prove that $\Phi$ is completely positive.  Accordingly, fix a positive integer $m$
	 and consider $\Phi_m:M_m(\cR)\to M_m(\rA(\cD_A^\circ))\cong \rA(\cD_A^\circ,M_m).$  A self-adjoint  $\widetilde{B}\in M_m(\cR)$ 
	 	 has (after applying the canonical shuffle) a representation of  the form
\[
   \widetilde{B}= B_0 \otimes \Ii_\cH + \sum_{i=1}^\gv B_i \otimes A_i
 \]
      for $B_0,\ldots,B_\gv \in \mathbb{S}_m.$  Applying $\Phi_m$ to $\widetilde{B},$ produces the  
        matrix affine mapping  
    \[
     (\Phi_m[\widetilde{B}]_t)_{t \in \N} = \Phi_m[{\widetilde{B}}] : \cD_A^\circ \to \MM(\Mm_m),
    \]
       where, for $X\in \cD^\circ_A(t),$ 
 \[
	\Phi_m[{\widetilde{B}}]_t (X) = B_0 \otimes \Ii_t + \sum_{i=1}^\gv B_i \otimes X_i.
 \]
  Now suppose  $\widetilde{B}\succeq 0.$  By \cite[Lemma 3.6]{HKM17}, which requires the assumption that
  $\cD_A(1)$ is bounded,  $B_0 \succeq 0.$ We include the argument to keep the presentation self contained. 
   If $B_0 \nsucceq 0,$ then there is a vector $v$ such that $\langle B_0 v, v\rangle < 0.$ 
   Letting $V:\C^m \otimes \C v\to \C^m\otimes \cH$
   denote the inclusion,
	$$
	\langle B_0 v, v\rangle \otimes \Ii_\cH + \sum_{i=1}^\gv \langle B_i v, v\rangle \otimes A_i =
	(V \otimes \Ii_\cH)^* \Big(B_0 \otimes \Ii_\cH + \sum_{i=1}^\gv B_i \otimes A_i\Big)(V \otimes \Ii_\cH)^* \succeq 0,
	$$
	which implies that $\sum_{i=1}^\gv \langle B_i v, v\rangle \otimes A_i  \succ 0.$
	Thus the (nonzero) point $t(\langle B_i v, v\rangle)_{i=1}^\gv \in \R^\gv$ lies in $\cD_A(1)$ for all $t>0,$ contradicting  the boundedness of $\cD_A(1).$
	Hence, $B_0 \succeq 0.$

    For $\epsilon>0,$ note that   $\CB{B_\epsilon} := (B_0+\epsilon) \otimes \Ii + \sum_{i=1}^\gv B_i \otimes A_i \succ 0.$ Since $B_{0,\epsilon}:=B_0+\epsilon\succ 0,$ its positive square root $B_{0,\epsilon}^{\frac12}$ is invertible and therefore,
	\begin{align*}
		\Ii_m \otimes \Ii + \sum_{i=1}^\gv B_{0,\epsilon}^{-1/2} \, B_i \, 
         B_{0,\epsilon}^{-1/2} \otimes A_i & = 
		(B_{0,\epsilon}^{-1/2} \otimes \Ii)\Big(B_{0,\epsilon} \otimes \Ii + \sum_{i=1}^\gv B_i \otimes A_i\Big)(B_{0,\epsilon}^{-1/2} \otimes \Ii)\\ & \succeq 0.
	\end{align*}
	Thus the tuple $\overline{B}_\epsilon := \big(B_{0,\epsilon}^{-1/2} \, B_i \, B_{0,\epsilon}^{-1/2}\big)_{i=1}^\gv$ lies in $\cD_A(m)$ and hence, by definition of the polar dual, $\Phi_m[{\overline{B}_\epsilon}]_t(X) \succeq 0$ for all $X \in \cD^\circ_A.$ Thus
 \[
\Phi_m[B_\epsilon]_t(X) = (B_{0,\epsilon}^{1/2} \otimes \Ii_t) \, \Phi_m [{\overline{B}_\epsilon}]_t(X)\, (B_{0,\epsilon}^{1/2} \otimes \Ii_t) \succeq 0
 \]
	for all $X \in \cD^\circ_A.$ Letting $\epsilon>0$ tend to $0$ gives $\Phi_m[{{\CB{\widetilde{B}}}}]_t(X)\succeq 0,$ proving that $\Phi$ is indeed completely positive.

	It remains to prove that the inverse of $\Phi$ is also completely positive. So assume $\Phi_m[{\widetilde{B}}]: \cD_A^\circ \to \MM(M_m)$
	 is positive. Equivalently, 
	$$
	 \Phi_m[{{\widetilde{B}}}]_t(X)= B_0 \otimes \Ii_t + \sum_{i=1}^\gv B_i \otimes X_i \succeq 0
	$$
	 for all $t$ and $X\in \cD_A^\circ(t).$
	 It follows from $0\in \cD_A^\circ$ that $B_0$ is positive semidefinite.
	By a similar argument as before, the matrix $B_0$ can be assumed positive definite so that we have
	$$
	\Ii_m \otimes \Ii_t + \sum_{i=1}^\gv B_0^{-1/2}B_iB_0^{-1/2} \otimes X_i \succeq 0
	$$
	for all $X \in \cD^\circ_A.$
	Hence, the tuple $\overline{B}= \big(B_0^{-1/2}B_iB_0^{-1/2}\big)_{i=1}^\gv$ lies in the polar dual of $\cD^\circ_A.$ But then, by
	 Lemma~\ref{l:HKM17:4.3}\ref{i:4.3:iv} (the Bipolar Theorem \cite[Corollary 5.5]{EW}), $\overline{B}$ lies in $\cD_A;$ that is, 
\[
   0\preceq L_A(\overline{B}) = \Ii_\cH \otimes\Ii_m + \sum_{i=1}^\gv A_i\otimes \overline{B}_i
    \cong \Ii_m \otimes \Ii_\cH + \sum_{i=1}^\gv B_0^{-1/2} B_i B_0^{-1/2} \otimes A_i
\]
     and thus
	$$
	B_0 \otimes \Ii_t + \sum_{i=1}^\gv B_i \otimes A_i = 
	(B_0^{1/2} \otimes \Ii_n)\Big(\Ii_m \otimes \Ii_n + \sum_{i=1}^\gv B_0^{-1/2}B_iB_0^{-1/2} \otimes A_i \Big)(B_0^{1/2} \otimes \Ii_n) \succeq 0, 
	$$
Hence $0\preceq \widetilde{B}  \in M_m(\cR)$ as desired. 
\end{proof}

\subsection{Duality in the \texorpdfstring{$\Gat$}{Gamma}-convex setting}\label{subsec:gama-dual}
Let again $\Gat=(\gat_1,\dots,\gat_\rv)$ be a tuple of symmetric noncommutative polynomials
with $\gat_j=x_j$ for $1\le j\le \gv\le \rv.$  In this section we introduce $\Gat$-analogs of operator systems and ucp maps
 and  prove a  
duality resembling \cite[Proposition 3.5]{WW} and Theorem \ref{th:find}. 
{\CCBB The full morphism-level categorical formulation is
given in 
Theorem \ref{th:gamma-categorical-duality} in
Appendix~\ref{app:categorical-duality}.}

\subsubsection{The matrix state space,  matrix ranges and ucp maps in the \texorpdfstring{$\Gat$}{Gamma}-convex  setting} 
Given a tuple $A=(A_1,\dots,A_\gv) \in \mathbb{S}_\cH^\gv,$ let $\cR=\cR^\Gamma_A$ denote the operator system
\begin{equation}\label{eq:gamaopsys}
	\spann\{\gamma_j(A) \ | \ j=0,1,\ldots,\rv\},
\end{equation}
where, for notational purposes, $\gamma_0(A)=\Ii_{\cH}.$ \index{$\gamma_0$}
We call $\cR=\cR^\Gamma_A$ 
a \df{$\Gat$-operator system}. \index{$\cR^\Gat_A$}

\begin{definition}\label{def:g-opsys}
	Given a Hilbert space $\cK,$ 
  a linear map $\varphi:\cR^\Gamma_A \to \cB(\cK)$ is a \df{$\Gamma$-concomitant} provided,
  \[
		\varphi(\gamma_i(A))=\gamma_i(\wvphi{A}), 
  \]
  for each  $1\le i\le \rv,$
  where \index{$\wvphi{A}$}
  \begin{equation*}
   \gamma_i(A) =\gamma_i(A_1,\dots,A_\gv), \ \ \
   \wvphi{A} =(\varphi(A_1),\ldots,\varphi(A_\gv)). 
  \end{equation*}

 {\CCBB 
 A linear map $\varphi:\cR^\Gat_A \to \cB(\cK)$} is  a \df{$\Gamma$-ucp map} if it is ucp and a $\Gamma$-concomitant.
{\CCBB  For $\Gamma(\wvphi{A})= \big (\gamma_1(\wvphi{A}),\ldots, \gamma_\rv(\wvphi{A}) \big )$ we have} 
\[ \pushQED{\qed}
  L_{\Gamma(\wvphi{A})}(y) = \Ii_{\cK} + \sum_{i=1}^\rv \varphi(\gamma_i(A)) \, y_i. \qedhere \popQED
\]
 \end{definition}

 \begin{remark}\label{rem:GatUCP}
  Suppose $A=(A_1,\ldots,A_\gv)\in \mathbb{S}^\gv_\cH.$ 
  Assuming, $\{\gamma_j(A):0\le j\le \rv\}$ is linearly independent, which, by Proposition~\ref{prop:bded}, is the case
 when $\cD_{\Gat(A)}$ is bounded,  and given $(B_1,\dots,B_\gv)\in \mathbb{S}_{\cK}^\gv$ there exists a $\Gat$-concomitant uniquely
  determined by  $\gamma_j(A) \mapsto \gamma_j(B).$
  
	The connection between spectrahedral inclusions $\cD_A\subseteq \cD_B$ and ucp maps exposed in Proposition~\ref{prop:cp}
	extends to $\Gat$-ucp maps.
	{\CCBB  Suppose $\varphi:\cR^\Gamma_A\to M_m$ is a $\Gat$-concomitant. If $\varphi$ is $\Gat$-ucp,
    then}
	 $\cD_{\Gat(A)} \subseteq \cD_{\Gamma(\wvphi{A})}.$ 
     {\CCBB Conversely, if $\cD_{\Gat(A)}$ is bounded
     and the inclusion $\cD_{\Gat(A)} \subseteq \cD_{\Gamma(\wvphi{A})}$ holds,  then  $\varphi$
     is $\Gat$-ucp. In particular, assuming $\cD_{\Gat(A)}$ is bounded,} $\varphi$ is ucp if and only if  $Y \in \mathbb{S}^\rv_m$ and 
     \[
	L_{\Gamma(A)}(Y) =\Ii_d\otimes \Ii_m + \sum_{i=1}^{\rv} \gamma_i(A) \otimes Y_i  \succeq 0,
	\]
	implies
	\[ \pushQED{\qed}
	L_{\Gamma(\wvphi{A})} (Y)=
	\Ii_n\otimes \Ii_m + \sum_{i=1}^{\rv} \varphi(\gamma_i(A))\otimes Y_i\succeq 0.  \qedhere \popQED
	\]
\end{remark}

Since they preserve the unit and the positivity structure of $\Gat$-operator systems, $\Gat$-ucp maps are the  \df{morphisms} of $\Gat$-operator systems.
An \df{isomorphism of $\Gat$-operator systems} is a bijective $\Gat$-ucp map whose inverse is also $\Gat$-ucp.

 Given a $\Gat$-operator system $\cR$ as in \eqref{eq:gamaopsys} and a positive integer $n,$ \index{$\UCP^\Gat$}
 let $\UCP^\Gat_n(\cR)$ denote the $\Gat$-ucp maps $\varphi:\cR\to M_n$ and let $\UCP^\Gat(\cR) = (\UCP^\Gat_n(\cR))_n.$
 A pair $(\varphi,V),$ where $\varphi\in \UCP_t^\Gat(\cR)$ and $V:\C^s\to \C^t$ is an isometry, is a \df{$\Gat$-pair} provided
  \[
  V^* \gamma_i (\wvphi{A}) V = \gamma_i (V^* \wvphi{A} V),
 \]
  for all $1\le i\le \rv.$
  It is shown in Proposition~\ref{p:GammaUCP-convex} below that $\UCP^\Gat(\cR)$ is $\Gat$-convex. That is, if $(\varphi,V)$ is
  a $\Gat$-pair, then $V^*\varphi V: \cR\to M_s$ defined by $V^* \varphi V (R) = V^* \varphi(R)V$ is $\Gat$-ucp.

 The  \df{$\Gat$-matrix range} of a tuple $A=(A_1,\dots,A_\gv)\in \mathbb{S}_\cH^\gv$ is 
 $\widecheck{\cR} = W^\Gat(A) = (W_n^\Gat(A))_n,$ where  \index{$\widecheck{\cR}$}
\begin{equation}\label{eq:range}
	W_n^\Gat(A) = \{(\varphi(A_1),\ldots,\varphi(A_\gv)) \in \mathbb{S}^\gv_n \ | \ \varphi \in \UCP^\Gat_n(\cR^\Gat_A)\}.
\end{equation}
   Since a ucp map $\varphi$ is bounded with norm one, it follows that $W^\Gat(A)$ is bounded.

 In the case that $\gv=\rv$ and thus $\Gat(x)=x,$ the graded set $W(A)=W^\Gat(A)$ is the {matrix range} of $A$ from Subsection~\ref{sss:PD-MSS-MR}.
  A pair $(B,V),$ where $B\in W^\Gat_t(A)$ and $V:\C^s\to\C^t$ is an isometry, is a \df{$\Gat$-pair} for $W^\Gat(A)$ if $B= \wvphi{A},$
   where $\varphi\in \UCP^\Gat_t(\cR)$ and $(\varphi,V)$ is a $\Gat$-pair for $\UCP^\Gat(\cR).$

Given $\Gat$-convex sets $\bm K$ and $\bm K^\prime$
 a sequence  $\Phi=(\Phi_n)$ of maps $\Phi_n:K_n\to K^\prime_n$ is  \df{matrix $\Gat$-affine} if 
 $V^*\Phi_n(X) V = \Phi_m(V^*XV)$ for each $\Gat$-pair $(X,V)$ with $X\in K_n$ and $V:\C^m\to \C^n.$ 
The  $\Gat$-convex sets $\bm K$ and $\bm K^\prime$  are {\bf isomorphic} if there exists
 a matrix $\Gat$-affine map $\Phi$ such that each $\Phi_n$ is a homeomorphism.

\begin{proposition}
	\label{p:GammaUCP-convex}
If   $(A_1,\dots,A_\gv)\in \mathbb{S}_\cH^\gv,$   
	then   $W^\Gat(A)$  and $\UCP^\Gat(\cR^\Gat_A)$ are isomorphic \bdcpt $\Gamma$-convex sets.
\end{proposition}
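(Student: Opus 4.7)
The plan is to show that the evaluation map
\[
\Psi_n:\UCP^\Gat_n(\cR^\Gat_A)\to W^\Gat_n(A),\qquad \varphi\mapsto \wvphi{A}=(\varphi(A_1),\ldots,\varphi(A_\gv)),
\]
assembled into a sequence $\Psi=(\Psi_n)_n$, is a level-wise homeomorphism that is matrix $\Gat$-affine, and to use it to transport the $\Gat$-convex structure from the domain to the range.

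First I would verify that $\UCP^\Gat(\cR^\Gat_A)$ is itself a $\Gat$-convex set. Closure under direct sums and unitary conjugations is routine because the polynomial identity $\varphi(\gamma_i(A))=\gamma_i(\wvphi{A})$ commutes with both operations (polynomial evaluations distribute over block-diagonal sums and conjugation). The key step is closure under $\Gat$-pair compressions: given a $\Gat$-pair $(\varphi,V)$ with $\varphi\in\UCP^\Gat_t(\cR^\Gat_A)$ and $V:\C^s\to\C^t$ an isometry, I set $\psi=V^*\varphi V$ and compute
\[
\psi(\gamma_i(A))=V^*\varphi(\gamma_i(A))V=V^*\gamma_i(\wvphi{A})V=\gamma_i(V^*\wvphi{A}V)=\gamma_i(\vecc\psi(A)),
\]
where the second equality uses that $\varphi$ is a $\Gat$-concomitant and the third uses the $\Gat$-pair condition on $(\varphi,V)$. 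Thus $\psi$ is again $\Gat$-ucp.

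Next I check that $\Psi$ is bijective and matrix $\Gat$-affine. Surjectivity is immediate from the definition of $W^\Gat_n(A)$ in \eqref{eq:range}. For injectivity, if $\Psi_n(\varphi_1)=\Psi_n(\varphi_2)$, then $\varphi_1$ and $\varphi_2$ agree on each $A_i$, hence, by the $\Gat$-concomitant property, on each $\gamma_i(A)$ as well, and also on $\Ii_\cH$ by unitality; since these elements span $\cR^\Gat_A$, the maps are equal. Matrix $\Gat$-affineness is the observation that for a $\Gat$-pair $(\varphi,V)$,
\[
\Psi_s(V^*\varphi V)=\big((V^*\varphi V)(A_j)\big)_{j=1}^\gv=V^*\wvphi{A}V=V^*\Psi_t(\varphi)V,
\]
which simultaneously shows that $W^\Gat(A)$ inherits $\Gat$-convexity from $\UCP^\Gat(\cR^\Gat_A)$, since the compressions required by the definition of a $\Gat$-pair for $W^\Gat(A)$ are exactly the images under $\Psi$ of the $\Gat$-convex combinations in $\UCP^\Gat$.

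Finally I address the topology and bounds. Equip $\UCP^\Gat_n(\cR^\Gat_A)$ with the point-norm topology; because $\cR^\Gat_A$ is finite-dimensional, this coincides with convergence on the finite set of generators. Standard arguments show $\UCP_n(\cR^\Gat_A)$ is compact, and $\UCP^\Gat_n(\cR^\Gat_A)$ is the closed subset cut out by the continuous (polynomial) relations $\varphi(\gamma_i(A))=\gamma_i(\wvphi{A})$, hence compact. Each $\Psi_n$ is manifestly continuous, so being a continuous bijection from a compact space to a Hausdorff space it is a homeomorphism, identifying $W^\Gat_n(A)$ as a compact subset of $\mathbb{S}^\gv_n$. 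Uniform boundedness of both sets follows from $\|\varphi(A_j)\|\le\|A_j\|$, independently of $n$. The only subtle point I anticipate is keeping the two notions of $\Gat$-pair (one for $\UCP^\Gat(\cR^\Gat_A)$, one for $W^\Gat(A)$) aligned under $\Psi$; this is exactly what the matrix $\Gat$-affineness computation above takes care of.
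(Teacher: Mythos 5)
Your proof is correct and follows essentially the same route as the paper's: establish $\Gat$-convexity of $\UCP^\Gat(\cR^\Gat_A)$ via the $\Gat$-pair compression calculation, observe that $\Psi=(\Psi_n)$ is a matrix $\Gat$-affine bijection, and use boundedness plus closedness of $\UCP^\Gat_n$ in finite dimensions to get compactness and homeomorphism. You fill in a couple of details the paper labels ``routine'' or ``omitted'' (notably the injectivity of $\Psi_n$ via the spanning property of $\{\Ii_\cH,\gamma_i(A)\}$), but the structure and key computations are the same.
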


\begin{proof}
  \CB{Let $\cR=\cR^\Gat_A.$}
 To prove \CB{$\UCP^\Gat(\cR)$} is $\Gat$-convex,
  suppose \CB{$\varphi \in \UCP^\Gat_t(\cR)$} and $V:\C^s\to\C^t$ is an isometry such that $(\varphi,V)$ is a $\Gat$-pair.  It is immediate,
   from the definition of $\Gat$-pair, 
   that $\psi=V^* \varphi V$ is a $\Gat$-concomitant.
  Indeed, by definition of a $\Gat$-pair,   
  $V^* \gamma_i(\wvphi{A})V = \gamma_i(V^*\wvphi{A}V)$ 
  for $0\le i\le \rv.$
  Hence, as $\varphi$ is a $\Gat$-concomitant, 
 \[
    \psi(\gamma_i(A)) = V^* \varphi({\gamma}_i(A)) V 
     = V^* \gamma_i(\wvphi{A})V = \gamma_i(V^* \wvphi{A}V)
     =\gamma_i({\psi}(A)).
 \]
  
  To see that $\psi$ is ucp, let $R=\sum Y_j\otimes \gamma_j(A)\in M_m(\cR)$ be given and observe,
 \[
  \psi_m(R) = (\Ii_m \otimes V)^* \varphi_m(R)(\Ii_m \otimes V).
 \]
 Hence if $R$ is positive semidefinite, then so is $\psi_m(R).$ Thus $\psi$ is ucp. Hence $\UCP^\Gat(\cR)$ is $\Gat$-convex.
  That the maps  $\Psi_s:\UCP^\Gat_s(\cR) \to W^\Gat_s(A)$ given by 
   $\Psi_s(\varphi) =\wvphi{A}$ are affine bijections 
  follows from the definitions. The details are omitted.

  The graded  set $\UCP^\Gat(\cR)$ is bounded as ucp maps have norm one.
   It is straightforward to check
   that $\UCP^\Gat_s(\cR)$ is closed so that $\UCP^\Gat_s(\cR)$ is compact.   
    Since $\Psi_s$ is evidently continuous, by compactness of its domain,
    $W^\Gat(A)$ is also compact and $\Psi_s$ is a homeomorphism. 
  \end{proof}

Given a  tuple $B\in \mathbb{S}_\cH^{\rv},$ let
\[
 \cD^\Gat_B =
 \big(\{X\in \mathbb{S}^\gv_m: I\otimes I +\sum_{j=1}^\rv  B_j\otimes \gamma_j(X) \succeq 0\}\big)_{m\in \N}.
\]
 We call $\cD^\Gat_B$ a \df{\Hilby  $\Gat$-spectrahedron}. \index{$\cD^\Gat_B$}

\begin{proposition}
\label{p:not:vac}
 The graded set $\cD_B^\Gat$ is $\Gat$-convex.

 Suppose $\bm K \subseteq\mathbb{S}^\gv$ is closed and  $\Gat$-convex and $\bm J \subseteq \mathbb{S}^\rv$
 is a closed matrix convex set that contains $\Gat(\bm K).$ If 
\begin{enumerate}[\rm (a)]
 \item  $0\in K_1;$
  \item $\Gamma(0)=0;$ 
   \item \label{i:not:vac:ii+} $0$ is in the interior of $J_1;$ and
  \item \label{i:not:vac:iii} 
     $X\in \mathbb{S}^\gv$ and $\Gat(X)\in \bm J$
    implies $X\in \bm K,$ 
 \end{enumerate}
  then  there exists a semi-finite tuple $A\in \mathbb{S}_\cH^\rv$ such that $\bm K =\cD^\Gat_A.$
  \end{proposition}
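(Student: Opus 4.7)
The plan splits into the two assertions of the proposition.

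For the first, that $\cD^\Gat_B$ is $\Gat$-convex, I would simply verify the definition. Closure under direct sums and simultaneous unitary conjugations is automatic, since polynomial evaluation commutes with both operations and $L_B(\Gat(\cdot))$ is therefore compatible with direct sums and unitary similarity (after the canonical shuffle on the tensor factors). For the $\Gat$-pair compression property, suppose $X \in \cD^\Gat_B$ has size $n$ and $(X,V) \in \Coup$ with isometry $V:\C^m\to\C^n$. The defining condition $V^*\gamma_j(X)V = \gamma_j(V^*XV)$ for $1\le j\le \rv$ yields
\[
L_B(\Gat(V^*XV)) \;=\; I + \sum_{j=1}^{\rv} B_j\otimes \gamma_j(V^*XV) \;=\; (I\otimes V)^* L_B(\Gat(X)) (I\otimes V) \;\succeq\; 0,
\]
so $V^*XV \in \cD^\Gat_B$.

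For the second (main) assertion, the strategy is to reduce directly to the matrix-convex analog, namely Lemma~\ref{l:semi-finite}, applied not to $\bm K$ but to $\bm J$. The key preliminary observation is that, under the listed hypotheses,
\[
\bm K \;=\; \Gat^{-1}(\bm J).
\]
Indeed, the containment $\Gat(\bm K) \subseteq \bm J$ gives the inclusion $\bm K \subseteq \Gat^{-1}(\bm J)$ immediately, while hypothesis~(d) is literally the reverse inclusion. (Hypotheses~(a)--(b) merely ensure compatibility at the base point $0$, which sits in $J_1$ by hypothesis~(c).) Consequently any Hilbertian spectrahedral representation of $\bm J$ transports through $\Gat$ to a Hilbertian $\Gat$-spectrahedral representation of $\bm K$.

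Now $\bm J$ is a closed matrix convex subset of $\mathbb{S}^\rv$ with $0$ in the interior of $J_1$, so Lemma~\ref{l:semi-finite} applies and produces a semi-finite tuple $A=(A_1,\dots,A_\rv)\in \mathbb{S}_\cH^\rv$, acting on some separable Hilbert space $\cH$, with $\bm J = \cD_A$. Unwinding the definition of $\cD^\Gat_A$ gives
\[
\Gat^{-1}(\cD_A) \;=\; \bigl\{X \in \mathbb{S}^\gv : I + \sum_{j=1}^{\rv} A_j\otimes \gamma_j(X) \succeq 0 \bigr\} \;=\; \cD^\Gat_A,
\]
and combining with the identity $\bm K = \Gat^{-1}(\bm J)$ gives $\bm K = \cD^\Gat_A$, as required. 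No serious obstacle arises: the heavy lifting — the countable construction of $A$ via the Effros--Winkler separation Theorem~\ref{t:EW} — is already absorbed into Lemma~\ref{l:semi-finite}. The only non-routine step is recognizing that hypothesis~(d), together with $\Gat(\bm K) \subseteq \bm J$, upgrades the generic one-sided containment to the equality $\bm K = \Gat^{-1}(\bm J)$, which is what permits the spectrahedral representation to be transported across $\Gat$ without loss.
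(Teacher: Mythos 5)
Your proof is correct and follows essentially the same route as the paper: a direct computation for the $\Gat$-convexity of $\cD^\Gat_B$ using the $\Gat$-pair condition (up to a canonical shuffle in the tensor ordering), followed by an application of Lemma~\ref{l:semi-finite} to $\bm J$ and the observation that hypotheses (d) plus $\Gat(\bm K)\subseteq\bm J$ give the identity $\bm K=\Gat^{-1}(\bm J)$, hence $\bm K=\cD^\Gat_A$. Your side-remark that hypotheses (a)--(b) play no active role in the argument (only (c) and (d) are used) is also accurate; the paper's proof likewise invokes only (c) to certify the lemma and (d) for the reverse inclusion.
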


\begin{remark}\rm
\label{r:not:vac}
  (1) Note that $\bm J =\overline{\matco}(\Gat(\bm K))$ contains $\Gat(\bm K).$ In any case, the condition of item~\ref{i:not:vac:iii}
   of Proposition~\ref{p:not:vac} is, given the assumption that $\Gat(\bm K)\subseteq \bm J,$ equivalent to $\bm K =\Gat^{-1}(\bm J).$ 

  {\CCBB 
   (2) Proposition~\ref{p:not:vac} provides a characterization
   of \Hilby $\Gat$-spectrahedra assuming $\Gat(0)=0$
   stated in terms of the existence of a $\bm J$ satisfying
   the hypotheses of the proposition. 
   Indeed, for the converse, suppose $\bm K=\cD_A^\Gat$
   for some (possibly operator) tuple $A,$ set
   $\bm J=\cD_A$ and observe that $\Gat^{-1}(\bm J)=\bm K.$
   That $J_1$ contains a neighborhood of $0$ and
   $0\in K_1$ are automatic. 

   (3) If $\bm K$ is a \Hilby $\Gat$-spectrahedron,
    then each $K_n$ has non-empty interior. 
    The set $\bm K=\{(X,Y) \in \bbS^2: XY-YX=0, \, \|X\|, \, \|Y\| \le 1\}$
    is a closed and bounded $\Gat=(x,y,x^2,y^2)$
    convex set. See Example~\ref{ex:x^2y^2}. Since
    $K_2$ has empty interior, it is not a 
    \Hilby $\Gat$-spectrahedron. So not every $\Gat$
    convex set that contains $0$ (or even $0$ in its
    interior at level $1$) is a \Hilby $\Gat$-spectrahedron.
   }
   \qed
\end{remark}

\CB{Before proving  Proposition \ref{p:not:vac}, we give a few examples.}

\begin{example}
In \cite[Theorem 2.6]{JKMMP} a condition on a $\Gat$-convex set $\bm K$ is given under which $\matco (\Gat(\bm K))$ contains a neighborhood of $0.$ Namely, if $\Gat(0)=0$ and $\bm{K}$ is a  $\Gat$-convex set containing $0,$ then $0$ is in the interior of $\matco (\Gat(\bm K))(1)$ if and only if the real span of $\{\gamma_j \ | \ 1 \leq j \leq \rv\}$ does not contain a polynomial $q \in \C\langle x \rangle$ such that $q(X) \succeq 0$ for all $X \in \bm{K}.$

This certificate applied to the case of  $xy$-convexity, where $\Gat=(x,y,xy+yx,i(xy-yx)),$ 
implies if $\bm K$ is $xy$-convex, $K_1$ contains a neighborhood of $0$ and, for some $n,$
there is a pair $(X,Y)\in K_n$ such that $XY\ne YX,$  then  $\matco (\Gat(\bm K))(1)$ contains a neighborhood of $0.$
In particular, the result holds if $K_2$ also contains a neighborhood of $0$ as we now show. 

 Suppose $a,b,c\in\R$ and $ax+by + c xy  \ge 0$ for all $(x,y)$ in the non-empty interior of $K_1$.
 For $(x,y)$ sufficiently small in $K_1,$  we also have $(-x,-y), (x,-y)$ and $(-x,y)\in K_1.$ Thus $-ax-by+c\, xy \ge 0.$
 Hence $c\, xy\ge 0$ for all $(x,y)$ sufficiently small. Thus $c=0.$ A similar argument  gives $a=b=0$ too.   Thus,
 if we start with $a,b,c,d\in\R$ such that  $q(X,Y) = aX + bY + c\, (XY+ YX) +i\, d(XY-YX)  \succeq 0$ on $\bm K$ and $K_1$ has an interior, then
 we conclude that $a=b= c =0.$    Thus $q(x,y) =  i\,d(xy-yx),$ for some real number $d$ and now $q(x,y)\succeq 0$
  on $\bm K$ implies if $(x,y)\in K_n$ then $i\,d (xy-yx)\succeq 0.$ If there is an $n$ such that $K_n$ contains a pair $(X,Y)$
  such that $XY-YX\ne 0,$ then $i\,(XY-YX)$ has both positive and negative eigenvalues and
  thus $d=0.$ \qed
 \end{example}

\begin{example}\rm
\label{e:not:vac}
 Let  $K_n =\{(X,Y)\in \mathbb{S}_n^2: I-X^2-Y^4\succeq 0\}$ and $\bm K = (K_n)_n.$ It is straightforward
  to verify that $\bm K$ is $\Gat$-convex, where $\Gat(x,y)=(x,y,y^2);$ that is,  $\bm K$ is $y^2$-convex.
  
  Let $\bm J$ denote the closure of the matrix convex hull of $\Gamma(\bm K)$ and the point $(0,0,-1)\in \mathbb{S}^3_1=\mathbb{R}^3.$
  Let $E_k = (0,0,I_k)$ for positive integers $k.$ In particular, $-E_k\in J_k.$  Since $J_1$ contains the points $(\pm 1,0,0),$ $(0,\pm 1,1)$
  and $(0,0,-1)$ and is convex, it contains a neighborhood of $0.$  A routine argument shows that elements of $\bm J$ have
  the form
  \[
     \begin{pmatrix} V^* & W^* \end{pmatrix}  \, \begin{pmatrix} \Gat(X,Y) & 0 \\ 0 & -E_k \end{pmatrix}
     \,  \begin{pmatrix} V\\ W \end{pmatrix},
  \]
  for some positive integers $k,\ell,$ tuple $(X,Y)\in K_\ell,$ and maps $V$ and $W$ such that $V^*V+W^*W=I.$

 Suppose $\Gat(X,Y)=(X,Y,Y^2)$ is in  $\bm J.$ 
 Thus there exists a sequence 
  $(X_n,Y_n)\in K_{m_n}$ and $E_{k_n}\in J_{k_n}$ and isometries 
\[
   \cV_n =\begin{pmatrix} V_n \\ W_n \end{pmatrix}
\]
 such that 
 \[
     \cV_n^*  \begin{pmatrix} \Gat(X_n,Y_n) & 0 \\ 0  & -E_{k_n} \end{pmatrix} \cV_n
       = (V_n^*X_n V_n, V_n^* Y_n V_n, V_n^* Y_n^2 V_n - W_n^* W_n)
\]
   converges to $(X,Y,Y^2).$  Set 
 \[
  Z_n = \cV_n^*  \begin{pmatrix} Y_n^2 & 0 \\ 0  & -I_{k_n} \end{pmatrix} \cV_n = V_n^* Y_n^2 V_n - W_n^* W_n.
 \]
  Thus $Z_n$ converges to $Y^2$ and therefore $Z_n^2$ converges to $Y^4.$ 
  Moreover,
 \[
 \begin{split}
    Z_n^2  & \preceq \cV_n^* \,  \begin{pmatrix} Y_n^2 & 0 \\ 0  & -I_{k_n} \end{pmatrix}^2 \, \cV_n
     \\ & = \cV_n^*  \begin{pmatrix} Y_n^4 & 0 \\ 0  & I_{k_n} \end{pmatrix} \cV_n
    \\ & = V_n^* Y_n^4 V_n + W_n^* W_n.
 \end{split}
 \]  
  Thus, as $(X_n,Y_n)\in \bm K$ so that $X_n^2+Y_n^4 \preceq I,$
 \[
   (V_n^* X_n V_n)^2 +  Z_n^2  \preceq  V_n^* \big ( X_n^2 + Y_n^4)V_n  +W_n^*W_n 
   \preceq V_n^* V_n +W_n^*W_n =I.
 \]
 Hence, taking the limit on $n$ gives $X^2+Y^4\preceq I.$ Since $\bm K$ is closed,  $(X,Y)\in \bm K$
 and consequently $\bm J$ and $\Gat$ satisfy the hypotheses of Proposition~\ref{p:not:vac}.
 It follows that there exists a semi-finite tuple $A$ such that $\bm K =\cD^\Gat_A.$
 
 Of course, as is easily verified, $\bm K$ is determined by the $\Gat$-linear matrix inequality,
 \[
   I + B_0 x + B_1 y +B_2 y^2 =   \begin{pmatrix} 1 & x & y^2 \\ x & 1 & 0 \\ y^2 & 0 & 1 \end{pmatrix}
 \]
  so that $\bm K = \cD_B^\Gat.$
  
  As a further remark, the graded set $\bm J =\overline{\matco}(\Gat(K))$ satisfies all the hypotheses of Proposition~\ref{p:not:vac},
  save for item~\ref{i:not:vac:ii+} since if  $(x,y,z)\in  \matco(\Gat(\bm K))(1),$ then $z\ge 0.$
\qed
\end{example}

\begin{proof}[Proof of Proposition~\ref{p:not:vac}]
  If $(X,V)$ is a $\Gat$-pair and $X\in \cD_B^\Gat,$ then
 \[
\begin{split}
  I\otimes I  + \sum_{j=1}^\rv  B_j\otimes \gamma_j(V^*XV)  
   &  = I\otimes I + \sum_{j=1}^\rv  B_j\otimes V^* \gamma_j(X)V
   \\ & = [I\otimes V]^* \left (I\otimes I + \sum_{j=1}^\rv   B_j\otimes \gamma_j(X)  \right ) [I\otimes V] \succeq 0.
 \end{split}
 \]
 Thus $V^*XV\in \cD_B^\Gat$ and thus $\cD_B^\Gat$ is $\Gat$-convex.

  The hypotheses  imply that $\bm J$ is a closed matrix convex set with $0$ in its
    interior, certifying an application of Lemma~\ref{l:semi-finite}. 
    Hence there exists a semi-finite tuple $A\in \mathbb S^\rv_{\mathcal H}$ 
    for some separable Hilbert space $\mathcal H,$ such that $\bm J = \cD_A.$   If $X\in\bm K,$
   then $\Gat(X)\in \matco(\Gat(\bm K))\subseteq \bm J=\cD_A$ and thus $X\in \cD_A^\Gat;$ i.e.,
 \[
    I + \sum  A_j\otimes  \gat_j(X)  \succeq 0. 
 \]
  Conversely,  if   $X\in \mathbb{S}^\gv\setminus \bm K,$ 
  then  the hypothesis of item~\ref{i:not:vac:iii} 
   implies $\Gat(X)\notin \bm J = \cD_A$ and so $X\notin \cD_A^\Gat.$ Thus 
    $\bm K =\cD_A^\Gat$ for some semi-finite tuple $A$ as claimed.
\end{proof}

\subsubsection{Duality} 
For any \bdcpt $\Gat$-convex set \CB{$\bm{K}\subset \gtup,$} let
\begin{equation}
 \label{e:wideY}
\widehat{Y} = \bigoplus_{Y \in \bm{K}}Y \quad\text{ and }\quad   I = \bigoplus_{Y \in \bm{K}} \Ii_{\siz(Y)},
\end{equation}
and associate to $\bm{K}$ the $\Gat$-operator system $\widehat{\bm{K}}= $ span$\{I=\gamma_0(\widehat{Y}),\gamma_1(\widehat{Y}),\ldots,\gamma_\rv(\widehat{Y})\}.$
 \index{$\widehat{Y}$}
The assumption that $\bm K$ is bounded ensures each $\widehat{Y}_j$ is a bounded operator.

\begin{theorem}[\textbf{Webster-Winkler duality for $\Gat$-convex sets}]
 \label{th:g-dual} 
 The operations  \, $\widehat{}$ \,
of \eqref{e:wideY}
  and \, $\widecheck{}$ \, of \eqref{eq:range} are dual to one another: 
 \begin{enumerate}[\rm (a)]
  \item \label{i:g-dual:a}
    Suppose $A = (A_1,\ldots,A_\gv)\in \mathbb{S}_\cH^\gv$ is semi-finite  and let $\cR$ denote the span of  $\{\Ii_\cH,\gamma_1(A),\ldots,\gamma_\rv(A)\}.$  If 
{\CCBB $\Ii_\cH,\gamma_1(A),\ldots,\gamma_\rv(A)$ are linearly independent}
(e.g.,     $\cD_{\Gat(A)}$ is bounded), 
 then $\cR$ and $\widehat{\widecheck{\cR}}$ are isomorphic $\Gat$-operator systems. 

 \item \label{i:g-dual:b}  Suppose $\Gamma(0)=0$ and let $\bm{K}\subset \gtup$ denote a  \bdcpt  $\Gat$-convex set with  $0\in K_1.$  
 {If $\bm K =\Gat^{-1}(\overline{\matco}(\Gat(\bm K))),$}
 then 
 $\bm{K} = \widecheck{\widehat{\bm{K}}} = W^\Gat(\widehat{Y}) $
for $\widehat{Y}$ defined as in equation~\eqref{e:wideY}.
\end{enumerate}
\end{theorem}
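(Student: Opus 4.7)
The plan is to reduce both parts to the matrix-convex Webster--Winkler duality (Theorem \ref{th:find}) together with Proposition \ref{prop:cp}, exploiting the fact that a $\Gat$-ucp map is, by definition, a ucp map intertwining each $\gat_j$ with the coordinatewise evaluation of its image. In each part the crux is to pin down the correct spectrahedral relationship between the tuples $\Gat(A)$ and $\Gat(\widehat{Y})$ (in (a)), or between $\Gat(\widehat{Y})$ and the closed matrix convex hull of $\Gat(\bm K)$ (in (b)); Propositions \ref{prop:cp}, \ref{prop:bded} and Lemma \ref{prop:setdu} then translate spectrahedral statements back into $\Gat$-ucp conclusions.

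For part (a), I consider the unital map $\Phi\colon \cR \to \widehat{\widecheck{\cR}}$ determined by $\gat_j(A) \mapsto \gat_j(\widehat{Y})$ for $1 \le j \le \rv$. Boundedness of $\cD_{\Gat(A)}$ together with Proposition \ref{prop:bded} forces $\{\Ii,\gat_1(A),\ldots,\gat_\rv(A)\}$ to be linearly independent, so $\Phi$ is well defined, and $\Gat$-concomitance holds by construction. To see $\Phi$ is completely positive I verify $\cD_{\Gat(A)} \subseteq \cD_{\Gat(\widehat{Y})}$ and invoke Proposition \ref{prop:cp}: every $Y \in W^\Gat(A)$ is $\wvphi{A}$ for some $\Gat$-ucp $\varphi$, so $\varphi(\gat_j(A)) = \gat_j(Y)$, and the ampliation of $\varphi$ converts $L_{\Gat(A)}(Z) \succeq 0$ into $L_{\Gat(Y)}(Z) \succeq 0$; intersecting over $Y \in W^\Gat(A)$ yields the containment. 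For the reverse containment I exploit semi-finiteness: writing $A = \oplus_j A^{(j)}$, each compression $\varphi_j\colon R \mapsto V_j^*RV_j$ onto the $j$-th block is $\Gat$-ucp because $V_j$ reduces $A$ (Remark \ref{rem:gpairs}(b)); hence $A^{(j)} \in W^\Gat(A)$ for every $j$, so $A$ is unitarily equivalent to a direct summand of $\widehat{Y}$, giving $\cD_{\Gat(\widehat{Y})} \subseteq \cD_{\Gat(A)}$. The resulting equality $\cD_{\Gat(A)} = \cD_{\Gat(\widehat{Y})}$ makes $\cD_{\Gat(\widehat{Y})}$ bounded, so by Proposition \ref{prop:bded} the map $\Phi$ is bijective, and a symmetric application of Proposition \ref{prop:cp} shows $\Phi^{-1}$ is also ucp and hence $\Gat$-ucp.

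For part (b), the easy inclusion $\bm K \subseteq W^\Gat(\widehat{Y})$ is witnessed, for each $Y \in \bm K$, by the compression $\varphi_Y\colon \cR^\Gat_{\widehat{Y}} \to M_{\siz(Y)}$ along the inclusion isometry $V_Y$ onto the $Y$-summand of $\widehat{Y}$; this is $\Gat$-ucp because $V_Y$ reduces $\widehat{Y}$, and it satisfies $(\varphi_Y(\widehat{Y}_1),\ldots,\varphi_Y(\widehat{Y}_\gv)) = Y$. For the reverse inclusion, take $X = \wvphi{\widehat{Y}}$ with $\varphi$ a $\Gat$-ucp map; the $\Gat$-concomitance gives $\Gat(X) = (\varphi(\gat_1(\widehat{Y})),\ldots,\varphi(\gat_\rv(\widehat{Y})))$, which lies in the ordinary matrix range $W(\Gat(\widehat{Y}))$. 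A canonical-shuffle-plus-ampliation argument shows $W(\Gat(\widehat{Y})) \subseteq \cD^\circ_{\Gat(\widehat{Y})}$ without any boundedness hypothesis, and the hypothesis $\bm K = \Gat^{-1}(\overline{\matco}(\Gat(\bm K)))$ will then yield $X \in \bm K$, provided I can identify $\cD^\circ_{\Gat(\widehat{Y})}$ with $\overline{\matco}(\Gat(\bm K))$.

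The main obstacle is precisely this last identification. Set $\bm L := \overline{\matco}(\Gat(\bm K))$, which is closed, matrix convex, contains $0$ (since $\Gat(0)=0 \in \Gat(\bm K)$), and is compact by boundedness of $\bm K$ combined with Lemma \ref{l:level:bounded}. Lemma \ref{prop:setdu} applied to $\bm L$ gives $\bm L = \cD^\circ_{\widehat{Z}}$ with $\widehat{Z} = \oplus_{Z \in \bm L}Z$. It then suffices to show $\cD_{\Gat(\widehat{Y})} = \cD_{\widehat{Z}}$: one inclusion is immediate from $\Gat(\bm K) \subseteq \bm L$, and the reverse follows because, for each fixed tuple $X$, the set $\{Z : L_Z(X) \succeq 0\}$ is a closed matrix convex set (a free spectrahedron after canonical shuffle), so the pointwise constraints from $\Gat(\bm K)$ automatically propagate to all of $\overline{\matco}(\Gat(\bm K)) = \bm L$. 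Taking polars yields $\cD^\circ_{\Gat(\widehat{Y})} = \bm L$, completing the plan for (b).
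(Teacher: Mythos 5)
Your proof is correct, and for part (a) it is essentially the paper's argument in a different outfit: the paper verifies positivity of the unital map $\tau$ and its inverse directly from the definition of a $\Gat$-ucp map, while you recast the same verification as the spectrahedral equality $\cD_{\Gat(A)} = \cD_{\Gat(\widehat{B})}$ and let Proposition \ref{prop:cp} do the translation. The crucial ingredient in both --- that semi-finiteness of $A$ places each block $A^{(j)}$ in $W^\Gat(A)$ so that the reverse spectrahedral inclusion holds --- is identical. For part (b), however, you take a genuinely different route. The paper argues by contradiction with a single application of the Effros-Winkler separation theorem (Theorem~\ref{t:EW}): if $B\in W^\Gat_\ell(\widehat{Y})\setminus K_\ell$, then $\Gat(B)\notin\overline{\matco}(\Gat(\bm K))$, a separating monic pencil $L_C$ is produced, the canonical shuffle converts $L_C(\Gat(\cdot))$ into $L_{\Gat(\cdot)}(C)$, and one concludes $\cD_{\Gat(\widehat{Y})}\not\subseteq\cD_{\Gat(B)}$, so $\varphi$ was not $\Gat$-ucp. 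You instead push the problem through the bipolar machinery: you show $W(\Gat(\widehat{Y}))\subseteq\cD_{\Gat(\widehat{Y})}^\circ$ by the easy (ampliation) direction, identify $\cD_{\Gat(\widehat{Y})}=\cD_{\widehat{Z}}$ where $\widehat{Z}$ is built from $\bm L:=\overline{\matco}(\Gat(\bm K))$, and then invoke Lemma~\ref{prop:setdu} (itself a bipolar consequence) to get $\cD^\circ_{\Gat(\widehat{Y})}=\bm L$. Both use the same underlying separation principle, but yours buys a clean intermediate identity $\cD^\circ_{\Gat(\widehat{Y})}=\overline{\matco}(\Gat(\bm K))$ of independent interest (it is a $\Gat$-analog of the identity $W(A)=\cD_A^\circ$), at the cost of a slightly longer chain of reductions; the paper's is more economical, producing the needed contradiction in one step. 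One small cosmetic remark: the intermediate claim in (a) that ``$A$ is unitarily equivalent to a direct summand of $\widehat{Y}$'' is fine (distinct unitary conjugates of each $A^{(j)}$ are available in $W^\Gat(A)$ to absorb repeated blocks), but it is unnecessary --- the inclusion $\cD_{\Gat(\widehat{Y})}\subseteq\cD_{\Gat(A)}$ follows already from $A^{(j)}\in W^\Gat(A)$ for each $j$, since $\cD_{\Gat(A)}=\bigcap_j\cD_{\Gat(A^{(j)})}$.
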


{\CCBB Theorem~\ref{th:g-dual} gives the object-level duality.  The corresponding
morphism-level statement, yielding a contravariant equivalence of categories, is
given as Theorem \ref{th:gamma-categorical-duality} in Appendix~\ref{app:categorical-duality}.}

\begin{remark}\rm
\label{r:221}
\mbox{}\par
{\CCBB (1) In item~\ref{i:g-dual:a} of Theorem~\ref{th:g-dual} the semi-finite assumption is needed. Consider $\Gamma=(x,y,x^2,y^2)$ (as in Example \ref{ex:x^2y^2}).
Let $D$ denote a diagonal operator on $\ell^2=\ell^2(\N_0)$ with diagonal entries
$d_0=1$ and $(d_n)_n$ a decreasing strictly monotonically to $0.$ Thus $D$ is compact. Let $S$ denote the shift.
Let $A_1=D$ and $A_2=S^*D+DS.$ Thus $A_1,A_2$ are both compact and self-adjoint.  
An exercise shows that the $C^*$-algebra
generated by $(A_1,A_2)$ is the algebra of compact operators, $\mathcal K(\ell^2(\mathbb N_0)).$  
Indeed, by the functional calculus, the rank-one projection $P_n$ onto
$\mathbb C e_n$ belongs to $C^*(D)$ for every $n$. Also,
\[
   P_{n+1}A_2P_n=d_{n+1}E_{n+1,n}.
\]
Since $d_{n+1}\neq 0$, it follows that $E_{n+1,n}\in C^*(A_1,A_2)$.
Taking adjoints gives $E_{n,n+1}$, and products of these adjacent matrix
units give all $E_{ij}$. Hence $C^*(A_1,A_2)$ contains the finite-rank
operators. Since $A_1,A_2$ are compact, the reverse inclusion is immediate,
and therefore $C^*(A_1,A_2)=\mathcal K(\ell^2(\mathbb N_0))$.

Suppose $\varphi\in \UCP^\Gat_n(\cR^\Gat_A).$
By the Stinespring-Arveson theorem (\cite[Theorem 4.1 and Theorem 7.5]{Pa}),
 there is a representation
$\pi$ {of $B(\ell^2)$ on a Hilbert space $\mathscr{K}$ and an isometry $V:\C^n\to \mathscr{K}$} such that $\varphi(\cdot)
= V^* \pi(\cdot)V.$
As in Example \ref{ex:x^2y^2}, 
the range of $V$ reduces the pair of self-adjoint 
 operators $\pi(A_1),\pi(A_2).$  
 Thus the map $\psi$
 on the $C^*$-algebra generated by $(A_1,A_2)$ defined
 by $\psi(X)=V^* \pi(X)V$ is a representation
 $\mathcal K(\ell^2)\to M_n(\C)$. 
Since {the $C^*$-algebra
$\mathcal K(\ell^2)$  is simple} it  has no nonzero finite-dimensional
representations, whence $\psi=0$. In particular,
$
   \varphi(A_1)=\varphi(A_2)=0.
$
Hence
$
   W_n^\Gat(A)=\{(0,0)\}
$
for every $n$, and so
\[
   \widehat{\widecheck{\cR}}=\mathbb C I.
\]
On the other hand,
\[
  \cR= \cR_A^\Gat
   =
   \spann\{I,A_1,A_2,A_1^2,A_2^2\}
\]
has dimension $5$. Thus $\cR$ and $\widehat{\widecheck{\cR}}$ are not isomorphic
$\Gamma$-operator systems.}

{\CCBB We also point out that in the present context
of $\Gat(x,y)=(x,y,x^2,y^2)$-convexity, if $\phi:\cR^\Gat_A \to M_n$ is 
a $\Gamma$-ucp map, then $\phi(A_j^2)=\phi(A_j)^2.$ Thus, identifying $\phi$
with its extension to 
the $C^*$-algebra
$C^*(A)$ 
generated by $A,$  both $A_1$ and $A_2$ are in the 
multiplicative domain of $\phi.$ Since both $A_1$ and $A_2$
are self-adjoint, it follows that $\phi$ is a representation
of $C^*(A)$  \cite[Theorem~3.18]{Pa}.}

\smallskip

(2)
 In item~\ref{i:g-dual:b} of Theorem~\ref{th:g-dual}, if the matrix convex hull of $\Gat(K)$ is closed, then, by Theorem~\ref{t:thm:jp-intro}, it is the case
 that $\Gat(X)$ is in the closure of the matrix convex hull of $\Gat(\bm K)$ if and only if $X\in \bm K.$ 

 \smallskip
 
 (3)
 {\CCBB 
    The isomorphism $\tau$ between $\cR$ and $\widehat{\widecheck{\cR}}$ 
constructed in the proof of Theorem~\ref{th:g-dual}
item~\ref{i:g-dual:a} below
    has the additional
    property that if $\varphi:\widehat{\widecheck{\cR}}\to M_n$ is $\Gat$-ucp, then so is $\varphi\circ\tau.$
    }
  \qed
\end{remark}

\begin{proof}[Proof of Theorem~\ref{th:g-dual}]
Set
\[
	\widehat{B} = \bigoplus_{B \in W^\Gat(A)} B \quad \text{ and }\quad  I = \bigoplus_{B \in W^\Gat(A)} \Ii_{\siz(B)}
\]
 and let $\cK$ denote the space that the $\widehat{B}_j$ act on. 
From the definitions,  
\[
  \widehat{\widecheck{\cR}} = \spann\{\gamma_0(\widehat{B})=\Ii_\cK,\gamma_1(\widehat{B}), \ldots, \gamma_\rv(\widehat{B})\}.
 \]
For notational convenience, let $A_0=\Ii_\cH$ and
$\widehat{B}_0=\Ii_\cK.$
Since the operators
$\Ii_\cH,\gamma_1(A),\ldots,\gamma_\rv(A)$ are linearly independent,
 there is a unital map  $\tau: \cR \to \widehat{\widecheck{\cR}}$ determined by $\tau(\gamma_j(A)) = \gamma_j(\widehat{B}).$
By construction, $\tau$ is bijective and a $\Gat$-concomitant. {\CCBB A bit more is true as noted
in (3) of Remark~\ref{r:221}. If $\varphi:\widehat{\widecheck{\cR}}\to M_n$ is $\Gat$-ucp,
then
\[
(\varphi\circ \tau) \circ\gamma_j(A) 
 = \varphi(\gamma_j(\widehat{B})) 
 = \gamma_j(\varphi(\widehat{B}))
 = \gamma_j \circ (\varphi \circ \tau)(A).
\]
Hence $\varphi\circ \tau$ is a $\Gat$-concomitant
and hence $\Gat$-ucp.
}

Suppose $R=\sum X_j \otimes \gamma_j(A)\in M_m(\cR)$ is positive semidefinite. Given $B\in W^\Gat(A),$ there exists a $\Gat$-ucp map
 $\varphi$ such that $B=\wvphi{A}.$ Hence, 
 \begin{equation*}
 0\preceq  \varphi(R) = \sum X_j \otimes \varphi(\gamma_j(A)) = \sum X_j \otimes \gamma_j(\wvphi{A}) =\sum X_j\otimes \gamma_j(B).
 \end{equation*}
Therefore,
\[
 0\preceq \sum  X_j\otimes \gamma_j(\widehat{B}) =\tau(R)
\]
and it follows that $\tau$ is completely positive. On the other hand, $A$ is an at most countable direct sum $A=\oplus A^{(i)}$
 where $A^{(i)}\in \mathbb{S}_{n_i}^\gv.$  Thus if $R\not\succeq 0,$ then there is some $i$ such that $R^\prime =\sum X_j\otimes \gamma_j(A^{(i)}) \not\succeq 0.$
 Since the unital map $\psi$  sending  $A$ to $A^{(i)}$ is $\Gat$-ucp, it follows that $\psi(A)=A^{(i)}\in W^\Gat(A).$
 {\CCBB 
   Therefore $A^{(i)}$ is a summand of $\widehat{B}$ and consequently $\tau(R)\not\succeq 0;$ that is,
  $R\not\succeq 0$ implies $\tau(R)\not\succeq 0.$ }
  Hence the inverse of $\tau$ is also completely positive and the proof of item~\ref{i:g-dual:a} is complete.

 Turning to item~\ref{i:g-dual:b},  recall that $\widehat{\bm{K}}= $ span$\{I,\gamma_1(\widehat{Y}),\ldots,\gamma_\rv(\widehat{Y})\}.$
	For any $n \in \N$ and $B \in K_n$ define the linear map $\varphi : \widehat{\bm{K}} \to \Mm_n$ by
	\begin{align*}
		I_{\mathscr{Y}} &\mapsto \Ii_n\\
		\gamma_i(\widehat{Y}) &\mapsto \gamma_i(B),
	\end{align*}
where $\mathscr{Y}$ is the space that the $\widehat{Y}_j$ act on. 
By construction of $\widehat{Y},$ there is an isometry $V$ reducing $\widehat{Y}$ such that $B = V^*\widehat{Y}V.$ Thus $(\widehat{Y}, V)$ is a $\Gat$-pair and $\varphi$ is a $\Gat$-ucp with image $B.$  Thus $B\in W^\Gat(\widehat{Y}).$

Conversely, let $B \in W_\ell^\Gat(\widehat{Y})$ be the image of $\widehat{Y}$ by a $\Gat$-ucp map $\varphi.$ 
 Arguing by contradiction, suppose $B \notin K_\ell.$  
  The hypothesis that $\bm K =\Gat^{-1}(\overline{\matco}(\Gat(\bm K)))$ 
   implies  $\Gat(B)$ is not in the closed matrix convex set  $\bm J=\overline{\matco}(\Gat(\bm K));$ that is $\Gat(B)\notin J_\ell.$
   By Theorem~\ref{t:EW}, there exists a tuple $C=(C_1,\dots,C_\rv)\in \mathbb{S}^\rv_\ell$ such that 
  $L_C(\Gat(Y)) \succeq 0$ for all $Y\in \bm K,$ but $L_C(\Gat(B))\not\succeq 0.$  
Setting
\[
 \LG_C(X) = \Ii_\ell \otimes \Ii_{\siz(X)}  + \sum_{i=1}^\rv C_i \otimes \gamma_i(X)
 \]
 and applying the canonical shuffle gives, 
\[
  L_{\Gat(Y)} (C) \uapprox L_C(\Gat(Y)) =  \LG_C(Y) \succeq 0
  \]
   for all $Y \in \bm{K},$ but 
\[
 L_{\Gat(B)}(C) \uapprox  L_C(\Gat(B)) =  \LG_C(B)   \nsucceq 0,
\]
 where  \df{$\uapprox $} indicates unitary equivalence. 
It follows that $L_{\Gat(\widehat{Y})} (C) \succeq 0,$ but $L_{\Gat(B)}(C) \nsucceq 0,$ 
showing that 
$\cD_{\Gat(\widehat{Y})}\not\subseteq \cD_{\Gat(B)}.$ 
  Thus $\varphi$ is not $\Gat$-ucp  (see Remark~\ref{rem:GatUCP}). 
  Hence, if $B\in W^\Gat_\ell(\widehat{Y}),$ then $B\in K_\ell$ completing
   the proof that $\bm K =W^\Gat(\widehat{Y}).$
\end{proof}

\subsubsection{\texorpdfstring{$\Gat$}{Gamma}-polar dual} 
  Given the tuple $A=(A_1,\dots,A_\gv)\in \mathbb{S}^\gv_\cH,$ the \df{$\Gat$-polar dual}  of $\cD_{\Gat(A)}$ is, by definition,
   the sequence $\cD_{A}^{\Gat\mhyphen\circ} = (\cD_{A}^{\Gat\mhyphen\circ}(n))_n,$ where
 \[
  \cD_{A}^{\Gat\mhyphen\circ}(n) =\{X \in {\mathbb{S}^\gv}: L_B(\Gat(X))= I_{\siz{B}} \otimes I_n +\sum_{j=1}^\rv B_j\otimes \gamma_j(X) \succeq 0 
     \mbox{ for all } B\in \cD_{\Gat(A)} \}.  \index{$\cD_{A}^{\Gat\mhyphen\circ}$}
 \]
{\CCBB  Evidently this definition reduces to the usual
 polar dual $\cD_A^\circ$ in the case $\Gat(x)=x.$}
 
\begin{proposition}
\label{p:Gat-polar-dual:1}
 With notations above, if $\cD_{\Gat(A)}$ is bounded, then   $\cD_{A}^{\Gat\mhyphen\circ} = W^\Gamma(A).$
 \end{proposition}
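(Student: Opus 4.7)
The plan is to factor the identity $\cD_A^{\Gat\mhyphen\circ} = W^\Gat(A)$ through the ordinary matrix range and polar dual of the $\rv$-tuple $\Gat(A) \in \mathbb{S}_\cH^\rv$, thereby reducing the statement to Proposition~\ref{p:Rcheck-WA}. Concretely, I would establish the chain of equivalences
$$
X \in \cD_A^{\Gat\mhyphen\circ}(n) \iff \Gat(X) \in \cD_{\Gat(A)}^\circ(n) \iff \Gat(X) \in W_n(\Gat(A)) \iff X \in W^\Gat_n(A).
$$

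The first equivalence is immediate from the definition of the $\Gat$-polar dual; unpacking the condition $L_B(\Gat(X)) \succeq 0$ for all $B \in \cD_{\Gat(A)}$ shows $\cD_A^{\Gat\mhyphen\circ} = \Gat^{-1}(\cD_{\Gat(A)}^\circ)$. The second equivalence is precisely Proposition~\ref{p:Rcheck-WA} applied to the tuple $\Gat(A)$, noting that the spanning operator system of $\Gat(A)$ is exactly $\cR^\Gat_A$ and that the boundedness hypothesis on $\cD_{\Gat(A)}$ is the input that proposition requires (and, via Proposition~\ref{prop:bded}, guarantees linear independence of $\Ii_\cH, \gamma_1(A), \ldots, \gamma_\rv(A)$). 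The third equivalence is where the $\Gat$-structure must be handled and is the step warranting the most care, although it ultimately turns on a simple observation.

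For the direction $X \in W^\Gat_n(A) \Rightarrow \Gat(X) \in W_n(\Gat(A))$, a $\Gat$-ucp map $\varphi$ certifying $X \in W^\Gat_n(A)$ is in particular ucp on $\cR^\Gat_A$ and satisfies $\varphi(\gamma_j(A)) = \gamma_j(\wvphi{A}) = \gamma_j(X)$, so the image of $\Gat(A)$ under $\varphi$ is $\Gat(X) \in W_n(\Gat(A))$. Conversely, given an ordinary ucp map $\psi : \cR^\Gat_A \to \Mm_n$ with $\psi(\gamma_j(A)) = \gamma_j(X)$ for all $j$, specialize to $1 \le j \le \gv$ (where $\gamma_j = x_j$) to obtain $\psi(A_j) = X_j$, so that $X = (\psi(A_1), \ldots, \psi(A_\gv))$. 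The $\Gat$-concomitant condition $\psi(\gamma_j(A)) = \gamma_j(\psi(A_1), \ldots, \psi(A_\gv))$ then collapses to the tautology $\gamma_j(X) = \gamma_j(X)$ and is automatically verified, making $\psi$ a $\Gat$-ucp map and placing $X$ in $W^\Gat_n(A)$.

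The main obstacle is thus rather mild: everything substantive has already been done in Proposition~\ref{p:Rcheck-WA}, and the $\Gat$-aspect amounts to the observation that the concomitant constraint imposes no extra condition on a ucp map $\cR^\Gat_A \to \Mm_n$ once its values on $A_1, \ldots, A_\gv$ are fixed, because the values of $\psi$ on the remaining generators $\gamma_{\gv+1}(A), \ldots, \gamma_\rv(A)$ are then consistent with the $\Gat$-concomitant law on both sides.
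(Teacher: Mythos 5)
Your proof is correct and follows essentially the same route as the paper. The paper's proof observes, in one chain, that $X\in\cD_A^{\Gat\mhyphen\circ}$ iff $L_B(\Gat(X))\succeq 0$ for all $B\in\cD_{\Gat(A)}$ iff (by Proposition~\ref{prop:cp} and Remark~\ref{rem:GatUCP}) the unital $\Gat$-concomitant $\varphi:\gamma_i(A)\mapsto\gamma_i(X)$ is $\Gat$-ucp iff $X\in W^\Gat(A)$. You instead route the middle step through Proposition~\ref{p:Rcheck-WA} applied to the $\rv$-tuple $\Gat(A)$ (giving $\cD_{\Gat(A)}^\circ = W(\Gat(A))$) and then observe, as the paper does implicitly, that the $\Gat$-concomitant condition is a tautology once the ucp map has the prescribed values on the $\gamma_j(A)$. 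Since Proposition~\ref{p:Rcheck-WA} is itself proved via Proposition~\ref{prop:cp}, the underlying mechanism and the role of the boundedness/linear-independence hypothesis (via Proposition~\ref{prop:bded}) are identical; your version just packages the matrix-range/polar-dual equality as a black box and isolates the $\Gat$-specific observation at the end, which is a reasonable way to present it.
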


\begin{proof}
   Observe, using Proposition~\ref{prop:cp}, Remark~\ref{rem:GatUCP} and the boundedness assumption on $\cD_{\Gat(A)},$
    that  $X\in \cD_{A}^{\Gat\mhyphen\circ}$ if and only if $L_{B}(\Gat(X))\succeq 0$ for all $B\in \cD_{\Gat(A)}$
   if and only if the unital $\Gat$-concomitant map  $\varphi$ that sends $\gamma_i(A)$ to $\gamma_i(X)$
    is $\Gat$-ucp  if and only if 
    $X=\wvphi{A}\in W^\Gamma(A).$
\end{proof}

Adding a hypothesis produces a bipolar result.

\begin{proposition}
\label{p:Gat-polar-dual:2}
  With notations above, if $A$ is semi-finite and $\cD_\Gat(A)$ is bounded,  then 
\begin{equation}\label{eq:gatbipolar}
 \{B\in \mathbb{S}^{\rv}: L_B(\Gat(X))\succeq 0 \text{ for all } X\in \cD_{A}^{\Gat\mhyphen\circ} \} = \cD_{\Gat(A)}.
\end{equation}
\end{proposition}

\begin{proof}
 Let $S$ denote the left hand side of \eqref{eq:gatbipolar}.
 Suppose $B\in \cD_{\Gat(A)}.$ For $X\in \cD_{A}^{\Gat\mhyphen\circ},$ we have  $L_C(\Gat(X))\succeq 0$ for all $C\in \cD_{\Gat(A)}.$
 Choosing  $C=B$ gives $B\in S$.
  
  Conversely, suppose $B\in S.$ Since $A$ is semi-finite, $A=\oplus_{j=1}^\infty A^{(j)}$ for some tuples $A^{(j)}\in \mathbb{S}_{m_j}^\gv.$ Moreover, $L_B(\Gat(A^{(j)}))\succeq 0$
   since each $A^{(j)}\in \cD_{A}^{\Gat\mhyphen\circ}$ and $B\in S.$  Thus $L_B(\Gat(A))\succeq 0$ and $B\in \cD_{\Gat(A)}.$
\end{proof}

{\CCBB
 This subsection concludes with two examples that
  explain the need for the boundedness hypotheses
  in Propositions~\ref{p:Gat-polar-dual:1} and
  \ref{p:Gat-polar-dual:2}. Namely, Lemma~\ref{l:level:bounded} does not necessarily
  hold for $\Gat$-convex sets.
  Compactness (level-wise) of a $\Gat$-convex set $\bm K$ does not necessarily imply it is bounded (uniformly).

\begin{example}
Let
$
   \Gamma(x,y)=(x,y,x^2,y^2).
$
For $d\in\mathbb N$, let
$
   D_d=\operatorname{diag}(1,2,\ldots,d)\in \bbS_d
$
and let
\[
   T_d=\sum_{j=1}^{d-1}(E_{j,j+1}+E_{j+1,j})\in \bbS_d.
\]
Set
$
   A^{(d)}=(D_d,T_d)\in \bbS_d^2.
$
Then $A^{(d)}$ is irreducible., i.e., $C^*(D_d,T_d)=M_d(\C)$.

Define a graded set $\bm K=(K_n)_n\subseteq \bbS^2$ as follows. A tuple
$X\in K_n$ if and only if $X$ is unitarily equivalent to a finite direct sum
\begin{equation}\label{eq:bigDS}
   \bigoplus_{d=1}^n \bigl(A^{(d)}\bigr)^{\oplus m_d},
   \qquad
   m_d\in\mathbb N_0,
   \qquad
   \sum_{d=1}^n d\,m_d=n.
\end{equation}
Then $\bm K$ is a free set. It is closed under direct sums by construction and
closed under simultaneous unitary conjugation by definition.

By Example \ref{ex:x^2y^2},  a $\Gamma$-pair is precisely a compression to
a common reducing subspace. Since each $A^{(d)}$ is irreducible, and since
the $A^{(d)}$'s have different sizes, a reducing subspace of a finite direct
sum as in \eqref{eq:bigDS}
is unitarily equivalent to a direct sum
\[
   \bigoplus_{d=1}^n \bigl(A^{(d)}\bigr)^{\oplus k_d},
   \qquad
   0\leq k_d\leq m_d.
\]
Hence $\bm K$ is closed under all $\Gamma$-compressions, and is therefore 
$\Gamma$-convex.

For each fixed $n$, the set $K_n$ is compact. Indeed, there are only
finitely many multiplicity vectors
\[
   (m_1,\ldots,m_n)\in\mathbb N_0^n
   \quad\text{with}\quad
   \sum_{d=1}^n d\,m_d=n.
\]
For each such vector, the corresponding unitary orbit is compact, and $K_n$
is a finite union of such compact unitary orbits.
On the other hand, $\bm K$ is not uniformly bounded. Indeed,
$
   A^{(n)}\in K_n
$
and
$
   \|A^{(n)}\|\geq \|D_n\|=n,
$
so the norms of elements of $\bm K$ are unbounded over the levels. Thus $\bm K$ is
level-wise compact, but not bounded.
\end{example}

\begin{example}
Let
$
   \Gamma(x,y)=(x,y,y^2).
$
For a self-adjoint matrix $Y$, write its spectral decomposition as
\[
   Y=\sum_{\lambda\in\sigma(Y)}\lambda P_\lambda .
\]
For each $n$, define $K_n\subseteq \bbS_n^2$ to consist of all pairs
$(X,Y)$ such that
\[
   0\preceq Y\preceq I
\]
and, with respect to the spectral projections 
$P_\lambda$
of $Y$,
\begin{equation}\label{eq:compressPl}
\begin{split}
   P_\lambda XP_\lambda& =0
   \qquad(\lambda\in\sigma(Y)) \\
   \|P_\lambda XP_\mu\|& \leq |\lambda-\mu|
   \qquad(\lambda\neq\mu).
\end{split}
\end{equation}
Set
$   \bm K=(K_n)_{n\in\mathbb N}.
$
The defining conditions are invariant under simultaneous unitary conjugation.
They are also preserved under direct sums, since the spectral projection of
$Y\oplus Y'$ corresponding to $\lambda$ is $P_\lambda\oplus P'_\lambda$.
Thus $\bm K$ is a free set.

We next show that $\bm K$ is $\Gamma$-convex. Since
$\Gamma=(x,y,y^2)$, an isometry $V:\C^m\to\C^n$ is a
$\Gamma$-pair for $(X,Y)$ precisely when
$\range(V)$ reduces $Y$. Thus a
$\Gamma$-compression of $(X,Y)$ is exactly a compression to a reducing
subspace for $Y$.

Let $M$ be a reducing subspace for $Y$. Then
\[
   M=\bigoplus_{\lambda\in\sigma(Y)} M_\lambda,
   \qquad
   M_\lambda\subseteq P_\lambda\C^n.
\]
The compression of $Y$ to $M$ has spectral projections the compressions of
the $P_\lambda$'s to the subspaces $M_\lambda$. 
Clearly, the conditions \eqref{eq:compressPl} hold after compression to $M$. Hence $\bm K$ is closed
under $\Gamma$-compressions. That is, $\bm K$ is $\Gamma$-convex.

For each fixed $n$, the set $K_n$ is compact. Indeed, 
 $0\preceq Y\preceq I$,
and the defining inequalities  \eqref{eq:compressPl} imply
$
   \|X\|\leq n.
$
Thus $K_n$ is bounded. It is also closed: if $(X_j,Y_j)\to (X,Y)$, then the
spectral block inequalities pass to the limit by the finite-dimensional
spectral calculus. In particular, when eigenvalues coalesce, the inequality of \eqref{eq:compressPl}
forces the corresponding off-diagonal blocks to vanish in the limit. Hence
$(X,Y)\in K_n$. Therefore each $K_n$ is compact.

However, $\bm K$ is not uniformly bounded. For $d\in\mathbb N$, set
\[
   Y_d=\frac1d\operatorname{diag}\left(1,2,\ldots,d\right)
\]
and define $X_d\in \bbS_d$ by
\[
  (X_d)_{i,j} =\frac1d |i-j|.
\]
Then $(X_d,Y_d)\in K_d$ since the spectral projections of $Y_d$ are the
coordinate projections.
On the other hand, choosing
\[
   \mathbf 1_d=(1,\ldots,1)^T\in\C^d,
\]
gives
\[
   \frac{\langle X_d\mathbf 1_d,\mathbf 1_d\rangle}{\|\mathbf 1_d\|^2}
   =
   \frac1{d^2}\sum_{i,j=1}^d |i-j|
   =
   \frac{d-\frac1d}{3}.
\]
Therefore
$
   \|X_d\|\geq \frac{d-\frac1d}{3}.
$
Thus $\bm K$ is level-wise compact but not bounded.
\end{example}
}

\section{Extreme points of \texorpdfstring{$\Gat$}{Gamma}-convex sets} \label{sec: g-ext}
In this section we introduce a notion of an extreme point 
and establish an analog of the Krein-Milman Theorem for  a $\Gamma$-convex set $\bm K =(K_n).$  Doing so
requires the  addition of a countably infinite level $K_\infty,$ even in  the case of a matrix convex set ($\Gamma(x)=x$)
\cite{Kr, Evert-no-ext, DK}.  The need for this additional level  is discussed in further detail 
 as  Remark~\ref{r:need:for:inf} at the end of Subsection~\ref{ssec:FKM}.  The notion of extreme point here is closely related to
 that of an Arveson boundary point \cite{A1,A2,Ham79}. It was introduced by Kleski \cite{Kle14} 
 and was recently studied and adapted  in \cite{EHKM, EH, DK, EEHK} for examples.

\subsection{Pascoe's SOT Bolzano-Weierstraß modulo unitary similarity theorem} 
To tackle the problem of potential insufficiency of free extreme points at finite levels we consider matrix convex sets with an added infinite-dimensional component
and take advantage of \cite[Theorem~6.4.2]{DK}.  This additional level is a subset of $\mathbb S_{\cHi}^\gv=\cB(\cHi)_{\sa}^\gv,$ the set of $\gv$-tuples  of self-adjoint operators on an infinite-dimensional separable complex Hilbert space $\cHi.$  
 For convenience, in this section $\cH_k$ will denote a Hilbert space of finite dimension $k \in \mathbb{N}.$

 Pascoe's SOT Bolzano-Weierstraß Modulo Unitary Similarity Theorem (SOT-BW-MUST), will play an important role in what follows. 
  A version of the result  appears in \cite{Man} (see Lemma~4.5 and Remark~4.6) and is used in \cite{JKMMP}.  
    The  result naturally
  extends to sequences $(X_j)_j$ {\CCBB \cite{Pascoe}} as opposed to finite tuples {\CCBB as stated here. On the
  other hand, it does not extend to uncountably many $j,$
  and thus to non-separable Hilbert space, 
  as an example in \cite{Pascoe} shows.} We use the usual abbreviation \df{SOT} for the \df{strong operator topology}.  
  
\begin{theorem}[Pascoe's SOT-BW-MUST] 
 \label{t:sot-bw-must}
   Suppose $\cH$ is an infinite-dimensional separable Hilbert space and  $(X^{(j)})_j$ is  a sequence from $\cB(\cH)^\gv.$ 
   If  the sequence $(X^{(j)})_j$ is bounded, then there is a subsequence $(Y^{(n)})_n$ of $(X^{(j)})_j$ 
   and a sequence of unitary mappings $(U_n)_n$ on $\cH$ such that $U_n^*$ converges SOT to an isometry
   and $U_n^*Y^{(n)} U_n$ converges SOT. 
   
    In particular, if $S\subseteq \cB(\cH)^\gv$ is SOT-closed,  bounded and closed under isometric  conjugation (meaning if 
     $Y\in S$ and $W\in \cB(\cH)$ is an isometry, then $W^* YW\in S$), then $S$ is  WOT-compact.
\end{theorem}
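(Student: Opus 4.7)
Plan: The theorem has two claims: a subsequence and unitary statement (Part one) and the WOT-compactness of $S$ (Part two). The plan is to prove Part one first and derive Part two as a consequence.

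For Part one, the opening move is to extract a WOT-convergent subsequence. Since $\cH$ is separable, the trace class on $\cH$ is a separable Banach space with dual $\cB(\cH)$, so the WOT on bounded sets in $\cB(\cH)$ is metrizable and Banach-Alaoglu yields WOT-relative compactness. Extracting convergent subsequences coordinate by coordinate (a finite pass over the $\gv$ entries) produces a subsequence along which every coordinate converges in WOT. The substantive step is then promoting WOT to SOT via unitary conjugation, which uses the infinite-dimensionality of $\cH$ essentially: one needs \emph{room} to hide the weak-but-not-norm portion of the sequence. I would carry out a diagonal construction. Decompose $\cH = \bigoplus_{m \geq 0} \cK_m$ with each $\cK_m$ infinite-dimensional; let $V$ be a shift-like isometry sending $\cK_m \to \cK_{m+1}$; and fix an orthonormal basis $(e_i)_i$ of $\cH$. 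Recursively choose a subsequence $(Y^{(n)})$ and unitaries $U_n$ so that (i) $U_n^*$ agrees with $V$ on $\operatorname{span}\{e_1, \ldots, e_n\}$ up to an error smaller than $2^{-n}$, forcing $U_n^* \to V$ in SOT with $V$ isometric; and (ii) the contribution of $U_n^* Y^{(n)} U_n$ transverse to the previous stage is routed into summands $\cK_m$ for $m$ so large that, along the basis $(e_i)$, the conjugates converge in SOT to some operator $Z$. The recursion exploits the fact that for any finite-rank projection $P$ one has $P Y^{(n)} P \to P Y P$ in norm (on a finite-dimensional range WOT coincides with norm), together with the freedom to unitarily redirect the complementary part of the operator into an orthogonal summand far out in the decomposition.

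For Part two, given any bounded sequence in $S$, Part one yields $Y^{(n)}$, $U_n$, $V$, $Z$ as above. Each $U_n$ is a unitary, hence an isometry, so closure of $S$ under isometric conjugation gives $U_n^* Y^{(n)} U_n \in S$; SOT-closedness of $S$ then places $Z$ in $S$. A short calculation using uniform boundedness of the conjugates together with the norm convergences $U_n^* h \to V h$ and $U_n^* k \to V k$ yields
\[
 \langle Y^{(n)} h, k \rangle = \langle (U_n^* Y^{(n)} U_n)(U_n^* h), \, U_n^* k \rangle \longrightarrow \langle Z V h, \, V k \rangle = \langle V^* Z V h, \, k \rangle,
\]
so $Y^{(n)} \to V^* Z V$ in WOT. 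Closure of $S$ under isometric conjugation then places $V^* Z V$ in $S$, proving WOT-sequential compactness of $S$; together with metrizability of the WOT on the bounded set $S$, this delivers WOT-compactness. I expect the main obstacle to lie in Part one, namely the coordinated recursive construction of the $U_n$: the single isometry $V$ emerging as the SOT limit of $U_n^*$ must be compatible with how the oscillatory errors in $Y^{(n)}$ are pushed off into the orthogonal summands $\cK_m$, and aligning these two simultaneous demands is the technical crux and the place where infinite-dimensionality of $\cH$ is essential.
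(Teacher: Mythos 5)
Your Part two is essentially the paper's: use Part one, push through SOT-closedness and closure under isometric conjugation, convert WOT-sequential compactness to WOT-compactness via metrizability on bounded sets. That portion is fine.

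Part one, however, diverges from the paper and contains the gap that you yourself flag as the ``technical crux,'' and I don't think that gap can be closed along the lines you sketch. Your plan is to fix a shift-like isometry $V$ with $V\cK_m \subseteq \cK_{m+1}$ \emph{in advance}, arrange $U_n^* \to V$ on $\operatorname{span}\{e_1,\dots,e_n\}$ up to $2^{-n}$, and ``route the transverse contribution of $U_n^* Y^{(n)} U_n$ into $\cK_m$ for $m$ large.'' But SOT convergence of $U_n^* Y^{(n)} U_n$ means that $U_n^* Y^{(n)} U_n h$ is \emph{norm}-Cauchy for $h$ in a dense set, and a component that drifts off into $\cK_{m(n)}$ with $m(n)\to\infty$ cannot contribute to a norm-convergent sequence unless its norm tends to zero. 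There is no reason for that norm to shrink: e.g.\ take $\gv=1$ and $Y^{(n)}$ unitary, so $\|U_n^*Y^{(n)}U_ne_1\|=1$ for all $n$; if a bounded-below piece of this unit vector keeps moving into higher and higher summands, the sequence is not Cauchy. Pushing oscillation to infinity is the natural mechanism for producing \emph{WOT} limits (the escaping part is annihilated against fixed vectors), which is exactly the convergence you already have after your Banach--Alaoglu step and is not what you need. There is also a secondary mismatch: you insist the limit isometry is a prescribed shift-like $V$, but the theorem only claims \emph{some} isometry emerges, and there is no control over which one.

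What the paper does instead is qualitatively different and is worth internalizing. For each tuple $Y=X^{(j)}$ it builds the nested finite-dimensional subspaces $\cH_k(Y) = \operatorname{span}\{Y^\alpha e_n : n\le k,\, |\alpha|\le k\}$, whose dimensions are bounded by a constant $d_k$ \emph{independent of} $Y$ and which satisfy $Y_\ell\,\cH_k(Y)\subseteq \cH_{k+1}(Y)$. It then constructs a unitary $W_j$ (depending on $X^{(j)}$) so that $\cH_k \subseteq W_j\,\cH_k(X^{(j)}) \subseteq \cH_{d_k}$. The crucial consequence is that, after conjugation, $W_jX^{(j)}W_j^*$ carries the \emph{fixed} finite-dimensional subspace $\cH_k$ into the \emph{fixed} finite-dimensional subspace $\cH_{d_{k+1}}$, and $W_j^*$ carries $\cH_k$ into $\cH_{d_k}$, uniformly in $j$. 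The restrictions are then just bounded sequences of matrices between fixed finite-dimensional spaces, so ordinary Bolzano--Weierstrass plus a diagonal argument over $k$ yields norm convergence on the dense set $\cup_k\cH_k$, and boundedness upgrades this to SOT convergence. In short: the paper does not push the oscillation out to infinity; it conjugates so that the \emph{entire} relevant action is confined to fixed finite-dimensional boxes and then uses compactness there. The unitary is tailored to each $X^{(j)}$ via the ``orbit'' subspaces $\cH_k(X^{(j)})$; it is not an error-controlled approximation to a pre-chosen shift. This is the idea you would need to adopt to make Part one rigorous. Incidentally, your opening extraction of a WOT-convergent subsequence is then unnecessary; it is subsumed by the finite-dimensional compactness argument.
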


\begin{remark}\label{rem : top}
    Because of Theorem~\ref{t:sot-bw-must},  the strong operator topology on free subsets of $\mathbb{S}^{\gv}$ is often most natural for the purposes here.
       However,  both the  \df{\wstar  topology}  (synonymously \df{ultra-weak}) and \df{weak operator topology} (\df{WOT}) play a role.
        On  norm-bounded sets, the WOT and \wstar topology coincide. The SOT is stronger than the WOT and therefore,
         a (norm) bounded {\CCBB 
         WOT-closed set is SOT-closed.}
         On the other hand, the SOT and WOT have the  same closed convex sets
        (since they have the same continuous linear functionals).  
        Thus a norm bounded convex set that is closed in any one of the WOT, SOT or \wstar topology is closed in all three. 
         In particular, the closure of a norm bounded convex set is the same in all three of these topologies.
	  
	 For a separable Hilbert space, as is the case here, all three topologies are metrizable on bounded sets. 
\qed
\end{remark}

\subsection{Preliminaries on free extreme points} 
As a convention in this section, let $\mathbb{S}^\gv$ denote the graded set $(\mathbb{S}^\gv_n)_{n\in \N \cup \{\infty\}},$ 
where $\mathbb{S}^\gv_n$ is identified
with $\cB(\cH_n)^\gv_{\sa}$ for $n\in \N$ and $\cH_\infty =\cH.$ 
A  graded set $\bm{K} = (K_n)_{n \in \N \cup \{\infty\}} \subseteq \mathbb{S}^\gv$ 
is \df{closed} if each $K_n$ is closed in the specified  topology. We call $\bm{K}$
\df{bounded} if 
there is $R\in\R_{\geq0}$ such that $\|X\|\leq R$ (see Definition~\ref{def gamma})  for all
$X\in\bm K$.
Under mild and natural assumptions (e.g., $\bm K$
is closed under  countable direct sums
(cf.~Remark \ref{rem:infSums}) or $K_\infty$ is bounded),  $\bm K$ is bounded.

\begin{definition}
 \label{d:opco}
   Suppose $\bm S, \, \bm K\subseteq \mathbb{S}^\gv.$ Thus $\bm K = (K_n)_{n\in\N\cup \{\infty\}}$ and $K_n\subseteq \mathbb{S}^\gv_n$ for each $n$
    and similarly for $\bm S.$
\begin{enumerate}[(a)]	
        \item  The graded set $\bm S$ is a \df{free set} if it is closed under unitary similarity and at most countable direct sums  of norm-bounded families.
        \item A free set $\bm S$ is \df{fully free} if it is closed under restrictions to reducing subspaces.
	\item  Given  $n,n_i\in \mathbb{N}\cup\{\infty\}$ and $A^{(1)},\ldots, A^{(k)} \in \bbS^{\gv}$ with $A^{(i)} \in \bbS^{\gv}_{n_i}$ for $1\le i \le k,$   an expression of the form
	\begin{equation}\label{eq-op comb}
		\sum_{i=1}^k V_i^\ast A^{(i)} V_i,
	\end{equation}
	where  $V_i \in {M}_{n_i,n}$ satisfy $\sum_{i=1}^k V_i^\ast V_i = {I}_n$, is an \df{operator convex combination} of  $A^{(1)},\ldots, A^{(k)}$. 
     \item The \textbf{(fully) free hull}  of  a graded set $\bm S$ is the smallest (fully) 
       free set containing $\bm S;$ \index{fully free hull} \index{free hull} \index{closed fully free hull}
      the  \textbf{closed} (in a specified topology) \textbf{fully free hull} is the smallest closed
       fully free set 
       containing $\bm S.$ \index{closed free hull}
      \item  A free set  $\bm{K}$ is  \textbf{operator convex} if it is closed under  
	operator convex combinations.
	Observe that each operator convex set is fully free.
	\item  The \df{operator convex hull} of a graded set $\bm S\subseteq \mathbb{S}^\gv$ is the intersection of all operator convex sets containing $\bm S,$
	 denoted $\opco(\bm S).$ Its  (level-wise) closure is denoted by  $\ovopco(\bm S)$, and a superscript such as SOT or $w^*$ may be added to specify the topology if it is not clear from the context.\qed
\end{enumerate}
\end{definition} 

\begin{remark}\rm
 For $\infty> R>0,$ the ball $\{X\in \mathbb{S}^\gv: \|X\|\le R\}$ is bounded, 
 fully free,  closed (in all the relevant topologies), and 
 operator convex. 
 It is clear that any graded set $\bm S$ is contained in a closed fully free
 operator convex set. 
 Since an intersection of (fully) free sets is (fully) free, the (fully) free hull of a graded set
 is also (fully) free. The same statement holds with operator convex hull instead of free hull.
 Thus these hulls exist and are bounded if $\bm S$ is.

 Proposition~\ref{l:cl:cvx} below contains some further initial observations.\qed
\end{remark}

\begin{proposition}
 \label{l:cl:cvx}
   A free set $\bm K$ is operator convex if and only if for each $m,n\in \N\cup\{\infty\},$ each  $X\in K_n$ and each isometry $V:\cH_m\to \cH_n,$
    the tuple $V^*XV\in K_m.$
 
   For a graded set $\bm S\subseteq \mathbb{S}^\gv,$ and  $\tau$ either the SOT, the WOT or the \wstar topology, the 
    graded set $\ovopco^{\tau}(\bm S)$ is operator convex and it is the smallest
    $\tau$-closed operator convex set containing $\bm S;$ that is, $\ovopco^{\tau}(\bm S)$ is the $\tau$-closed convex hull
     of $\bm S.$ 
     
     If $\bm F$ is a free set, then 
      \[ 
       \opco(\bm F) = \{V^* FV: F\in \bm F,\ V \textrm{ is an isometry}\}
    \]
     and if moreover, $\bm F$ is bounded, then so is $\opco(\bm F).$ 
    Finally, if $\bm F$ is a free set that is bounded and SOT-closed, then 
      $\opco(\bm F)$ 
     is bounded  SOT-closed and hence WOT
     and \wstar closed, since $\opco(\bm F)$  is norm bounded and convex.

     The level-wise SOT closure of a norm bounded fully free set is fully free. In particular, 
    if $\bm S$ is a bounded graded set, then its SOT-closed fully free hull is bounded and \CB{is}
     the level-wise closure of its fully free hull.
  \end{proposition}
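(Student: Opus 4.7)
The plan is to prove the five assertions in the order stated, with the SOT-closedness of $\opco(\bm F)$ being the main technical hurdle.

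The characterization is proved via a direct sum trick. The forward direction is immediate (take $k=1$, $V_1=V$). Conversely, given a free $\bm K$ closed under isometric conjugation and an operator convex combination $\sum_{i=1}^k V_i^*A^{(i)}V_i$ with $\sum V_i^*V_i=I_n$, set $A=\bigoplus_i A^{(i)}\in\bm K$ (freeness handles this finite, hence norm bounded, direct sum) and $V=\col(V_1,\ldots,V_k)\colon\cH_n\to\bigoplus_i\cH_{n_i}$, which is an isometry by the partition-of-unity hypothesis. Then $V^*AV=\sum V_i^*A^{(i)}V_i\in\bm K$. The same reformulation gives the formula $\opco(\bm F)=\bm L:=\{V^*FV: F\in\bm F,\ V\text{ isometry}\}$: clearly $\bm F\subseteq\bm L$, and $\bm L$ is contained in any operator convex set containing $\bm F$, so it suffices that $\bm L$ itself be operator convex. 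By the characterization this reduces to verifying freeness and isometric-conjugation closure for $\bm L$, both of which follow from the identities $(VW)^*F(VW)=W^*(V^*FV)W$ and $\bigoplus_j V_j^*F_jV_j=(\bigoplus_j V_j)^*(\bigoplus_j F_j)(\bigoplus_j V_j)$, since $\{F_j\}$ inherits a norm bound from $\{V_j^*F_jV_j\}$. Boundedness of $\opco(\bm F)$ is immediate from $\|V^*FV\|\le\|F\|$.

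For the claim that $\ovopco^\tau(\bm S)$ is operator convex, the key observation is that for fixed isometries $V_i$ the map $(X^{(i)})\mapsto\sum V_i^*X^{(i)}V_i$ is $\tau$-continuous in each coordinate: approximating each $X^{(i)}\in\ovopco^\tau(\bm S)$ by $Y^{(i)}_\alpha\in\opco(\bm S)$ produces $\sum V_i^*Y^{(i)}_\alpha V_i\in\opco(\bm S)$ converging in $\tau$ to $\sum V_i^*X^{(i)}V_i$. Closure under unitary similarity is analogous, and closure under norm bounded countable direct sums uses $\tau$-metrizability on norm bounded subsets of $\cB(\cH)^\gv$ to diagonalize the approximating sequences. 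Since every $\tau$-closed operator convex set containing $\bm S$ contains $\opco(\bm S)$ and therefore its $\tau$-closure, this identifies $\ovopco^\tau(\bm S)$ as the smallest such.

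The SOT-closedness of $\opco(\bm F)$ for bounded SOT-closed $\bm F$ is the decisive technical step, relying on Pascoe's SOT-BW-MUST (Theorem~\ref{t:sot-bw-must}). Given $X^{(j)}=V_j^*F_jV_j$ converging SOT to $X$ at level $n$, first reduce to $F_j\in\bm F_\infty$ and $V_j\colon\cH_n\to\cH$ isometric by direct-summing each $F_j$ with a fixed element of $\bm F_\infty$ (freeness) and correspondingly extending $V_j$; this leaves $X^{(j)}$ unchanged. Exploit the gauge identity $V_j^*F_jV_j=(U_j^*V_j)^*(U_j^*F_jU_j)(U_j^*V_j)$: Pascoe's theorem applied to $(F_j)$ yields, after passing to a subsequence, unitaries $U_j$ with $U_j^*F_jU_j\to G$ SOT (so $G\in\bm F_\infty$ by SOT-closedness) and $U_j^*\to W$ SOT to an isometry. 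For finite $n$, the sequence $U_j^*V_j$ is a norm bounded family of operators out of the finite-dimensional $\cH_n$, so admits a subsequential SOT limit $V$, isometric because $V^*V=\lim(U_j^*V_j)^*(U_j^*V_j)=I$, giving $X=V^*GV\in\opco(\bm F)$. The case $n=\infty$ is the main obstacle, since one must also extract an isometric limit of the non-self-adjoint sequence $V_j\colon\cH\to\cH$; this can be handled by applying Pascoe's theorem to the self-adjoint doubling $\begin{pmatrix}0 & V_j\\V_j^* & 0\end{pmatrix}$ and coupling the resulting unitary change carefully with the one used for $(F_j)$. The WOT- and $w^*$-closedness then follow automatically because SOT-closed convex sets are WOT-closed and WOT agrees with $w^*$ on norm bounded sets.

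For the last assertion, closure of a norm bounded SOT-closed fully free $\bm F$ under unitary similarity and norm bounded countable direct sums is by the earlier continuity and diagonalization arguments. The genuinely new point is closure under restrictions to reducing subspaces: if $X^{(j)}\in\bm F$ converges SOT to $X$ with $\cM$ reducing $X$, then the off-diagonal blocks $P_\cM X^{(j)}P_{\cM^\perp}\to P_\cM XP_{\cM^\perp}=0$ in SOT, and a further application of Pascoe's theorem allows one to unitarily perturb the $X^{(j)}$ so that $\cM$ becomes reducing for a subsequence; the restrictions to $\cM$ then lie in $\bm F$ by full freeness and converge SOT to $X|_\cM$. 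The `in particular' conclusion follows from the first sentence: the SOT-closure of the fully free hull of $\bm S$ is both fully free and the smallest SOT-closed fully free set containing $\bm S$, hence equals the SOT-closed fully free hull.
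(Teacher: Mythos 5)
Most of your argument tracks the paper: the direct-sum characterization, the $\tau$-continuity argument for $\ovopco^\tau(\bm S)$, the identity for $\opco(\bm F)$, and the elementary boundedness claims are all essentially the paper's route.

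The SOT-closedness of $\opco(\bm F)$ is where you go off the rails, and the problem is that you skip the one normalization that makes the argument go through. The paper does not keep the isometries $V_j$ varying and does not apply the Bolzano--Weierstra\ss{} machinery to them; after direct-summing each $F_j$ with a fixed $E\in F_\infty$ so that every $V_j:\cY\to\cH$ is an isometry into a fixed infinite-dimensional $\cH$ with infinite co-rank, it conjugates each $F_j$ by a unitary on $\cH$ carrying $V_j$ to a single fixed isometry $V$, using closure of $\bm F$ under unitary similarity. Once $V$ is fixed, Pascoe's theorem is applied to the $F$-sequence alone, and the identity $V^*F^{(n)}V=V^*U_n\bigl[(U_n^*F^{(n)}U_n)U_n^*V\bigr]$ produces the WOT limit $W^*FW$ with $W=U^*V$ an isometry. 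Without this reduction your argument breaks: for finite $n$ you assert that a norm-bounded sequence of operators out of $\cH_n$ has a subsequential SOT limit, which is false (take $T_k:\C\to\cH$, $T_k(1)=e_k$; there is no norm-convergent subsequence of $(e_k)$), and the isometry claim $V^*V=\lim(U_j^*V_j)^*(U_j^*V_j)=I$ does not rescue it because the limit of $W_j^*W_j$ is only WOT when $W_j\to W$ SOT. For $n=\infty$ you acknowledge the gap yourself; the proposed doubling trick needs the two unitary gauges (the one from Pascoe applied to $(F_j)$ and the one from the doubling of $(V_j)$) to be compatible, and nothing you write shows they can be made to match.

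On the last assertion, note that the paper's own proof only verifies closure of $\overline{\bm F}$ under unitary similarity and countable direct sums; it does not address restrictions to reducing subspaces, and you are right to flag that something more is needed for ``fully free.'' However, your sketch (``a further application of Pascoe's theorem allows one to unitarily perturb the $X^{(j)}$ so that $\cM$ becomes reducing for a subsequence'') is not an argument --- Pascoe's theorem produces unitaries making a subsequence converge, not unitaries making a given subspace reducing for each term --- so this step remains open in your version as well.
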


\begin{proof}
The proof of the first statement is routine. Suppose $\bm K$ is free   and let  an operator convex combination as in \eqref{eq-op comb}  be given.
 Since $\bm K$ is closed under direct sums, $X=\oplus A^{(i)}$ is in $\bm K.$ Now the operator  $V=\col(V_1,\ldots, V_k)$
 is an isometry and $\sum V_i^*A^{(i)}V_i=V^*XV.$

For the second statement, let $X\in \ovopco^{\tau}(\bm S)$, and choose a net $(X^{(i)})_i$ in $\opco(\bm S)$ converging to $X$
{with respect to the topology $\tau.$} 
Given an isometry $V,$
the net $(V^*X^{(i)}V)_i$ is in $\opco(\bm S)$ and converges to $V^*XV\in\ovopco^{\tau}(\bm S)$.

The identity of the third  statement is a consequence of the first statement {using the assumption that $\bm F$
is free.}  If $\bm F$ is bounded, then $\bm F$ is contained in 
a bounded operator convex set and hence $\opco(\bm F)$ is bounded. 
To show that $\opco(\bm F)$ is SOT-closed assuming $\bm F$ is bounded and SOT-closed, 
we proceed as follows.
First note that, since $\bm F$ is bounded, so is $\bm G= \opco(\bm F).$ Since
each $G_m$ is a bounded subset of operators on a separable Hilbert space,
the SOT topology on $G_m$ is metrizable. It thus suffices to show,
if $Y^{(i)}$ is a sequence from $G_m$ that converges to some $Y,$ then $Y\in G_m.$
The proof of this fact will use the following observations for bounded  sequences
$(S_n)$ and $(T_n)$ of operators on Hilbert space.  If $(S_n)$ converges WOT and
$(T_n)$ converges SOT to $S$ and $T$ respectively, then $(S_n T_n)$ converges WOT to $ST;$
if both sequences converge SOT, then $(S_n T_n)$ converges SOT to $ST;$ 
and if $(S_n)$ converges SOT to $S,$ then $(S_n^*)$ converges {WOT} to $S^*.$

Without loss of generality, assume $\bm F\neq\emptyset$.
 Suppose $Y$ is in the SOT-closure of $\opco(\bm F)$ and let $\cY$ denote the  separable Hilbert space that $Y$ acts upon. 
 By assumption, there is a  {sequence} 
  $Y^{(i)}=V_i^*F^{(i)}V_i\in \opco(\bm F)$ that converges SOT to $Y,$  where   $F^{(i)}\in \bm F$ and $V_i$ are isometries.
  Let $\cE_j$ denote the separable Hilbert space that $F^{(j)}$ acts upon. Since the free set $\bm F$ is not empty,
 there is an $E\in F_\infty.$ Let   $\cE_0$ denote the separable infinite-dimensional Hilbert space that $E$ acts upon.
 The assumption
that $\bm F$ is closed under countable direct sums  justifies replacing $F^{(j)}$  and $V_j$ with 
\[
  \begin{pmatrix} F_j   & 0 \\ 0 & E\end{pmatrix}, \  \  \   \begin{pmatrix} V_j \\ 0 \end{pmatrix}
\]
acting on $\cE_j\oplus\cE_0$ and $\cY$ respectively;  
 and then closure with respect to unitary similarity justifies assuming that the $F^{(j)}$ all act upon the same infinite-dimensional
Hilbert space $\cH.$ Finally, once again using the assumption that $\bm F$ is  closed under unitary similarity,  the fact that the dimensions of the ranges of $V_j$ in $\cH$
are all the same justifies assuming that  $V_j=V:\cY\to \cH$ for some fixed isometry $V$ and all $j.$

By Theorem~\ref{t:sot-bw-must}, passing to a subsequence if needed, there exists a sequence 
 $(U_n)_n$ of unitary operators on $\cH$ such that $(U_n^* F^{(n)} U_n)_n$ and $(U_n^*)_n$  each converge SOT to some $F$ and
  isometry $U^*$.
  Moreover,  $U_n^*V$ converges SOT to an isometry $W$ and $(U_n^*V)^* = V^* U_n$ converges WOT to $W^*.$
 Finally, $[(U_n^* F^{(n)} U_n) U_n^*V]$ is the product of SOT convergent sequences and hence converges SOT to $FW.$
   Therefore, 
 \[
  \begin{split}
    Y & = \rmSOT\mhyphen \lim_n V^* F^{(n)}V
    \\ & = \rmSOT\mhyphen \lim_n V^* U_n U_n^* F^{(n)} U_nU_n^* V
    \\ & = \rmWOT\mhyphen \lim_n V^* U_n [(U_n^* F^{(n)} U_n) U_n^*V]
    \\ & = W^* FW. 
  \end{split}
 \]
  Hence $Y\in \opco(\bm F)$ as claimed.

  Since the SOT-closed set $\opco(\bm{F})$ is bounded and convex, it is WOT and  \wstar closed too.

To prove the fourth, and final statement, let $\bm F$ denote a fully free set and $\overline{\bm F}$
denote its level-wise SOT closure. Since the SOT topology is metrizable on bounded
sets in the case of separable Hilbert space,  given $F\in \overline{F_n}$ and a unitary $U$ of
 size $n,$ there exists
 a sequence $(F^{(m)})$ from $F_n$ that converges SOT to $F.$ Hence $(U^* F^{(m)} U)$ is a sequence from
 $\bm F$ that converges SOT to $U^*FU.$ So $U^*FU \in \overline{\bm F}$ and thus $\overline{\bm F}$
 is closed under unitary similarity.  

  Now suppose $\bm F$ is fully free and bounded by $R.$ 
  Given a sequence $(F^{(m)})$  from $\overline{\bm F},$ for each $m$ there is a sequence
  $(F^{(m,\ell)})_\ell$ that converges SOT to $F^{(m)},$ since $\bm F$ is a norm
   bounded subset of operators on a separable Hilbert space (so that the relative SOT
   topology on $\bm F$ is metrizable). Let $x=\oplus x_m$ denote a vector from
  the space that $\oplus F^{(m)}$ acts on and observe, for each $M,$ 
\[
 \| (\oplus_m (F^{(m,\ell)} - \oplus F^{(m)} )x \|^2 
  \le \sum_{m=1}^M \| (F^{(m,\ell)} -F^{(m)} )x_m\|^2 + 2 R \sum_{m=M+1}^\infty \|x_m\|^2,
\]
from which it readily follows that $\oplus_m F^{(m,\ell)}$ converges SOT to $\oplus_m F^{(m)}.$

 If $\bm S\subseteq \mathbb{S}^\gv$ is bounded by $R,$ then $\bm S$ is a subset of the fully free
  set $\bm R =\{X\in \mathbb{S}^\gv : \|X\|\le R\}.$ Hence the fully free hull of $\bm S$ is bounded
   by $R.$
\end{proof}

\begin{remark}\label{rem:infSums}
  If $\bm K\subseteq\mathbb{S}^\gv$ is closed under arbitrary (i.e., not only norm-bounded) countable direct sums, then $\bm K$ is bounded. Indeed, arguing the contrapositive,
  if there does not exist a $C$ such that $\|X\|\le C$ for all $X\in \bm K,$ then for each $n$ there exists $X^{(n)}\in \bm K$ with $\|X^{(n)}\|\ge n,$
   in which case $\oplus_n X^{(n)}$ is not a bounded operator and thus not in $\bm K.$ Conversely, assuming $\bm K$ is closed
   with respect to finite direct sums and is bounded along with some additional closure property implies $\bm K$ is closed with
    respect to countable direct sums.  We leave details to the interested reader and instead consider a similar result for operator
    convex combinations.
    
        A {\wstar} closed operator convex set $\bm K$ that contains $0$  is closed under arbitrary (i.e., not necessarily finite) convex combinations of any norm-bounded family. 
	Recall that if $0 \in K_1,$ then $\bm{K}$ is closed under conjugation by contractions (see, e.g., \cite[Lemma 2.3]{HKM16}, which remains valid in the infinite
	dimensional setting  with the same proof). Using notation as in equation~\eqref{eq-op comb}, but now with arbitrarily  many indices $i\in J,$
	 where $J$ is an index set,  
	 for any finite subset of indices $I\subseteq J$  we have $\sum_{i\in I}  V_{i}^*  V_{i} \preceq I$ and the sum  
\[
	a_I = \sum_{i \in I} V_{i}^* A^{(i)} V_{i}
\]
	 lies in $\bm K.$
	The net $(a_I)_I$ is \wstar convergent as we now explain. By the norm
	boundedness assumption on the family $(A^{(i)})_i,$ there is an $R>0$ such that $\|A^{(i)}\| \leq R$ for all $i.$ 
    In particular, $R-A^{(i)}_j \ge 0$ for each $i$ and $1\le j\le \gv.$  
	Consequently, with $R-A^{(i)}$ denoting the tuple with $j$-th entry $R-A^{(i)}_j,$  the net $(a^\prime_I)_I$ defined by
	$$
	a^\prime_I = \sum_{i \in I} V_{i}^* (R - A^{(i)}) V_{i}
	$$
	is an entrywise increasing 
	bounded net of self-adjoint operators and hence WOT-convergent by Vigier's theorem \cite[Theorem 4.1.1]{Mur90}.  
	Since $\sum_i V_i^* V_i = I,$ the net $(a_I)_I$ is also WOT-convergent. Now Remark \ref{rem : top} implies that $(a_I)_I$ is \wstar convergent 
	and hence its limit, denoted by $\sum_i V_i^* A^{(i)} V_i,$ lies in $\bm{K}.$
 
 By \cite[Proposition~3.7(b)]{JKMMP},  an SOT-closed operator convex set $\bm K$ containing $0$ is the SOT-closure of its matricial levels, $(K_n)_{n\in\N}.$
  \qed
\end{remark}

\subsection{Free extreme points and the Krein-Milman theorem for operator convex sets}
\label{ssec:FKM}
Free extreme points are intuitively those points of a matrix or operator convex set $\bm{K}$ that cannot be expressed as a nontrivial matrix convex combination of any finite subset of $\bm{K}.$  A notion of an nc convex set is introduced in \cite{DK} and  it  turns out that a \bdcpt operator convex set given its \wstar topology
is a  relatively simple example of a compact nc convex set over the dual operator algebra $\sR_\gv,$ where $\sR_\gv$ is the operator space known
as $\gv$-dimensional row Hilbert space.   See Appendix~\ref{sec:app}. 
The \cite{DK} notion of an \df{extreme point} of an nc convex set,  specialized to operator convex sets follows.
 For consistency with earlier work, cf.~\cite{EPS,EEHK}, we use the terminology \df{free extreme point}. 
 In finite dimensions (the matrix convex setting) free extreme points also go by absolute extreme points \cite{EHKM,EH,Kr}.

\begin{definition}[{\cite[Definition 6.1.1]{DK}}]
	\label{def:fext}
	Let $\bm{K} = (K_n)_{n \in \N  \cup \{\infty\}}$ be an operator convex set.  A tuple $X \in K_n,$ where $n \in \mathbb{N} \cup \{\infty\},$ 
	is a \df{free extreme point} (of $\bm K$)  if any expression
	of $X$ as an  operator convex combination 
	\begin{equation}\label{eq:fextdef}
		X = \sum_{i=1}^k V_i^\ast X^{(i)} V_i,
	\end{equation}
	where $X^{(i)} \in K_{n_i}$ and the $V_i \in {M}_{n_i,n}$ are all nonzero {and satisfy $\sum_{i=1}^k V_i^\ast V_i = {I}_n,$}
	 it is the case that, for each $i,$ the matrix $V_i$ is a 
	 scalar multiple of an isometry
	   $W_i \in \mathcal{M}_{n_i,n}$ satisfying $W_i^*X^{(i)}W_i = X$ and with respect to the range of $V_i,$ 
  \[
   X^{(i)} = Y^{(i)} \oplus Z^{(i)}
  \]
	 for some $Y^{(i)},Z^{(i)} \in \bm{K},$ where $Y^{(i)}$ is unitarily equivalent to $X.$ Let $\ext(\bm K)$ denote the graded set of free extreme points of $\bm K$.
\qed
\end{definition}

The next lemma follows from the definition of a free extreme point.

\begin{lemma}\label{lema:red}
	Suppose $\bm{K}$ is an operator convex set and  $X \in \bm{K}.$  If $X$ is a free extreme point, expressed as in \eqref{eq:fextdef}, then $V = \col(V_1,\ldots,V_k)=\col(V_i)$ reduces $\oplus_iX^{(i)}.$
 If $X\in K_n$ for $n$ finite, then $X$ is a free extreme point if and only if it satisfies the conditions of Definition~\ref{def:fext} with $n_i$ also finite.
\end{lemma}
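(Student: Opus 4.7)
For the first statement, I would unfold Definition~\ref{def:fext}: each $V_i = \mu_i W_i$ for an isometry $W_i \in M_{n_i,n}$ satisfying $W_i^* X^{(i)} W_i = X,$ with $\range(V_i) = \range(W_i)$ reducing $X^{(i)}.$ The reducing property yields $X^{(i)} W_i = W_i \, (W_i^* X^{(i)} W_i) = W_i X,$ hence $X^{(i)} V_i = V_i X$ for each $i.$ Stacking these identities gives $(\oplus_i X^{(i)}) V = V X$ for $V = \col(V_1,\dots,V_k),$ so $\range(V)$ is invariant under each (self-adjoint) coordinate of $\oplus_i X^{(i)}$ and therefore reducing.

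The direction ``free extreme $\Rightarrow$ the finite-$n_i$ condition'' in the second statement is immediate. For the converse, I would start with an arbitrary operator convex combination $X = \sum_{i=1}^k V_i^* X^{(i)} V_i$ in which some $n_i$ may be infinite, and reduce it to the matrix setting by compression. Setting $r_i := \dim \range(V_i) \le n,$ take an isometry $U_i : \cH_{r_i} \to \cH_{n_i}$ onto $\range(V_i),$ and write $V_i = U_i A_i$ with $A_i := U_i^* V_i.$ By the fully-free property of operator convex sets (Proposition~\ref{l:cl:cvx}), $\tilde X^{(i)} := U_i^* X^{(i)} U_i \in K_{r_i},$ and
\begin{equation*}
X = \sum_{i=1}^k A_i^* \tilde X^{(i)} A_i, \qquad \sum_{i=1}^k A_i^* A_i = I_n,
\end{equation*}
is a matrix-level operator convex combination. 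Invoking the finite-$n_i$ hypothesis on $X$ forces each $A_i = \mu_i B_i$ for an isometry $B_i \in M_{r_i,n};$ the existence of such an isometry requires $r_i \ge n,$ so $r_i = n$ and $B_i$ is unitary. Consequently $V_i = \mu_i W_i$ with $W_i := U_i B_i \in M_{n_i,n}$ an isometry, and $W_i^* X^{(i)} W_i \cong X.$

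The main obstacle is establishing that $\range(V_i)$ actually reduces $X^{(i)},$ which the compression alone does not give. I plan to argue by contradiction: if $\range(V_i)$ fails to reduce $X^{(i)},$ then there exist $j \in \{1,\dots,\gv\}$ and $\zeta \in \cH_n$ with $\eta := (I - U_i U_i^*) X^{(i)}_j U_i \zeta \ne 0.$ Enlarge the compression by adjoining $\mathbb{C}\eta$: let $U_i' : \cH_{n+1} \to \cH_{n_i}$ be the isometry whose first $n$ columns are $U_i$ and whose last column is $\eta/\|\eta\|,$ so $\hat X^{(i)} := (U_i')^* X^{(i)} U_i' \in K_{n+1}$ (again by Proposition~\ref{l:cl:cvx}). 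Since $V_i = U_i' A_i'$ with $A_i' = \col(A_i, 0) \in M_{n+1,n},$ substituting the $i$-th summand produces a still finite-level decomposition $X = (A_i')^* \hat X^{(i)} A_i' + \sum_{\ell \ne i} A_\ell^* \tilde X^{(\ell)} A_\ell.$ The hypothesis applied to this new decomposition forces $\range(A_i') = \cH_n \oplus 0$ to reduce $\hat X^{(i)}_j;$ however, a short calculation shows $(\eta/\|\eta\|)^* X^{(i)}_j U_i \zeta = \|\eta\| \ne 0,$ so the off-diagonal block of $\hat X^{(i)}_j$ is nonzero, contradicting the reducing condition. Therefore $\range(V_i)$ reduces $X^{(i)},$ and the fully-free property of $\bm K$ gives $X^{(i)} = Y^{(i)} \oplus Z^{(i)}$ with $Y^{(i)}, Z^{(i)} \in \bm K$ and $Y^{(i)} \cong X;$ absorbing the implicit unitary into $W_i$ finally achieves $W_i^* X^{(i)} W_i = X,$ completing the verification.
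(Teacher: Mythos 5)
Your argument for the first assertion is identical to the paper's: unfold the definition, use $W_iW_i^*X^{(i)}W_i = X^{(i)}W_i$ (from the reducing property of $\range V_i$) to get $X^{(i)}V_i = V_iX$, and stack to conclude $(\oplus_iX^{(i)})V = VX$, whence $\range V$ is invariant and hence reducing.

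For the second assertion the paper only says ``another routine argument based on equation~\eqref{e:red}'' without spelling it out, so your expansion is genuinely useful, and it is correct. One can compare it with a slightly more direct route: instead of first compressing to $\range V_i$ and then adjoining a single troublesome direction $\C\eta$ to run a contradiction, compress in one shot to the finite-dimensional subspace $\cM_i := \range(V_i) + \sum_{j=1}^\gv X^{(i)}_j\range(V_i)$ (of dimension at most $(1+\gv)n$). Applying the finite-level hypothesis once to that compressed decomposition yields simultaneously that $A_i$ is a scalar multiple of an isometry, that $B_i^*\tilde X^{(i)}B_i = X$, \emph{and} that $\range(A_i)$ reduces the compression; since $\cM_i$ contains $X^{(i)}_j\range(V_i)$ for every $j$, the reducing statement there pulls back to $X^{(i)}_j\range(V_i)\subseteq\range(V_i)$, giving all the conclusions in one pass. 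Your two-step version (first recover $r_i=n$ and the scalar-multiple-of-an-isometry structure, then a one-coordinate-at-a-time contradiction) reaches the same place and is perfectly rigorous, just a bit longer. Two small points of care: the hypothesis $W_i^*X^{(i)}W_i = X$ is obtained \emph{exactly} (not merely up to unitary equivalence) already at the compression stage, since $B_i^*\tilde X^{(i)}B_i = X$ is part of the finite-level conclusion; and one should note explicitly that the compressed pieces $\tilde X^{(\ell)}$ and $\hat X^{(i)}$ lie in $\bm K$ by Proposition~\ref{l:cl:cvx}, and that all $A_\ell$, $A_i'$ are nonzero, so the finite-level definition genuinely applies.
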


\begin{proof}
  Using notation in Definition~\ref{def:fext}, note that $W_i W_i^*$ is the projection onto the
   range of $V_i$ and  there are constants $\lambda_i$ such
   that
  $V_i=\lambda_i W_i.$ Hence $ X^{(i)} W_i = W_i W_i^* X^{(i)} W_i = W_i X$ and 
\begin{equation}
\label{e:red}
   [\oplus_i X^{(i)}] \,  V  = \col{(\lambda_i X^{(i)}W_i)}  = \col(\lambda_i W_i X) = V\, X.
\end{equation}
  Hence the range of $V$ is invariant, and thus reducing, for $\oplus_i X^{(i)}.$

  Another routine argument based on equation~\eqref{e:red} proves the second statement.  
\end{proof}

\begin{remark}\rm
Any free extreme point $X \in K_n$ with $n \in \N$ is irreducible. Indeed, if $X$ is not irreducible, let $V_1$ denote the inclusion into $\cH_n$  of 
a proper nontrivial invariant subspace $\cV\subseteq \cH_n$ of $X$ and $V_2$  the inclusion of the orthogonal complement of $\cV$ into $\cH_n.$
 Thus $V_1$ and $V_2$ are isometries and $\sum V_j V_j^* =I_n.$
 Setting $X^{(j)} = V_j^*  X V_j\in \bm K,$  
\[
	X =  V_1 X^{(1)} V_1^*  + V_2 X^{(2)} V_2^*.
\]
Since $X$ is free extreme and the size of $X^{(1)}$ is at most the size of $X,$ it follows that $X$ is unitarily equivalent to $X^{(1)}.$ 
But this contradicts the fact that $\cV$ is a proper nontrivial invariant subspace of $\C^n.$ \qed
\end{remark}

\begin{remark}
	If $\bm{K}$ is a matrix convex set, then the condition in Definition \ref{def:fext} that $V_i$ must be a  scalar multiple of an isometry can be dropped.
	In fact, in that case a point $X \in \bm{K}$ is free extreme if and only if \eqref{eq:fextdef} implies that for each $i$ either $n_i = n$ and $X^{(i)}$ is unitarily equivalent to $X$ or $n_i > n$ and there is a tuple $Y^{(i)} \in \bm{K}$  such that $X^{(i)}$ is unitarily equivalent to $X \oplus Y^{(i)}.$ To see the equivalence of the two definitions (in the 
	absence of an infinite level) note that, for each $i,$ we have as above that $X^{(i)} \uapprox  X \oplus Y^{(i,1)}.$  Writing $V_i = \text{col}(V_{i,1}, V_{i,2})$ gives
	$$
	X = \sum_{i=1}^k V_i^\ast X^{(i)} V_i = \sum_{i=1}^k V_{i,1}^\ast X V_{i,1} + \sum_{i=1}^k V_{i,2}^\ast Y^{(i,1)} V_{i,2},
	$$
	which is again an expression of $X$ as a matrix convex combination with nonzero $V_{i,j}.$
	Since $X$ is free extreme, each of the $Y^{(i,1)}$ must be unitarily equivalent to $X \oplus Y^{(i,2)}.$ We proceed by induction until, for some $m,$ the size of $Y^{(i,m)}$ is 
  at most $n,$ forcing  { $Y^{(i,m)} = U_{i,m}^*XU_{i,m}$ for unitary matrices $U_{i,m}.$ Hence
   there are unitary matrices $U_{i,j}$ such that}
	$$
	X =\sum_{j=1}^m \sum_{i=1}^k V_{i,j}^*   U_{i,j}^*XU_{i,j} V_{i,j}
	$$
	with $V_i = \text{col}(V_{i,1}\ldots,V_{i,m}).$
	Now by \cite[Proposition 4.6]{EHKM}, for each $i,j,$  the matrix $U_{i,j}V_{i,j}$ is a scalar multiple of the identity, say $U_{i,j}V_{i,j} = t_{i,j} \Ii_n,$ which implies that each $V_i$ is a scalar multiple of an isometry as
	$$
	V_i^* V_i = \sum_{j=1}^m V_{i,j}^* V_{i,j}  = \sum_{j=1}^m V_{i,j}^*   U_{i,j}^* U_{i,j} V_{i,j}  = \sum_{j=1}^m t_{i,j}^2 \,\Ii_n.
	$$
	It is immediate that $V_i^*X_iV_i = \sum_{j=1}^m t_{i,j}^2 \,X.$ \qed
\end{remark}

The following result is \cite[Theorem~6.4.2]{DK} specialized to operator convex sets.

\begin{theorem}[\textbf{Krein-Milman theorem for free extreme points}]\label{th: nckm}
	A  \wstar \bdcpt  operator convex set $\bm{K}$ is the \wstar closed operator convex hull of its free extreme points; that is, every \wstar \bdcpt operator convex set that contains the extreme point of $\bm K$ contains 
  $\bm K.$
\end{theorem}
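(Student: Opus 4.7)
\emph{Proof plan.}
The plan is to deduce this statement directly from the nc Krein-Milman theorem of Davidson and Kennedy, namely \cite[Theorem 6.4.2]{DK}, which is cited in the text and whose specialization to the present setting motivates the very notion of free extreme point in Definition~\ref{def:fext}. The work consists in verifying that a \wstar \bdcpt operator convex set $\bm K\subseteq \mathbb{S}^\gv$ fits into the framework of \wstar compact nc convex sets over an appropriate dual operator algebra, namely the $\gv$-dimensional row Hilbert space $\sR_\gv,$ as recorded in Appendix~\ref{sec:app}.

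First, I would establish a bijection between \wstar \bdcpt operator convex sets $\bm K\subseteq \mathbb{S}^\gv$ and compact nc convex sets in the Davidson-Kennedy sense. Under this identification, the defining closure properties of an operator convex set (closure under countable direct sums of norm-bounded families, under unitary similarity, and under operator convex combinations $\sum V_i^* X^{(i)} V_i$ with $\sum V_i^* V_i = I$) correspond to the nc convexity axioms, and the \wstar topology on $\bm K$ corresponds to the point-\wstar topology used in \cite{DK}. Boundedness is what lets one treat countable direct sums; this was already built into Definition~\ref{d:opco}.

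Next, I would verify that Definition~\ref{def:fext} of a free extreme point is, under this correspondence, literally \cite[Definition~6.1.1]{DK}: in both formulations one demands that in any expression $X=\sum_{i=1}^k V_i^*X^{(i)}V_i$ as a nontrivial operator convex combination, each coefficient $V_i$ must be a scalar multiple of an isometry $W_i$ whose range reduces $X^{(i)}$ and on which $X^{(i)}$ restricts to a unitary copy of $X.$ With this bookkeeping done, \cite[Theorem~6.4.2]{DK} applies verbatim to yield both that $\ext(\bm K)\neq\emptyset$ and that $\bm K$ equals the \wstar closed convex hull of $\ext(\bm K),$ which translates back to $\bm K = \ovopco^{w^*}(\ext(\bm K)).$

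The main obstacle is the translation in Appendix~\ref{sec:app}: one must confirm that $\sR_\gv$ is the correct dual operator algebra (so that a $\gv$-tuple of self-adjoint operators on $\cH$ corresponds to a point in the nc spectrum), and that extreme points and closed convex hulls line up on both sides of the correspondence. Once that matching is in place no further analytic content is needed, since existence of free extreme points and the spanning statement emerge simultaneously from the Davidson-Kennedy theorem. An alternative route, should the categorical identification prove cumbersome, would be to adapt the proof strategy of \cite[Theorem~6.4.2]{DK} directly in the language of operator convex sets, using Pascoe's SOT-BW-MUST (Theorem~\ref{t:sot-bw-must}) to pass to SOT/\wstar limits along dilation chains; but invoking \cite{DK} as a black box is considerably shorter.
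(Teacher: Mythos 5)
Your proposal matches the paper exactly: the paper gives no independent proof, but simply states Theorem~\ref{th: nckm} as \cite[Theorem~6.4.2]{DK} specialized to operator convex sets, with the identification of \wstar \bdcpt operator convex subsets of $\mathbb{S}^\gv$ with compact nc convex sets over $\sR_\gv$ carried out in Appendix~\ref{sec:app}. Your verification that Definition~\ref{def:fext} is \cite[Definition~6.1.1]{DK} under this correspondence is precisely the bookkeeping the paper leaves implicit.
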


An important class of matrix convex sets where free extreme points at finite levels do span the whole set (even without taking closures) are real free spectrahedra \cite{EH};
  i.e., spectrahedra closed under complex conjugation. 
 Given a tuple $A\in \mathbb{S}^\gv_d,$ the spectrahedron $\cD_A$ is \df{closed under complex conjugation}
 if $X=(X_1,\dots,X_\gv) \in \cD_A$ implies $\overline{X} \in \cD_A,$ where $\overline{X}$ is the tuple obtained from $X$ by 
 entry-wise conjugating each $X_j.$  Let $\bm K$ denote the operator convex set
  \[
   \bm K = \{X\in \mathbb{S}^\gv: L_A(X) \succeq 0\}.
 \]
 Thus $K_n=\cD_A(n)$ for $n\in \N.$  
 In this case, the main result  (Theorem~1.3) of \cite{EH} yields a stronger conclusion than that of Theorem~\ref{th: nckm} \CB{(see \cite{Pas22}, who shows that the conclusion of \cite[Theorem 1.3]{EH} can fail for complex spectrahedra that are not closed under complex conjugation.)}
 
\begin{proposition}
 \label{prop:EH}
   With notations as above and assuming $\cD_A$ is closed under complex conjugation, if $\bm K$ is bounded, then
    $\bm K$ is the SOT-closure of the operator convex hull of $\ext(\bm K)\cap \cD_A.$ 
\end{proposition}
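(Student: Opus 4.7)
The plan is to establish the two inclusions in $\bm K = \ovopco^{\SOT}(\ext(\bm K)\cap\cD_A)$ separately. The reverse containment $\supseteq$ reduces to verifying that $\bm K$ itself is SOT-closed, operator convex, and contains $\ext(\bm K)\cap\cD_A$ (the last being trivial). Operator convexity at the infinite level follows from the standard identity
\begin{equation*}
L_A\Bigl(\sum_i V_i^* X^{(i)} V_i\Bigr) = \sum_i (\Ii_d\otimes V_i)^* L_A(X^{(i)})(\Ii_d\otimes V_i),
\end{equation*}
which is positive semidefinite whenever each $L_A(X^{(i)})\succeq 0$ and $\sum_i V_i^*V_i=\Ii$. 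SOT-closedness of $K_\infty$ is similarly routine: tensoring an SOT-convergent net of operators with the fixed finite-dimensional $A_j$ preserves SOT convergence, so $X\mapsto L_A(X)$ is SOT-continuous, and positive semidefiniteness passes to WOT (hence SOT) limits.

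For the forward inclusion I would treat the matricial and operator levels separately. At the finite matricial levels, Theorem~1.3 of \cite{EH} asserts that for a bounded real free spectrahedron every $X\in \cD_A(n)$, $n\in\N$, is a genuine (no closure needed) matrix convex combination of free extreme points of $\cD_A.$ By Lemma~\ref{lema:red}, at each finite level $n$ these matrix-convex-set extreme points coincide with the free extreme points of $\bm K$ at level $n$, that is, with $\ext(\bm K)\cap \cD_A(n)$. Consequently
\begin{equation*}
K_n \subseteq \opco(\ext(\bm K)\cap\cD_A) \qquad \text{for every } n\in\N.
\end{equation*}

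Lifting this inclusion to the operator level is where the SOT-closure enters. Note that $0\in K_\infty$ since $L_A(0)=\Ii_d\otimes \Ii_\cH\succeq 0$, so $\bm K$ is an SOT-closed operator convex set containing $0$. Applying Proposition~3.7(b) of \cite{JKMMP} to $\bm K$ yields that $K_\infty$ is contained in the SOT-closure of $\bigcup_{n\in\N} K_n$, which by the displayed inclusion above lies in $\ovopco^{\SOT}(\ext(\bm K)\cap\cD_A)$. Combined with the matricial case this gives the forward inclusion and completes the proof.

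I expect the main obstacle to be purely bookkeeping: confirming that \cite[Theorem~1.3]{EH} (at the matricial level) and \cite[Proposition~3.7(b)]{JKMMP} (at the operator level) apply verbatim under the present hypotheses. No new extreme-point machinery is needed, since \cite{EH} supplies the matrix convex decomposition at finite levels and the SOT-density of $\bigcup_{n\in\N}K_n$ in $K_\infty$ supplied by \cite{JKMMP} transports it to the operator level.
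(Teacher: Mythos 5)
Your proof is correct and follows essentially the same route as the paper's: both establish the matricial-level decomposition via \cite[Theorem~1.3]{EH} combined with the second part of Lemma~\ref{lema:red}, and both lift to the operator level using \cite[Proposition~3.7(b)]{JKMMP}. The paper records the argument more tersely (omitting the explicit verification of the reverse inclusion, which it treats as immediate from operator convexity and SOT-closedness of $\bm K$), but the underlying structure is identical.
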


\begin{proof}
An application of \cite[Theorem 1.3]{EH} together with the second part of Lemma~\ref{lema:red} yields that $\cD_A$ (the matricial levels of $\bm{K})$ is the  matrix convex hull of $\ext(\bm{K}) \cap \cD_A.$  

By \cite[Proposition~3.7(b)]{JKMMP}, the SOT-closure of the
fully free set $\bm S$ generated by $\cD_A$ 
is $\bm{K}.$ 
  \end{proof}

\subsection{Weak converses to the Krein-Milman theorem}
 This subsection consists of  several variations on weak converses to Theorem~\ref{th: nckm} of independent interest.
We begin with a partial converse inspired by Agler's abstract approach to model theory \cite{Ag}.\looseness=-1

The analog of a boundary for an Agler family of operators adapted to operator convex sets reads as follows.
A \df{boundary}
 of an operator convex set $\bm K$ is 
 an SOT-closed 
 fully free set $\bm B\subseteq \bm K$ (so closed under  norm-bounded countable direct sums, unitary similarities and
restrictions to reducing subspaces {and level-wise SOT-closed}) such that $\opco(\bm B) =\bm K.$  
Note that by Proposition~\ref{l:cl:cvx} $\opco(\bm B)$ is automatically SOT-closed and bounded.
Also note that $\bm K$ is a boundary for itself.   In particular, the set
 $\mathscr{B},$ of  boundaries for $\bm K$ is not empty  and  we call the  set
\begin{equation*}
 \partial^A \bm K =\bigcap \{\bm B: \bm B\in\mathscr{B}\}
\end{equation*}
 the \df{Agler boundary} for $\bm K$ by analogy with the notion of the Agler boundary for 
 a family of operators \cite{Ag}.

 Adapting Agler's notion of an extremal element of a family of operators \cite{Ag} 
 yields the following 
 definition.    A point $X\in K_n$ is an \df{\agler extreme point} of $\bm K,$ if $Y\in K_m$
  and $V:\cH_n\to \cH_m$ is an isometry such that $X=V^* YV,$ then the range of $V$ reduces $Y.$  Let $\ext^A(\bm K)$ denote
   the \agler extreme points of $\bm K.$   
 By Lemma~\ref{lema:red}, $\ext(\bm K)\subseteq\ext^A(\bm K).$ 
Theorem~\ref{th: nckm} says that the SOT-closed \df{fully free hull}  of
the free extreme points of a \bdcpt operator convex set $\bm K$ is a boundary for $\bm K.$

\begin{proposition}
\label{prop:milman:agler-style}
  If $\bm B$ is a boundary of {a bounded SOT-closed}  operator convex set $\bm K,$  then $\ext^A(\bm K) \subseteq \bm B.$ 
  In particular, $\ext(\bm K) \subseteq \bm B.$
 
  The  SOT-closed fully free hull of $\ext(\bm K)$  
   is a boundary that is contained in every other boundary of $\bm K;$  {\CCBB {i.e.}},
   the Agler  boundary for $\bm K$
  is a boundary that is contained in  all other boundaries.
 \end{proposition}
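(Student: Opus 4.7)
The plan is to prove the two assertions in sequence, with the first driving the second. For the first, I will fix a boundary $\bm{B}$ of $\bm{K}$ and an Agler extreme point $X \in \ext^A(\bm{K}) \cap K_n.$ Since $\opco(\bm{B}) = \bm{K}$ contains $X,$ the explicit description of the operator convex hull of a free set in Proposition~\ref{l:cl:cvx} produces $Y \in \bm{B}$ and an isometry $V$ with $X = V^{\ast} Y V.$ The Agler extreme property of $X$ then forces the range of $V$ to reduce $Y,$ so that $V^{\ast} Y V$ is unitarily equivalent (via $V$, viewed as a unitary from $\cH_n$ onto its range) to the restriction of $Y$ to this reducing subspace. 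Because $\bm{B}$ is fully free, i.e., closed under restrictions to reducing subspaces and under unitary similarity, this places $X$ in $\bm{B}.$ The particular case $\ext(\bm{K}) \subseteq \bm{B}$ then follows from the inclusion $\ext(\bm{K}) \subseteq \ext^A(\bm{K})$ noted right after Definition~\ref{def:fext} via Lemma~\ref{lema:red}.

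For the second assertion, let $\bm{H}$ denote the SOT-closed fully free hull of $\ext(\bm{K}).$ Since $\bm{K}$ is operator convex (hence fully free) and SOT-closed, and $\ext(\bm{K}) \subseteq \bm{K},$ it follows that $\bm{H} \subseteq \bm{K};$ in particular $\bm{H}$ is bounded. I will check the defining condition $\opco(\bm{H}) = \bm{K}$ by verifying both inclusions. The containment $\opco(\bm{H}) \subseteq \bm{K}$ is immediate from the operator convexity of $\bm{K}.$ For the reverse inclusion, the last part of Proposition~\ref{l:cl:cvx} guarantees that $\opco(\bm{H})$ is bounded and SOT-closed, and therefore also \wstar closed by Remark~\ref{rem : top}. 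Since $\opco(\bm{H}) \supseteq \ext(\bm{K}),$ it contains the \wstar closed operator convex hull of $\ext(\bm{K}),$ which by Theorem~\ref{th: nckm} equals all of $\bm{K}.$ Thus $\bm{H}$ is a boundary.

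Minimality is then almost automatic: if $\bm{B}$ is any boundary, the first assertion yields $\ext(\bm{K}) \subseteq \bm{B},$ and since $\bm{B}$ is SOT-closed and fully free it must contain the SOT-closed fully free hull of $\ext(\bm{K}),$ namely $\bm{H}.$ Consequently $\bm{H}$ is the minimum element of the family $\mathscr{B}$ of boundaries, and hence equals the Agler boundary $\partial^A \bm{K}$ of \eqref{eq:partAg}. The only nontrivial point in the whole argument is knowing that $\opco(\bm{H})$ is already SOT-closed, so that no extra closure is needed to recover $\bm{K};$ this is exactly the content of the third part of Proposition~\ref{l:cl:cvx}, which for the present purposes does the real work.
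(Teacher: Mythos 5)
Your argument is correct and follows the paper's proof essentially step for step: separate $X\in\ext^A(\bm K)$ via $\opco(\bm B)=\bm K$ and Proposition~\ref{l:cl:cvx}, invoke the Agler extreme property to get a reducing subspace, use that $\bm B$ is fully free, and then show the SOT-closed fully free hull of $\ext(\bm K)$ is a boundary via Proposition~\ref{l:cl:cvx} and Theorem~\ref{th: nckm}. (If anything, you are slightly cleaner than the paper in observing that the reduction follows directly from the definition of an Agler extreme point rather than from Lemma~\ref{lema:red}.)
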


\begin{proof}  Let $\bm B$ be a boundary of $\bm K.$  
   By definition  $\opco(\bm B) =\bm K$ and \CB{$\bm B$} is SOT-closed and fully free. 
     So given $X\in \ext^A(\bm K),$ 
   there exists a $B\in \bm B$ and an isometry $V$ such that $X=V^*BV.$  Since $X$ is \CB{Agler} extreme in $\bm K,$ the range of $V$ reduces $B$ by {\CCBB {definition}}. 
   Since, being fully free, $\bm B$ is closed with respect to reducing subspaces, $X\in \bm B.$  
   {Thus, if $\bm B$ is a boundary for $\bm K,$ then} {\CCBB 
    $\ext^A(\bm K)\subseteq \bm B.$}

   {Now let $\bm E$ denote the SOT-closed fully free hull of $\ext(\bm K).$   Since a boundary  $\bm B$ for $\bm K$ is SOT-closed
    and contains $\ext(\bm K),$ as was just proved,  it follows that $\bm E\subseteq \bm B.$ Since $\bm K$ is a boundary, $\bm E \subseteq \bm K$
    and thus $\opco(\bm E)\subseteq \bm K.$ 
    By Proposition~\ref{l:cl:cvx},  $\opco(\bm E)$ is bounded and SOT-closed. 
    Since $\opco(\bm E)$ is bounded, SOT-closed and operator convex, it is also \wstar closed.  By
   the Krein Milman Theorem for operator convex sets,
     Theorem~\ref{th: nckm},  $\bm K\subseteq \opco(\bm E).$ Thus $\opco(\bm E) =\bm K$ and therefore
      $\bm E$ is a boundary for $\bm K$ that is contained in every other boundary for $\bm K.$}
   \end{proof}

Theorem~6.4.3 from \cite{DK} specialized to the present setting gives the following result.
A graded set $\bm S\subseteq \bbS^{\gv}$ is \df{closed under compressions} if $V^*XV\in \bm S$
whenever $X\in \bm S$ and $V$ is an isometry. We call $V^*XV$ a \df{compression} of $X.$

\begin{proposition}[{Special case of \cite[Theorem~6.4.3]{DK}}]
 \label{prop:milman:DK}
   Suppose $\bm K$ is a \bdcpt operator convex set. If $\bm S\subseteq \bm K$ is \wstar closed and closed under compressions and if the closed operator
   convex hull of $\bm S$ is  $\bm K,$ then $\ext(\bm K)\subseteq \bm S.$
\end{proposition}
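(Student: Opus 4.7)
The plan is to derive this proposition as a direct corollary of \cite[Theorem~6.4.3]{DK}. That theorem, formulated for compact nc convex sets, characterizes nc extreme points as lying in every \wstar-closed, compression-closed subset whose closed nc convex hull is the whole set. To apply it in the present setting, I would use the identification, recorded in Appendix~\ref{sec:app}, of \bdcpt operator convex sets over $\C^\gv$ with compact nc convex sets over the $\gv$-dimensional row Hilbert space $\sR_\gv.$ Under this correspondence, free extreme points in the sense of Definition~\ref{def:fext} match the nc extreme points of \cite{DK}, the notion of compression closure translates verbatim, and the operator convex hull corresponds to the nc convex hull. The hypotheses of the proposition then transfer directly to those of \cite[Theorem~6.4.3]{DK}, whose conclusion yields $\ext(\bm{K})\subseteq \bm{S}.$

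For intuition, the underlying idea can be described as follows. Fix $X \in \ext(\bm{K}).$ Because $\bm{K} = \ovopco^{w^*}(\bm{S}),$ the point $X$ is a \wstar limit of finite operator convex combinations of elements of $\bm{S}.$ The Krein-Milman type argument (cf.\ Proposition~\ref{prop:milman:agler-style} and its proof) then aims to realize $X$ itself as a compression $X = W^*YW$ of a single tuple $Y$ that is already an element of $\bm{S}.$ Once this realization is available, the free extremality of $X$ (together with Lemma~\ref{lema:red}) forces $W$ to be an isometry whose range reduces $Y,$ so that $Y = X' \oplus Z$ with $X'$ unitarily equivalent to $X.$ The compression closure of $\bm{S},$ which in combination with \wstar closure includes closure under unitary conjugation and (for bounded families) under restriction to reducing subspaces, then delivers $X \in \bm{S}.$

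The main technical obstacle in a self-contained proof is the passage from arbitrary finite \wstar-approximating combinations to a single dilation that actually lies in $\bm{S}$ rather than merely in $\bm{K}.$ Since $\bm{S}$ is not assumed closed under direct sums, one cannot simply assemble the approximating summands by concatenation; some additional selection device, such as a boundary representation or an invocation of Pascoe's SOT-BW-MUST (Theorem~\ref{t:sot-bw-must}) applied to carefully chosen unitary normalizations, is needed. The Davidson-Kennedy framework handles exactly this obstacle using the theory of boundary representations for nc convex sets, which is why the cleanest route is to cite \cite[Theorem~6.4.3]{DK} rather than reprove it here.
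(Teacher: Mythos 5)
Your proposal takes the same route as the paper: the paper gives no independent proof, merely labeling the result ``Special case of \cite[Theorem~6.4.3]{DK}'' and relying on the identification in Appendix~\ref{sec:app} of \bdcpt operator convex subsets of $\mathbb{S}^\gv$ with compact nc convex sets over $\sR_\gv$, exactly as you describe. Your additional intuition paragraph is sound, and you correctly flag the genuine obstacle to a self-contained proof (that $\bm S$ need not be closed under direct sums, so a limit of compressions of finite direct sums from $\bm S$ does not obviously come from a single element of $\bm S$), which is precisely what the Davidson--Kennedy machinery is needed for.
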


 In Proposition~\ref{prop:milman:agler-style}, $\bm B$ is assumed closed with respect to only reducing subspaces, and not necessarily compressions like
  $\bm S$ from Proposition~\ref{prop:milman:DK}. On the other hand, $\bm B$ is assumed to be a free set, in particular closed with respect to norm-bounded
  countable direct sums  unlike $\bm S.$  Proposition~\ref{prop:milman}  below gives a couple
   of variants of Proposition~\ref{prop:milman:DK} obtained by imposing  slightly different hypotheses on $\bm S.$

\begin{proposition}
\label{prop:milman}
Let $\bm{K}$ be an operator convex set.
\begin{enumerate}[\rm (a)]
\item \label{i:milman:a}
Suppose $\bm S \subseteq \bm K$ is a subset of irreducible tuples that is closed under unitary conjugation. If the operator convex hull of $\bm S$ equals $\bm{K},$ then $\bm S$ contains all the free extreme points of $\bm{K}.$
\item \label{i:milman:b}
Suppose $\bm S \subseteq \bm K$  is closed under compressions. If the operator convex hull of $\bm S$ equals $\bm{K},$ then $\bm S$ contains all the free extreme points of $\bm{K}$.
\end{enumerate}
\end{proposition}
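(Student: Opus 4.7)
The plan is to give a unified argument for both (a) and (b) based on a single intertwining calculation.

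First, in both parts $\bm{S}$ is closed under unitary conjugation (in (b) because unitaries are isometries), so the free hull $\widetilde{\bm{S}}$ consists of at most countable norm-bounded direct sums $\widetilde{S}=\bigoplus_i S^{(i)}$ with $S^{(i)}\in\bm{S}$. Since $\bm{K}$ is operator convex, $\widetilde{\bm{S}}\subseteq \bm{K}$, and since every operator convex set containing $\bm{S}$ also contains $\widetilde{\bm{S}}$, one has $\opco(\bm{S})=\opco(\widetilde{\bm{S}})$. Proposition~\ref{l:cl:cvx} then yields
\[
\opco(\bm{S})=\{V^*\widetilde{S}V:\widetilde{S}\in\widetilde{\bm{S}},\ V\text{ an isometry}\}.
\]
Fix $X\in\ext(\bm{K})\cap K_n$ and write $X=V^*\widetilde{S}V$ with $\widetilde{S}=\bigoplus_i S^{(i)}$ and $V:\cH_n\to\cH_{\widetilde S}$ an isometry. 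Viewed as a $k=1$ operator convex combination, the definition of free extreme point together with Lemma~\ref{lema:red} forces $\cV:=V(\cH_n)$ to be a reducing subspace of $\widetilde{S}$ and $V^*\widetilde{S}V=X$. Independently, if $X$ admitted a proper nontrivial reducing subspace $\cW\subseteq\cH_n$, the canonical splitting $X=V_1^*(X|_{\cW})V_1+V_2^*(X|_{\cW^\perp})V_2$ together with the free extreme property would give $V_1^*V_1=P_\cW=|\lambda|^2 I_n$ for some scalar $\lambda$, which is impossible for a proper nontrivial projection; hence $X$ is irreducible.

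Next, let $P_i$ be the block projection of $\cH_{\widetilde S}=\bigoplus_j\cH_{n_j}$ onto $\cH_{n_i}$ and set $T_i:=P_iV:\cH_n\to\cH_{n_i}$. Since $P_i$ commutes with $\widetilde{S}$ and $\widetilde{S}V=VX$, a short computation yields $S^{(i)}T_i=T_iX$ and $\sum_i T_i^*T_i=V^*V=I_n$. By irreducibility of $X$ its commutant in $\cB(\cH_n)$ equals $\C I_n$, so the positive operator $T_i^*T_i$, which commutes with $X$, must be a scalar $\lambda_i I_n$ with $\lambda_i\geq 0$ and $\sum_i\lambda_i=1$. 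Choosing $i_0$ with $\lambda_{i_0}>0$, the operator $U:=T_{i_0}/\sqrt{\lambda_{i_0}}$ is an isometry intertwining $X$ and $S^{(i_0)}$, and $U^*S^{(i_0)}U=X$. For part (b), this exhibits $X$ as a compression of $S^{(i_0)}\in\bm{S}$, so $X\in\bm{S}$ by closure under compressions. For part (a), the irreducibility of $S^{(i_0)}$ together with the fact that $\range(U)$ is a nonzero $S^{(i_0)}$-reducing subspace forces $U$ to be unitary; then $X$ is unitarily equivalent to $S^{(i_0)}\in\bm{S}$, and closure under unitary conjugation gives $X\in\bm{S}$.

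The main obstacle I anticipate is verifying the first step, namely that every element of $\opco(\bm{S})$ admits the single-term representation $V^*\widetilde{S}V$ with $\widetilde{S}$ in the free hull (rather than having to work with a generic, possibly growing, finite operator convex combination). Once this reduction is in place, the intertwining analysis is essentially Schur's lemma in disguise, and the conclusion in each part is an immediate application of the respective closure hypothesis on $\bm{S}$.
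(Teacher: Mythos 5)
Your proof is correct and, in fact, repairs a small gap in the paper's own argument. The paper begins by asserting that a free extreme point $X\in\bm K=\opco(\bm S)$ is a finite operator convex combination $\sum_{i=1}^k V_i^* X^{(i)} V_i$ with each $X^{(i)}\in\bm S$. This step is not automatic: by Proposition~\ref{l:cl:cvx}, a generic point of $\opco(\bm S)$ has the form $X=V^*\widetilde S V$ with $\widetilde S=\oplus_i S^{(i)}$ a possibly \emph{countably infinite} direct sum of elements of $\bm S$, and $\widetilde S$ lies in the free hull of $\bm S$ but typically not in $\bm S$ itself; the corresponding expansion $\sum_i T_i^* S^{(i)} T_i$ then has infinitely many nonzero terms, so the finite-$k$ Definition~\ref{def:fext} cannot be applied to the $S^{(i)}$ term-by-term. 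You close this by treating $X=V^*\widetilde S V$ as a one-term combination (so $V$ reduces $\widetilde S$, by Lemma~\ref{lema:red}), verifying that a free extreme point is irreducible at any level, and then using the intertwinings $S^{(i)}T_i=T_iX$ together with the fact that each $T_i^*T_i$ lies in the (trivial) commutant of $X$ to extract a single $S^{(i_0)}\in\bm S$ and an isometry $U$ with $U^*S^{(i_0)}U=X$. Closure under compressions (case (b)), respectively irreducibility of $S^{(i_0)}$ plus closure under unitary conjugation (case (a)), then finish. The Schur-lemma step is a clean, unified way to pass from the aggregate $\widetilde S$ to a single summand; a marginally more economical repair, closer to the paper's wording, is to pick $i_0$ with $T_{i_0}=P_{i_0}V\neq0$, regroup $\widetilde S$ as $S^{(i_0)}\oplus\bigl(\oplus_{i\ne i_0}S^{(i)}\bigr)$ and apply Definition~\ref{def:fext} to the resulting two-term combination $T_{i_0}^*S^{(i_0)}T_{i_0}+T'^*\widetilde S'\,T'$ directly, avoiding the commutant calculation altogether. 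Both routes are valid; yours has the virtue of exposing the single representative $S^{(i_0)}$ explicitly and treating (a) and (b) uniformly.
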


\begin{proof}
Suppose $X \in \bm{K}$ is a free extreme point.   Since $X \in \bm{K},$ by assumption, it can  be written as an operator convex combination \eqref{eq-op comb},
	$$
	X = \sum_{i=1}^k V_i^\ast X^{(i)} V_i,
	$$
	of points $X^{(i)}$ from $\bm S$. 

In the case of item~\ref{i:milman:a}, since $X$ is free extreme and the tuples $X^{(i)}$ are irreducible, we deduce that $X^{(i)}$ is unitarily equivalent to $X$ for each $i.$ As $\bm S$ is closed under unitary conjugation, $X$ lies in $\bm S$.

For item~\ref{i:milman:b}, 
since $X$ is free extreme, each $V_i$ is a scalar multiple of an isometry $W_i$ and $X=W_i^*X^{(i)}W_i$. As $\bm S$ 
 is closed under compressions, this implies $X\in \bm S$ as desired.
\end{proof}

\begin{remark}\rm
 \label{r:need:for:inf}
   We now return to the need for the addition of an infinite level, alluded to at the outset of
   this section, to obtain a Krein-Milman Theorem for operator convex sets.  Suppose $\bm K$
   is a matrix convex set. What is desired is a  notion of extreme point with 
   the property that (1) the closed matrix convex hull of the graded set $\bm E$ of extreme points
   of $\bm K$ is $\bm K;$ and (2) if $\bm S$ is any closed graded set whose closed matrix convex
   hull is $\bm K,$ then $\bm S$ contains $\bm E.$ In this sense, $\bm E$ (or its closure)
   is {\it the smallest spanning set}
   for $\bm K.$  From what we have seen, the only possibility for $\bm E$  is the graded set
   of free extreme points. On the other hand,  it can happen that a matrix convex set does not contain any free extreme points. 
The simplest example is exhibited by the Cuntz \cite{Cun77} isometries, cf.~\cite[Example 6.30]{Kr}; an alternate self-contained construction 
is given in \cite{Evert-no-ext}. Namely,
given a tuple $A$ of compact self-adjoint operators with no nontrivial finite-dimensional reducing subspace, 
the matrix convex set generated by $\{A\}$ has no (finite-dimensional) free extreme points.

There is a notion of, and a version of the Krein-Milman for, matrix extreme points \cite{WW,Morentz}. See also \cite{FHL,Fischer,F}
for some additional references. 
However,  the graded set of matrix extreme points of a matrix convex set is not necessarily a minimal spanning set, because a matrix extreme point of a matrix convex set could be expressible as a compression of a matrix extreme point at a higher level. That is, the graded set of matrix extreme points satisfies condition (1), but not condition (2), above
 -- see for instance \cite{EEHK} where it is shown (2) can fail even for a free spectrahedron. \qed
\end{remark}

\subsection{Operator \texorpdfstring{$\Gat$}{Gamma}-convex sets}
The notion of an operator convex set naturally extends to that of an \df{operator $\Gat$-convex set}
by appending a countably infinite-dimensional level $K_\infty \subseteq \cB(\cHi)_{\sa}^\gv=\mathbb S_\infty^\gv$ for an infinite-dimensional separable Hilbert space $\cHi.$
For a tuple $\Gat=(\gat_1,\dots,\gat_\rv)$ of symmetric noncommutative polynomials
with $\gat_j=x_j$ for $1\le j\le \gv\le \rv,$ the mapping $\PhiG:\mathbb{S}^\gv \to \mathbb{S}^\rv$ naturally extends to the operator level.

\begin{lemma}
 \label{l:Gat:SOT-continuous}
    The mapping $\Gat$ is (level-wise) SOT-SOT continuous on bounded sets.
    Moreover, if $\bm B \subseteq \mathbb{S}^\gv$ is bounded and SOT-closed, then 
     so is $\Gamma(\bm B).$
\end{lemma}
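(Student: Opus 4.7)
The plan is to handle the continuity and the closedness assertions separately. For continuity, the key input is that multiplication $(A,B)\mapsto AB$ on $\cB(\cH_n)$, for any $n\in\N\cup\{\infty\}$, is jointly SOT-continuous on norm-bounded subsets: writing
\[
A_k B_k x - ABx = A_k(B_k-B)x + (A_k-A)Bx,
\]
both summands tend to zero in norm provided $\|A_k\|$ is uniformly bounded and $A_k\to A$, $B_k\to B$ in SOT. Since each $\gamma_j$ is a finite $\C$-linear combination of words in $x_1,\ldots,x_\gv$, an induction on word length, combined with SOT-continuity of addition and scalar multiplication, promotes this to SOT-continuity of each $\gamma_j$, hence of $\Gat$, on norm-bounded subsets of $\mathbb{S}^\gv$.

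For the closedness of $\Gat(\bm B)$ I would split by level. If $n\in\N$ is finite, then $\mathbb{S}^\gv_n$ is finite-dimensional, so SOT coincides with the norm topology. The bounded SOT-closed set $\bm B\cap \mathbb{S}^\gv_n$ is therefore norm-compact, and its continuous image $\Gat(\bm B\cap \mathbb{S}_n^\gv)$ is norm-compact, and hence SOT-closed in $\mathbb{S}^\rv_n$.

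At the infinite level the argument relies on Pascoe's SOT-BW-MUST (Theorem~\ref{t:sot-bw-must}). Given $C$ in the SOT-closure of $\Gat(\bm B_\infty)$ realized by $\Gat(B^{(k)})\xrightarrow{\rmSOT}C$ with $B^{(k)}\in \bm B_\infty$, Pascoe's theorem yields a subsequence together with unitaries $U_n$ such that $U_n^* B^{(k_n)} U_n \xrightarrow{\rmSOT}\widetilde B$ while $U_n^* \xrightarrow{\rmSOT} W$ with $W$ an isometry. Because each $\gamma_j$ is a polynomial, it is covariant under unitary conjugation, so
\[
U_n^*\Gat(B^{(k_n)})U_n = \Gat(U_n^*B^{(k_n)}U_n) \xrightarrow{\rmSOT} \Gat(\widetilde B)
\]
by the first step. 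Under the (standing) hypothesis that $\bm B$ is closed under unitary equivalence, SOT-closedness gives $\widetilde B \in \bm B_\infty$, and a careful comparison of the two SOT-limits above then identifies $C$ with $\Gat$ of some element of $\bm B_\infty$, placing $C$ in $\Gat(\bm B)$.

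The main obstacle is the infinite-dimensional level: Pascoe's theorem guarantees only that the limit $W$ of $(U_n^*)$ is an isometry rather than a unitary, so $U_n x$ converges only weakly (to $W^*x$) and not in norm. Consequently the identification of $C$ with $\Gat$ of an element of $\bm B_\infty$ is not quite immediate from $U_n^*\Gat(B^{(k_n)})U_n\xrightarrow{\rmSOT}\Gat(\widetilde B)$; it requires tracking the range of $W$ and, if necessary, an auxiliary direct-sum (dilation) manipulation to replace $W$ by a genuine unitary. The finite-level case, by contrast, is essentially automatic once the continuity is established.
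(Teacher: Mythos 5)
Your continuity argument is correct and matches the paper's (joint SOT-continuity of multiplication on bounded subsets, promoted to polynomials). The closedness argument, however, is both overcomplicated and incomplete, and you have yourself flagged the gap at the end. The route through Pascoe's SOT-BW-MUST theorem is unnecessary here, and it also quietly imports a hypothesis — closure of $\bm B$ under unitary equivalence — that the lemma does not assume. A set can be bounded and SOT-closed without being a free set.

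The observation you are missing is structural and makes the whole thing collapse: by the standing convention $\gamma_j = x_j$ for $1\le j\le \gv$, so the map $\Gat$ has the form
\[
\Gat(X) = \big(X_1,\ldots,X_\gv,\ \gamma_{\gv+1}(X),\ldots,\gamma_{\rv}(X)\big),
\]
i.e.\ the first $\gv$ coordinates of $\Gat(X)$ are $X$ itself. Now suppose $\Gat(B^{(k)})\xrightarrow{\rmSOT} C$ with $B^{(k)}\in \bm B_n$ (at any level $n\in\N\cup\{\infty\}$; the bounded SOT topology is metrizable, so sequences suffice). Looking at the first $\gv$ coordinates alone shows $B^{(k)}\xrightarrow{\rmSOT} B$ for some $B\in \mathbb{S}^\gv_n$. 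Since $\bm B$ is SOT-closed and bounded, $B\in\bm B$. By the continuity part of the lemma, $\Gat(B^{(k)})\xrightarrow{\rmSOT}\Gat(B)$, so $C=\Gat(B)\in\Gat(\bm B)$. This argument is uniform over all levels and needs no unitaries, no compactness, and no auxiliary hypotheses. The finite/infinite case split and the appeal to Pascoe's theorem should be removed.
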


\begin{proof}
 Continuity is immediate at finite levels. Suppose $B\subseteq \cB(\cHi)$ is bounded. 
 Thus the  (product)  SOT topology on $B$ is
 metrizable. Hence SOT-SOT-continuity of $\Gat\vert_B$ is equivalent to sequential SOT-SOT-continuity,
 which is immediate since products and sums of bounded SOT convergent sequences converge SOT.

 Now suppose $\bm B$ is closed and bounded.  It is immediate that $\Gat(\bm B)$ 
 is bounded. To prove $\Gat(\bm B)$ is closed, it suffices to show $\Gat(\bm B)$ is SOT-sequentially closed.
 Given an SOT-convergent sequence $\Gat(X^{(n)})=(X^{(n)},\gamma(X^{(n)})),$ there is some $X$ 
  such that $X^{(n)}$ converges SOT to $X.$ Since $\bm B$ is SOT-closed, $X\in\bm B.$  By continuity of $\Gat$ on bounded sets, $\Gat(X^{(n)})$ converges SOT to $\Gat(X)\in \Gat(\bm B).$
\end{proof}
 
 The notion of a \df{$\Gat$-pair} is extended to include any pair $(X,V)$ satisfying $V^* \PhiG(X)V= \PhiG(V^*XV),$
 where $X\in \mathbb{S}^\gv_n$ and $V:\cH_m\to\cH_n$ is an isometry (with $\cH_\infty=\cHi$).
 As before, denote the graded set of all $\Gat$-pairs by $\Coup.$ \index{$\Coup$}

\begin{definition}
  Let $\bm S, \bm K\subseteq \mathbb{S}^\gv$ denote  graded sets.
   Thus $S_n, K_n\subseteq \mathbb{S}^\gv_n$ for each $n\in \N \cup\{\infty\}.$
  \begin{enumerate}[\rm (1)]\itemsep=6pt
  \item  The graded set  $\bm{K} = (K_n)_{n \in \N \cup \{\infty\}}$ 
  is  \df{operator $\Gat$-convex} if it is free and  $V^*XV \in \bm K$ whenever $X\in \bm K$ and $(X,V)\in \Coup.$
	
	\item The \df{operator $\Gat$-convex hull} of   $\bm{S}$ is the  intersection of all operator $\Gat$-convex sets containing $\bm{S},$ 
	denoted  by $\Gat\mhyphen \opco(\bm{S}).$ 
For a topology $\tau$, the 
$\tau$-closed operator $\Gat$-convex hull{\footnote {\CCBB The universe is closed (in any topology)
and $\Gat$-convex. See Example~\ref{ex:matco-all}} }of $\bm S$ is 
the intersection of all $\tau$-closed operator $\Gat$-convex sets containing $\bm S,$ denoted by $\Gat\mhyphen\ovopco_{\tau}(\bm{S})$.\qed
 \end{enumerate}
\end{definition}

\begin{remark}\rm
\label{r:sub:sot:explained}
\mbox{}\par
(1)
It is easy to see that a free set $\bm{K}$ is operator $\Gat$-convex if and only if it is closed under \df{operator $\Gat$-convex combinations}; that is, operator convex combinations of the form\looseness=-1
\begin{equation*}
	\sum_{i=1}^k V_i^\ast X^{(i)} V_i
\end{equation*}
as in \eqref{eq-op comb}, where  $(X, V)$ with
$$
X = \oplus_{i=1}^k X^{(i)} \quad
\text{ and }
\quad
V = \text{col}(V_1,\ldots,V_k) =
\begin{pmatrix}
	V_1 \\
	\vdots \\
	V_k	
\end{pmatrix},
$$
is a $\Gamma$-pair.

\smallskip

(2)
Note, in the case  $\Gamma(x)=x,$ that $\Gat\mhyphen\ovopco_{\tau}(\bm S) = \ovopco^{\tau}(\bm S)$
    where $\tau$ is either the SOT, WOT  or \wstar topology; 
    that is, in these cases the closure of $\opco(\bm K)$ is the 
    smallest closed operator convex set containing $\bm S$  by Lemma~\ref{l:cl:cvx}.  Thus we use
  the notation $\Gat\mhyphen \ovopco_{\tau},$ instead of $\Gat\mhyphen\ovopco^{\tau},$ to distinguish between the smallest closed 
   $\Gat$-convex set containing $\bm S$ and the
   {\CCBB level-wise} closure of $\Gat\mhyphen\opco(\bm S).$

\smallskip

{\CCBB

(3)
If $\bm S$ is bounded, SOT-closed, and free, then 
\cite[Theorem 3.3]{JKMMP} implies that $\Gat\mhyphen\opco(\bm S)$ is already
SOT-closed. Hence in this case
\[
   \Gat\mhyphen \ovopco^{\rm sot}(\bm S)
   =
   \Gat\mhyphen \opco(\bm S)
   =
   \Gat\mhyphen \ovopco_{\rm sot}(\bm S).
\]
In general, however, these two closure constructions do not coincide as we show next.

\smallskip

(4)
Let
$
   \Gamma(x,y)=(x,y,y^2).
$
For $n\in\mathbb N$, set
\[
X=
   \begin{pmatrix}
      0 & 1\\
      1 & 0
   \end{pmatrix},
   \qquad
   Y_n=
   \begin{pmatrix}
      0 & 0\\
      0 & \frac1n
   \end{pmatrix},
\]
and let
\[
   A_n=(X,Y_n)\in \bbS_2^2.
\]
Let $\bm S\subseteq\bbS^2$ be the graded set with
\[
   S_2=\{A_n:n\in\mathbb N\},
   \qquad
   S_m=\emptyset\quad(m\neq 2).
\]

We claim that
\[
\Gat\mhyphen \ovopco^{\rm sot}(\bm S)
   \neq
   \Gat\mhyphen \ovopco_{\rm sot}(\bm S).
\]
Indeed,
$
   A_n\to A=(X, 0_2)
$
in norm, hence also in the SOT. Since $A_n\in \bm S\subseteq
\Gat\mhyphen\opco(\bm S)$, it follows that
\[
   A\in 
\Gat\mhyphen \ovopco^{\rm sot}(\bm S).
\]
Consequently $A$ belongs to every SOT-closed operator $\Gamma$-convex set
containing $\bm S$, and hence
$
   \Gat\mhyphen \ovopco_{\rm sot}(\bm S).
$

Now the second coordinate of $A$ is $0_2$. Therefore every subspace reduces
the second coordinate. In particular, if
\[
   v=\frac1{\sqrt2}
   \begin{pmatrix}
      1\\
      1
   \end{pmatrix}
\]
and $V:\mathbb C\to\mathbb C^2$ is the isometry $V1=v$, then
 $(A,V)$ is a $\Gamma$-pair. Since
$\Gat\mhyphen \ovopco_{\rm sot}(\bm S)$ is $\Gamma$-convex, it follows
that
$
   V^*AV
   =
   \bigl(V^*XV, 0\bigr)
   =
   (1,0) \in \Gat\mhyphen \ovopco_{\rm sot}(\bm S).
$

On the other hand,
\[
   (1,0)\notin
\Gat\mhyphen \ovopco^{\rm sot}(\bm S).
\]
To see this, we show that every scalar point in
$\Gat\mhyphen\opco(\bm S)_1$ has first coordinate  $0$.

Let $B=(b_1,b_2)\in \Gat\mhyphen\opco(\bm S)_1$. By
Proposition~\ref{prop:newjp} {below}, $B$ is obtained by compressing
a tuple $F$ in the free hull of $\bm S$. Thus
$
   B=V^*FV
$
for some unit vector $V:\C\to \cH_F$, and 
$\Gat$-pair
$(F,V)$.
Write $F=(F_1,F_2)$. Since
$\Gamma=(x,y,y^2)$, the condition that $(F,V)$ is a $\Gamma$-pair implies $V1$ is an eigenvector of
$F_2$.
But $F$ is a direct sum, up to unitary equivalence, of copies of the tuples
$A_n$. For each $A_n=(X,Y_n)$, the compression of $X$ to any eigenspace
of $Y_n$ is zero.
Therefore the same holds for $F$. Since $V1$ lies in an eigenspace of
$F_2$, we get
$
   b_1=V^*F_1V=0.
$
Thus
\[
   \Gat\mhyphen\opco(\bm S)_1\subseteq \{0\} \times \R.
\]
Taking SOT closure at level $1$, which is just ordinary Euclidean closure,
still gives only points with first coordinate $0$. Hence
$(1,0)\not\in\Gat\mhyphen \ovopco^{\rm sot}(\bm S).$

Therefore
\[
\Gat\mhyphen \ovopco^{\rm sot}(\bm S) 
\subsetneq 
\Gat\mhyphen \ovopco_{\rm sot}(\bm S).
\]
By finite-dimensionality, the same example also separates the corresponding weak$^*$-closed notions.
}
\qed \end{remark}

A Hahn-Banach separation theorem for operator $\Gat$-convex subsets of  $\bbS^{\gv}$  was proved in \cite{JKMMP}. Assuming $0 \in \bm{K},$ this theorem holds in our setting as well after identifying the finite levels $K_n$ of $\bm{K}$ with $K_n \oplus 0 \subseteq K_\infty.$

\begin{theorem}{\cite[Theorem 3.8]{JKMMP}} \label{hb-op}
	Suppose $\Gat(0) = 0$ and let $\bm{K}$ be an SOT-closed bounded operator $\Gat$-convex set such that $0 \in \bm{K}.$ If $X \notin \bm{K},$
	 then  there is an $n \in \N$ and a monic linear pencil $L$ of size $n$ such that the $\Gat$-pencil $\LG = L\circ \Gat$ is positive semidefinite on $\bm{K},$ but not at $X.$ \looseness=-1
\end{theorem}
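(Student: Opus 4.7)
The plan is to lift the matricial separation theorem, Theorem~\ref{t:thm:jp-intro}, to the operator setting. Introduce the auxiliary set $\bm{J} := \ovopco^{\SOT}(\Gat(\bm{K}))$, the SOT-closed operator convex hull of $\Gat(\bm{K})$. Since $\bm{K}$ is bounded and each $\gat_j$ is polynomial, $\Gat(\bm{K})$ is bounded, hence so is $\bm{J}$; by construction $\bm{J}$ is a bounded, SOT-closed operator convex set containing $0$. Granting the key identity $\bm{K}=\Gat^{-1}(\bm{J})$ (the technical heart of the argument, proved below), the assumption $X\notin\bm{K}$ forces $\Gat(X)\notin\bm{J}$, after which an operator Effros-Winkler-style separation produces the desired $\Gat$-pencil.

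The identity $\bm{K}=\Gat^{-1}(\bm{J})$ is established as follows. The inclusion $\bm{K}\subseteq\Gat^{-1}(\bm{J})$ is immediate. Conversely, suppose $\Gat(Y)\in\bm{J}$. By Proposition~\ref{l:cl:cvx}, $\bm{J}$ is the SOT-closure of $\opco(\Gat(\bm{K}))$, whose elements are isometric compressions $V^*\Gat(Z)V$ of tuples from $\Gat(\bm{K})$. Hence there exist $Z_\alpha\in\bm{K}$ and isometries $V_\alpha$ with $\Gat(Y)=\rmSOT\mhyphen\lim V_\alpha^*\Gat(Z_\alpha)V_\alpha$. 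Using the full freeness of $\bm{K}$ and that $0\in\bm{K}$, one pads by zeros and conjugates by unitaries so that all $Z_\alpha$ act on a common infinite-dimensional separable Hilbert space $\cHi$ and the $V_\alpha$ share a common codomain. Applying Pascoe's SOT-BW-MUST (Theorem~\ref{t:sot-bw-must}) to the combined bounded sequence, we pass to a subsequence and choose unitaries $U_\alpha$ such that $U_\alpha^*Z_\alpha U_\alpha\to Z$ SOT (with $Z\in\bm{K}$, by SOT-closedness and closure under unitary similarity) and $U_\alpha^*V_\alpha\to V$ SOT for some isometry $V$. Comparing the first $\gv$ coordinates yields $Y=V^*ZV$, and SOT-continuity of $\Gat$ on bounded sets (Lemma~\ref{l:Gat:SOT-continuous}) gives
$$
\Gat(V^*ZV)=\Gat(Y)=\rmSOT\mhyphen\lim V_\alpha^*\Gat(Z_\alpha)V_\alpha=V^*\Gat(Z)V,
$$
so $(Z,V)$ is a $\Gat$-pair. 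Operator $\Gat$-convexity of $\bm{K}$ then forces $Y=V^*ZV\in\bm{K}$.

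With the identity in hand, the concluding step is a separation statement. Since $\bm{J}$ is bounded, SOT-closed, operator convex, and contains $0$, it admits a presentation as a \Hilby free spectrahedron $\cD_A$ for a semi-finite tuple $A=\bigoplus_j A^{(j)}$ with each $A^{(j)}\in \mathbb{S}^\rv_{n_j}$ (cf.\ Proposition~\ref{p:not:vac} and Remark~\ref{r:relax:sep}; if $0$ fails to lie in the interior of $J_1$, first restrict to the affine span of $J_1$ as in Remark~\ref{r:4.3}). The failure $\Gat(X)\notin\bm{J}=\cD_A$ means $L_A(\Gat(X))\not\succeq 0$, so some finite summand already witnesses $L_{A^{(j)}}(\Gat(X))\not\succeq 0$. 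Taking $L:=L_{A^{(j)}}$, monic of finite size $n=n_j$, the $\Gat$-pencil $\LG:=L\circ\Gat$ is positive semidefinite on all of $\bm{K}$ (because $\Gat(\bm{K})\subseteq \cD_A$) and yet $\LG(X)=L(\Gat(X))\not\succeq 0$, as required.

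The principal obstacle is the limiting step in $\bm{K}=\Gat^{-1}(\bm{J})$: producing simultaneous SOT limits for the ambient tuples $Z_\alpha$ and the isometries $V_\alpha$ requires Pascoe's SOT-BW-MUST (and careful preliminary dilation to a common infinite-dimensional space), while verifying that the limiting pair $(Z,V)$ is actually a $\Gat$-pair depends crucially on the SOT-continuity of $\Gat$ on bounded sets. A secondary technicality is the reduction of the separating pencil to \emph{finite} size, which flows from the semi-finite spectrahedral presentation of $\bm{J}$ rather than from any direct infinite-dimensional separation result.
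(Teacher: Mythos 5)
This theorem is not proved in the present paper --- it is cited directly from \cite[Theorem~3.8]{JKMMP}, with the preamble only explaining that the result extends from the $\cB(\cH)^\gv_\sa$-setting of \cite{JKMMP} to the graded operator setting by identifying finite-level elements with $K_n\oplus 0\subseteq K_\infty$. There is therefore no in-paper proof to compare your argument against, so I assess it on its own terms.

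Your core identity $\bm K=\Gat^{-1}(\bm J)$ with $\bm J=\ovopco^{\SOT}(\Gat(\bm K))$ is the right idea, and the reduction to a \emph{finite}-size pencil via a semi-finite Hilbertian representation of $\bm J$ is a sensible route to the finiteness claim. However, two points deserve comment. First, the entire net/Pascoe-SOT-BW-MUST argument for the identity $\bm K=\Gat^{-1}(\bm J)$ is redundant: by Proposition~\ref{prop:candb} the set $\opco(\Gat(\bm K))$ is already SOT-closed (so $\bm J=\opco(\Gat(\bm K))$ with no closure), and by the third statement of Proposition~\ref{l:cl:cvx} its elements are exactly $V^*\Gat(Z)V$ with $Z\in\bm K$ and $V$ an isometry. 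Then $\Gat(Y)=V^*\Gat(Z)V$ gives $Y=V^*ZV$ (first $\gv$ coordinates), $(Z,V)\in\Coup$, and $Y\in\bm K$ by operator $\Gat$-convexity --- no limits needed. Your limiting argument is not wrong, but you are re-proving what the paper has already established, and in so doing you suppress a detail (the convergence of the compressions is only WOT, as the paper notes in the proof of Proposition~\ref{l:cl:cvx}; SOT is not available for $(U_\alpha^*V_\alpha)^*$).

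Second, and this is the genuine gap: your reduction when $0\notin\intr(J_1)$ is gestured at but not carried out, and what is needed is a real case analysis, not merely "restrict to the affine span." Write $V=\spann(J_1)$; by matrix convexity $\bm J^{\fin}\subseteq\MM(V)$. If $\Gat(X)\notin M(V)$, the restricted pencil does not separate; instead one must take a real linear functional $\lambda$ vanishing on $V$ with $\Lambda:=\lambda(\Gat(X))\ne 0$, and use a scalar pencil $L(z)=1\pm t\lambda(z)$ for large $t>0$ (the sign chosen so that $I\mp t\Lambda\nsucceq 0$), which is identically $1$ on $\Gat(\bm K)$. If $\Gat(X)\in M(V)$, one restricts to $V$, applies Lemma~\ref{l:semi-finite} there, and then extends the coefficients by zero on $V^\perp$ to get a monic pencil in all $\rv$ coordinates. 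Without this split the sentence "first restrict to the affine span of $J_1$" does not yield a separating pencil in the degenerate case. Relatedly, the passage from the finite-level identity $\bm J^{\fin}=\cD_A^{\fin}$ supplied by Lemma~\ref{l:semi-finite} to the full operator-level identity $\bm J=\cD_A^\infty$ requires an explicit invocation of \cite[Proposition~3.7(b)]{JKMMP} (both sides are SOT-closed, bounded, operator convex, contain $0$, hence are the SOT-closures of their matricial levels, which agree); your write-up leaves this implicit. Filling in these two items would make the argument complete.
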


Propositions~\ref{prop:gammahull} and \ref{prop:jp} generalize immediately to the operator setting. 
Recall the definition of the fully free hull of a graded set $\bm S \subseteq \mathbb{S}^\gv$
from Definition~\ref{d:opco}.

\begin{proposition}\label{prop:newjp} If $\bm{J} = (J_n)_{n \in \N \cup\{\infty\}}$ is not empty and $\bm F$ is its free hull, then
	\begin{enumerate}[\rm (a)]
	        \item  \label{item 0} $\Gat$-$\opco(\bm J)=\Gat$-$\opco(\bm F)$ and  $\opco(\Gat(\bm J)) =\opco(\Gat(\bm F));$
		\item  \label{item 1} $\Gat$-$\opco(\bm{J}) =\{V^*XV: X\in \bm{F}, \, (X,V)\in\Coup \};$
		\item \label{item 2} $\PhiG^{-1}(\opco(\PhiG(\bm{J}))) =\Gat\mhyphen\opco(\bm J);$ 
		\item \label{item 5} {$\PhiG^{-1}(\ovopco^{\SOT} (\PhiG(\bm{J})))$ is $\Gat$-convex;}
		\item \label{item 3} if $\Gat(0)=0,$ if  the graded set $\bm{J}$ is bounded and if  $0 \in J_1,$ then 
 \[
 \Gat\mhyphen\ovopco_{\SOT} (\bm{J}) =    \PhiG^{-1}(\ovopco^{\SOT} (\PhiG(\bm{J}))).	
\]
  In particular, if $\bm K$ is operator convex and  $X\in \bm K\setminus \Gat\mhyphen \ovopco_{\SOT}(\bm J),$ then
   $\Gat(X) \notin \ovopco^{\SOT}(\Gat(\bm J)).$
	\end{enumerate}
\end{proposition}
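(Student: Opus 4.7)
The plan is to prove items (a)--(e) in order, leveraging Propositions \ref{prop:gammahull} and \ref{prop:jp} for the operator level and using Hahn--Banach separation (Theorem \ref{hb-op}) for the topological statement in (e).

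For (a), every operator $\Gat$-convex set is free, so $\Gat\mhyphen\opco(\bm J)$ is a free set containing $\bm J$; hence it contains the free hull $\bm F$, giving $\Gat\mhyphen\opco(\bm F)\subseteq \Gat\mhyphen\opco(\bm J)$, and the reverse is immediate from $\bm J\subseteq \bm F$. For the matrix-convex identity, $\opco(\Gat(\bm J))\subseteq \opco(\Gat(\bm F))$ is trivial; conversely, free polynomials commute with unitary conjugation and with norm-bounded countable direct sums, so $\Gat(\bm F)$ lies in the free hull of $\Gat(\bm J)$ and hence in $\opco(\Gat(\bm J))$. Items (b) and (c) are operator-level transcriptions of Propositions \ref{prop:gammahull} and \ref{prop:jp}: after reducing to $\bm F$ via (a), the set $\{V^*XV : X\in \bm F,\,(X,V)\in \Coup\}$ is operator $\Gat$-convex and contains $\bm F$, hence equals $\Gat\mhyphen\opco(\bm J)$, proving (b); then if $X\in \Gat\mhyphen\opco(\bm J)$, (b) yields $\Gat(X)=V^*\Gat(Y)V\in \opco(\Gat(\bm J))$, while conversely $\Gat(X)=V^*\Gat(Y)V$ forces $X=V^*YV$ by comparing the first $\gv$ coordinates, automatically making $(Y,V)$ a $\Gat$-pair and placing $X$ in $\Gat\mhyphen\opco(\bm J)$.

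For (d), let $\bm L := \PhiG^{-1}(\ovopco^{\SOT}(\PhiG(\bm J)))$. Because free polynomials commute with direct sums and unitary conjugation, and because $\ovopco^{\SOT}(\PhiG(\bm J))$ is fully free (being operator convex), $\bm L$ is free. If $X\in \bm L$ and $(X,V)\in \Coup$, then $\Gat(V^*XV)=V^*\Gat(X)V$, and this lies in $\ovopco^{\SOT}(\PhiG(\bm J))$ since operator convex sets are closed under compression by an isometry; thus $V^*XV\in \bm L$, so $\bm L$ is $\Gat$-convex.

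Finally, for (e), the inclusion $\Gat\mhyphen\ovopco_{\SOT}(\bm J)\subseteq \bm L$ follows from (d) together with the observation that $\bm L$ is SOT-closed on bounded sets: $\bm J$ bounded implies via (c) that $\Gat\mhyphen\opco(\bm J)$ is norm-bounded, and on bounded sets $\Gat$ is SOT-SOT continuous by Lemma \ref{l:Gat:SOT-continuous}, so preimages of SOT-closed sets under $\Gat$ are SOT-closed in the relevant ball. For the reverse, I argue by contradiction. Suppose $X\in \bm L\setminus \Gat\mhyphen\ovopco_{\SOT}(\bm J)$. The set $\Gat\mhyphen\ovopco_{\SOT}(\bm J)$ is SOT-closed, bounded (again by (c)), operator $\Gat$-convex, and contains $0$ (since $0\in J_1$ and $\Gat(0)=0$), so Theorem \ref{hb-op} produces a monic linear pencil $L$ such that $\LG:=L\circ \Gat$ is positive semidefinite on $\Gat\mhyphen\ovopco_{\SOT}(\bm J)$ but $\LG(X)\not\succeq 0$. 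In particular $L\succeq 0$ on $\Gat(\bm J)$; since $\{Z : L(Z)\succeq 0\}$ is an SOT-closed operator convex set, it contains $\ovopco^{\SOT}(\Gat(\bm J))\ni \PhiG(X)$, so $\LG(X)=L(\PhiG(X))\succeq 0$, a contradiction. The ``In particular'' assertion is just the contrapositive of this equality. The main obstacle is verifying that $\Gat\mhyphen\ovopco_{\SOT}(\bm J)$ satisfies every hypothesis of Theorem \ref{hb-op}, which is precisely where the boundedness of $\bm J$ and the normalization $0\in J_1$, $\Gat(0)=0$ are used.
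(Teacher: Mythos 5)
Your proposal is correct and follows essentially the same approach as the paper: reduce to the free hull via (a), identify the $\Gat$-convex hull as the set of $\Gat$-pair compressions via (b)--(c) (the operator transcription of Propositions~\ref{prop:gammahull} and~\ref{prop:jp}), verify $\Gat$-convexity of the preimage in (d) using that $\ovopco^{\SOT}(\Gat(\bm J))$ is operator convex, and prove the hard inclusion in (e) by Hahn--Banach separation via Theorem~\ref{hb-op}. The only micro-level difference is in the reverse direction of (e): the paper evaluates the separating pencil $L$ on each element $V^*\Gat(Y)V$ of $\opco(\Gat(\bm F))$ and then passes to the closure, whereas you go directly from $L\succeq 0$ on $\Gat(\bm J)$ to $L\succeq 0$ on $\ovopco^{\SOT}(\Gat(\bm J))$ by noting the positivity domain of the finite-size pencil $L$ is an SOT-closed operator convex set and invoking the minimality characterization in Lemma~\ref{l:cl:cvx}; this is an equivalent but slightly more economical packaging of the same idea, and you also make the freeness of $\bm L$ in (d) explicit (which the paper leaves implicit).
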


\begin{proof}
        It is immediate that $\Gat$-$\opco(\bm J)\subseteq \Gat$-$\opco(\bm F).$ On the other hand, 
        since $\Gat$-$\opco(\bm J)$ is a free set (since it is $\Gat$-convex)
        that contains $\bm J,$ it follows that $\bm F \subseteq \Gat$-$\opco(\bm J).$ Hence $\Gat$-$\opco(\bm F) \subseteq \Gat$-$\opco(\bm J).$
        Similarly, since $\Gat(\bm J)\subseteq \Gat(\bm F),$ it follows that $\opco(\Gat(\bm J)) \subseteq \opco(\Gat(\bm F)).$
         On the other hand, it is readily checked that         $\opco(\Gat(\bm J))$ is a free set that contains 
         $\Gat(\bm F)$ giving the reverse inclusion
         and completing the proof of item~\ref{item 0}.  
        
        From item~\ref{item 0}, it suffices to prove the remaining items with $\bm J$ replaced by a free set $\bm F.$
        For such an $\bm F,$  items~\ref{item 1} and \ref{item 2} are readily
        verified (cf.~Propositions \ref{prop:gammahull} and  \ref{prop:jp}).   For instance, for item~\ref{item 1},  it suffices to show that the right hand side is operator $\Gat$-convex.
        To show that the right-hand side is a free set, i.e., closed under countable direct sums, we proceed as follows. Suppose $(Y^{(j)})_j$ is a sequence from 
        the right-hand side of \ref{item 1}.
         Thus there exists a sequence $(X^{(j)})_j$ from $\bm F$ and isometries $V_j$ such that $Y^{(j)}=V_j^* X^{(j)} V_j$ 
          and $(X^{(j)},V_j)\in \Coup.$ 
          Since $\bm F$ is closed with respect to countable direct sums, $X=\oplus X^{(j)} \in \bm F.$ Let $V=\oplus V_j.$ 
           Thus $V$ is an isometry,
           $(X,V)\in \Coup$ 
           and $V^* X V =\oplus_j Y^{(j)}.$ Hence 
           $\oplus_j Y^{(j)}$ is in the right-hand side of \ref{item 1}.  Similar arguments 
        show the right hand side is closed under unitary similarity and operator $\Gat$-convex combinations.

	To show that  $\bm L = \Gat^{-1}( \ovopco^{\SOT}(\Gat(\bm J)))$ is $\Gat$-convex, suppose $X\in \bm L$ and $(X,V)\in \Coup.$ 
	Thus $\Gat(V^*XV)=V^*\Gat(X)V$ and $\Gat(X)\in  \ovopco^{\SOT}(\Gat(\bm J)).$  Since $\ovopco^{\SOT}(\Gat(\bm J))$ is
	 operator convex by Proposition~\ref{l:cl:cvx}, it follows that $\Gat(V^*XV) =V^*\Gat(X)V\in \ovopco^{\SOT}(\Gat(\bm J)).$ Thus $V^*XV\in \bm L$ and hence
	  $\bm L$ is $\Gat$-convex and item~\ref{item 5} is proved.

	Turning to item \ref{item 3} with its hypothesis that
    $\bm J,$ and hence $\bm F,$ is bounded,  from item~\ref{item 2}, 
   $\Gat\mhyphen\opco(\bm F)=\Gat^{-1}(\opco(\Gat(\bm F))).$ 
	By the
	   SOT-continuity of  $\Gat$ on bounded sets, Lemma~\ref{l:Gat:SOT-continuous} implies that  
	 the graded set $\Gat^{-1}( \ovopco^{\SOT}(\Gat(\bm F)))$ is SOT-closed.   From item~\ref{item 5} it is $\Gat$-convex.
  Therefore,
\[
  \Gat\mhyphen\ovopco_{\SOT}(\bm F) \subseteq \Gat^{-1}( \ovopco^{\SOT}(\Gat(\bm F))). 
 \]
To prove the reverse inclusion,  suppose $W\notin \Gat\mhyphen\ovopco_{\SOT}(\bm F).$ By 
		 Theorem \ref{hb-op} applied to $\Gat\mhyphen \ovopco_{\SOT}(\bm F),$ there is an $n \in \N$ and a $\Gat$-pencil $\LG = L \circ \Gat,$ where $L$ is a monic linear pencil of size $n,$
		 such that $\LG(W) \nsucceq 0$ and $\LG(Y) = L(\Gat(Y)) \succeq 0$ for all $Y\in \Gat$-$\ovopco_{\SOT}(\bm F).$   
		 If $Z\in \opco(\Gat(\bm F)),$ then  
		 $Z=V^*\Gat(Y)V$ for some  $Y\in \bm F$ and isometry $V.$  
		   Hence, as $Y\in \bm F \subseteq \Gat\mhyphen \ovopco_{\SOT}(\bm F),$ it follows that
       $L^\Gat(Y)\succeq 0$ and therefore,
  \[
      L(Z) = (I\otimes V)^*L(\Gat(Y))(I\otimes V) = (I\otimes V)^* L^\Gat(Y) (I\otimes V) \succeq 0.
   \]
	Thus $L\succeq 0$ on $\opco(\Gat(\bm F))$ and thus on $\ovopco^{\SOT}(\Gat(\bm F)).$   
	 Consequently, $\Gat(W)\notin\ovopco^{\SOT}(\Gat(\bm F)).$ Equivalently,  $W\notin \Gat^{-1}(\ovopco^{\SOT}(\Gat(\bm F))$ and the proof 
         is complete.
\end{proof}

\begin{proposition}\label{prop:candb}
	Suppose $\bm{F} = (F_n)_{n \in \N \cup \{\infty\}}$ is a free set. If $\bm F$  
	is SOT-closed and bounded, then so  is $\opco(\Gat(\bm{F})).$
	In particular, $\opco(\Gat(\bm{F}))$ is also \wstar closed.
\end{proposition}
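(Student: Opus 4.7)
The plan is to reduce the claim to the two already-established results Lemma~\ref{l:Gat:SOT-continuous} and the combination of Proposition~\ref{l:cl:cvx} applied to $\Gat(\bm{F})$ in place of $\bm{F}.$ The key observation that makes this work is that $\Gat(\bm F)$ is itself a free set, so the existing machinery for operator convex hulls of bounded SOT-closed free sets transfers verbatim from the $\gv$-variable setting to the $\rv$-variable image under $\Gat.$

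First I would verify that $\Gat(\bm{F})$ is a free set in $\mathbb{S}^\rv.$ Since every $\gat_i$ is a noncommutative polynomial, polynomial evaluation commutes with unitary conjugation and with direct sums: $\Gat(U^*XU)=U^*\Gat(X)U$ and $\Gat(\oplus_j X^{(j)})=\oplus_j \Gat(X^{(j)})$ for any norm-bounded family $(X^{(j)})_j\subseteq \bm F.$ Boundedness of $\bm{F}$ guarantees that the family $(\Gat(X^{(j)}))_j$ is also norm-bounded (polynomials are bounded on bounded sets), so the direct sum on the right lies in $\cB(\cH)^\rv,$ and hence $\Gat(\bm F)$ is closed under at most countable norm-bounded direct sums and under unitary similarity.

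Next, since $\bm F$ is bounded and SOT-closed, Lemma~\ref{l:Gat:SOT-continuous} yields that $\Gat(\bm F)$ is bounded and SOT-closed in $\mathbb{S}^\rv.$ At this point $\Gat(\bm F)$ satisfies exactly the hypotheses under which Proposition~\ref{l:cl:cvx} concludes that the operator convex hull of a bounded SOT-closed free set is itself bounded and SOT-closed. Applying that proposition (with $\Gat(\bm F)$ in place of the free set there) gives that $\opco(\Gat(\bm F))$ is bounded and SOT-closed. Finally, the \wstar-closedness is free: $\opco(\Gat(\bm F))$ is a norm-bounded convex SOT-closed set, and as noted in Remark~\ref{rem : top} any such set is automatically WOT- and \wstar-closed, because on norm-bounded sets the WOT and \wstar topologies agree and convex SOT- and WOT-closures coincide.

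I do not expect any serious obstacle here; the only subtle point is making sure that the direct-sum closure of $\Gat(\bm F)$ does not require $\bm F$ to be closed under \emph{arbitrary} (non-norm-bounded) direct sums, which it need not be. The boundedness hypothesis on $\bm F$ is exactly what prevents this issue, since $\|\gat_i(X)\|$ is uniformly controlled by $\|X\|$ across the family. With that care taken, the argument is essentially two appeals to earlier results chained through the observation that $\Gat$ preserves freeness.
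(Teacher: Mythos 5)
Your argument is correct and follows essentially the same path as the paper's own proof: invoke Lemma~\ref{l:Gat:SOT-continuous} to conclude that $\Gat(\bm F)$ is bounded and SOT-closed, observe that $\Gat(\bm F)$ is free, and then apply Proposition~\ref{l:cl:cvx} with $\Gat(\bm F)$ in place of $\bm F.$ The only difference is that you spell out the verification that $\Gat$ preserves freeness (in particular, that boundedness keeps the direct sums norm-controlled), a point the paper simply asserts.
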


Proposition \ref{prop:candb} extends \cite[Theorem 3.3]{JKMMP},  which gave a similar statement for subsets of tuples of operators $\cB(\cHi)_{\sa}^\gv$. 

\begin{proof}
  Without loss of generality, we assume throughout the proof that $\bm F$ is nonempty.
  It is immediate that $\opco(\Gat(\bm F))$ is bounded.
  
  Since $\bm F$ is bounded and SOT-closed, $\Gat(\bm F)$ is bounded and SOT-closed 
   by Lemma~\ref{l:Gat:SOT-continuous}. 
   Since $\bm F$ is free, so is $\Gat(\bm F).$
 An application of  Proposition~\ref{l:cl:cvx}  with $\Gat(\bm  F)$ in place of $\bm F$ shows
$\opco(\Gat(\bm F))$ is SOT  and \wstar closed.
\end{proof}

\subsection{\texorpdfstring{$\Gat$}{Gamma}-extreme points}
We are now ready to define the $\Gat$-analog of a free extreme point.

\begin{definition}\label{def-gext}
	Let $\bm{K} = (K_n)_{n \in \N \cup \{\infty\}}$ be an operator $\Gat$-convex set (that is not assumed closed).  A tuple $X \in K_n$ for $n \in \N \cup \{\infty\}$ is a \df{$\Gat$-extreme point} if any expression
	of $X$ as an operator  $\Gat$-convex combination
	\begin{equation*}
		X = \sum_{i=1}^k V_i^\ast X^{(i)} V_i,
	\end{equation*}
	where $X^{(i)} \in K_{n_i},$ the $V_i \in {M}_{n_i,n}$ are all nonzero and $\sum_{i=1}^k V_i^\ast V_i = {I}_n$ and $(X,V),$ where $X=\oplus_iX^{(i)}$ and $V=\col(V_1,\ldots,V_k),$ is a $\Gat$-pair,
	implies that for each $i,$ the matrix $V_i$ is a  scalar multiple of an isometry $W_i \in {M}_{n_i,n}$ such
	 that $(X^{(i)},W_i)$ is a $\Gat$-pair 
	 and, with respect to the range of $V_i,$ 
	$$X^{(i)} = Y^{(i)} \oplus Z^{(i)}$$
	for some $Y^{(i)},Z^{(i)} \in \bm{K}$ with $Y^{(i)}$ unitarily equivalent to $X.$
	Denote the graded set of $\Gat$-extreme points of $\bm{K}$ by $\Gat\mhyphen\ext(\bm{K}).$ \qed
\end{definition}

We first establish the existence and then a spanning property, i.e., a Krein-Milman type theorem, for $\Gat$-extreme points. For that we first show that all the free extreme points of $\opco(\Gat(\bm{K}))$ lie in $\Gat(\bm{K})$ and explain the correspondence between $\Gat$-extreme points of $\bm{K}$ and free extreme points of $\opco(\PhiG(\bm{K})).$ 

\begin{lemma}\label{lema:fext}
If  ${\bm F}\subseteq \bbS^{\gv}$ is  a fully free set,
then every free extreme point of $\opco(\Gat(\bm{F}))$ is of the form $\Gat(X)$ for some $X \in \bm{F};$ that is, 
 $\ext(\opco(\Gat(\bm F)))\subseteq \Gat(\bm F).$
\end{lemma}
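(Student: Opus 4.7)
The plan is to exhibit any free extreme point $Z$ of $\opco(\Gat(\bm F))$ concretely as $\Gat$ applied to a reducing direct summand of some element of $\bm F$, thereby placing $Z$ inside $\Gat(\bm F)$.

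As a first step, I would check that $\Gat(\bm F)$ is itself a free set in the sense of Definition~\ref{d:opco}(a). Closure under unitary similarity is immediate from $U^\ast\Gat(X)U=\Gat(U^\ast XU)$, and closure under norm-bounded countable direct sums follows from $\Gat(\oplus_i X^{(i)})=\oplus_i\Gat(X^{(i)})$, noting that because $\gat_j=x_j$ for $1\le j\le\gv$, a norm bound on the family $(\Gat(X^{(i)}))_i$ forces a norm bound on $(X^{(i)})_i$, so $\oplus X^{(i)}\in\bm F$. Once $\Gat(\bm F)$ is known to be free, the third assertion of Proposition~\ref{l:cl:cvx} gives a clean parametrisation
\[
\opco(\Gat(\bm F))=\{V^\ast\Gat(X)V : X\in\bm F,\ V\text{ an isometry}\}.
\]

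Next, for a free extreme $Z\in\opco(\Gat(\bm F))$, I would write $Z=V^\ast\Gat(X)V$ as above and view this as a (one-term) operator convex combination with $V^\ast V=I$. Applying the free extreme point hypothesis from Definition~\ref{def:fext} (or equivalently Lemma~\ref{lema:red}), the single coefficient $V$ must be a scalar multiple of an isometry $W$ whose range reduces $\Gat(X)$; since $V$ is already an isometry this scalar has modulus one, so the range of $V$ itself reduces $\Gat(X)$.

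The decisive step, and the one that uses the hypothesis on $\Gat$ rather than just abstract properties of free sets, is to promote this to a reducing subspace for $X$. Since $\gat_j=x_j$ for $1\le j\le\gv$, any subspace reducing the tuple $\Gat(X)$ automatically reduces the first $\gv$ coordinates, which are the entries of $X$ itself; hence the range of $V$ reduces $X$. Setting $X_0:=V^\ast XV$, full freeness of $\bm F$ yields $X_0\in\bm F$, and because $V$ reduces $X$, polynomial evaluation commutes with compression to obtain
\[
\Gat(X_0)=\Gat(V^\ast XV)=V^\ast\Gat(X)V=Z,
\]
so $Z\in\Gat(\bm F)$. The only point requiring any care is this last reduction argument: everything else is bookkeeping with the definitions, and no closure or boundedness assumption needs to be invoked beyond what is built into the notion of a fully free set.
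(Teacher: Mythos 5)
Your proof is correct and follows essentially the same route as the paper's: parametrize $\opco(\Gat(\bm F))$ via Proposition~\ref{l:cl:cvx}, write the extreme point as $V^*\Gat(X)V$, use Lemma~\ref{lema:red} (or the definition of free extreme point) to deduce that $V$ reduces $\Gat(X)$, pass the reduction to $X$ using $\gat_j=x_j$ for $j\le\gv$, and finish with full freeness. Your version is a bit more careful on two points the paper leaves implicit --- verifying that $\Gat(\bm F)$ is a free set before invoking the parametrization, and spelling out why a subspace reducing $\Gat(X)$ also reduces $X$ --- but these are elaborations, not a different argument.
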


\begin{proof}
	Let $Y$ be a free extreme point of $\opco(\Gat(\bm{F})).$ Since $Y \in \opco(\Gat(\bm{F})),$
     by Lemma~\ref{l:cl:cvx} 
    there exists $X^{(i)} \in \bm F$ and $V_i$ such that $Y$ 
     can be expressed as the operator convex combination 
	\begin{equation*} 
		Y =  \sum_{i=1}^{k} V_i^\ast \Gat(X^{(i)}) V_i = V^* \Gat(X) V,
	\end{equation*}
	where $X=\oplus X^{(i)}\in \bm F$ and the operator
    $V = \text{col}(V_1,\ldots,V_k)$ is an isometry. Since $Y$ is a free extreme point of 
    the operator convex set $\opco(\Gat(\bm F)),$   Lemma~\ref{lema:red} applied
     to $\opco(\Gat(\bm F))$ implies that $V$ reduces $\Gat(X)$ and hence $X,$ 
    since $\bm F$ is fully free and hence closed with respect to restriction to reducing subspaces.
    Thus $V^*XV\in \bm F$ 
	   and
       $Y= V^* \Gat(X)V = \Gat(V^*XV) \in \Gat(\bm{F}).$
\end{proof}

\subsection{Krein-Milman for operator \texorpdfstring{$\Gat$}{Gamma}-convex sets}

\begin{proposition}\label{tr22}
	Let $\bm{K}$ be an operator $\Gat$-convex set. 
	 If  $Z$ is a free extreme point of $\opco(\Gat(\bm K)),$ then there
         is a $\Gat$-extreme point $X\in \bm K$ such that $Z=\Gat(X).$ Thus,
    \[
      \ext(\opco(\Gat(\bm K))) \subseteq \Gat(\Gat\mhyphen\ext(\bm K)).
    \]
In particular, if $X\in \mathbb{S}^\gv$ and $\Gat(X)$
          is a free extreme point of $\opco(\Gat(\bm K)),$ then $X$ is a $\Gat$-extreme point of $\bm K.$
 \end{proposition}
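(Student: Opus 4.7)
The plan is to combine Lemma \ref{lema:fext} with the tight interplay between operator $\Gat$-convex combinations in $\bm{K}$ and ordinary operator convex combinations in $\opco(\Gat(\bm K))$. First I observe that every operator $\Gat$-convex set is automatically fully free: if $Y \in \bm K$ and an isometry $W$ reduces $Y$, then $(Y,W) \in \Coup$ by Remark \ref{rem:gpairs}(b), whence $W^*YW \in \bm K$ by Definition \ref{def op gamma}. Lemma \ref{lema:fext} therefore applies with $\bm F = \bm K$ and produces some $X \in \bm K$ with $Z = \Gat(X)$.

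The bulk of the work is to verify $X \in \Gat\mhyphen\ext(\bm K)$. Suppose
\[
X = \sum_{i=1}^k V_i^* X^{(i)} V_i
\]
is an operator $\Gat$-convex combination with $X^{(i)} \in K_{n_i}$, each $V_i$ nonzero, $\sum V_i^* V_i = I_n$, and $(\widetilde X, V) \in \Coup$ for $\widetilde X = \oplus_i X^{(i)}$ and $V = \col(V_1,\ldots,V_k)$. Applying the $\Gat$-pair identity yields
\[
Z = \Gat(X) = \Gat(V^*\widetilde X V) = V^* \Gat(\widetilde X) V = \sum_{i=1}^k V_i^* \Gat(X^{(i)}) V_i,
\]
which writes the free extreme point $Z$ of $\opco(\Gat(\bm K))$ as an ordinary operator convex combination of points $\Gat(X^{(i)}) \in \opco(\Gat(\bm K))$.

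Invoking Definition \ref{def:fext} (sharpened by Lemma \ref{lema:red}) now forces each $V_i$ to be a scalar multiple of an isometry $W_i$ whose range reduces $\Gat(X^{(i)})$ and satisfies $W_i^* \Gat(X^{(i)}) W_i = \Gat(X)$. Because $\gamma_j = x_j$ for $1 \le j \le \gv$, this reducing subspace simultaneously reduces $X^{(i)}$, and the first $\gv$ coordinates yield $W_i^* X^{(i)} W_i = X$. Hence $(X^{(i)}, W_i) \in \Coup$ by Remark \ref{rem:gpairs}(b), and the decomposition with respect to the range of $W_i$ writes $X^{(i)} = Y^{(i)} \oplus Z^{(i)}$ with $Y^{(i)} = X$; both summands lie in $\bm K$ by full freeness. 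All requirements of Definition \ref{def-gext} are then met, so $X \in \Gat\mhyphen\ext(\bm K)$, and the in particular statement is immediate since $X$ is recovered from $\Gat(X)$ by reading off its first $\gv$ entries.

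The only real subtlety is the downward transfer of the reducing-subspace and isometry data from $\Gat(X^{(i)})$ back to $X^{(i)}$; this would be a genuine obstacle were $\Gat$ not required to contain the coordinate projections, but the standing hypothesis $\gamma_j = x_j$ for $j \le \gv$ makes the transfer automatic and simultaneously supplies, via Remark \ref{rem:gpairs}(b), the $\Gat$-pair condition on $(X^{(i)}, W_i)$ demanded by Definition \ref{def-gext}.
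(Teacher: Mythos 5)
Your proof is correct and follows essentially the same route as the paper's: you invoke Lemma~\ref{lema:fext} to produce $X$, apply $\Gat$ to the given $\Gat$-convex combination to exhibit $\Gat(X)$ as an operator convex combination in $\opco(\Gat(\bm K))$, and then pull the free-extreme-point structure back through the first $\gv$ coordinates. The only cosmetic differences are that you obtain the $\Gat$-pair condition on $(X^{(i)},W_i)$ via the reducing-subspace observation in Remark~\ref{rem:gpairs}(b) rather than from the algebraic identity $W_i^*\Gat(X^{(i)})W_i=\Gat(W_i^*X^{(i)}W_i)$, and the phrase ``$Y^{(i)}=X$'' should read ``$Y^{(i)}$ unitarily equivalent to $X$.''
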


\begin{proof} Let $T$ be a free extreme point of $\opco(\Gat(\bm{K})).$ By Lemma \ref{lema:fext}, there is an $n \in \N \cup \{\infty\}$ and $X \in K_n$ such that $T = \Gat(X).$  To prove $X$ is a $\Gat$-extreme point of $\bm K,$ suppose
	\begin{equation}\label{eq3}
		X = \sum_{i=1}^{k} V_i^\ast X^{(i)} V_i
	\end{equation}
	for a $\Gat$-pair $(\oplus_i X^{(i)},  \,\text{col}(V_1,\cdots,V_k))$
	with $X^{(i)} \in K_{n_i}$ and $n_i \in \N \cup \{\infty\},$ and nonzero $V_i \in {M}_{n_i,n}$ satisfying $\sum_{i=1}^k V_i^\ast V_i = {I}_n.$ 
	Applying $\Gat$ on both sides of \eqref{eq3} and using the defining property of a $\Gat$-pair gives
	$$
	 \Gat(X) =  \sum_{i=1}^{k} V_i^\ast \Gat(X^{(i)}) V_i.
	$$
	Since $ \Gat(X) $ is a free extreme point of $\opco(\Gat(\bm K)),$ each of the $V_i$ is a  scalar multiple of an isometry $W_i \in {M}_{n_i,n}$ satisfying $W_i^*\Gat(X^{(i)})W_i = \Gat(X)$
	 and  
 with respect to the range of $V_i,$
\[
  \Gat(X^{(i)}) = Y^{(i)}\oplus Z^{(i)},
\]
for some $Y^{(i)},Z^{(i)}\in \opco(\Gat(\bm K)),$ where $Y^{(i)}$ is unitarily equivalent to $\Gat(X).$  From the identity $W_i^* \Gat(X^{(i)})W_i = \Gat(X),$ it follows
 that $W_i^*X^{(i)}W_i=X$ and thus $W_i^*\Gat(X^{(i)})W_i =\Gat(W_i^*X^{(i)}W_i);$ that is,  $(X^{(i)},W_i)$ is a $\Gat$-pair. Further, 
setting
$\cY^{(i)} = (Y^{(i)}_1,\dots,Y^{(i)}_\gv)$ and $\cZ^{(i)}=(Z^{(i)}_1,\dots,Z^{(i)}_\gv),$ it follows that $X^{(i)} = \cY^{(i)}\oplus \cZ^{(i)}$
with respect to the range of $V_i$ and $\cY^{(i)}$ is unitarily equivalent to $X.$  In particular,  since  $X^{(i)}\in \bm K$ and $\bm K$ is $\Gat$-convex and therefore fully free,
 both $\cY^{(i)}$ and $\cZ^{(i)}$ 
 are in $\bm K.$  Hence $X$ is a $\Gat$-extreme point of $\bm K.$
 \end{proof}

 We offer the following partial converse to Proposition~\ref{tr22},
 which does not figure in future developments.
 
\begin{proposition}
\label{tr22:con}
 Let $\bm K$  be an SOT-closed and bounded operator $\Gat$-convex set. 
  If \\ $\opco(\ext(\opco(\Gat(\bm K))))$ is SOT-closed and $X\in \Gat\mhyphen \ext(\bm K),$ then 
         $\Gat(X)\in  \ext(\opco(\Gat(\bm K))).$
\end{proposition}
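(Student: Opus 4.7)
The plan is to combine a single-compression argument with an irreducibility argument for $\Gat(X).$ Set $\bm L = \opco(\Gat(\bm K))$ and $\bm E = \ext(\bm L).$ Since $\bm K$ is SOT-closed and bounded, Proposition~\ref{prop:candb} gives that $\bm L$ is SOT-closed and bounded, hence by Remark~\ref{rem : top} also $w^*$-closed. Theorem~\ref{th: nckm} yields $\bm L = \ovopco^{w^*}(\bm E),$ and the hypothesis that $\opco(\bm E)$ is SOT-closed (and hence, being bounded and convex, also $w^*$-closed) upgrades this to $\bm L = \opco(\bm E);$ every element of $\bm L$ is a genuine operator convex combination of free extreme points, not merely a limit.

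Next I would verify that a $\Gat$-extreme point $X$ is irreducible, and hence so is $\Gat(X).$ Indeed, if $X=Y\oplus Z$ were a non-trivial direct sum with $Y,Z\in\bm K$ (available because $\bm K$ is fully free, by the $\Gat$-compression axiom together with Remark~\ref{rem:gpairs}(b)), the coordinate projections $V_i$ would yield a $\Gat$-convex combination $X = V_1^*YV_1 + V_2^*ZV_2$ (the associated $\col(V_1,V_2)$ being the identity, trivially a $\Gat$-pair) in which $V_i^*V_i$ is a non-trivial projection, not a scalar multiple of $I,$ contradicting $\Gat$-extremality.

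Third, given an arbitrary decomposition $\Gat(X) = \sum_i V_i^* A^{(i)} V_i$ with $A^{(i)}\in\bm L,$ collapse it to a single compression $\Gat(X) = V^* A V$ with $V = \col(V_i)$ an isometry and $A = \oplus_i A^{(i)}.$ By Proposition~\ref{l:cl:cvx} applied to the free hull of $\bm E,$ write $A = U^* F U$ for an isometry $U$ and $F$ a direct sum of elements of $\bm E.$ Lemma~\ref{lema:fext} gives $F = \Gat(\tilde X)$ for some $\tilde X\in\bm K.$ Setting $W = UV,$ the identity $\Gat(X) = W^*\Gat(\tilde X)W$ exhibits $(\tilde X,W)$ as a $\Gat$-pair realising $X = W^*\tilde X W$ as a single-term $\Gat$-convex combination. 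The $\Gat$-extremality of $X$ forces the range of $W$ to reduce $\tilde X$ and hence $F = \Gat(\tilde X),$ so $FW = W\Gat(X);$ unwinding via $A = U^*FU$ produces $AV = V\Gat(X),$ which blockwise reads $A^{(i)}V_i = V_i\Gat(X)$ for every $i.$

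Taking adjoints and using self-adjointness gives $V_i^*V_i\,\Gat(X) = \Gat(X)\,V_i^*V_i.$ By the irreducibility of $\Gat(X),$ $V_i^*V_i = c_iI$ for some scalar $c_i \geq 0,$ so $V_i = \sqrt{c_i}\,W_i$ for an isometry $W_i$ whose range reduces $A^{(i)},$ and $W_i^* A^{(i)} W_i = \Gat(X).$ These are precisely the conditions of Definition~\ref{def:fext}, so $\Gat(X)\in\ext(\bm L).$ The main conceptual step is the irreducibility of $\Gat(X)$: without it the commutativity of $V_i^*V_i$ with $\Gat(X)$ would not force $V_i^*V_i$ to be a scalar, and the per-block conditions of free extremality would not follow from the single-term compression conclusion.
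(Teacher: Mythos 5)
Your proof is correct and takes a genuinely different route from the paper's. Both start the same way: use the SOT-closedness hypothesis on $\opco(\ext(\opco(\Gat(\bm K))))$ together with Theorem~\ref{th: nckm} to upgrade the weak$^*$-closed hull to an honest operator convex hull, and invoke Lemma~\ref{lema:fext} to recognize free extreme points of $\opco(\Gat(\bm K))$ as lying in $\Gat(\bm K).$ From there the two arguments diverge. The paper expands each $Y^{(i)}$ as an operator convex combination of free extreme points $\Gat(X^{(i,j)}),$ substitutes to get a double-indexed expression $X = \sum_{i,j} (W_{i,j}V_i)^* X^{(i,j)} (W_{i,j}V_i),$ applies $\Gat$-extremality of $X$ to this refined decomposition to conclude each $W_{i,j}V_i = t_{i,j}Q_{i,j},$ and then computes $V_i^*V_i = \sum_j t_{i,j}^2 I_n$ directly. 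You instead (i) first prove a standalone lemma that a $\Gat$-extreme point $X$ is irreducible (hence so is $\Gat(X)$, since every reducing subspace for $\Gat(X)$ reduces $X$ through the coordinates $x_1,\ldots,x_\gv$), via the trivial direct-sum decomposition argument; (ii) collapse the given decomposition to a single-term $\Gat$-convex compression $X = W^*\tilde X W$ and apply only the ``Agler-extreme'' consequence of $\Gat$-extremality (the range of $W$ reduces $\tilde X$); (iii) unwind to derive the intertwining $A^{(i)}V_i = V_i\,\Gat(X),$ hence $[V_i^*V_i, \Gat(X)]=0;$ and (iv) invoke irreducibility of $\Gat(X)$ to force $V_i^*V_i = c_i I.$ The irreducibility lemma is of independent interest (it extends the paper's Remark after Lemma~\ref{lema:red} beyond finite levels and from free extremality to $\Gat$-extremality) and your step (ii) makes the exact reducibility content of $\Gat$-extremality that is being used visible. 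The paper's argument avoids irreducibility entirely at the cost of carrying the double index. One small point worth making explicit in your write-up: the claim $Y^{(i)}, Z^{(i)} \in \bm L$ in the final verification (item (3) of Definition~\ref{def:fext}) follows because $\bm L$ is operator convex hence fully free, so closed under restriction to reducing subspaces; you assert the conclusion without saying why the summands stay in $\bm L.$
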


\begin{wrapfigure}{r}{0.5\linewidth}
\centering
\resizebox{\linewidth}{!}{%
\begin{tikzpicture}[scale=2.5, >=Stealth]

    \foreach \n in {1, 2, 3, 4, 5, 6, 8, 11, 16, 25} {
        \pgfmathsetmacro{\ang}{180 / \n}
        \pgfmathsetmacro{\px}{0.45 * sin(\ang)}
        \pgfmathsetmacro{\py}{1.2 - 1.2 * cos(\ang)}
        
        \coordinate (F\n) at (\px, \py);
        \coordinate (B\n) at (-\px, \py);
    }

    \foreach \n [remember=\n as \lastn (initially 1)] in {2, 3, 4, 5, 6, 8, 11, 16, 25} {
        \fill[gray!10, opacity=0.4] (-3,0) -- (B\lastn) -- (B\n) -- cycle;
        \fill[gray!15, opacity=0.4] (3,0) -- (B\lastn) -- (B\n) -- cycle;
        \draw[gray!60, dashed, thick] (B\lastn) -- (B\n);
    }
    \draw[gray!60, dashed, thick] (B25) -- (0,0);

    \foreach \n in {2, 3, 4, 5, 6, 8, 11, 16, 25} {
        \draw[black!60, dashed, thick] (-3,0) -- (B\n); 
        \draw[black!60, dashed, thick] (3,0) -- (B\n);  
    }

    \foreach \n [remember=\n as \lastn (initially 1)] in {2, 3, 4, 5, 6, 8, 11, 16, 25} {
        \fill[blue!18, opacity=0.65] (-3,0) -- (F\lastn) -- (F\n) -- cycle;
        \fill[blue!18, opacity=0.65] (3,0) -- (F\lastn) -- (F\n) -- cycle;
        
        \draw[black, thick] (F\lastn) -- (F\n);
    }
    \draw[black, thick] (F25) -- (0,0);

    \fill[blue!25, opacity=0.55]
        (F1) -- (F2) -- (F3) -- (F4) -- (F5) -- (F6)
             -- (F8) -- (F11) -- (F16) -- (F25) -- (0,0)
             -- (B25) -- (B16) -- (B11) -- (B8)
             -- (B6) -- (B5) -- (B4) -- (B3) -- (B2) -- (B1)
             -- cycle;

    \draw[black, thick]
        (F1) -- (F2) -- (F3) -- (F4) -- (F5) -- (F6)
             -- (F8) -- (F11) -- (F16) -- (F25) -- (0,0);

    \draw[gray!60, dashed, thick]
        (B1) -- (B2) -- (B3) -- (B4) -- (B5) -- (B6)
             -- (B8) -- (B11) -- (B16) -- (B25) -- (0,0);

    \foreach \n in {1, 2, 3, 4, 5, 6, 8, 11, 16} {
        \draw[black!50, thin] (-3,0) -- (F\n);
        \draw[black!50, thin] (3,0) -- (F\n);
    }

    \draw[thick, black] (-3,0) -- (0,2.4) -- (3,0); 
    \draw[thick, black] (-3,0) -- (3,0);           

    \foreach \n in {1, 2, 3, 4, 5, 6, 8} {
        \fill[red] (F\n) circle (1pt);
        \fill[red!60!black!60] (B\n) circle (0.8pt);
    }
    
    \node[right, font=\tiny, red] at (F1) {$Q_1$};
    \node[right, font=\tiny, red] at (F2) {$Q_2$};
    \node[right, font=\tiny, red] at (B2) {$Q_3$};
    \node[right, font=\tiny, red] at (F3) {$Q_4$};
    
    \fill[red, draw=black] (-3,0) circle (1.5pt) node[below left, black, font=\small] {$P_-$};
    \fill[red, draw=black] (3,0) circle (1.5pt) node[below right, black, font=\small] {$P_+$};

    \fill[white, draw=black, thick] (0,0) circle (1.5pt);

    \draw[<-] (0,0) -- (0.5,-0.5) node[right, font=\small\sffamily, align=left] {
        {Cluster Point} $Q_\infty$ \\ 
        Limit of vertices $Q_n$, but lying on \\ 
        the segment $[P_-, P_+]$, so {not extreme}.
    };
\end{tikzpicture}

}
\caption{\textcolor{black}{Compact convex set $C\subseteq\R^3$ with non-closed $\ext(C)$}}
\label{fig-nonclosedext}
\end{wrapfigure}

{\CCBB
The hypothesis in Proposition~\ref{tr22:con} that
$
   \opco\big(\ext(\opco(\Gat(K)))\big)
$
is SOT-closed does not imply that the set of extreme points itself is closed.
This phenomena already occurs in the case $\Gamma(x)=x$, with
$x=(x_1,x_2,x_3)$.

Let $C\subseteq \R^3$ be the compact convex set 
of Figure \ref{fig-nonclosedext}
whose set of
ordinary extreme points is not closed, but each extreme point is exposed. 
Let $E=\ext(C)=\{P_-,P+\}\cup\{Q_n\mid n\in\N\}$.

Now define
$
   \bm K=\opco(C),
$
i.e., $\bm K$ is the smallest operator convex set containing $C$.
Since $C=\conv(E)$, we have
$
   \bm K=\opco(C)=\opco(E).
$

We claim $\bm K$ is SOT-closed. To verify this claim, put
$
   \Omega=E\cup\{Q_\infty\}
$
and let
\[
   \mathcal S=\spann\{1,x_1,x_2,x_3\}\subseteq \cC(\Omega).
\]
Consider the operator matrix range
\[
   L_m
   =
   \{(\phi(x_1),\phi(x_2),\phi(x_3)):
      \phi:\mathcal S\to B(\cH_m)\text{ is ucp}\}.
\]
Here $\cH_m=\C^m$ for $m<\infty$ and $\cH_\infty=\cH$ is an infinite-dimensional separable Hilbert space. Thus $\bm L$
is the operator matrix range of the coordinate tuple
$(x_1,x_2,x_3)\in\cC(\Omega)^3$.

We claim that
$
   \bm L=\opco(E)=\bm K.
$
Let $X\in L_m$. Thus
   $X=(\phi(x_1),\phi(x_2),\phi(x_3))$
for a ucp map
$
   \phi:\mathcal S\to \cB(\cH_m).
$
Extend $\phi$ to a ucp map on $\cC(\Omega)$. By Stinespring's theorem,
there exist a representation $\pi:\cC(\Omega)\to \cB(\cH_\pi)$ and an
isometry $V:\cH_m\to\cH_\pi$ such that
\[
   \phi(f)=V^*\pi(f)V,\qquad f\in \cC(\Omega).
\]
Since $\Omega$ is countable, we may write
\[
   \cH_\pi=\bigoplus_{\omega\in\Omega}\cH_\omega, 
   \qquad
   \pi(f)=\bigoplus_{\omega\in\Omega} f(\omega)I_{\cH_\omega}.
\]
Hence
\[
   X
   =
   V^*
   \left(
      \bigoplus_{\omega\in\Omega}\omega I_{\cH_\omega}
   \right)
   V.
\]

The only summand not indexed by $E$ is the $Q_\infty$-summand. Since
  $ Q_\infty=\frac12 (P_-+P_+),$
replace the summand
$
   Q_\infty I_{\cH_{Q_\infty}}
$
by the compression of
\[
   P_- I_{\cH_{Q_\infty}}
   \oplus
   P_+ I_{\cH_{Q_\infty}}
\]
by the isometry
\[
\cH_{Q_\infty} \to \cH_{Q_\infty} \oplus \cH_{Q_\infty}, \qquad
   h\mapsto \frac{1}{\sqrt 2}h\oplus \frac{1}{\sqrt 2}h.
\]
Thus the tuple
\[
   \bigoplus_{\omega\in\Omega}\omega I_{\cH_\omega}
\]
is a compression of a countable direct sum of points of $E$. Since
$\opco(E)$ is an operator convex free set containing $E$, it contains this
countable direct sum and all of its compressions. Therefore
$
   X\in\opco(E).
$
Thus
$
   \bm L\subseteq\opco(E).
$
The reverse inclusion is immediate from the definition of $\bm L$, and hence 
$
   \bm L=\opco(E)=\bm K.
$

Since $\mathcal S$ is finite-dimensional, point-SOT limits of ucp maps
$\cS\to \cB(\cK)$ are again ucp. Therefore $\bm K$ is SOT-closed.

It remains to observe that the free extreme points of $\bm K$ are not closed.
Let $e\in E$. As shown above, $e$ is exposed in $C$. Choose an affine
linear functional $\ell_e$ such that
\[
   \ell_e(e)=0,
   \qquad
   \ell_e(d)>0\quad\text{for all }d\in E\setminus\{e\},
   \qquad
   \ell_e\geq 0\quad\text{on }C.
\]
Suppose $Y\in K_m$ and $V:\C\to \cH_m$ is an isometry such that
$
   V^*YV=e.
$
Using the representation of $Y$ as an operator convex combination of points
of $E$, write
\[
   Y=\sum_{d\in E}d\otimes P_d,
   \qquad
   P_d\succeq0,
   \qquad
   \sum_{d\in E}P_d=I.
\]
Then
\[
   0
   =
   \ell_e(e)
   =
   V^*\ell_e(Y)V
   =
   \sum_{d\in E}\ell_e(d)\,V^*P_dV.
\]
Each summand is positive. Since $\ell_e(d)>0$ for $d\neq e$, it follows
that
\[
   P_dV=0\qquad(d\neq e).
\]
Hence $P_eV=V$, so $\range(V)$ reduces $Y$. Thus $e$ is an
Arveson boundary point of $\bm K$, and therefore a free extreme point.

Consequently,
$
   E\subseteq \ext(\bm K).
$
But
$
   Q_n\in E\subseteq \ext(\bm K),
$ and $Q_n\to Q_\infty,$
while $Q_\infty$ is not even an ordinary extreme point of
$
   K_1=C.
$
Therefore $Q_\infty\notin\ext(\bm K)$, and $\ext(\bm K)$ is not closed.

Finally, since
$
   \bm K=\opco(E)
$ and $
   E\subseteq\ext(\bm K),
$
we have
\[
   \bm K
   =
   \opco(E)
   \subseteq
   \opco(\ext(\bm K))
   \subseteq
   \bm K,
\]
whence
$
   \opco(\ext(\bm K))=\bm K,
$
which is SOT-closed.  Thus the hypothesis of
Proposition~\ref{tr22:con} can hold even though the set of free
extreme points is not closed.
}

\begin{proof}[Proof of Proposition \ref{tr22:con}]
	Suppose  $n \in \N \cup \{\infty\}$ and $X \in K_n$  is a $\Gat$-extreme point
  of $\bm K.$ To prove
	 $\Gat(X)$ is a free extreme point in $\opco(\Gat (\bm K)),$ let   $n_i \in \N \cup \{\infty\},$ tuples
	  $Y^{(i)} \in \opco(\Gat(\bm{K}))_{n_i}$  and nonzero $V_i \in {M}_{n_i,n}$ 
	  such that $\sum_{i=1}^k V_i^\ast V_i = {I}_n$ and
	\begin{equation*}
		\Gat(X) = \sum_{i=1}^{k} V_i^\ast Y^{(i)} V_i
	\end{equation*}
	 be given.   By Proposition~\ref{prop:candb},  $\bm J:=\opco(\Gat(\bm K))$ is  \wstar closed.
	 Thus,  by the Krein-Milman Theorem \ref{th: nckm},  $\bm J$ is the \wstar closed hull of its
	  free extreme points. Hence $\bm J = \ovopco^{w^*} (\ext(\bm J)) = \opco(\ext(\bm J)),$
     since it is assumed that $\opco(\ext(\bm J))$ is SOT-closed and the SOT and \wstar 
      closures of the bounded convex set $\opco(\ext(\bm J))$ are the same.
    Thus each of the $Y^{(i)}$ is an operator convex combination of free extreme points of $\opco\,(\Gat(\bm{K})).$ By Lemma \ref{lema:fext}, every free extreme point of $\opco\,(\Gat(\bm{K}))$ is of the form $\Gat(X)$ for some $X \in \bm{K}.$ So each $Y^{(i)}$ is an operator convex combination
 \[
	Y^{(i)} = \sum_{j=1}^{m_i} W_{i,j}^* \Gat(X^{(i,j)})W_{i,j},
 \]
	where $\Gat(X^{(i,j)}) \in \mathbb{S}^\rv_{r_i}$ is a free extreme point of $\opco\,(\Gat(\textbf{K})),$ and
	\begin{equation}\label{eq2}
		\Gat(X) = \sum_{i=1}^{k} \sum_{j=1}^{m_i}   V_i^\ast W_{i,j}^* \Gat(X^{(i,j)})W_{i,j} V_i.
	\end{equation}
	Now a comparison of the first $\gv$ coordinates in \eqref{eq2} implies that $X$ is expressed as an operator convex combination
	$$
	X =  \sum_{i=1}^{k} \sum_{j=1}^{m_i} V_i^\ast W_{i,j}^*  X^{(i,j)} W_{i,j} V_i.
	$$
	Plugging this expression into \eqref{eq2} we see that $(\oplus_{i,j} X^{(i,j)},\, \text{col}( W_{i,j} V_i))$ is a $\Gat$-pair.
	As $X$ is $\Gat$-extreme,  
	for each $i,j,$ every $ W_{i,j} V_i = t_{i,j} Q_{i,j}$ is a  scalar multiple of an isometry $Q_{i,j} \in {M}_{r_i,n}$  with $Q_{i,j}^*\Gat(X^{(i,j)})Q_{i,j} = \Gat(X).$
	Moreover, for each $i,j,$ there are tuples $A^{(i,j)}, B^{(i,j)} \in \bm{K}$  such that with respect to the range of $Q_{i,j},$ the tuple $X^{(i,j)}$ decomposes as $A^{(i,j)} \oplus B^{(i,j)}$  with $A^{(i,j)} \uapprox  X.$ 
	Now
	$$
	V_i^* V_i =  V_i^\ast \sum_{j=1}^{m_i} W_{i,j}^*  W_{i,j} V_i  = \sum_{j=1}^{m_i}V_i^\ast W_{i,j}^* W_{i,j} V_i = \sum_{j=1}^{m_i} t_{i,j}^2 \,Q_{i,j}^* Q_{i,j} = \sum_{j=1}^{m_i} t_{i,j}^2 \,\Ii_n,
	$$
	so $V_i$ is a scalar multiple of an isometry.
	Since $Q_{i,j}^*\Gat(X^{(i,j)})Q_{i,j} = \Gat(X)$ for all $i,j,$ we have that
	$$
	V_i^* Y_i V_i = \sum_{j=1}^{m_i} V_i^\ast W_{i,j}^*  \Gat(X^{(i,j)}) W_{i,j} V_i =
	\sum_{j=1}^{m_i}t_{i,j}^2\, Q_{i,j}^* \Gat(X^{(i,j)}) Q_{i,j}
	$$
	is a scalar multiple of $\Gat(X).$
	Since $\Gat$ respects unitary conjugation, the decomposition of $X^{(i,j)}$ as $A^{(i,j)} \oplus B^{(i,j)}$ for each $i,j$ implies 
	$$\Gat(X^{(i,j)}) = \Gat(A^{(i,j)} \oplus B^{(i,j)}) = \Gat(A^{(i,j)}) \oplus \Gat(B^{(i,j)})$$ with $\Gat(A^{(i,j)})$ unitarily equivalent to $\Gat(X)$
proving that $\Gat(X)$ is free extreme.
\end{proof}

The previous proposition implies that the existence of $\Gat$-extreme points is implied by the existence of free extreme points. 
 Proposition~\ref{prop:candb} gives a natural condition under which $\opco(\Gat(\bm{K}))$ is closed and bounded and hence
 ensures that the latter exist.

\begin{theorem}[\textbf{Krein-Milman theorem for $\Gat$-convex sets}]\label{th-gkm} Suppose $\Gat(0)=0.$
	If $\bm{K}$ is an SOT-closed and bounded operator $\Gat$-convex set that  contains $0,$  
      then $\Gat\mhyphen\ext(\bm{K}) \neq \emptyset$ and 
	\begin{equation}\label{eq:gkm}
		\bm{K} = 
		\Gat\mhyphen \ovopco_{\SOT} (\Gat\mhyphen\ext(\bm{K})).
		\end{equation}
\end{theorem}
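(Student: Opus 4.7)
The plan is to transfer the free Krein--Milman Theorem \ref{th: nckm} from operator convexity to $\Gamma$-convexity via the polynomial map $\Gamma$, using Proposition \ref{tr22}'s correspondence between free extreme points of $\opco(\Gamma(\bm{K}))$ and $\Gamma$-extreme points of $\bm{K}$. First I would establish nonemptiness of $\Gat\mhyphen\ext(\bm{K})$: since $\bm{K}$ is an SOT-closed, bounded free set, Proposition \ref{prop:candb} gives that $\opco(\Gamma(\bm{K}))$ is SOT-closed and bounded, hence \wstar compact by Remark~\ref{rem : top}. The free Krein--Milman Theorem \ref{th: nckm} applied to $\opco(\Gamma(\bm{K}))$ then produces a free extreme point, which by Proposition \ref{tr22} has the form $\Gamma(X)$ for some $X\in\Gat\mhyphen\ext(\bm{K})$. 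In particular $\Gat\mhyphen\ext(\bm{K})\neq\emptyset$.

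Write $\bm{E}=\Gat\mhyphen\ext(\bm{K})$. The inclusion $\Gat\mhyphen\ovopco_{\SOT}(\bm{E})\subseteq\bm{K}$ in \eqref{eq:gkm} is immediate from SOT-closedness and operator $\Gamma$-convexity of $\bm{K}$. For the reverse, Theorem \ref{th: nckm} together with Proposition \ref{tr22} yields
\[
\opco(\Gamma(\bm{K}))=\ovopco^{w^*}\bigl(\ext(\opco(\Gamma(\bm{K})))\bigr)\subseteq\ovopco^{w^*}(\Gamma(\bm{E}))=\ovopco^{\SOT}(\Gamma(\bm{E})),
\]
where the last equality uses that $\opco(\Gamma(\bm{E}))$ is a bounded convex set whose SOT and \wstar closures coincide (Remark~\ref{rem : top}). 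For every $X\in\bm{K}$ this forces $\Gamma(X)\in\ovopco^{\SOT}(\Gamma(\bm{E}))$. To lift this matricial inclusion to a $\Gamma$-convex-hull statement in the $\gv$-tuple variable, I would apply Proposition \ref{prop:newjp}\ref{item 3} to the enlarged graded set $\bm{J}=\bm{E}\cup\{0\}$, which is bounded and satisfies $0\in J_1$; this gives
\[
\Gat\mhyphen\ovopco_{\SOT}(\bm{J})=\Gamma^{-1}\bigl(\ovopco^{\SOT}(\Gamma(\bm{J}))\bigr)=\Gamma^{-1}\bigl(\ovopco^{\SOT}(\Gamma(\bm{E}))\bigr),
\]
the second equality holding since $0=\Gamma(0)\in\ovopco^{\SOT}(\Gamma(\bm{E}))$, a consequence of applying the previous displayed containment at $X=0\in\bm{K}$. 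Therefore $\bm{K}\subseteq\Gat\mhyphen\ovopco_{\SOT}(\bm{J})$.

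The main obstacle I anticipate is closing the gap between $\Gat\mhyphen\ovopco_{\SOT}(\bm{J})$ and $\Gat\mhyphen\ovopco_{\SOT}(\bm{E})$, i.e., verifying that augmenting $\bm{E}$ by $\{0\}$ does not enlarge the hull. Equivalently, one must show $0\in\Gat\mhyphen\ovopco_{\SOT}(\bm{E})$. A direct approach is to exhibit $0$ as a genuine operator $\Gamma$-convex combination of elements of $\bm{E}$ at scalar level (in the spirit of $0=\tfrac12 X+\tfrac12(-X)$) after arranging $\Gamma$-pair compatibility using $\Gamma(0)=0$; alternatively one may bootstrap by applying the Hahn--Banach Theorem \ref{hb-op} to $\bm{K}$ itself and arguing that every monic $\Gamma$-pencil nonnegative on $\bm{E}$ is already nonnegative on $\opco(\Gamma(\bm{E}))$ and hence, by the Krein--Milman inclusion above, on $\Gamma(\bm{K})$, so the cutting-plane description of $\Gat\mhyphen\ovopco_{\SOT}(\bm{E})$ matches that of $\bm{K}$. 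Once this last step is established, $\Gat\mhyphen\ovopco_{\SOT}(\bm{J})=\Gat\mhyphen\ovopco_{\SOT}(\bm{E})$ and the theorem follows.
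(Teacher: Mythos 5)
Your proof follows the paper's strategy for both halves, and the obstacle you flagged at the end is genuine; in fact the paper's own proof applies Proposition~\ref{prop:newjp}\ref{item 3} with $\bm J=\Gat\mhyphen\ext(\bm K)$ without verifying the hypothesis $0\in J_1$, which fails in general (the origin is rarely a $\Gat$-extreme point). Neither of your two proposed repairs closes the resulting gap. Writing $0=\tfrac12 X+\tfrac12(-X)$ does not by itself produce a $\Gat$-convex combination: the pair $\big(X\oplus(-X),\,\tfrac1{\sqrt2}\col(I,I)\big)$ lies in $\Coup$ only when $\Gat(X)+\Gat(-X)=0$, which holds only if every $\gamma_j$ is odd. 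The Hahn--Banach bootstrap is circular: Theorem~\ref{hb-op} only separates points from $\Gat\mhyphen\ovopco_{\SOT}(\bm E)$ once you already know $0$ belongs to that set, so the equality of cutting-plane descriptions cannot be converted into set equality without first establishing exactly the membership you are trying to prove.

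The gap is, however, closable by bypassing Proposition~\ref{prop:newjp}\ref{item 3} altogether. Let $\bm H$ denote the SOT-closed fully free hull of $\bm E=\Gat\mhyphen\ext(\bm K)$. Since $\Gat\mhyphen\ovopco_{\SOT}(\bm E)$ is SOT-closed and operator $\Gat$-convex, hence fully free by Remark~\ref{rem:gpairs}(b), one has $\bm H\subseteq\Gat\mhyphen\ovopco_{\SOT}(\bm E)$. By Lemma~\ref{l:Gat:SOT-continuous} and Proposition~\ref{l:cl:cvx}, $\Gat(\bm H)$ is bounded, SOT-closed and free, so $\opco(\Gat(\bm H))$ is SOT-closed; checking both inclusions gives $\opco(\Gat(\bm H))=\ovopco^{\SOT}(\Gat(\bm E))$, which by the Krein--Milman Theorem~\ref{th: nckm} and Proposition~\ref{tr22} equals $\opco(\Gat(\bm K))$. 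Now given $X\in\bm K$, the third statement of Proposition~\ref{l:cl:cvx} furnishes $W\in\bm H$ and an isometry $V$ with $\Gat(X)=V^*\Gat(W)V$; comparing the first $\gv$ coordinates gives $X=V^*WV$, and combined with the remaining coordinates this yields $\Gat(V^*WV)=V^*\Gat(W)V$, that is, $(W,V)\in\Coup$. Hence $X\in\Gat\mhyphen\opco(\bm H)\subseteq\Gat\mhyphen\ovopco_{\SOT}(\bm E)$, which gives $\bm K\subseteq\Gat\mhyphen\ovopco_{\SOT}(\bm E)$ directly, with no appeal to Hahn--Banach and no hypothesis on whether $0\in E_1$.
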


\begin{remark}\rm
\label{r:more:gkm}
\pushQED{\qed}
\mbox{}\par
{(1)}
  Assuming $\bm K$ is bounded and \wstar closed it follows that $\bm K$ is WOT-closed and hence SOT-closed. 
  (See  Remark~\ref{rem : top}.) 
 Thus, if $\bm K$ is \wstar closed and  bounded, operator $\Gat$-convex and contains  $0,$ then Theorem~\ref{th-gkm} applies with
 the conclusion $\bm K=\Gat\mhyphen \ovopco_{\SOT}(\Gat\mhyphen\ext( \bm K)).$
 Since $\Gat\mhyphen\opco(\Gat\mhyphen\ext(\bm K)) \subseteq \bm K$ and $\bm K$ is \wstar closed,
\[
  \Gat\mhyphen \ovopco_{w^*}(\Gat\mhyphen\ext(\bm K)) \subseteq \bm K  = \Gat\mhyphen \ovopco_{\SOT} (\Gat\mhyphen\ext(\bm{K}))
   \subseteq  \Gat\mhyphen \ovopco_{w^*}(\Gat\mhyphen\ext(\bm K)),
\] 
 {where the last inclusion follows from the fact that 
 \wstar closed sets are SOT-closed.}
Thus, when $\bm K$ is \wstar closed,
\[
 \bm K =  \Gat\mhyphen \ovopco_{w^*}(\Gat\mhyphen\ext(\bm K)) =\Gat\mhyphen \ovopco_{\SOT}(\Gat\mhyphen\ext(\bm K)).
\]

\smallskip

{\CCBB 
(2) Suppose $\bm K$ is \wstar closed and satisfies the hypotheses of Theorem~\ref{th-gkm}.
If $\bm B$  is the \wstar closed fully free hull of $\Gat\mhyphen \ext(\bm K)$, then
\[
   \Gat\mhyphen \ovopco_{\SOT}(\bm B) = \Gat\mhyphen \ovopco_{\SOT}(\Gat\mhyphen\ext(\bm K)) = \bm K. \qedhere \popQED
\]
}
\end{remark}

\begin{proof}[Proof of Theorem~\ref{th-gkm}]
Since $\bm{K}$ is  SOT-closed and bounded, by Proposition \ref{prop:candb} the operator convex set $\opco\,(\Gat(\bm{K}))$ is also SOT-closed and bounded. Since $\opco(\Gat(\bm K))$ is convex, Remark \ref{rem : top} implies that it is also \wstar closed.
	 Hence, an application of the Banach-Alaoglu theorem shows that it is \wstar compact.
	By Theorem \ref{th: nckm}, the {\wstar} compact operator convex set $\opco\,(\Gat(\bm{K}))$ has a free extreme point $Y.$ 
	By Lemma \ref{lema:fext}, there is an $X \in \bm{K}$ such that $Y=\Gat(X)$ and thus,
	 by Proposition \ref{tr22},  $X$ is a $\Gat$-extreme point of $\bm{K}.$

{To prove \eqref{eq:gkm} assume there is an $X \in \bm{K}$ not lying in $\Gat\mhyphen \ovopco_{\SOT}(\Gat\mhyphen\ext (\bm K)).$
	 An application of  item \ref{item 3} in Proposition \ref{prop:newjp}  to $\bm J = \Gat\mhyphen\ext (\bm K)$  
	  gives 
\[
  \Gat(X) \notin \ovopco^{\SOT}(\Gat(\bm J)).
\] 
 Since $\opco(\Gat(\bm K))$ is SOT-closed, and thus \wstar closed,  Theorem~\ref{th: nckm} gives
 \[
  \ovopco^{w^*}(\ext(\opco(\Gat(\bm K))) = \opco(\Gat(\bm K)) = \ovopco^{w^*}(\Gat(\bm K)) .
 \]
  On the other hand,
    Proposition~\ref{tr22} implies    $\ext(\opco(\Gat (\bm K)) \subseteq \Gat(\bm J).$ 
  Thus  
\[
 \opco(\Gat(\bm K)) = \ovopco^{w^*}(\Gat(\bm K)) \subseteq \ovopco^{w^*}(\Gat(\bm J))  \subseteq \ovopco^{w^*}(\Gat(\bm K)) =\opco(\Gat(\bm K)),
\]
as, for bounded convex sets,   SOT and \wstar closures are the same. 
 Thus
\[
 \opco(\Gat(\bm K)) = \ovopco^{\SOT}(\Gat (\bm J)).
\]
Since $X\in \bm K,$ we obtain the contradiction, 
  $\Gat(X) \in \opco(\Gat(\bm K)) =\ovopco^{\SOT}(\Gat(\bm J))\not\ni \Gat(X).$
 Hence $\Gat\mhyphen \ovopco^{\SOT}(\bm J) = \bm K$ as claimed.
 }
\end{proof}

 We close this subsection with a weak converse to the $\Gat$-version of the Krein-Milman Theorem,
Theorem~\ref{th-gkm}.

The analog of a boundary for an Agler family of operators adapted to operator $\Gat$-convex sets reads as follows. A \df{$\Gat$-boundary}
 of an operator $\Gat$-convex set $\bm K$ is a SOT-closed 
 fully free set $\bm B\subseteq \bm K$ such that $\Gat\mhyphen\ovopco_{\SOT}(\bm B) =\bm K.$  Let
 $\BG$ denote the set of $\Gat$-boundaries for $\bm K$ and set
\[
 \partial_\Gat^A \bm K =\bigcap \{\bm B: \bm B\in\BG\}.
\]

A point $X\in K_n$ is a \df{$\Gat$-\agler extreme point} of $\bm K,$ if $Y\in K_m$
  and $V:\cH_n\to \cH_m$ is an isometry such that 
    $(Y,V)\in \Coup$ and $X=V^*YV,$ 
   then the range of $V$ reduces $Y.$ 
   Let $\Gat\mhyphen\ext^A(\bm K)$ denote
   the $\Gat$-\agler extreme points of $\bm K.$  
Evidently, $\Gat\mhyphen\ext(\bm K)\subseteq \Gat\mhyphen\ext^A(\bm K).$ 
Theorem~\ref{th-gkm} (also see Remark~\ref{r:more:gkm})  implies  that the SOT-closed {\it fully free hull}  of
the free $\Gat$-extreme points of a \bdcpt operator $\Gat$-convex set $\bm K$ is a 
 $\Gat$-boundary for $\bm K.$ 
\CB{
\begin{theorem}
\textcolor{black}{
 If  $\bm B$ is a $\Gat$-boundary of 
  a bounded and SOT-closed operator $\Gat$-convex set $\bm K,$ then 
  $\Gat\mhyphen \ext^A(\bm K) \subseteq \bm B.$ Hence
  $\Gat\mhyphen\ext (\bm K)\subseteq \bm B$ and therefore the SOT-closed fully free hull of $\Gat\mhyphen \ext(K)$
   is $\partial^A_\Gat K$ and is a boundary for $\bm K.$ In particular, $\partial^A_\Gat \bm K$ is the smallest boundary
   for 
   $\bm K.$}
\end{theorem}}

\begin{proof}
 Suppose $\bm B\subseteq \bm K$ is fully free SOT-closed and $\Gat\mhyphen\ovopco_{\SOT}(\bm B)=\bm K.$
  By Proposition~\ref{prop:candb}, $\opco(\Gat(\bm B))$ is SOT-closed and bounded.  By Proposition~\ref{prop:newjp}\ref{item 3},
 \begin{equation}
 \label{e:magic:think}
  \bm K = \Gat\mhyphen\ovopco_{\SOT}(\bm B) = \Gat^{-1}(\ovopco^{\SOT}(\Gat(\bm B))) = \Gat^{-1}(\opco(\Gat(\bm B))).
 \end{equation}
  Now suppose $X\in \Gat\mhyphen\ext^A(\bm K).$ Since $X\in \bm K,$ equation~\eqref{e:magic:think} implies
   $\Gat(X)\in \opco(\Gat(\bm B)).$ Hence there is an isometry $V$ and $B\in \bm B$ such that $\Gat(X)=V^* \Gat(B)V$
   and thus $X=V^*BV$ and  $(B,V)\in \Coup$.   Since $X$ is Agler extreme, it follows that the range of $V$ reduces
    $B$ and since $\bm B$ is closed under restrictions to reducing subspaces, $X\in \bm B.$ 
\end{proof}

\subsection{The free parabola}
 We close this section with the example of the free parabola.
 
\begin{example}
    \label{eg:parabola}
    Let $\bm{K}=(K_n)_{n\in\N\cup\{\infty\}}$ denote  the  operator $\Gamma(x,y)=(x,y,y^2)$-convex ($y^2$-convex) set 
\[
    \bm K = \{ (X,Y) \ | \ -(Y^2+I) \preceq X \preceq Y^2 + I, \ Y^2 \preceq I\}
\]
and let $\bm K^{\fin} = (K_n)_{n\in\N}$ denote the matrix convex set consisting of the finite levels
  of $\bm K.$
	\begin{figure}[ht]
	\centering
	 \begin{tikzpicture}[scale=1.5]
			\draw[very thin,color=gray] (-2.3,-1.3) grid (2.3,1.3);
			\draw[->] (-2.2, 0) -- (2.2, 0) node[right] {$x$};
			\draw[->] (0, -1.3) -- (0, 1.5) node[above] {$y$};
			
			\draw[scale=1, domain=-1:1, smooth, variable=\x, black] plot ({1+\x*\x},{\x});
			\draw[scale=1, domain=-1:1, smooth, variable=\x, black] plot ({-1-\x*\x},{\x});
			\draw[scale=1, domain=-2:2, smooth, variable=\x, black] plot ({\x},1);
			\draw[scale=1, domain=-2:2, smooth, variable=\x, black] plot ({\x},-1);
			
			\fill[blue!30, opacity=0.5] plot[domain=-1:1, variable=\x] ({1+\x*\x},{\x})
			--	plot[domain=-1:1, variable=\x] ({-1-\x*\x},{\x}) 
			-- plot[domain=-2:2, variable=\y] ({\y},1) 
			-- plot[domain=-2:2, variable=\y] ({\y},-1) --  cycle;
		\filldraw[color=black] (1,0) circle (0.02) node[below right] {$1$};
		\filldraw[color=black] (2,0) circle (0.02) node[below] {$2$};
\filldraw[color=black] (0,1) circle (0.02) node[above left] {$1$};
		\end{tikzpicture}
\caption{The first level component $K_1$ of $\bm K$.}
\label{fig-parabola}
\end{figure}
It is straightforward to check that $\bm K,$ and hence $\bm K^{\fin},$  is  bounded and 
  SOT-compact
   and $\Gat$-convex, \CB{where $\Gat = (x,y,y^2)$}.  By Lemmas~\ref{l:Gat:SOT-continuous} and \ref{l:cl:cvx}, 
     $\ovopco^{w^*}(\Gat(\bm K))=\opco(\Gat(\bm K)).$
  Below we show that (1) $\ovopco^{w^*}(\Gat(\bm K))=\opco(\Gat(\bm K))$ is described by a linear matrix inequality; (2)
 $\matco(\Gat(\bm K^{\fin}))$ is a spectrahedron and (3) it is the matrix
  convex hull of its finite level free extreme points;  (4)  $\bm K^{\fin}$ is the $\Gat$-convex hull of its 
  {finite level} $\Gat$-extreme points.

 Let $\bm J$ denote the bounded SOT-closed operator convex set consisting of all tuples $(X,Y,Z)$ 
 described by the linear matrix inequality
\begin{align*}
    -(Z+I) &\preceq X \preceq Z + I \\
    0 &\preceq Z \preceq I \\
   0 &\preceq  \begin{pmatrix}
        I & Y \\
        Y & Z
    \end{pmatrix},
\end{align*}
and note it contains $\Gat(\bm K).$
 The affine linear transformation $(X,Y,Z)\mapsto (X,Y,W=Z-\frac12 I)$ sends $\bm J$ to the
  free spectrahedron $\cD_A,$ where
\[
 L_A(x,y,w) = \begin{pmatrix}\frac32 +w - x \end{pmatrix} \oplus \begin{pmatrix}\frac32 +x +w \end{pmatrix} \oplus \begin{pmatrix} 1-2w \end{pmatrix} \oplus\begin{pmatrix} 1+2w
  \end{pmatrix} \oplus \begin{pmatrix} 1 &\sqrt{2}y\\ \sqrt{2}y & 1+2w \end{pmatrix}.
\]
Since  $\cD_A$ is closed under complex conjugation,  it is spanned by its  finite level free extreme points 
by \cite[Theorem~1.3]{EH} (see Proposition~\ref{prop:EH}). Thus so is   $\bm J^{\fin};$  that is,
\begin{equation}
\label{e:parabola:1}
  \matco(\ext(\bm J)\cap \bm J^{\fin}) = \bm J^{\fin}.
\end{equation}

{\it Claim:} If $(X,Y,Z)\in \bm J^{\fin}$ is a free extreme point of $\bm J,$ then $(X,Y,Z)=(X,Y,Y^2)\in \Gat(\bm K^{\fin}).$

      Before proceeding, we collect
   some consequences of this claim. First, 
\begin{equation}
\label{e:parabola:2}
  \ext(\bm J)\cap {\bm J^{\fin}} \subseteq \Gat(\bm K^{\fin}) 
   \subseteq \matco(\Gat(\bm K^{\fin})) \subseteq \bm J^{\fin}.
\end{equation}
 Combining the identities of equations~\eqref{e:parabola:1} and \eqref{e:parabola:2} gives
\[
 \matco(\Gat(\bm K^{\fin})) = {\bm J}^{\fin}.
\]
  Thus $\matco(\Gat(\bm K^{\fin}))$ is a spectrahedron described by
  a linear matrix inequality so that (2) holds. Moreover, {by 
  \cite[Proposition~3.7(b)]{JKMMP}, $\ovopco^{\SOT}(\bm J^{\fin}) = \bm J$. Since 
   for convex sets SOT and \wstar  closures coincide, 
    $\ovopco^{w^*}(\bm J^{\fin}) =\bm J$ and we conclude that 
    $\ovopco^{w^*}(\Gat(\bm K^{\fin})) = \bm J.$} Since also 
\[
  \bm J = \ovopco^{w^*}(\Gat(\bm K^{\fin})) \subseteq \ovopco^{w^*}(\Gat(\bm K)) 
    =\opco(\Gat(\bm K)) \subseteq \bm J,
\]
 it follows $\opco(\Gat(\bm K)) =\bm J$ so  that (1) holds. Further by equation~\eqref{e:parabola:1},
 (3) holds. 
  
   By Proposition~\ref{tr22} and the Claim, if $(X,Y,Z)\in \ext(\bm J)\cap \bm J^{\fin},$
   then $(X,Y,Z)=\Gat(X,Y)$ and $(X,Y)\in \Gat\mhyphen \ext(\bm K).$

    Now let $(X,Y)\in \bm K^{\fin}$ be given.  Since $\Gat(X,Y) \in \bm J^{\mat},$
    by another application of \cite[Theorem~1.3]{EH}, there exist
    free extreme points $(X_i,Y_i,Z_i)\in \bm J^{\fin}$ 
   of $\bm J$ and $V_i$ such that $\sum_i V_i^* V_i=I$ and 
    $\sum V_i^* (X_i,Y_i,Z_i)V=\Gat(X,Y).$ Since $(X_i,Y_i,Z_i)\in \ext(\bm J)\cap \bm J^{\fin},$
    we have $(X_i,Y_i,Z_i)=\Gat(X_i,Y_i)$ and $(X_i,Y_i)\in \Gat\mhyphen \ext(\bm K).$ 
Finally,  setting $V^* = \begin{pmatrix} V_1^* & \ldots & V_N^* \end{pmatrix}$ and
 $(\widetilde{X},\widetilde{Y}) =\oplus (X_i,Y_i),$ 
\[
  \Gat(X,Y) 
   = V^* \Gat(\widetilde{X},\widetilde{Y}) V.
\]
 Thus $((\widetilde{X},\widetilde{Y}),V)$ is a $\Gat$-pair and $(X,Y) =V^*(\widetilde{X},\widetilde{Y}) V.$  So $(X,Y)\in \Gat\mhyphen\conv(\Gat\mhyphen\ext(\bm K) \cap \bm K^{\fin})$ and therefore
  $\bm K^{\fin} = \Gat\mhyphen\conv(\Gat\mhyphen\ext(\bm K) \cap \bm K^{\fin})$ as
   claimed; that is, (4) holds.

  Turning to the proof of the Claim, we first argue that if  $(X,Y,Z)\in \bm J^{\fin}$ is a free extreme point of $\bm J,$
 then  $Y$ and $Z$ commute.  To do so, it suffices to show, given 
 $(X,Y,Z)\in\bm J^{\fin},$ that either  $Y$ and $Z$ commute or there exists a tuple $(\varphi,\alpha,u)\ne 0$
 and $\beta,w, \psi$ such that 
\[
 X_* =\begin{pmatrix} X & \varphi \\ \varphi^* & \psi \end{pmatrix}
  \ \ \
 Y_*= \begin{pmatrix} Y & \alpha \\ \alpha^* & \beta \end{pmatrix},
 \ \ \
 Z_* =\begin{pmatrix} Z& u\\ u^* & w \end{pmatrix}
\]
  satisfies $I\succeq Z_* \succeq Y_*^2$ and $-(Z_*+I) \preceq X_* \preceq Z_* +I;$
  that is, $(X_*,Y_*,Z_*)\in \bm J^{\fin}.$

 Suppose $Y$ and $Z$ do not commute and let $\cH$ denote the finite-dimensional space $(X,Y,Z)$ that 
  	acts upon. Let $\cK=\ker(I-Z)\cap \ker(Z-Y^2).$ If $\cK=\cH,$ then $Z$ is the identity.
   Thus $\cK^\perp \ne \{0\}.$

  From the definition of $\cK,$
 \begin{equation}
  \label{e:span:condition}
    \range(I-Z)+\range(Z-Y^2)=\cK^\perp \ne \{0\}.
  \end{equation}
 Let $\Delta^2=Z-Y^2.$   If  $Y\Delta^2=0,$ then $YZ=Y^3$ and hence $YZ=Y^3=(YZ)^*=ZY.$
 Thus $Y$ and $Z$  commute, contradicting our earlier assumption. It follows that   $Y\Delta^2\ne 0.$
 Choose $h$ such that
 $Y\Delta^2 h \ne 0$ and let $\alpha =\Delta^2 h.$  From the spanning condition of equation~\eqref{e:span:condition}, 
 there exists $f,g\in \cK^\bot$ such that
\begin{equation*}
 (I-Z)f+ \Delta^2 g = Y \Delta^2 h = Y \alpha.
\end{equation*}
Let 
\[
 u= (I-Z)f. 
\]
In particular,
\begin{equation}
 \label{e:span:2}
      u-Y\alpha =  (1-Z)f - Y\alpha  
  = - \Delta^2 g.
\end{equation}
 Since $u\in \range(I-Z),$ there exist  $s_0,w>0$ such that
\[
   \begin{pmatrix} I-Z & -s u \\ -s u^* & 1-w \end{pmatrix} \succeq 0.
\]
 With this $w$ fixed and $0<s\le s_0,$ let
\[
 Y_*(s)= \begin{pmatrix} Y& s\alpha \\ s\alpha^* & 0
 \end{pmatrix}, 
 \ \ \
 Z_*(s) =\begin{pmatrix} Z& s u\\ s u^* & w \end{pmatrix}
\]
and observe  from equation~\eqref{e:span:2} that
\begin{equation} 
 \label{e:span:3}
\begin{split}
 Z_*(s)-Y_*^2(s)
 & = \begin{pmatrix}  Z- [Y^2+s^2\alpha\alpha^*] & s[u-Y\alpha]
 \\ * & w-s^2 \alpha^*\alpha  \end{pmatrix}
 = \begin{pmatrix} \Delta^2 - s^2 \alpha \alpha^*  & -s \Delta^2 g \\ * & w-s^2 \alpha^* \alpha \end{pmatrix}
\\ & = \begin{pmatrix} \Delta & 0 \\0 &1 \end{pmatrix} \, \begin{pmatrix} I-s^2 \Delta hh^* \Delta & -s \Delta g \\
                    * & w-s^2\alpha^*\alpha \end{pmatrix} \, \begin{pmatrix} \Delta & 0\\0 & w-s^2\alpha^*\alpha \end{pmatrix}.
\end{split}
\end{equation}
 Without loss of generality, assume $0<s_0$ is small enough so that 
\[
 Z_*(s)-Y_*^2(s) \succeq  \begin{pmatrix}  \Delta^2  &0\\0 & w \end{pmatrix}
  - \begin{pmatrix} -s^2 \Delta hh^* \Delta & -s \Delta g \\ -s g^* \Delta  & -s^2\alpha^*\alpha \end{pmatrix} \succeq 0,
\]
which is possible since the matrix in the middle of the right hand side of equation~\eqref{e:span:3} is positive definite for $s_0$ small.

 Finally, we construct a suitable dilation $X_*$ of $X$. 
 If  $v \in \ker (Z+I-X) \cap \ker(Z+I+X),$ then  $Xv = (Z+I)v = -(Z+I)v$ and hence $v$ lies in the kernel of $Z+I.$ Now $Z\succeq 0$ implies $v=0.$ So $\ker (Z+I-X) \cap \ker(Z+I+X)=\{0\},$ or alternatively, $\range(Z+I-X)+\range(Z+I+X) = \cH,$ which implies there are vectors $u_1, u_2 \in \cH$ such that 
 \[
  u = (Z+I-X)u_1 + (Z+I+X)u_2.
 \]
  Now set 
 \[
  \phi = (Z+I-X)u_1 - (Z+I+X)u_2
 \]
  and note that $u-\phi = 2(Z+I+X)u_2 \in \range(Z+I+X)$ and $u+\phi = 2(Z+I-X)u_1 \in \range(Z+I-X).$ Hence, setting
\[
 X_*(s)  =\begin{pmatrix} X & s \phi \\ s \phi^* & 0 \end{pmatrix}
\]
 and choosing $0<s\le s_0$ sufficiently small, 
 \[
  	Z_*(s)+I \pm X_*(s)  = \begin{pmatrix}Z+I \pm X & s(u\pm \phi) \\ s(u\pm \phi)^* & w+1 \end{pmatrix} 
 \]
are both positive semidefinite.   We conclude, if $Y$ and $Z$ do not commute, then $(X,Y,Z)$ is not an \agler extreme point, 
 and hence by Proposition~\ref{prop:milman:agler-style},  not a  free extreme point 
 of $\bm J^{\fin}.$
 
 Now suppose $Y$ and $Z$ commute.  To show either $Z=Y^2$ or $(X,Y,Z)$ is not an extreme point of $\bm J,$ 
  first note that as $Y$ and $Z$ commute and they are both self-adjoint, they are, without loss of generality, (simultaneously) diagonal.
  Let $Y_{j,j}$ and $Z_{j,j}$ denote their diagonal entries and note that $Y^2\preceq Z$ is equivalent to  $|Y_{j,j}| \le \sqrt{Z_{j,j}}.$
  Suppose $Y^2\ne Z.$ In this case, without loss of generality,  $|Y_{1,1}| <\sqrt{Z_{1,1}}.$  For notational
   convenience, let $y=Y_{1,1}$ and $z=Z_{1,1}.$ Hence there exists a
   $0<\lambda <1$ such that $y=\lambda \sqrt{z} - (1-\lambda)\sqrt{z}$. Let $Y^\pm$ denote the diagonal matrices with 
    $Y^\pm _{1,1}= \pm \sqrt{z}$ and $Y^\pm _{j,j}= Y_{j,j}$ for $j>1.$ 
   Thus $(Y^\pm)^2 \preceq Z$ and therefore $(X,Y^\pm,Z)\in \bm K.$  Moreover, 
 \[
   (X,Y,Z) =  \lambda(X,Y^+,Z) +(1-\lambda)(X,Y^-,Z),
\]
 and hence $(X,Y,Z)$ is not free extreme.  Thus, if $(X,Y,Z)$ is extreme in $\bm J,$ then $Z=Y^2$ as claimed.
\qed
\end{example}

\section{Free analog of the Lasserre-Parrilo construction in the \texorpdfstring{$\Gat$}{Gamma}-convex setting}
\label{sec 7} 

In the commutative setting, a construction due to Lasserre \cite{Las09} (see also Parrilo \cite{Par06}) assigns to a  semialgebraic set $D_p = \{x \in \R^g \ | \ p(x)\geq 0\}$ a sequence of spectrahedra whose projections give a decreasing family of convex semialgebraic sets, called \textit{relaxations}, approximating the convex hull of $D_p.$ Under mild assumptions \cite{HN09,HN10}, such an approximation scheme is \textit{exact}, in which case the convex hull of $D_p$ is presented as a projection of a spectrahedron, called \textit{spectrahedrop} \cite{BPR13}.

\subsection{Introduction and basic notation}  
A free analog of the Lasserre-Parrilo relaxations was introduced and studied in \cite{HKM16}.
By adapting those methods, we give a construction of a sequence $\mathcal{D}^\Gat_{A^{(d)}}$ of free $\Gat$-spectrahedra
{in increasingly
many auxiliary variables}
whose projections give better and better approximations of the operator $\Gat$-convex hull of the
 positivity domain $\mathcal D_p$ of a symmetric matrix-valued noncommutative polynomial $p$,
 \[
 \mathcal{D}_p = 
( \mathcal{D}_p(n))_n = (\{X \in \mathbb{S}_n^\texttt{g} \ | \ p(X) \succeq 0\})_n.
 \]

 {\CCBB In what follows we work with matrix-valued $\Gamma$-pencils
with auxiliary variables.
 See equation~\eqref{eq: gen-sp}. There are two related reasons for working with matrix-valued pencils $L^\Gamma .$ For computational purposes one is forced to cut down to finite dimensions; and, as explained in Remark \ref{r:not:vac}(2), under a bit of an additional hypothesis a $\Gamma $-convex
set is a \Hilby $\Gamma $-spectrahedron. No approximation {(with projections of such sets)} is needed.}
{\CCBB Similarly,  the algorithms require
verifying that localizing matrices $H^{\Uparrow}_p(Y)$
(see Definition~\ref{def:Hankel}, equation~\eqref{eq:hankeldeftrunc}) are positive semidefinite and, in practice, doing that for an operator-valued $p$ would lead to
finite dimensional (matrix) approximations.}

 Fix a symmetric matrix-valued noncommutative polynomial $p \in M_{\mu}(\C\langle x \rangle)$
 of degree $\leq \delta$ in $\gv$ variables.  Thus
 $p$ takes the form
 \begin{equation}\label{eq-p}
 	p(x) = \sum_{|\alpha| \leq \delta} p_\alpha \alpha, 
 \end{equation}
   where  $p_\alpha^{*} = p_{\alpha^*}.$

In the first part of this section free analogs of moment sequences and Hankel matrices adapted to the $\Gat$-convex setting are used to construct an infinite free $\Gat$-spectrahedron $\mathfrak{L}_p$ and a canonical projection of $\mathfrak{L}_p$ onto the matricial levels of the operator $\Gat$-convex hull of $\mathcal{D}_p.$ In the final part of this section we explain how $\mathfrak{L}_p$ naturally determines a sequence of finite free $\Gat$-spectrahedra whose projections, i.e., \textit{free $\Gat$-spectrahedrops}, are increasingly finer outer approximations to the matricial levels of the operator $\Gat$-convex hull of $\mathcal{D}_p.$

\subsubsection{Free \texorpdfstring{$\Gat$}{Gamma}-spectrahedra and their projections}
	Fix $\Gat(x) = (\gamma_1, \ldots, \gamma_\rv)$ with $\gamma_i = x_i$ for $i=1, \ldots, \gv.$ We extend the notion of a $\Gat$-pencil to any matrix-valued polynomial of the form
	\begin{equation}\label{eq: gen-sp}
		\LG(x,y) = A_0 + \sum_{i=1}^\gv A_i\,x_i + \sum_{j=\gv+1}^\rv A_j \gamma_j(x) 
     \, +  \, \sum_{k=1}^\hv B_k y_k
	\end{equation}
	for some $\hv \in \N.$ Here the $y_k$ are new {symmetric}  noncommutative variables (that do not appear in any of the $\gamma_j$).
	In other words, we infer that the terms that determine $\Gat$ are $\gamma_i$ for $i=\gv+1, \ldots, \rv$ and refer to the polynomial obtained by adding new variables as linear terms to a $\Gat$-pencil as a $\Gat$-pencil as well.
	
	 Given a $\Gat$-pencil $\LG$ as in \eqref{eq: gen-sp}, the free set $\proj_x(\mathcal{D}_{L^\Gat}) = (\text{proj}_x(\mathcal{D}_{L^\Gat})_n)_n$ with
	 \begin{equation*}
	 \text{proj}_x(\mathcal{D}_{L^\Gat})_n =  \{X \in \mathbb{S}_n^\gv \ | \ \exists Y \in \mathbb{S}_n^{\hv} \text{ such that } 
	  L^\Gat(X,Y) \succeq 0\}
	 \end{equation*}
	 is called a \df{free $\Gat$-spectrahedrop}. Note that by definition, we project the positivity domain of $\LG$ onto the first $\gv$ variables in the linear part of $\LG.$
	 It is easy to check that a free $\Gat$-spectrahedron is a $\Gat$-convex set, or see Proposition \ref{p:not:vac}.

\subsubsection{Free operator semialgebraic sets} 
	Recall the definitions of an operator convex set (hull) and an operator $\Gat$-convex set (hull) from Section \ref{sec: g-ext}. 
	Similarly, we extend the matricial positivity domain $\cD_p$ of $p$ by including an  operator level in $\cB(\cH),$ where $\cH$ is an infinite-dimensional separable Hilbert space. 
Recall  from Section \ref{sec: g-ext} that 
$\mathbb{S}^\gv$ is the graded set $(\mathbb{S}^\gv_n)_{n\in \N \cup \{\infty\}}$ where $\mathbb{S}^\gv_n$ is identified
with $\cB(\cH_n)^\gv_{\sa}$ for an $n$-dimensional Hilbert space $\cH_n$ for $n\in\N$, and $\cH_\infty=\cH.$

	Given a symmetric free matrix-valued polynomial $p$ 
	let
	\begin{equation}\label{eq:operatorsa}
	\cD_p^\infty = \{X \in \mathbb{S}^\gv \ | \ p(X) \succeq 0\}.
	\end{equation}
   The set  $\cD_p^\infty$ is the \df{free operator semialgebraic set} defined by the polynomial $p.$
    We extend the notion of a $\Gat$-convex hull as in Definition \ref{def gamma} to accept \CB{as inputs}  free operator semialgebraic sets as follows.
	The \df{$\Gat$-convex hull} of $\cD_p^\infty$ is the graded set  \index{$\Gatconv$}
  $$\Gatconv(\cD_p^\infty) =\big(\Gatconv(\cD_p^\infty)(n)\big)_n,$$
  {where $n\in\N$. (Most of what follows also  works
  for \textit{operator} $\Gat$-convex hulls, $\Gat\mhyphen\opco(\cD_p^\infty).$ See Subsection \ref{ssec:moreop}.)}
	Here a tuple  $X \in \gtupn$ lies in $\Gatconv(\cD_p^\infty)(n)$ if there exists a tuple $Y \in \cD_p^\infty$ acting on some 
(finite-dimensional or separable)
	Hilbert space $\cH$ and an isometry $V: \C^n \to \cH$ such that $(Y,V)$ is a $\Gat$-pair and $X = V^*YV.$
  By construction this $\Gat$-convex hull is the smallest 
  $\Gat$-convex  set {\it spanned}  by $\cD_p^\infty.$

\begin{example}\rm
From  \cite{BP65} (see also \cite{Tao}), if $H$ is an infinite-dimensional separable Hilbert space, then
the operator 
\[
 C=\begin{pmatrix} I & I \\0 & I \end{pmatrix}
\]
 acting on $K=H\oplus H,$ where $I$ is the identity on $H,$ is a commutator; that is, there exist
 operators $X,Y$ on $K$ such that $[X,Y]=XY-YX=  C.$  Let $X_1=\frac12 (X+X^*)$ and $X_2=\frac{1}{2i} (X-X^*)$ denote  the real and imaginary parts of $X$ and similarly for $Y.$ In particular, $X=X_1+iX_2$ and likewise for $Y.$ Observe,
  \[
 \begin{split}
4\left ( [X,Y]+[X,Y]^* \right ) & = 4 \left ( [X,Y]- [X^*,Y^*]  \right ) = [X_1 +i X_2, Y_1 +i Y_2] -  [X_1-iX_2, Y_1-iY_2]
\\ & = 2 i \left ([X_2,Y_1] +  [X_1,Y_2] \right ). 
\end{split}
\]
Hence, {the polynomial} $p(x_1,x_2,y_1,y_2) = 2i([x_2,y_1] + [x_1,y_2]) -1 $ 
{is symmetric and further}
\[
  p(X_1,X_2,Y_1,Y_2)  = \begin{pmatrix} 3 & 2\\ 2 & 3\end{pmatrix} \succ0.
\]
On the other hand, for any tuple $(W_1,W_2,Z_1,Z_2)$ of self-adjoint matrices acting on a  Hilbert space of finite dimension $n$ we have
$p(W_1,W_2,Z_1,Z_2)$ has trace $-n$  and thus is not positive semidefinite.   Thus 
\begin{equation}\label{eq:infnonzero}
\cD_p^\infty\neq\emptyset=\cD_p.
\end{equation}
By direct summing $N\pm x_i$ and $N\pm y_i$ for an appropriate $N\in\N$, we can obtain an Archimedean polynomial $p$ satisfying \eqref{eq:infnonzero}.

In the setting of $xy$-convexity, where $\Gat$ consists of
(the real and imaginary parts of) $x_iy_j$, the constructed
polynomial $p$ is a $\Gat$-pencil. Thus $\cD_p^\infty$ is
an operator $\Gat$-convex set with empty finite levels $\cD_p$.
\qed
\end{example}

\subsection{Free Hankel matrices and moment sequences}
The main tool for constructing the spectrahedral lifts and the 
$\Gat$-convex
projections approximating $\cD_p$ are free analogs of Hankel matrices as in \cite{HKM16}. In the  classical matrix-valued single variable theory, a Hankel matrix $H = (H_{i,j})$ with entries from $M_n$ for some $n$ has the property of being constant on anti-diagonals, that is, there is a \textit{moment sequence} $(A_k)_k$ of self-adjoint matrices from $M_n$ such that $H_{i,j}= A_{i+j}.$

\begin{definition}\label{def:Hankel}
	Let $n$ be a positive integer and $Y = (Y_\alpha)_\alpha$ a sequence of $n \times n$ matrices indexed by words $\alpha$ in the free symmetric variables $x_1, \ldots, x_{\gv}.$  Let $\Gat=(\gat_1,\dots,\gat_\rv)$ be a tuple of symmetric free polynomials  with $\gat_j=x_j$ for $1\le j\le \gv\le \rv$ and monomial expansions $\gamma_j = \sum_k \gamma_{j,k}m_{j,k},$ where $\gamma_{j,k} \in \C$ and $m_{j,k}$ are words in $x$.
	\begin{enumerate}[\rm (a)]\itemsep=10pt
		\item The sequence $Y$ is a \df{$\Gat$-moment sequence} if  $Y_{\emptyset} = I,$ if, for each $\alpha,$ 
		\begin{equation}\label{eq-ms1}
			Y_{\alpha^\ast} = (Y_\alpha)^\ast,
		\end{equation}
	 and for each $j=1, \ldots, \rv,$ 
		\begin{equation}\label{eq:gmom}
		\sum_{k} \gamma_{j,k}\, Y_{m_{j,k}(x)}
		= 	\sum_{k} \gamma_{j,k} \,
		m_{j,k}(Y_{x_1}, \ldots, Y_{x_{\gv}}) = \gamma_j(Y_{x_1}, \ldots, Y_{x_{\gv}}).
		\end{equation}
		In particular, property \eqref{eq-ms1} implies each of the $Y_{x_j}$ is self-adjoint. 
		
		\item  Let $\mathfrak{M}^\Gat(n)$ denote the set of $\Gat$-moment sequences $(Y_\alpha)_\alpha$ with $Y_\alpha \in M_n.$

		\item The \df{free $\Gat$-Hankel matrix} associated to $Y\in \mathfrak{M}^\Gat$ is defined as 
		\begin{equation}\label{eq:hankeldef}
		H(Y) = \big(Y_{\alpha^\ast \beta}\big)_{\alpha, \beta},
		\end{equation}
		while for a positive integer $d,$ the corresponding \df{truncated free $\Gat$-Hankel matrix} is \looseness=-1
		\begin{equation}\label{eq:hankeldeftrunc}
		H_d(Y) = \big(Y_{\alpha^\ast \beta}\big)_{|\alpha|, |\beta| \leq d}.
		\end{equation}
		These definitions are taken from \cite{HKM16}, but are here only applied to $\Gat$-moment sequences $Y$.
		
		\item For a $\mu \times \mu$ {symmetric} matrix-valued polynomial $p$ as in  equation~\eqref{eq-p} and $Y\in\mathfrak{M}^\Gat(n),$
		 the \df{$p$-localizing matrix} $H^{\Uparrow}_p(Y)=(H^\Uparrow(Y)_{\alpha,\beta})_{\alpha,\beta}$ 
		 with  $n\mu \times n\mu$ matrix  block entry at position $(\alpha, \beta)$ 
		$$
		H^{\Uparrow}_p(Y)_{\alpha, \beta} = \sum_{|\gamma| \leq \delta} p_\gamma \otimes Y_{\alpha^\ast \gamma \beta}.
		$$
		For a positive integer $d,$ the \df{$d$-truncated localizing matrix} of $p$ is
		\[ \pushQED{\qed}
		H^{\Uparrow}_{p, d}(Y) = \big(H^{\Uparrow}_p(Y)_{\alpha, \beta} \big)_{|\alpha|, |\beta| \leq d}. \qedhere\popQED
		\]
	\end{enumerate}
\end{definition}

\begin{example}
	To illustrate the structure of a  $\Gat$-Hankel matrix, let $\Gat(x_1, x_2) = (x_1, x_2, x_2^2).$  To shorten the notation we write {$Y_j$} instead of $Y_{x_j}$ and  substitute the index $\alpha$ in $Y_\alpha$ by the indices of the variables in $\alpha,$ e.g., we write $Y_{121}$ instead of $Y_{x_1 x_2 x_1}.$  Equation \eqref{eq:gmom} then says that $Y_{22} = X_2^2$
	and the truncated $\Gat$-Hankel matrix for $d=2$ equals
	\begin{equation*}
		H_2(Y)=
		\begin{pmatrix} 
			1 & X_1 & X_2 & Y_{11} & Y_{12} & Y_{21} & X_2^2 \\[.1cm]
			X_1 & Y_{11} & Y_{12} & Y_{111} & Y_{112} & Y_{121} & Y_{122} \\[.1cm]
			X_2 & Y_{21} & X_2^2 & Y_{211} & Y_{212} & Y_{221} & Y_{222} \\[.1cm]
			Y_{11} & Y_{111} & Y_{112} & Y_{1111} & Y_{1112} & Y_{1121} & Y_{1122} \\[.1cm]
			Y_{21} & Y_{211} & Y_{212} & Y_{2111} & Y_{2112} & Y_{2121} & Y_{2122} \\[.1cm]
			Y_{12} & Y_{121} & Y_{122} & Y_{1211} & Y_{1212} & Y_{1221} & Y_{1222} \\[.1cm]
			X_2^2 & Y_{221} & Y_{222} & Y_{2211} & Y_{2212} & Y_{2221} & Y_{2222} 
		\end{pmatrix},
	\end{equation*}
	which is an $x_2^2$-pencil. \qed
\end{example}

\begin{remark}
 \label{r:Gat:pair:moment}
	Any $Z \in \mathcal{D}_p^\infty(n)$,
where $n\in\N\cup\{\infty\}$,
	 together with an isometry $V \in \Mm_{n,m}$ such that $(Z, V)$ is a $\Gat$-pair, determines a $\Gat$-moment sequence
	\begin{equation}\label{eq-ms}
		Y_\alpha = V^\ast Z^\alpha V\in\Mm_m,
	\end{equation}
	where $Z^\alpha = \alpha (Z).$ For example, if $\alpha =  x_1 x_2^2\, x_1,$ then
	$Y_\alpha = V^\ast Z_1 Z_2^2 Z_1 V.$ It is easy to verify that $(Y_\alpha)_\alpha$ is indeed a $\Gat$-moment sequence and direct computation shows
\begin{equation} \label{e:pos:con} \pushQED{\qed}
	H (Y) \succeq 0 \quad \text{ and } \quad \ H^{\Uparrow}_p(Y) \succeq 0. \qedhere\popQED
\end{equation}
\end{remark}

\subsection{Step 1: construction of the lift}\label{sec-lift}
{Fix a symmetric matrix-valued free polynomial $p$.}
Let $\mathfrak{L}^\Gat_p = (\mathfrak{L}^\Gat_p(n))_n$ be defined by 
\begin{equation*}
\mathfrak{L}^\Gat_p(n) = \{Y = (Y_{\alpha})_\alpha \in \mathfrak{M}^\Gat(n)  \ | \  \ H (Y) \succeq 0, \ H^{\Uparrow}_p(Y) \succeq 0\}.
\end{equation*}
 Given $Y \in \mathfrak{L}^\Gat_p(n)$ let
\begin{equation}
\label{d:Yhat}
\hat{Y} = (Y_{x_1}, Y_{x_2}, \ldots, Y_{x_{\gv}}) \in \gtup_n
\end{equation}
and denote
$$
\hat{\mathfrak{L}_p^\Gat} = \{\hat{Y} \ | \ Y \in \mathfrak{L}^\Gat_p\}.
$$
\begin{remark}\label{rem lpdp}
With the notation just introduced, Remark \ref{r:Gat:pair:moment}
states that  the $\Gat$-moment sequence $Y$ of equation~\eqref{eq-ms} belongs to $\mathfrak{L}^\Gat_p$ if $Z$ is in $\mathcal{D}_p^\infty.$ \qed
\end{remark}

We now introduce a boundedness assumption on the polynomial $p$ 
that will ensure our construction of the $\Gat$-convex lifts clamps down on the $\Gat$-convex hull of $\cD_p^\infty$.

\begin{definition}
	A matrix-valued polynomial $p$ is \df{Archimedean} if there is a $k>0$ and finitely many matrix-valued polynomials $s_j$ and $t_j$ such that
	\begin{equation}\label{eq:archpoly}
	k^2 - \sum_{i=1}^\gv x_i^2 = \sum_j s_j^\ast s_j + \sum_j t_j^\ast p \,t_j.
	\end{equation}
 In this case we say that $p$ is \df{$k$-Archimedean}.
 Observe that \eqref{eq:archpoly} implies each $X\in\cD_p^\infty$ satisfies
 $\|X\|\leq k$. \qed
\end{definition}

We now prove the main result of this subsection:\ a spectrahedral realization of the  $\Gat$-convex hull of the free operator semialgebraic set defined by an Archimedean polynomial. It is the $\Gat$-analog to \cite[Theorem 5.4]{HKM16}. 

\begin{theorem}\label{th-lift}
	If $p$ is Archimedean, then
	$$
	\Gat\mhyphen\conv(\mathcal{D}_p^\infty) = \hat{\mathfrak{L}_p^\Gat}.
	$$
	
	Moreover, if $p$ is $k$-Archimedean, then $\hat{\mathfrak{L}_p^\Gat}$ is bounded by $k$ and if
	 $Y\in \mathfrak{L}^\Gat_p,$ then $\|Y_\alpha \| \le k^{|\alpha|}$ for  each word $\alpha.$ 
\end{theorem}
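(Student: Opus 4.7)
The plan is to prove the two inclusions separately, with the hard work in the reverse inclusion carried out by a GNS-type construction driven by the $\Gat$-Hankel matrix, and then to extract the uniform norm bounds from the Archimedean decomposition.

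\textbf{Easy direction} ($\Gat\mhyphen\conv(\cD_p^\infty) \subseteq \hat{\mathfrak L^\Gat_p}$). By Proposition~\ref{prop:gammahull}, any $X\in \Gat\mhyphen\conv(\cD_p^\infty)(n)$ has the form $X=V^*ZV$ for some $Z\in \cD_p^\infty$ (possibly on an infinite-dimensional separable Hilbert space) and an isometry $V$ with $(Z,V)\in\Coup$. Following Remark~\ref{r:Gat:pair:moment}, define the sequence $Y_\alpha=V^*Z^\alpha V$. The defining equation \eqref{eq:gmom} of a $\Gat$-moment sequence holds because $V^*\gamma_j(Z)V=\gamma_j(V^*ZV)=\gamma_j(\hat Y)$, and $H(Y)\succeq 0$, $H^\Uparrow_p(Y)\succeq 0$ follow by factoring through $Z$ and $p(Z)^{1/2}$ respectively. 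Hence $Y\in\mathfrak L^\Gat_p(n)$ and $\hat Y=X$.

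\textbf{Hard direction} ($\hat{\mathfrak L^\Gat_p} \subseteq \Gat\mhyphen\conv(\cD_p^\infty)$). Fix $Y\in\mathfrak L^\Gat_p(n)$. First I would verify the norm bound $\|Y_\alpha\|\le k^{|\alpha|}$, postponed to the next paragraph, since this is what allows a Hilbert space completion to exist. Consider the vector space $\C\ax\otimes\C^n$ and equip it with the sesquilinear form $\langle \alpha\otimes v,\beta\otimes w\rangle=\langle H(Y)_{\beta,\alpha}v,w\rangle$. Positivity $H(Y)\succeq 0$ makes this a semi-inner product; quotient out the null space and complete to obtain a separable Hilbert space $\cH$. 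Left multiplication by $x_i$ descends to a self-adjoint operator $Z_i$ on $\cH$ (symmetry uses $Y_{\alpha^*}=Y_\alpha^*$; boundedness uses the norm bound). The map $V:\C^n\to\cH$, $v\mapsto[1\otimes v]$, is an isometry because $Y_\emptyset=I_n$, and a direct check gives $V^*\alpha(Z)V=Y_\alpha$ for every word $\alpha$. In particular $V^*Z_iV=Y_{x_i}$, so $V^*ZV=\hat Y$. The $\Gat$-pair property $V^*\gamma_j(Z)V=\gamma_j(V^*ZV)$ for $j=\gv+1,\dots,\rv$ is exactly the $\Gat$-moment relation \eqref{eq:gmom}. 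Finally $H^\Uparrow_p(Y)\succeq 0$, interpreted through the same GNS identification, is the statement that $p(Z)\succeq 0$ as an operator on $\mu$ copies of $\cH$, so $Z\in\cD_p^\infty$. Applying Proposition~\ref{prop:gammahull} gives $\hat Y\in\Gat\mhyphen\conv(\cD_p^\infty)$.

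\textbf{Boundedness from Archimedeanness.} Write $k^2-\sum x_i^2=\sum s_j^*s_j+\sum t_j^* p\,t_j$ and apply each side as a quadratic form against a vector of the form $\sum_\ell \alpha_\ell\otimes v_\ell$ using the GNS inner product of the previous paragraph. The right-hand side is manifestly $\ge 0$ since both $H(Y)$ and $H^\Uparrow_p(Y)$ are positive semidefinite, so $k^2-\sum Z_i^2\succeq 0$ on $\cH$, giving $\|Z_i\|\le k$ and hence $\|\hat Y\|\le k$ as well as $\|Y_\alpha\|=\|V^*\alpha(Z)V\|\le k^{|\alpha|}$. (Equivalently, the bound can be read off directly at the level of the matrices $Y_\alpha$ by pairing with suitable vectors in $\C\ax\otimes\C^n$, bypassing completion; I would present whichever is cleaner.) I would present this bound first so that the GNS completion in the hard direction is justified.

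\textbf{Main obstacle.} The substantive step is checking that $V^*\alpha(Z)V=Y_\alpha$ for every word $\alpha$ (not only $|\alpha|\le 1$), so that the $\Gat$-moment condition \eqref{eq:gmom} genuinely translates into the $\Gat$-pair condition for $(Z,V)$, and simultaneously that the localizing matrix positivity $H^\Uparrow_p(Y)\succeq 0$ transcribes to $p(Z)\succeq 0$ on the completion $\cH$. Both are bookkeeping exercises with the GNS identification, but they are where the structure of $\Gat$-moment sequences (as opposed to arbitrary moment sequences) is essential.
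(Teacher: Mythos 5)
Your plan is essentially the paper's argument (the paper factors the hard direction out into Proposition~\ref{prop:lift}): the easy inclusion via Remark~\ref{r:Gat:pair:moment}, a GNS-type construction from $H(Y)\succeq 0$, the $\Gat$-pair condition recovered from \eqref{eq:gmom}, $p(Z)\succeq 0$ from $H^\Uparrow_p(Y)\succeq 0$, and boundedness of the multiplication operators from the Archimedean decomposition. Two points to fix.

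First, the dependency order in your plan is backwards. You say you would "verify the norm bound $\|Y_\alpha\|\le k^{|\alpha|}$ [first] since this is what allows a Hilbert space completion to exist" and, in the GNS paragraph, that "boundedness [of $Z_i$] uses the norm bound." Neither claim holds: the completion of a quotient of a semi-inner-product space exists unconditionally, and a bound on the entries $Y_\alpha$ does not by itself bound the multiplication operators $Z_j$ on the GNS space (those norms are ratios of quadratic forms against $H(Y)$, not entrywise data). What actually does the work is exactly the quadratic-form computation you place in the last paragraph: pairing the identity $k^2-\sum x_j^2 = \sum s_j^*s_j + \sum t_j^* p\,t_j$ against vectors in $\C\ax\otimes\C^n$ and using $H(Y)\succeq 0$, $H^\Uparrow_p(Y)\succeq 0$ gives $k^2-\sum Z_j^2\succeq 0$ on the pre-Hilbert quotient $\mathcal W$, whence $\|Z_j\|\le k$ and $Z_j$ extends to the completion. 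The bound $\|Y_\alpha\|\le k^{|\alpha|}$ is then a corollary ($Y_\alpha = V^*T^\alpha V$ with $\|T\|\le k$), not a prerequisite. So present the GNS space $\mathcal W$ and the operators $Z_j$ on $\mathcal W$ first, then derive $p(Z)\succeq 0$, then the Archimedean norm bound, then complete.

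Second, you skip the step that left multiplication by $x_j$ is well defined on the quotient $\mathcal W$: one must check that the null space $\cN$ of the form is a left $\C\langle x\rangle$-submodule, i.e., $s\in\cN$ implies $x_j s\in\cN$. This is standard GNS bookkeeping (a Cauchy--Schwarz argument using positivity of $H(Y)$), but it must be stated; writing $Z_j$ on "$\cH$" without it is a gap. The paper records this explicitly in the proof of Proposition~\ref{prop:lift}.
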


The proof of Theorem~\ref{th-lift} will use the following proposition.

\begin{proposition}
\label{prop:lift}
   If $p$ is Archimedean, then for each $Y\in \mathfrak{L}^\Gat_p(n)$ there exists a $\Gat$-pair $(T,V)$ with  $T\in \cD_p^\infty$ and $V$ an isometry
  such that $Y_\alpha=V^* T^\alpha V$ for all $\alpha.$ 
\end{proposition}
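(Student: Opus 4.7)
The plan is to carry out a GNS-type construction from the $\Gat$-moment sequence $Y\in \mathfrak{L}^\Gat_p(n)$. On the algebraic tensor product $\cF := \C\langle x\rangle \otimes \C^n$, define a sesquilinear form by
\[
\langle \alpha\otimes v,\, \beta\otimes w\rangle_Y := w^\ast Y_{\beta^\ast\alpha}\, v
\]
and extending by linearity. The positivity $H(Y)\succeq 0$ says precisely that this form is positive semidefinite, so modding out by its kernel $\cN$ and completing yields a Hilbert space $\cH = \overline{\cF/\cN}$. For each $i=1,\dots,\gv$, left multiplication by $x_i$ descends to a densely-defined symmetric operator $T_i$ on $\cH$: symmetry follows from $Y_{\alpha^\ast} = Y_\alpha^\ast$, which gives $\langle x_i\alpha\otimes v,\beta\otimes w\rangle_Y = \langle \alpha\otimes v, x_i\beta\otimes w\rangle_Y$. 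Define $V:\C^n \to \cH$ by $Vv = [\emptyset\otimes v]$; since $\langle Vv,Vw\rangle = w^\ast Y_\emptyset v = w^\ast v$, $V$ is an isometry. A direct computation shows $V^\ast T^\alpha V = Y_\alpha$ for every word $\alpha$.

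Next I would use the Archimedean certificate \eqref{eq:archpoly} together with the two positivity conditions in \eqref{eq:GatLlift} to show each $T_i$ is bounded. Concretely, for any $f=\sum_\alpha \alpha\otimes v_\alpha\in \cF$, the identity
\[
k^2 - \sum_i x_i^2 = \sum_j s_j^\ast s_j + \sum_j t_j^\ast p\, t_j
\]
combined with $H(Y)\succeq 0$ (applied to each $s_j(x)f$) and $H^\Uparrow_p(Y)\succeq 0$ (applied to each $t_j(x)f\otimes e_\ell$) yields $\langle (k^2\Ii - \sum T_i^2)f, f\rangle_Y \ge 0$. Hence $\sum T_i^2 \preceq k^2$ in the quadratic form sense, so each $T_i$ extends to a bounded selfadjoint operator on $\cH$ with $\|T_i\|\le k$. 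An analogous calculation with $f\otimes e_\ell\in\cH\otimes \C^\mu$ and $H^\Uparrow_p(Y)\succeq 0$ directly yields $\langle p(T)f,f\rangle \ge 0$, so $T \in \cD_p^\infty$.

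Finally I would verify the $\Gat$-pair property. Since $V^\ast T_i V = Y_{x_i}$ for $i=1,\dots,\gv$, writing $\gamma_j = \sum_k \gamma_{j,k} m_{j,k}$ and using the defining relation \eqref{eq:gmom} of a $\Gat$-moment sequence gives
\[
V^\ast \gamma_j(T)V = \sum_k \gamma_{j,k}\, V^\ast m_{j,k}(T)V = \sum_k \gamma_{j,k}\, Y_{m_{j,k}} = \gamma_j(Y_{x_1},\dots,Y_{x_\gv}) = \gamma_j(V^\ast T V),
\]
so $V^\ast \Gat(T)V = \Gat(V^\ast TV)$ and $(T,V)\in\Coup$, completing the construction.

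The main obstacle, and the place where the $\Gat$-setting is genuinely used, is the boundedness step: one needs the Archimedean SOS identity to interact correctly with both $H(Y)\succeq 0$ and $H^\Uparrow_p(Y)\succeq 0$, acting on generic vectors in $\cF$ (or $\cF\otimes\C^\mu$). This is essentially parallel to the scalar/matrix-convex argument of \cite[Theorem 5.4]{HKM16}, but one must be careful that nothing in the estimate requires more structure from $Y$ than the $\Gat$-moment relations provide — in particular, one does not impose $Y_\alpha = m(Y_x)$ for general words, only for the specific polynomials $\gamma_j$.
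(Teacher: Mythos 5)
Your proposal is correct and follows essentially the same GNS-type construction as the paper's proof: both build the Hankel sesquilinear form on $\C\ax\otimes\C^n$, quotient by its kernel, realize $Y_\alpha = V^*T^\alpha V$ via the shift operators, use the Archimedean certificate together with $H(Y)\succeq 0$ and $H_p^\Uparrow(Y)\succeq 0$ to obtain the norm bound $\|T_i\|\le k$ and $p(T)\succeq 0$, and derive the $\Gat$-pair property from the $\Gat$-moment relation \eqref{eq:gmom}. The only cosmetic difference is that the paper first establishes $p(Z)\succeq 0$ from the localizing condition and then substitutes into the Archimedean identity, whereas you feed both Hankel positivity conditions directly into the identity, but this is the same estimate.
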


\begin{proof}
 Let $Y\in \mathfrak{L}^\Gat_p(n)$ be given.  Define a sesquilinear form $[ \cdot, \cdot]$ on
	$\mathcal{V} = \C\langle x\rangle \otimes \C^n$ by
	\begin{equation*}
		[ s,t ]_Y = \sum_{\alpha,\beta}  \langle Y_{\beta^* \alpha} s_\alpha, t_\beta \rangle
	\end{equation*}
	where $s=\sum \alpha \otimes s_\alpha$ and $t=\sum \beta\otimes t_\beta$. 
	The assumption $H (Y)\succeq 0$ implies $[s,s]_Y \geq 0$ for all $s \in \mathcal{V},$ so that this sesquilinear  form is positive semidefinite. 
	A standard argument (using the Cauchy-Schwarz inequality) shows that
	\[
	\cN = \{s : [s,s]_Y = 0\}
	\]
	is a subspace of $\mathcal{V}.$ Modding out $\cN$ produces a well-defined and positive semidefinite form
	$$
	[ s,t ]_Y = [ s + \cN,t+\cN ]_Y
	$$
	on the quotient $\mathcal{W}$ of $\mathcal{V}$ by $\cN,$ where, as is standard practice, $s\in \mathcal{V}$ is identified
	with its image $s+\cN$ in the quotient.
	
	Similar to the standard GNS construction we now show that $\cN$ is a left $\C\ax$-submodule: if $s\in \cN$ and $1\le j\le \gv$, then 
	$r=x_j s \in\cN.$
	Indeed, if $s\in\cN$, then 
	$H (Y) \succeq 0$ implies
	\[
	\sum_\alpha Y_{\beta^* \alpha} s_\alpha =0
	\]
	for each $\beta$ (and conversely). Hence
	\[
	\sum_\gamma Y_{\beta^* \gamma} r_\gamma 
	=  \sum_{\alpha} Y_{\beta^* x_j\alpha} s_\alpha 
	=  \sum_{\gamma} Y_{(x_j\beta)^* \alpha} s_\alpha 
	=  0
	\]
	so that $r\in\cN$.  It now follows that the multiplication mapping $Z_j$
	sending $s$ to $x_j s$ is well defined on $\mathcal{W}.$ The computation above also shows that (even without the condition $s\in \mathcal N$), 
	\begin{equation}
	\label{e:lift:1}
	  [ x_j s,t ]_Y = [ s, x_j t ]_Y.
	\end{equation}
	Further, for each word $\gamma$ we obtain an operator $Z^\gamma = \gamma(Z)$ on $\mathcal{W}$ satisfying
	$Z^\gamma s= ws = \sum \gamma \alpha\otimes s_\alpha.$ 
		 
	To prove $p(Z)=\sum p_\eta \otimes Z^\eta$ is positive semidefinite on $\mathcal{W},$ let
	$s=\sum e_j\otimes \alpha \otimes s_{\alpha,j}$, where $\{e_1,\dots,e_\mu\}$
	is the standard orthonormal basis for $\C^\mu$ (with $p_\eta \in \Mm_\mu$)
	and $s_{\alpha,j}\in\C^n$ and observe
	\begin{equation}
	\label{e:lift:2}
	\begin{split}
		\langle p(Z)s,s \rangle & =  \sum_{\alpha,\beta,\eta,j,k} \langle (p_\eta \otimes Z^\eta) e_j\otimes \alpha\otimes s_{\alpha,j}, e_k\otimes \beta \otimes s_{\beta,k}\rangle \\
		&   =  \sum \langle p_\eta e_j,e_k\rangle \, \langle Z^\eta \alpha\otimes s_{\alpha,j}, \beta\otimes s_{\beta,k}\rangle \\
		& =  \sum \langle p_\eta e_j,e_k\rangle \, \langle Y_{\beta^* \eta \alpha} s_{\alpha,j},s_{\beta,k}\rangle \\
		& =  \sum_{\alpha,\beta}  \Big\langle \big(\sum_\eta p_\eta \otimes Y_{\beta^*\eta\alpha}\big) \sum_j e_j \otimes s_{\alpha,j}, \sum_k e_k \otimes s_{\beta,k}\Big\rangle \\
		& =  \langle H_p^{\Uparrow}(Y) \vec s,\vec s \, \rangle, 
	\end{split}
	\end{equation}
	where $\vec s = (s_\alpha)_\alpha$ is the vector with $s_\alpha = \sum_j e_j \otimes s_{\alpha,j}$. The assumption $H_p^{\Uparrow}(Y) \succeq 0$ 
	implies $p(Z) \succeq 0$ as desired.   
	
	Define $Q:\C^n \to \mathcal{W}$ by
	\[
	Qv = \emptyset \otimes v.
	\]
	It is straightforward to see that $Q$ is an isometry and by construction, 
	\begin{equation}\label{eq-qz}
		Q^* Z^\alpha Q= Y_\alpha
	\end{equation}
	for all $\alpha,$ where $Z^\alpha$ is defined as $\alpha(Z_1,\ldots,Z_\gv).$ Moreover, $(Z, Q)$ is a $\Gat$-pair, where $Z = (Z_1, \ldots, Z_{\gv}).$  Indeed, if $\gamma_j = \sum_k \gamma_{j,k}m_{j,k}$ with $\gamma_{j,k} \in \C$ and words $m_{j,k},$
	then \eqref{eq-qz} together with the fact that $Y$ is a $\Gat$-moment sequence implies that
	\begin{align} \label{e:QZQ}
		\gamma_j(Q^* Z Q) & = \gamma_j(\hat{Y})  = \sum_k \gamma_{j,k}m_{j,k}(\hat{Y}) = 
		\sum_k \gamma_{j,k} Y_{m_{j,k}} = \sum_k \gamma_{j,k} Q^\ast Z^{m_{j,k}} Q\\
		 & = \notag
		Q^* \left(  \sum_k \gamma_{j,k} Z^{m_{j,k}}\right)   Q = Q^* \gamma_j(Z) Q
	\end{align}
	 for every $j=1, \ldots, \rv.$

	Since $p$ is Archimedean, the $Z_j$ are bounded operators.  Indeed, 
	 since $p$ is Archimedean, 
	\[
	k^2-\sum_j x_j^2 = \sum_i f_i^*f_i + \sum_k g_k^* p\, g_k
	\]
	for some $k>0$ and  noncommutative polynomials $f_i,g_k;$ and  $p(Z) \succeq 0$  
by \eqref{e:lift:2}.
	Hence
	$$k^2-\sum_j Z_j^2\succeq0.$$ 
	So $\|Z\|\le k$ and in particular $\|Z_j\|^2\leq k^2.$ Thus  $Z_j$ is bounded for each $1\le j\le \gv.$
        It now follows that, for  each word $\gamma,$ the operator $Z^\gamma$ on $\mathcal{W}$ extends to a  bounded operator $\widetilde{Z}_\gamma$ on 
	 the completion $\mathcal{H}$ of $\mathcal{W}.$  Setting $T_j=\widetilde{Z}_{x_j},$  the  identity of equation~\eqref{e:lift:1} implies  $T_j^*=T_j.$
	In particular, if $(h_n)$ is a sequence from $\mathcal{W}$ that converges to $h\in \cH,$ then $(Z_j h_n)=(T_j h_n)$ 
	 is a sequence from $\mathcal{W}$ that converges
	 to $T_j h$ and thus, by the invariance of $\mathcal{W}$ under $Z_j,$ it follows that $(Z^w h_n)$ converges 
	 to $\widetilde{Z}_w h= T^w h.$ For instance, $(Z_1 Z_2 h_n = Z_1 [Z_2 h_n])$  converges to $T_1 [T_2 h]$
	 since $(Z_2h_n)$ converges to $\widetilde{Z}_{x_2} h.$ 
	 Consequently,  $q(Z)$ is the restriction of $q(T)$ to $\mathcal{W}$ for $q\in \C\langle x \rangle.$ 
	 
	 It now follows that $p(T)\succeq 0$ since $p(Z)\succeq 0.$  Hence $T\in \cD_p^\infty.$    Further, $Q$ is an isometry into $\cH$
	  such that  $Q^*\gamma_j(T)Q = Q^*\gamma_j(Z)Q.$ Hence 
	 $\gamma_j(Q^* TQ)= Q^*\gamma_j(T)Q$ from equation~\eqref{e:QZQ}.  
	  Thus $(T,Q)$ is a $\Gat$-pair.  Finally, equation~\eqref{eq-qz} gives
	 $Q^* T^\alpha Q = Y_\alpha$ for each word $\alpha.$  
\end{proof}

\begin{proof}[Proof of Theorem~\ref{th-lift}]
	To prove the inclusion $\Gat\mhyphen\conv(\mathcal{D}_p^\infty) \subseteq  \hat{\mathfrak{L}_p^\Gat},$ note that every $X \in \Gat\mhyphen\conv\, (\mathcal{D}_p^\infty)$ is of the form $V^\ast Z V$ for some $Z \in \mathcal{D}_p^\infty$ and isometry $V$ such that $(Z, V)$ is a $\Gat$-pair. Remark~\ref{r:Gat:pair:moment} now implies the moment sequence $Y=(Y_\alpha)_\alpha$ with $Y_\alpha = V^\ast Z^\alpha V$ as in \eqref{eq-ms} belongs to $\mathfrak{L}_p^\Gat,$ hence $X = \hat{Y} \in \hat{\mathfrak{L}_p^\Gat}.$
	
	To prove the converse,  let $\widehat{Y}\in \hat{\mathfrak{L}_p^\Gat}$ be given. 
  Thus there exists a $\Gat$-moment sequence $(Y_\alpha)_\alpha$ from $\mathfrak{L}^\Gat_p(n)$ such that  $\widehat{Y}$ 
  is given by equation~\eqref{d:Yhat}.  From Proposition~\ref{prop:lift}, there is a $\Gat$-pair $(T,V)$ such that $T\in \cD_p^\infty$
  and $V$ is an isometry such that $\widehat{Y} = V^* T V$ so that $\widehat{Y}\in \Gat\mhyphen\conv(\cD_p^\infty).$

	Finally, since $\|T_j\|\le k$ for each $j,$ it follows that $\|T^\alpha\|\le k^{|\alpha|}$ for each word $\alpha.$
	Hence  $\| Y_\alpha \| =\| V^* T^\alpha V\| \le \|T^{\alpha}\| \le k^{|\alpha|}.$ Similarly, $\|\hat{Y}\|\le k$ since $\|T\|\le k.$
\end{proof}

\begin{corollary} 
	If $p$ is Archimedean, then $\Gat\mhyphen\conv(\mathcal{D}_p^\infty)$ is closed and bounded. 
\end{corollary}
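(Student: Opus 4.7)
The plan is to read off both conclusions directly from Theorem~\ref{th-lift}, which identifies $\Gat\mhyphen\conv(\cD_p^\infty)$ with $\hat{\mathfrak{L}^\Gat_p}.$ Boundedness is essentially free: if $p$ is $k$-Archimedean, the ``moreover'' clause of Theorem~\ref{th-lift} gives $\|\hat Y\|\le k$ for every $\hat Y\in\hat{\mathfrak{L}^\Gat_p},$ so $\Gat\mhyphen\conv(\cD_p^\infty)$ is bounded by $k.$ The real content of the corollary is that each level $\hat{\mathfrak{L}^\Gat_p}(n)\subseteq\mathbb S_n^\gv$ is closed.

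For closedness, I would argue level by level via a compactness/diagonalization argument on the underlying moment sequences. Fix $n$ and suppose $\hat Y^{(m)}\in\hat{\mathfrak{L}^\Gat_p}(n)$ converges to some $X\in\mathbb S_n^\gv.$ Each $\hat Y^{(m)}$ comes from a $\Gat$-moment sequence $Y^{(m)}=(Y^{(m)}_\alpha)_\alpha\in\mathfrak L^\Gat_p(n)$ with $\hat Y^{(m)}=(Y^{(m)}_{x_1},\dots,Y^{(m)}_{x_\gv}).$ The second clause of Theorem~\ref{th-lift} gives the uniform entry-wise bound $\|Y^{(m)}_\alpha\|\le k^{|\alpha|}$ for every word $\alpha$ and every $m.$ Hence for each fixed $\alpha$ the sequence $(Y^{(m)}_\alpha)_m$ lies in the compact ball of radius $k^{|\alpha|}$ in $M_n.$ Since the set of words is countable, a standard diagonal extraction produces a subsequence (still denoted $m$) along which $Y^{(m)}_\alpha\to Y_\alpha$ in $M_n$ for every word $\alpha$ simultaneously.

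The remaining task is to verify that the limit $Y=(Y_\alpha)_\alpha$ belongs to $\mathfrak L^\Gat_p(n)$ and has $\hat Y=X,$ whence $X\in\hat{\mathfrak{L}^\Gat_p}(n)$ as desired. Each of the defining conditions of $\mathfrak L^\Gat_p(n)$ passes to entry-wise limits: the normalization $Y_\emptyset=I$ and the symmetry relation $Y_{\alpha^*}=Y_\alpha^*$ are immediate; the $\Gat$-concomitance identities \eqref{eq:gmom} are polynomial identities in the finitely many entries $Y_{x_1},\dots,Y_{x_\gv}$ and the $Y_{m_{j,k}},$ hence survive the limit; and the positivity conditions $H(Y)\succeq 0$ and $H^\Uparrow_p(Y)\succeq 0$ amount to positive semidefiniteness of every finite principal block, which is preserved under entry-wise convergence because the cone of PSD matrices of fixed size is closed. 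Finally, $\hat Y^{(m)}\to\hat Y$ entry-wise coincides with $\hat Y^{(m)}\to X,$ so $\hat Y=X$ and the argument is complete.

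The only mildly delicate step is the diagonal extraction, which is routine thanks to the bound $\|Y^{(m)}_\alpha\|\le k^{|\alpha|}$ supplied by the Archimedean assumption; without this uniform control on all moments there would be no compactness to exploit, which is precisely why the Archimedean hypothesis cannot be dropped.
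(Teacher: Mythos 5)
Your proof is correct and takes essentially the same approach as the paper: both reduce closedness to a diagonal extraction over the countably many words, using the bound $\|Y^{(m)}_\alpha\|\le k^{|\alpha|}$ from Theorem~\ref{th-lift} to produce an entry-wise convergent subsequence of moment sequences, and then verify that the limit lies in $\mathfrak{L}^\Gat_p$. If anything, you are slightly more careful than the paper's text, which only mentions that membership in $\mathfrak{L}^\Gat_p$ is determined by the positivity conditions, while you explicitly note that the $\Gat$-moment identities \eqref{eq:gmom} also pass to the limit.
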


\begin{proof}
	Boundedness of $\Gat\mhyphen\conv(\mathcal{D}_p^\infty)$ follows from Theorem~\ref{th-lift}.

	To prove that it is closed, suppose  $(X^{(k)})_k$ is a sequence from  $\Gat\mhyphen\conv (\mathcal{D}_p^\infty)$ that  converges to some $X \in \gtupn.$ 
	For each $k$ there is an isometry $V_k$ and an element $Y^{(k)} \in \cD_p^\infty$ such that $(Y^{(k)},V_k)$ is a $\Gat$-pair and 
	$X^{(k)} = V_k^* Y^{(k)} V_k.$ As noted in 
	Remark~\ref{rem lpdp}, for each $k,$ the moment sequence $(Z^{(k)}_\alpha)_\alpha$ defined by
	$$
	Z^{(k)}_\alpha = V_k^* (Y^{(k)})^\alpha V_k
	$$
	lies in $\mathfrak{L}_p^\Gat.$ Now for any $\alpha,$ Theorem~\ref{th-lift}  implies that the sequence $(Z^{(k)}_\alpha)_k$ is bounded. 
	It thus has a convergent subsequence and by passing to a subsequence, we can assume that for each $\alpha,$ the sequence $(Z^{(k)}_\alpha)$ converges
	 to some $Z_\alpha.$ Since membership in $\mathfrak{L}_p^\Gat$ is determined by the positivity conditions of equation~\eqref{e:pos:con},
	  it follows that  $(Z_\alpha)_\alpha \in \mathfrak{L}_p^\Gat.$
	By construction, $X = (Z_{x_1},\ldots,Z_{x_\gv}) \in \hat{\mathfrak{L}_p^\Gat} = \Gat\mhyphen\conv (\mathcal{D}_p^\infty),$ where the last equality is guaranteed by Theorem \ref{th-lift}.
\end{proof}

\subsection{Step 2: truncated lifts and \texorpdfstring{$\Gat$}{Gamma}-moment sequences}
\label{ssec:step2}

Here we show that the degree-bound truncations of $\mathfrak{L}_p^\Gat$ form a sequence of finite free $\Gat$-spectrahedral lifts of $\cD_p^\infty$ whose projections give better and better outer approximations of the $\Gat$-convex hull of $\cD_p^\infty.$

\subsubsection{The clamping down theorem}  Let $\delta$ be the maximum degree of the polynomials $\gamma_j.$
For $n \in \N$ and $d \geq \delta$ denote the $n$-th level of the $d$-truncation of $\mathfrak{L}_p^\Gat$ by
\begin{equation}\label{eq:clamp1}
\begin{split}
\mathfrak{L}_p^\Gat(n, d) = \big\{Y = (Y_\alpha)_{|\alpha|\le 2d+\deg p +1}  &\mid   Y_\alpha \in M_n,  \ Y_\emptyset =I, \ Y_{\alpha^*}=Y_\alpha^*,\\ 
&\phantom{{}\mid{}} Y \text{ satisfies } \eqref{eq:gmom}, \  
H _{d + \left\lceil\frac12\deg p\right\rceil} (Y) \succeq 0, \ H^{\Uparrow}_{p,d}(Y)  \succeq  0 \big\}
\end{split}
\end{equation}
and let $\mathfrak{L}_p^\Gat(\cdot, d) = (\mathfrak{L}_p^\Gat(n, d))_n.$ As in the previous section define
\begin{equation}\label{eq:clamp2}
\hat{\mathfrak{L}_p^\Gat}(n, d) = \{\hat{Y} = (Y_{x_1}, Y_{x_2}, \ldots, Y_{x_{\gv}}) \in \gtupn \ | \ Y \in \mathfrak{L}_p^\Gat(n, d)\}.
\end{equation}

The next theorem asserts that the $d$-truncations $\hat{\mathfrak{L}_p^\Gat}(\cdot, d)$ are projections of free $\Gat$-spectrahedra and that they intersect precisely at $\hat{\mathfrak{L}_p^\Gat},$ which, by Theorem \ref{th-lift}, is $\Gat\mhyphen\conv(\cD_p^\infty).$

\begin{theorem}\label{th-cd}
	If $p$ is an Archimedean symmetric matrix-valued noncommutative polynomial, then 
	\begin{enumerate}[\rm (a)]
		\item \label{i:cd:a} 
     for every $n,$
		\begin{equation}\label{eq-cd}
			\bigcap_{d=0}^\infty \hat{\mathfrak{L}_p^\Gat}(n, d) = \hat{\mathfrak{L}_p^\Gat}(n) = \Gat\mhyphen\conv  (\cD_p^\infty)(n); 
		\end{equation}
		\item \label{i:cd:b}
    for every $d$ there is a $\Gat$-pencil $\LG_d$ given by a tuple $A^{(d)}$ such that $\hat{\mathfrak{L}_p^\Gat}(\cdot, d)$ is the projection of $\cD^\Gat_{A^{(d)}}.$ 
	\end{enumerate}
	Hence, the free $\Gat$-spectrahedrops $\hat{\mathfrak{L}_p^\Gat}(\cdot, d)$ give increasingly finer outer approximations of the $\Gat$-convex hull of $\cD_p^\infty.$
\end{theorem}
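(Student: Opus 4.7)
The plan follows the scheme of \cite[Theorem~5.1]{HKM16}, adapted to the $\Gat$-convex setting, and splits naturally into an algebraic construction for item \ref{i:cd:b} and a compactness argument for item \ref{i:cd:a}.

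I would dispatch item \ref{i:cd:b} first by exhibiting $\LG_d$ explicitly. Parametrize $Y=(Y_\alpha)_{|\alpha|\leq 2d+\deg p+1}$ using the $x$-variables $Y_{x_1},\dots,Y_{x_\gv}$ and, as fresh $y$-variables, the remaining independent entries of $Y$ (fixing $Y_\emptyset=I$ and eliminating $Y_{\alpha^*}$ via $Y_{\alpha^*}=Y_\alpha^*$). With this choice the $\Gat$-moment relation \eqref{eq:gmom} becomes, for each $\gv+1\le j\le \rv$, an affine linear equation between certain $y$-entries of $Y$ and the atom $\gamma_j(x)$, and can be enforced inside a single $\Gat$-pencil as two opposite one-sided inequalities. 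The positivity conditions $H_{d+\lceil(\deg p)/2\rceil}(Y)\succeq 0$ and $H^\Uparrow_{p,d}(Y)\succeq 0$ are, after the same substitutions, also affine linear in $(x,\Gat(x),y)$. Assembling these as a block-diagonal $\Gat$-pencil yields a tuple $A^{(d)}$ whose free $\Gat$-spectrahedron $\cD^\Gat_{A^{(d)}}$ projects under \eqref{eq:drop} exactly onto $\hat{\mathfrak{L}_p^\Gat}(\cdot,d)$.

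For item \ref{i:cd:a}, the easy chain
\[
\Gat\mhyphen\conv(\cD_p^\infty)(n) = \hat{\mathfrak{L}_p^\Gat}(n) \subseteq \bigcap_d \hat{\mathfrak{L}_p^\Gat}(n,d)
\]
is immediate: the equality is Theorem~\ref{th-lift} and the inclusion follows by truncating any $Y\in\mathfrak{L}_p^\Gat(n)$ to words of length $\le 2d+\deg p+1$. For the substantive reverse containment fix $X$ in the intersection and, for each $d$, pick $Y^{(d)}\in\mathfrak{L}_p^\Gat(n,d)$ with $\hat{Y}^{(d)}=X$. My plan is to use the Archimedean certificate \eqref{eq:archpoly} to derive a uniform a priori bound $\|Y^{(d)}_\alpha\|\le k^{|\alpha|}$ valid for every word $\alpha$ with $|\alpha|\le d-C$, where $C$ depends only on $\deg p$ and on the degrees of the $s_j,t_j$. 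Concretely, I would apply the (truncated) $Y^{(d)}$-moment functional to $\alpha^*(k^2-\sum_i x_i^2)\alpha$: the Archimedean decomposition $k^2-\sum_i x_i^2=\sum_j s_j^*s_j+\sum_j t_j^*p\,t_j$ together with $H_{d+\lceil(\deg p)/2\rceil}(Y^{(d)})\succeq 0$ and $H^\Uparrow_{p,d}(Y^{(d)})\succeq 0$ gives $k^2 Y^{(d)}_{\alpha^*\alpha}-\sum_i Y^{(d)}_{\alpha^*x_i^2\alpha}\succeq 0$. Iterating and combining with the Cauchy--Schwarz inequality $\|Y^{(d)}_\alpha\|\le \|Y^{(d)}_{\alpha^*\alpha}\|^{1/2}$ (itself a consequence of $H_{d+\lceil(\deg p)/2\rceil}(Y^{(d)})\succeq 0$) yields $\|Y^{(d)}_\alpha\|\le k^{|\alpha|}$, the $\Gat$-analog of the bound appearing in Theorem~\ref{th-lift}. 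With this bound in hand a diagonal Bolzano--Weierstraß extraction produces a subsequence $d_\ell\to\infty$ such that $Y^{(d_\ell)}_\alpha\to Y_\alpha$ for every word $\alpha$. The limit $Y$ inherits $Y_\emptyset=I$, $Y_{\alpha^*}=Y_\alpha^*$ and \eqref{eq:gmom} because each is a polynomial identity preserved by entry-wise convergence, while $H(Y)\succeq 0$ and $H^\Uparrow_p(Y)\succeq 0$ follow since every finite principal submatrix is a limit of positive semidefinite matrices. Thus $Y\in\mathfrak{L}_p^\Gat(n)$ and $X=\hat{Y}\in\hat{\mathfrak{L}_p^\Gat}(n)=\Gat\mhyphen\conv(\cD_p^\infty)(n)$.

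The hard part will be the uniform entry-wise bound: the Archimedean identity has to be pre- and post-multiplied by words of length $|\alpha|$ small enough that the resulting moments still lie inside the principal blocks of the $d$-truncated Hankel and localizing matrices. The admissible cut-off on $|\alpha|$ therefore grows with $d$ but is not uniform in $d$, which is precisely why one cannot extract the needed limit from a single truncation level; this makes the intersection over all $d$ in \eqref{eq-cd} and the diagonal compactness argument both essential. All remaining steps, including the verification that taking limits preserves the defining conditions of $\mathfrak{L}_p^\Gat(n)$, are routine once the bound is in place.
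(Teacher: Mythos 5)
Your proposal follows essentially the same route as the paper: for item (\ref{i:cd:b}) a Lasserre--Parrilo linearization of the truncated Hankel and localizing constraints into a $\Gat$-pencil (cf.\ Subsection~\ref{sec:gspec}), and for item (\ref{i:cd:a}) the easy chain from Theorem~\ref{th-lift}, followed by the reverse inclusion via the a priori entry-wise bound (the paper isolates this as Lemma~\ref{lema:seqbded}, imported from \cite[Lemma~6.5]{HKM16}) and a diagonal Bolzano--Weierstra\ss{} extraction, with the limit's membership in $\mathfrak{L}_p^\Gat(n)$ following because the Hankel and localizing positivity conditions are closed under entry-wise limits. The only genuine deviations are cosmetic: you re-derive the bound of Lemma~\ref{lema:seqbded} rather than cite it (your sketch of the iteration is correct, though the explicit constant $k$ and the cutoff $d-C$ differ slightly from the paper's $c$ and $2(d-\nu)$, which is harmless), and for item (\ref{i:cd:b}) you encode the $\Gat$-moment relation \eqref{eq:gmom} as a pair of opposite one-sided affine inequalities, whereas the paper substitutes the determined entries $Y_{m_{j,k}}$ by $\gamma_j(x)$ directly in the Hankel matrix (as in the displayed $H_2(Y)$ example with $\Gat=(x_1,x_2,x_2^2)$); both encodings yield a $\Gat$-spectrahedral lift.
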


\begin{lemma}\label{lema:seqbded}
	Let $p$ be an Archimedean polynomial. Then there is a natural number $\nu$ and a constant $c>0$ such that for every $Y \in \mathfrak{L}_p^\Gat(n, d)$ and word $\alpha$ of length $|\alpha|\leq2(d-\nu),$
	$$
	\| Y_\alpha \| \leq c^{|\alpha|}.
	$$
\end{lemma}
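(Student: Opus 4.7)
The plan is to leverage the Archimedean representation of $p$ to extract a recursive bound on the diagonal moments $Y_{\alpha^*\alpha}$, then use a Cauchy–Schwarz argument coming from $H(Y) \succeq 0$ to propagate this bound to arbitrary words.

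First I would set up notation. Write the Archimedean certificate as $k^2 - \sum_{i=1}^\gv x_i^2 = \sum_j s_j^* s_j + \sum_j t_j^* p\, t_j$ and let $\nu$ be an integer larger than $\max_j \deg s_j$, $\max_j \deg t_j + \lceil\tfrac12\deg p\rceil$, and $\lceil\tfrac12\deg p\rceil$. Regarding $Y \in \mathfrak{L}_p^\Gat(n,d)$ as giving a partially defined linear functional $L_Y$ on $\C\ax$ (and its matrix extensions) via $L_Y(w) = Y_w$, the positivity $H_{d+\lceil\deg p/2\rceil}(Y) \succeq 0$ translates to $L_Y(q^* q) \succeq 0$ in $M_n$ for every $q \in \C\ax^{1\times r}$ of degree at most $d + \lceil\tfrac12\deg p\rceil$, and $H^{\Uparrow}_{p,d}(Y) \succeq 0$ translates to $L_Y(r^* p\, r) \succeq 0$ for every $r \in (\C\ax)^{\mu\times 1}$ of degree at most $d$.

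Next I would apply $L_Y$ to $\alpha^*\left(k^2 - \sum_i x_i^2\right)\alpha = \alpha^*\!\sum_j s_j^* s_j\,\alpha + \alpha^*\!\sum_j t_j^* p\, t_j\,\alpha$ for any word $\alpha$ of length at most $d - \nu$. On the right, each $s_j\alpha$ has degree at most $d - \nu + \deg s_j \le d + \lceil\tfrac12\deg p\rceil$, so $L_Y((s_j\alpha)^*(s_j\alpha)) \succeq 0$; similarly $t_j\alpha$ has degree at most $d$, so $L_Y((t_j\alpha)^* p (t_j\alpha)) \succeq 0$. Hence the left side is positive semidefinite, giving the recursion
\[
\sum_{i=1}^\gv Y_{(x_i\alpha)^*(x_i\alpha)} \;\preceq\; k^2\, Y_{\alpha^*\alpha}
\qquad\text{whenever }|\alpha| \le d - \nu.
\]
Iterating this inequality from $Y_\emptyset = I_n$ and using $Y_{\beta^*\beta} \succeq 0$ for all $|\beta| \le d - \nu + 1$ (again a consequence of $H(Y) \succeq 0$), I get by induction that
\[
\|Y_{\alpha^*\alpha}\| \;\le\; k^{2|\alpha|}\qquad\text{for every word }\alpha\text{ with }|\alpha|\le d - \nu.
\]

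Finally, to bound a general $Y_w$ for a word $w$ of length $\ell \le 2(d-\nu)$, I would split $w = \alpha^* \beta$ by writing $w = x_{i_1}\cdots x_{i_\ell}$ and taking $\alpha = x_{i_{\lfloor \ell/2\rfloor}}\cdots x_{i_1}$, $\beta = x_{i_{\lfloor\ell/2\rfloor+1}}\cdots x_{i_\ell}$, so $|\alpha|,|\beta| \le \lceil \ell/2\rceil \le d - \nu$. The Cauchy–Schwarz inequality for $H(Y)$, namely
\[
|\langle Y_{\alpha^*\beta} v, u\rangle| \;\le\; \langle Y_{\beta^*\beta} v,v\rangle^{1/2}\langle Y_{\alpha^*\alpha} u,u\rangle^{1/2},
\]
(valid since $|\alpha|,|\beta| \le d - \nu \le d + \lceil\tfrac12\deg p\rceil$) then yields
\[
\|Y_w\| \;=\; \|Y_{\alpha^*\beta}\| \;\le\; \|Y_{\alpha^*\alpha}\|^{1/2}\|Y_{\beta^*\beta}\|^{1/2} \;\le\; k^{|\alpha|+|\beta|} \;=\; k^{|w|},
\]
so the lemma holds with $c = k$ and with $\nu$ chosen above. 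The main bookkeeping obstacle is keeping track of the degree constraints so that each application of $L_Y$ genuinely falls inside the range where the truncated Hankel and localizing matrices encode positivity; this is why the bound is only claimed for $|w| \le 2(d - \nu)$ rather than $|w| \le 2d$.
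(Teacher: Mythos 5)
Your proof is correct and follows the same route as the paper, which itself does not spell out the argument but simply cites \cite[Lemma 6.5]{HKM16}; that lemma uses precisely the two-step scheme you reproduce: apply the Archimedean certificate together with $H(Y)\succeq 0$ and $H^\Uparrow_p(Y)\succeq 0$ to obtain the recursive estimate $\sum_i Y_{(x_i\alpha)^*(x_i\alpha)}\preceq k^2\,Y_{\alpha^*\alpha}$ on the diagonal moments, then transfer to arbitrary words $w=\alpha^*\beta$ via the Cauchy--Schwarz inequality coming from the $2\times 2$ principal submatrix of the truncated Hankel matrix. The only point you might make explicit is that applying $L_Y$ to $\alpha^*s_j^*s_j\alpha$ (resp.\ $\alpha^*t_j^*p\,t_j\alpha$) and concluding positivity uses $I_{r_j}\otimes H_\bullet(Y)\succeq 0$ (resp.\ $H^\Uparrow_{p,\bullet}(Y)\succeq 0$) applied to the vector $\bigl(c_{j,w}\otimes v\bigr)_w$ obtained from the vector-valued coefficients of $s_j\alpha$ (resp.\ $t_j\alpha$); this is routine bookkeeping and does not change the structure of the argument.
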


The proof  follows along the lines of \cite[Lemma 6.5]{HKM16}, since the definition of our localizing matrices $H_p^{\Uparrow}(Y)$ coincides with the one there.

\begin{proof}[Proof of Theorem \ref{th-cd}] 
	To prove item~\ref{i:cd:a}  first observe that if $(Y_\alpha)_\alpha$ is a moment sequence, then 
	\begin{equation}\label{eq:111}
		H (Y) \succeq 0 \quad \text{ and } \quad \ H^{\Uparrow}_p(Y) \succeq 0
	\end{equation}
	if and only if for all $d,$
	\begin{equation}\label{eq:112}
		H _{d + \left\lceil\frac12\deg p\right\rceil} (Y) \succeq 0 \quad \text{ and } \quad \ H^{\Uparrow}_{p, d}(Y) \succeq 0.
	\end{equation}

	The second set equality in \eqref{eq-cd} is given in Theorem \ref{th-lift}.
	To prove the nontrivial inclusion $\subseteq$ in the first equality of \eqref{eq-cd}
	let $Z\in \bigcap_d\hat{\mathfrak{L}_p^\Gat}(n,d)$.
	For every $d$ there is a (truncated) moment sequence
	$Y^{(d)} = (Y^{(d)}_\alpha)\in {\mathfrak{L}_p^\Gat}(n;d)$ such that
	\[
	(Y^{(d)}_{x_1},\dots,Y^{(d)}_{x_{\gv} }) = Z.
	\]
	
	 By Lemma~\ref{lema:seqbded},  for a given word $\alpha,$ 
	the sequence $(Y^{(d)}_\alpha)_{|\alpha|\leq 2d+\deg(p)}$ is bounded. Since 
	 there are countably many such 
	sequences, there exists a moment sequence $(Y_\alpha)$ from $M_n$ and a subsequence $(d_k)_k$ of indices  such that
	$(Y^{(d_k)}_\alpha)$ converges to  $Y_\alpha \in M_n$ for each word $\alpha.$
	
	For each $k$ the sequence $Y^{(d_k)}$ satisfies \eqref{eq:112} with $d=d_k$ and hence for all $d\le d_k.$ Hence the limit $\Gat$-moment sequence 
	$Y = (Y_\alpha)_\alpha$ satisfies \eqref{eq:112} for all indices $d$  and thus $Y$ 
       satisfies \eqref{eq:111} and hence belongs to $\mathfrak{L}_p^\Gat(n).$ Thus, $Z\in \hat{\mathfrak{L}_p^\Gat}(n)$ since
	\[
	Z = (Y_{x_1},\dots,Y_{x_{\gv} }) = \hat{Y}. 
	\]
	
	Item~\ref{i:cd:b}  is proved in the next Subsection \ref{sec:gspec}, where we show that the $\hat{\mathfrak{L}_p^\Gat}(\cdot, d)$ are in  fact free $\Gat$-spectrahedrops.
\end{proof}

\subsubsection{Free \texorpdfstring{$\Gat$}{Gamma}-spectrahedral lifts of the \texorpdfstring{$\hat{\mathfrak{L}_p^\Gat}(\cdot, d)$}{LpGamma}}\label{sec:gspec}
By the Lasserre-Parrilo construction in Section \ref{sec-lift}, the $\hat{\mathfrak{L}_p^\Gat}(\cdot, d)$ are projections of the approximate lifts $\mathfrak{L}_p^\Gat(\cdot, d)$ and each of the latter is the positivity set of a matrix-valued polynomial of the form
\begin{equation*}
	\LG(x, y) = \cA(x) + \sum_{j=1}^{\hv} (B_j y_j + B_j^\ast y_j^\ast), 
\end{equation*}
where $\cA$ is a {(not necessarily monic) } $\Gat$-pencil of size $k$ and $B_j \in \Mm_k.$ The following procedure, described in \cite[Lemma 6.3]{HKM16}, 
replaces the non-self-adjoint coefficients $B_j$ and non-symmetric variables $y_j$ in $\LG(x, y)$ with (twice as many) self-adjoint coefficients and symmetric variables such that the resulting  $\Gat$-pencil, denoted $\widetilde{\LG},$  provides  a lift of $\hat{\mathfrak{L}_p^\Gat}(\cdot, d)$ to a $\Gat$-spectrahedron:
$$
{\rm proj}_x \cD^\Gat_{\widetilde{\LG}} = {\rm proj}_x \cD^\Gat_{\LG} = \hat{\mathfrak{L}_p^\Gat}(\cdot, d).
$$

Decomposing $B_j = C_j + \mathfrak{i}D_j$ and $y_j = w_j + \mathfrak{i}w_{-j}$ into self-adjoint matrices $C_j, D_j$ and free symmetric variables $w_{-\hv}, \ldots, w_{-1}, w_1, \ldots, w_\hv$  gives
\[
\begin{split}
	B_j y_j + B_j^* y_j^* & = 
	(C_j + \mathfrak{i} D_j) (  w_{j} + \mathfrak{i} w_{-j} ) + 
	(C_j - \mathfrak{i} D_j) (  w_{j} - \mathfrak{i} w_{-j} ) \\
	&= 2 ( C_j w_j - D_j w_{-j} )
\end{split}
\]	
and 
$$
\widetilde{\LG}(x,w) = \cA(x)
+ 2 \sum_{j=1}^h \big( C_j w_j - D_j w_{-j})  
$$
is a $\Gat$-pencil with self-adjoint coefficients and symmetric variables satisfying ${\rm proj}_x \cD^\Gat_{\widetilde{\LG}} = {\rm proj}_x \cD^\Gat_{\LG}.$

\subsection{Examples}
We next give a few examples of the Lasserre-Parrilo lifting construction in the $\Gat$-setting.

\begin{example}
	Denote the variables by $x, y$ instead of $x_1, x_2$ and let $\Gat = (x, y, y^2).$ Consider $p = (1-2 y^2+x^2) \oplus (1 - x^2 ).$ Then
	$$
	\cD_p = \{(X, Y) \ | \ 2Y^2  \preceq 1+X^2, \ X^2 \preceq I\}
	$$
	is not $y^2$-convex as seen from Figure \ref{fig-pr} representing $\cD_p(1).$ 
	\begin{figure}[ht]
	\centering
	 \begin{tikzpicture}[scale=2]
			\draw[very thin,color=gray] (-1.6,-1.6) grid (1.6,1.6);
			\draw[->] (-1.5, 0) -- (1.5, 0) node[right] {$x$};
			\draw[->] (0, -1.5) -- (0, 1.5) node[above] {$y$};
			
			\draw[scale=1, domain=-1:1, smooth, variable=\x, black] plot ({\x}, {sqrt((\x*\x+1)/2)});
			\draw[scale=1, domain=-1:1, smooth, variable=\x, black] plot ({\x}, {-sqrt((\x*\x+1)/2)});
			\draw[scale=1, domain=-1:1, smooth, variable=\x, black] plot (1,{\x});
			\draw[scale=1, domain=-1:1, smooth, variable=\x, black] plot (-1,{\x});
			
			\fill[blue!30, opacity=0.5] plot[domain=-1:1, variable=\x] ({\x}, {sqrt((\x*\x+1)/2)})
			--	plot[domain=-1:1, variable=\x] ({\x}, {-sqrt((\x*\x+1)/2)}) 
			-- plot[domain=-1:1, variable=\y] (1, {\y}) 
			-- plot[domain=-1:1, variable=\y] (-1, {\y}) --  cycle;
		\filldraw[color=black] (1,0) circle (0.02) node[below right] {$1$};
\filldraw[color=black] (0,1) circle (0.02) node[above left] {$1$};
		\end{tikzpicture}
\caption{The first level component $\cD_p(1)$ is clearly not convex in $x,$ hence $\cD_p$ is not $y^2$-convex.}
\label{fig-pr}
\end{figure}
	
	Let us prove that the $y^2$-convex hull of $\cD_p,$ i.e., the convex hull with respect to the $x$-coordinate, is
	\begin{equation}\label{eq-xhull}
		\Gat\mhyphen\conv (\cD_p) = \{(X, Y) \ | \ Y^2 \preceq I, \ X^2 \preceq I\}.
	\end{equation}
	Denoting the right hand side in \eqref{eq-xhull} by $\bm{K} = (K_n)_n,$ we immediately see that $\bm{K}$ is free and convex (in $X$ for each fixed $Y$). We now prove that, for each $n\in \N,$  any point $(X,Y) \in K_n $ is a $y^2$-convex combination of points from $\cD_p.$ 
	
	Note that by combining the two defining inequalities for $\cD_p,$ we obtain $Y^2 \preceq I$ on $\cD_p$. That is, $\cD_p\subseteq\bm K$.
	Let $(X,Y)\in\bm K$, i.e., $X$ and $Y$ are contractions.
	 Then $X$ can be diagonalized as $X = U^* DU,$ where $D$ is diagonal with diagonals $d_1,\ldots,d_n \in [-1,1]$ and $U$ is unitary. Write $d_1$ as a convex combination of the points $\pm 1$ and express $X = t X_1 + (1-t) X_2$ as a convex combination of matrices $X_1,X_2,$ where $X_i= U^* D_iU$ and $D_i$ has diagonals $(-1)^i,d_2,\ldots,d_n.$ Repeat this process on $X_1$ and $X_2$ to obtain an expression of $X$ as a convex combination 
	$$
	X = \sum_{i=1}^k t_i X_i
	$$ 
	of matrices $X_1,\ldots,X_k$ with $X_i^2 = I$. Clearly, for each $i,$ the tuple $(X_i,Y)$ lies in $\cD_p$ since
	\[
1+X_i^2 = 2 \succeq 2Y^2.
	\]
We can now write
\[
(X,Y)= \sum_{i=1}^k t_i (X_i,Y)\in\Gat\mhyphen\conv (\cD_p).
\]
We deduce that $\bm{K}$
	 must be contained in every other free set which is $y^2$-convex and contains $\cD_p.$
	 Hence $\bm{K}=\Gat\mhyphen\conv (\cD_p).$
	The first level of \eqref{eq-xhull} is the square $[-1,1]^2$.
	
	We now show that the projection of the first Lasserre-Parrilo lift $\mathfrak{L}_p^\Gat(\cdot, 0),$ given by 
	\begin{align*}
		H _1(Z) &=
		\begin{pmatrix} 
			I & X & Y \\[.1cm]
			X & Z_{11} & Z_{12} \\[.1cm]
			Y & Z_{21} & Y^2  \\[.1cm]
		\end{pmatrix}\succeq0,\\
		H_{p, 0}^{\Uparrow}(Z) &=  
		\begin{pmatrix}
			I-2Y^2+Z_{11} & 0 \\
			0 & I -Z_{11}
		\end{pmatrix}\succeq0,
	\end{align*}
	equals $\Gat\mhyphen\conv (\cD_p),$ i.e., the lift is exact. Indeed, for any $(X, Y) \in \Gat\mhyphen\conv (\cD_p),$ the canonical moment sequence \eqref{eq-ms} lies in $\mathfrak{L}_p^\Gat(\cdot, 0).$ For the other inclusion let $(X,Y) \in \hat{\mathfrak{L}_p^\Gat}(\cdot, 0).$ Then combining the inequalities 
	\begin{align*}
		I-2Y^2+Z_{11} \succeq 0 \quad \text{ and } \quad I -Z_{11}  \succeq 0
	\end{align*}
	gives $Y^2 \preceq I,$ 
	while taking Schur complements in $H_1(Z)$ implies
	$$
	\begin{pmatrix}
		Z_{11} & Z_{12}\\
		Z_{21} & Y^2
	\end{pmatrix}
- \begin{pmatrix}
	X \\ Y
\end{pmatrix}
\begin{pmatrix}
	X & Y
\end{pmatrix} =
\begin{pmatrix}
	Z_{11}-X^2 & Z_{12} - XY\\
	Z_{21}-YX & 0
\end{pmatrix}
\succeq 0.
	$$
	The latter forces $X^2 \preceq Z_{11}$ and hence
	$$
	X^2 \preceq Z_{11} \preceq I,
	$$
	so the tuple $(X,Y)$ lies in $\Gat\mhyphen\conv (\cD_p).$

	Finally, using Theorem~\ref{th-cd}, we can deduce
\begin{equation}\label{eq:notTV1}
\Gat\mhyphen\conv  (\cD_p) \subseteq
\Gat\mhyphen\conv  (\cD_p^\infty)
= 
	\bigcap_{d=0}^\infty \hat{\mathfrak{L}_p^\Gat}(\cdot, d) = \hat{\mathfrak{L}_p^\Gat} \subseteq 
	\hat{\mathfrak{L}_p^\Gat}(\cdot, 0) = \Gat\mhyphen\conv  (\cD_p),
\end{equation}
whence we have equalities throughout. \qed
\end{example}

\begin{example}
Consider the free semialgebraic set $\cD_p$ defined by 
\[p(x,y)=1-x^2-y^4,
\]
 the so-called TV screen \cite[Example 1.1]{JKMMP}. 
Figure \ref{fig-TV} depicts $\cD_p(1).$ 
\def\nos{100}
\begin{figure}[ht]
\begin{center}
\begin{tikzpicture}[domain=-1.6:1.6,scale=2] 
\draw[very thin,color=gray] (-1.6,-1.6) grid (1.6,1.6);
\draw[color=black, domain=1:-1, very thick,  samples=\nos] plot (\x,sqrt{(sqrt{(1-(\x)^2)})});
\draw[color=black, domain=1:-1, very thick,  samples=\nos] plot (\x,-sqrt{(sqrt{(1-(\x)^2)})});
\draw[color=black, domain=-1:1, very thick,  samples=\nos] plot (\x,sqrt{(sqrt{(1-(\x)^2)})});
\draw[color=black, domain=-1:1, very thick,  samples=\nos] plot (\x,-sqrt{(sqrt{(1-(\x)^2)})});
 \fill[color=blue!20, domain=1:-1, samples=\nos] plot (\x,sqrt{(sqrt{(1-(\x)^2)})}); 
 \fill[color=blue!20, domain=1:-1, samples=\nos] plot (\x,-sqrt{(sqrt{(1-(\x)^2)})});
 \fill[color=blue!20, domain=-1:1, samples=\nos] plot (\x,sqrt{(sqrt{(1-(\x)^2)})}); 
 \fill[color=blue!20, domain=-1:1, samples=\nos] plot (\x,-sqrt{(sqrt{(1-(\x)^2)})});
\fill[blue!20] (0,0) circle (1);
\draw[->] (-1.5,0) -- (1.5,0) node[right] {$x$}; \draw[->] (0,-1.5) -- (0,1.5) node[above] {$y$};
\filldraw[color=black] (1,0) circle (0.02) node[below right] {$1$};
\filldraw[color=black] (0,1) circle (0.02) node[above left] {$1$};
\end{tikzpicture} 
\end{center}~\caption{Bent TV screen $\cD_p(1)=\{ (x,y)\in\R^2 : 1-x^2-y^4\geq0\}$.}
			\label{fig-TV}
\end{figure}

 Since
 $p(X,Y)\succeq0$ if and only if
 \begin{equation*} 
\begin{pmatrix}
I & X & Y^2 \\ X& I & 0 \\ Y^2 & 0 & I
\end{pmatrix}\succeq0,
 \end{equation*}
 $\cD_p$ is $y^2$-convex. Let us show that the projection of the first Lasserre-Parrilo lift 
$\mathfrak{L}_p^\Gat(\cdot, 0),$ given by 
	\begin{equation}\label{eq:TV2}
	\begin{split}
		H _2(Z) &=
		\begin{pmatrix} 
		 I & X & Y & Z_{11} & Z_{12} & Z_{21} & Y^2 \\[1mm]
 X & Z_{11} & Z_{12} & Z_{111} & Z_{112} & Z_{121} & Z_{122} \\[1mm]
 Y & Z_{21} & Y^2 & Z_{211} & Z_{212} & Z_{221} & Z_{222} \\[1mm]
 Z_{11} & Z_{111} & Z_{112} & Z_{1111} & Z_{1112} & Z_{1121} & Z_{1122} \\[1mm]
 Z_{21} & Z_{211} & Z_{212} & Z_{2111} & Z_{2112} & Z_{2121} & Z_{2122} \\[1mm]
 Z_{12} & Z_{121} & Z_{122} & Z_{1211} & Z_{1212} & Z_{1221} & Z_{1222} \\[1mm]
 Y^2 & Z_{221} & Z_{222} & Z_{2211} & Z_{2212} & Z_{2221} & Z_{2222} 
		\end{pmatrix}\succeq0,\\
		H_{p, 0}^{\Uparrow}(Z) &=  
		\begin{pmatrix}
 			 I-Z_{11}-Z_{2222}
		\end{pmatrix}\succeq0,
	\end{split}
	\end{equation}
 of $\cD_p$ is exact, i.e., $\hat{\mathfrak{L}_p^\Gat}(\cdot, 0)=\cD_p$. 

Suppose $(X,Y,Z)$ satisfy \eqref{eq:TV2}.
By considering the top left $2\times2$ submatrix of $H_2(Z)$ we get 
\begin{equation}\label{eq:TV3}
Z_{11}\succeq X^2,
\end{equation}
and by considering the $2\times2$ principal submatrix on the first and last column we deduce
\begin{equation}\label{eq:TV4}
Z_{2222}\succeq Y^4.
\end{equation}
Using \eqref{eq:TV3} and \eqref{eq:TV4} in $H_{p, 0}^{\Uparrow}(Z) \succeq0$ yields
\[
0\preceq I-Z_{11}-Z_{2222} \preceq I-X^2-Y^4,
\]
whence $(X,Y)\in\cD_p$.
We can now conclude as in \eqref{eq:notTV1} that
\[ \pushQED{\qed}
\cD_p
= 
	\bigcap_{d=0}^\infty \hat{\mathfrak{L}_p^\Gat}(\cdot, d) = \hat{\mathfrak{L}_p^\Gat} =
	\hat{\mathfrak{L}_p^\Gat}(\cdot, 0). \qedhere\popQED
\]
\end{example}

\begin{example}
The situation changes if we consider the even more bent TV screen, that is, the free semialgebraic set $\cD_p$ defined by 
\[p(x,y)=1-x^2-y^6.
\]
Figure \ref{fig-TV6} depicts $\cD_p(1).$ 
\def\nos{100}
\begin{figure}[ht]
\begin{center}
\begin{tikzpicture}[domain=-1.6:1.6,scale=2] 
\draw[very thin,color=gray] (-1.6,-1.6) grid (1.6,1.6);
\draw[color=black, domain=1:-1, very thick,  samples=\nos] plot (\x,{((1 - (\x)^2)^(1/6))});
\draw[color=black, domain=1:-1, very thick,  samples=\nos] plot (\x,-{((1 - (\x)^2)^(1/6))});
\draw[color=black, domain=-1:1, very thick,  samples=\nos] plot (\x,{((1 - (\x)^2)^(1/6))});
\draw[color=black, domain=-1:1, very thick,  samples=\nos] plot (\x,-{((1 - (\x)^2)^(1/6))});
 \fill[color=blue!20, domain=1:-1, samples=\nos] plot (\x,{((1 - (\x)^2)^(1/6))}); 
 \fill[color=blue!20, domain=1:-1, samples=\nos] plot (\x,-{((1 - (\x)^2)^(1/6))});
 \fill[color=blue!20, domain=-1:1, samples=\nos] plot (\x,{((1 - (\x)^2)^(1/6))}); 
 \fill[color=blue!20, domain=-1:1, samples=\nos] plot (\x,-{((1 - (\x)^2)^(1/6))});
\fill[blue!20] (0,0) circle (1);
\draw[->] (-1.5,0) -- (1.5,0) node[right] {$x$}; \draw[->] (0,-1.5) -- (0,1.5) node[above] {$y$};
\filldraw[color=black] (1,0) circle (0.02) node[below right] {$1$};
\filldraw[color=black] (0,1) circle (0.02) node[above left] {$1$};
\end{tikzpicture} 
\end{center}~\caption{Bent TV screen $\cD_p(1)=\{ (x,y)\in\R^2 : 1-x^2-y^6\geq0\}$.}
			\label{fig-TV6}
\end{figure}
The graded set $\cD_p$ is again $y^2$-convex as is easily seen. In fact, it is even a $\Gat$-spectrahedron \cite[Proposition 4.2]{JKMMP}. 

However, in contrast to the above examples,
in this case the projection of the first 
Lasserre-Parrilo lift 
$\mathfrak{L}_p^\Gat(\cdot, 0)$ is not exact.
Consider 
\[
X=\begin{pmatrix} 1 \\ & 0\end{pmatrix},
\quad
Y=\frac12 \begin{pmatrix} 1 & 1 \\ 1 & 1\end{pmatrix}.
\]
Then 
\begin{equation}\label{eq:detTV}
\det( p(tX,tY))=
\det(I-t^2 X^2-t^6 Y^6)= \frac{t^8}{2}-t^6-t^2+1,
\end{equation}
so $t(X,Y)\in\cD_p$ for $|t|\leq t_0 \approx 0.861724$,
where $t_0$ is the smallest positive root of \eqref{eq:detTV}.

Now consider the first 
Lasserre-Parrilo lift $\mathfrak{L}_p^\Gat(\cdot, 0)$ described by {\CCBB the two inequalities}
	\begin{equation} 
	\begin{split}
		H_3(Z) &=
\resizebox{0.9\textwidth}{!}{$\left(
\begin{array}{ccccccccccccccc}
 1 & X & Y & Z_{11} & Z_{12} & Z_{21} & Y^2 & Z_{111} & Z_{112} &
   Z_{121} & Z_{122} & Z_{211} & Z_{212} & Z_{221} & Z_{222} \\
 X & Z_{11} & Z_{12} & Z_{111} & Z_{112} & Z_{121} & Z_{122} & Z_{1111} &
   Z_{1112} & Z_{1121} & Z_{1122} & Z_{1211} & Z_{1212} & Z_{1221} & Z_{1222}
   \\
 Y & Z_{21} & Y^2 & Z_{211} & Z_{212} & Z_{221} & Z_{222} & Z_{2111} &
   Z_{2112} & Z_{2121} & Z_{2122} & Z_{2211} & Z_{2212} & Z_{2221} & Z_{2222}
   \\
 Z_{11} & Z_{111} & Z_{112} & Z_{1111} & Z_{1112} & Z_{1121} & Z_{1122} &
   Z_{11111} & Z_{11112} & Z_{11121} & Z_{11122} & Z_{11211} & Z_{11212} &
   Z_{11221} & Z_{11222} \\
 Z_{21} & Z_{211} & Z_{212} & Z_{2111} & Z_{2112} & Z_{2121} & Z_{2122} &
   Z_{21111} & Z_{21112} & Z_{21121} & Z_{21122} & Z_{21211} & Z_{21212} &
   Z_{21221} & Z_{21222} \\
 Z_{12} & Z_{121} & Z_{122} & Z_{1211} & Z_{1212} & Z_{1221} & Z_{1222} &
   Z_{12111} & Z_{12112} & Z_{12121} & Z_{12122} & Z_{12211} & Z_{12212} &
   Z_{12221} & Z_{12222} \\
 Y^2 & Z_{221} & Z_{222} & Z_{2211} & Z_{2212} & Z_{2221} & Z_{2222} &
   Z_{22111} & Z_{22112} & Z_{22121} & Z_{22122} & Z_{22211} & Z_{22212} &
   Z_{22221} & Z_{22222} \\
 Z_{111} & Z_{1111} & Z_{1112} & Z_{11111} & Z_{11112} & Z_{11121} & Z_{11122}
   & Z_{111111} & Z_{111112} & Z_{111121} & Z_{111122} & Z_{111211} &
   Z_{111212} & Z_{111221} & Z_{111222} \\
 Z_{211} & Z_{2111} & Z_{2112} & Z_{21111} & Z_{21112} & Z_{21121} & Z_{21122}
   & Z_{211111} & Z_{211112} & Z_{211121} & Z_{211122} & Z_{211211} &
   Z_{211212} & Z_{211221} & Z_{211222} \\
 Z_{121} & Z_{1211} & Z_{1212} & Z_{12111} & Z_{12112} & Z_{12121} & Z_{12122}
   & Z_{121111} & Z_{121112} & Z_{121121} & Z_{121122} & Z_{121211} &
   Z_{121212} & Z_{121221} & Z_{121222} \\
 Z_{221} & Z_{2211} & Z_{2212} & Z_{22111} & Z_{22112} & Z_{22121} & Z_{22122}
   & Z_{221111} & Z_{221112} & Z_{221121} & Z_{221122} & Z_{221211} &
   Z_{221212} & Z_{221221} & Z_{221222} \\
 Z_{112} & Z_{1121} & Z_{1122} & Z_{11211} & Z_{11212} & Z_{11221} & Z_{11222}
   & Z_{112111} & Z_{112112} & Z_{112121} & Z_{112122} & Z_{112211} &
   Z_{112212} & Z_{112221} & Z_{112222} \\
 Z_{212} & Z_{2121} & Z_{2122} & Z_{21211} & Z_{21212} & Z_{21221} & Z_{21222}
   & Z_{212111} & Z_{212112} & Z_{212121} & Z_{212122} & Z_{212211} &
   Z_{212212} & Z_{212221} & Z_{212222} \\
 Z_{122} & Z_{1221} & Z_{1222} & Z_{12211} & Z_{12212} & Z_{12221} & Z_{12222}
   & Z_{122111} & Z_{122112} & Z_{122121} & Z_{122122} & Z_{122211} &
   Z_{122212} & Z_{122221} & Z_{122222} \\
 Z_{222} & Z_{2221} & Z_{2222} & Z_{22211} & Z_{22212} & Z_{22221} & Z_{22222}
   & Z_{222111} & Z_{222112} & Z_{222121} & Z_{222122} & Z_{222211} &
   Z_{222212} & Z_{222221} & Z_{222222} \\
\end{array}
\right)\succeq0,$}\\[5pt]
H_{p, 0}^{\Uparrow}(Z) &=  
		\begin{pmatrix}
 			 I-Z_{11}-Z_{222222}
		\end{pmatrix}\succeq0.
\end{split}
\label{eq:TV6}
	\end{equation}

Pick $t'=\frac{173}{200}=0.865 > t_0$. 
Then $p(t'X,t'Y)$ is not positive semidefinite; its smallest eigenvalue is
\[
\frac{53304846667911-29929 \sqrt{3362359178476091681}}{128000000000000} \approx -0.012306,
\]
whence $(t'X,t'Y)\not\in\cD_p$. However, we claim that
$(t'X,t'Y)\in \hat{\mathfrak{L}_p^\Gat}(\cdot, 0)$.
Replace $X,Y$ in \eqref{eq:TV6} by $t'X,t'Y$ leading to a pair of semidefinite constraints. We ran this SDP with the built-in solver in Wolfram Mathematica (with the trivial objective function $1$ which tends to produce high rank solutions) to obtain floating point solutions for the $2\times 2$ matrix variables $Z$. The smallest obtained eigenvalues for $H_3(Z)$ and $H_{p, 0}^{\Uparrow}(Z)$ were approximately $5\cdot 10^{-4}$ and $2\cdot 10^{-3}$, respectively. Since these are far enough from $0$, routine numerical analysis (see, e.g.~\cite{PP08,CKP15}) shows that a fine enough rationalization will lead to exact symbolic feasible points. Indeed, rationalizing leads to the following feasible point (as can be checked, e.g., with the Cholesky decomposition):\footnote{Since $H_3(Z)$ is positive semidefinite, it is self-adjoint. Hence $Z_{i_1i_2\cdots i_r}=Z_{i_ri_{r-1}\cdots i_1}^*$ for all indices $i_j$. 
We therefore omit one member of each such pair in the display below.}
{\scriptsize
\begin{align*} \pushQED{\qed}
\begin{autobreak} 
\phantom{=} 
(Z_{11})_{11}\to \frac{63}{82},\, \allowbreak
(Z_{11})_{12}\to -\frac{2}{39},\, \allowbreak
(Z_{11})_{22}\to \frac{53}{319},\, \allowbreak
(Z_{12})_{11}\to \frac{46}{123},\, \allowbreak
(Z_{12})_{12}\to \frac{55}{147},\, \allowbreak
(Z_{12})_{21}\to 0,\, \allowbreak
(Z_{12})_{22}\to 0,\, \allowbreak
(Z_{22})_{11}\to \frac{37}{69},\, \allowbreak
(Z_{22})_{12}\to \frac{5}{34},\, \allowbreak
(Z_{22})_{22}\to \frac{82}{117},\, \allowbreak
(Z_{111})_{11}\to \frac{112}{165},\, \allowbreak
(Z_{111})_{12}\to -\frac{4}{97},\, \allowbreak
(Z_{111})_{22}\to -\frac{1}{140},\, \allowbreak
(Z_{112})_{11}\to \frac{29}{94},\, \allowbreak
(Z_{112})_{12}\to \frac{24}{77},\, \allowbreak
(Z_{112})_{21}\to \frac{4}{75},\, \allowbreak
(Z_{112})_{22}\to \frac{2}{47},\, \allowbreak
(Z_{121})_{11}\to \frac{43}{133},\, \allowbreak
(Z_{121})_{12}\to 0,\, \allowbreak
(Z_{121})_{22}\to 0,\, \allowbreak
(Z_{122})_{11}\to \frac{32}{69},\, \allowbreak
(Z_{122})_{12}\to \frac{7}{55},\, \allowbreak
(Z_{122})_{21}\to 0,\, \allowbreak
(Z_{122})_{22}\to -\frac{1}{8348},\, \allowbreak
(Z_{212})_{11}\to \frac{16}{99},\, \allowbreak
(Z_{212})_{12}\to \frac{6}{37},\, \allowbreak
(Z_{212})_{22}\to \frac{16}{99},\, \allowbreak
(Z_{222})_{11}\to \frac{55}{166},\, \allowbreak
(Z_{222})_{12}\to \frac{18}{73},\, \allowbreak
(Z_{222})_{22}\to \frac{8}{15},\, \allowbreak
(Z_{1111})_{11}\to \frac{51}{32},\, \allowbreak
(Z_{1111})_{12}\to -\frac{65}{61},\, \allowbreak
(Z_{1111})_{22}\to \frac{571}{143},\, \allowbreak
(Z_{1112})_{11}\to \frac{43}{156},\, \allowbreak
(Z_{1112})_{12}\to \frac{22}{79},\, \allowbreak
(Z_{1112})_{21}\to -\frac{1}{47},\, \allowbreak
(Z_{1112})_{22}\to -\frac{1}{50},\, \allowbreak
(Z_{1121})_{11}\to \frac{67}{251},\, \allowbreak
(Z_{1121})_{12}\to -\frac{1}{982},\, \allowbreak
(Z_{1121})_{21}\to \frac{5}{114},\, \allowbreak
(Z_{1121})_{22}\to \frac{1}{232},\, \allowbreak
(Z_{1122})_{11}\to \frac{40}{99},\, \allowbreak
(Z_{1122})_{12}\to \frac{3}{38},\, \allowbreak
(Z_{1122})_{21}\to -\frac{1}{630},\, \allowbreak
(Z_{1122})_{22}\to \frac{7}{67},\, \allowbreak
(Z_{1212})_{11}\to \frac{13}{93},\, \allowbreak
(Z_{1212})_{12}\to \frac{7}{50},\, \allowbreak
(Z_{1212})_{21}\to 0,\, \allowbreak
(Z_{1212})_{22}\to \frac{1}{4614},\, \allowbreak
(Z_{1221})_{11}\to \frac{113}{81},\, \allowbreak
(Z_{1221})_{12}\to -\frac{67}{65},\, \allowbreak
(Z_{1221})_{22}\to \frac{639}{160},\, \allowbreak
(Z_{1222})_{11}\to \frac{43}{150},\, \allowbreak
(Z_{1222})_{12}\to \frac{19}{89},\, \allowbreak
(Z_{1222})_{21}\to 0,\, \allowbreak
(Z_{1222})_{22}\to 0,\, \allowbreak
(Z_{2112})_{11}\to \frac{130}{49},\, \allowbreak
(Z_{2112})_{12}\to -\frac{159}{61},\, \allowbreak
(Z_{2112})_{22}\to \frac{394}{85},\, \allowbreak
(Z_{2122})_{11}\to \frac{59}{294},\, \allowbreak
(Z_{2122})_{12}\to \frac{7}{127},\, \allowbreak
(Z_{2122})_{21}\to \frac{46}{229},\, \allowbreak
(Z_{2122})_{22}\to \frac{7}{128},\, \allowbreak
(Z_{2222})_{11}\to \frac{21}{64},\, \allowbreak
(Z_{2222})_{12}\to \frac{13}{95},\, \allowbreak
(Z_{2222})_{22}\to \frac{87}{136},\, \allowbreak
(Z_{11111})_{11}\to \frac{83}{69},\, \allowbreak
(Z_{11111})_{12}\to -\frac{101}{143},\, \allowbreak
(Z_{11111})_{22}\to -\frac{33}{97},\, \allowbreak
(Z_{11112})_{11}\to -\frac{1}{2668},\, \allowbreak
(Z_{11112})_{12}\to \frac{7}{102},\, \allowbreak
(Z_{11112})_{21}\to \frac{120}{109},\, \allowbreak
(Z_{11112})_{22}\to \frac{61}{80},\, \allowbreak
(Z_{11121})_{11}\to \frac{20}{91},\, \allowbreak
(Z_{11121})_{12}\to -\frac{1}{142},\, \allowbreak
(Z_{11121})_{21}\to -\frac{2}{97},\, \allowbreak
(Z_{11121})_{22}\to -\frac{1}{2946},\, \allowbreak
(Z_{11122})_{11}\to \frac{69}{193},\, \allowbreak
(Z_{11122})_{12}\to \frac{22}{309},\, \allowbreak
(Z_{11122})_{21}\to -\frac{2}{93},\, \allowbreak
(Z_{11122})_{22}\to -\frac{1}{75},\, \allowbreak
(Z_{11211})_{11}\to \frac{13}{58},\, \allowbreak
(Z_{11211})_{12}\to \frac{6}{175},\, \allowbreak
(Z_{11211})_{22}\to \frac{1}{645},\, \allowbreak
(Z_{11212})_{11}\to \frac{12}{103},\, \allowbreak
(Z_{11212})_{12}\to \frac{25}{213},\, \allowbreak
(Z_{11212})_{21}\to \frac{1}{47},\, \allowbreak
(Z_{11212})_{22}\to \frac{1}{51},\, \allowbreak
(Z_{11221})_{11}\to \frac{32}{47},\, \allowbreak
(Z_{11221})_{12}\to -\frac{49}{64},\, \allowbreak
(Z_{11221})_{21}\to \frac{5}{54},\, \allowbreak
(Z_{11221})_{22}\to -\frac{29}{168},\, \allowbreak
(Z_{11222})_{11}\to \frac{27}{112},\, \allowbreak
(Z_{11222})_{12}\to \frac{17}{104},\, \allowbreak
(Z_{11222})_{21}\to \frac{1}{38},\, \allowbreak
(Z_{11222})_{22}\to \frac{6}{89},\, \allowbreak
(Z_{12112})_{11}\to \frac{31}{17},\, \allowbreak
(Z_{12112})_{12}\to -\frac{111}{56},\, \allowbreak
(Z_{12112})_{21}\to \frac{24}{107},\, \allowbreak
(Z_{12112})_{22}\to -\frac{27}{113},\, \allowbreak
(Z_{12121})_{11}\to \frac{16}{129},\, \allowbreak
(Z_{12121})_{12}\to -\frac{1}{4415},\, \allowbreak
(Z_{12121})_{22}\to \frac{1}{3301},\, \allowbreak
(Z_{12122})_{11}\to \frac{17}{98},\, \allowbreak
(Z_{12122})_{12}\to \frac{1}{21},\, \allowbreak
(Z_{12122})_{21}\to 0,\, \allowbreak
(Z_{12122})_{22}\to \frac{1}{7768},\, \allowbreak
(Z_{12212})_{11}\to -\frac{5}{74},\, \allowbreak
(Z_{12212})_{12}\to -\frac{1}{104},\, \allowbreak
(Z_{12212})_{21}\to \frac{81}{73},\, \allowbreak
(Z_{12212})_{22}\to \frac{95}{117},\, \allowbreak
(Z_{12221})_{11}\to \frac{18}{95},\, \allowbreak
(Z_{12221})_{12}\to -\frac{1}{90},\, \allowbreak
(Z_{12221})_{22}\to -\frac{1}{449},\, \allowbreak
(Z_{12222})_{11}\to \frac{32}{113},\, \allowbreak
(Z_{12222})_{12}\to \frac{8}{67},\, \allowbreak
(Z_{12222})_{21}\to \frac{1}{6553},\, \allowbreak
(Z_{12222})_{22}\to -\frac{1}{3675},\, \allowbreak
(Z_{21112})_{11}\to \frac{14}{117},\, \allowbreak
(Z_{21112})_{12}\to \frac{22}{175},\, \allowbreak
(Z_{21112})_{22}\to \frac{11}{86},\, \allowbreak
(Z_{21122})_{11}\to \frac{221}{295},\, \allowbreak
(Z_{21122})_{12}\to -\frac{268}{153},\, \allowbreak
(Z_{21122})_{21}\to -\frac{43}{117},\, \allowbreak
(Z_{21122})_{22}\to \frac{169}{83},\, \allowbreak
(Z_{21212})_{11}\to \frac{5}{82},\, \allowbreak
(Z_{21212})_{12}\to \frac{3}{49},\, \allowbreak
(Z_{21212})_{22}\to \frac{7}{114},\, \allowbreak
(Z_{21222})_{11}\to \frac{12}{97},\, \allowbreak
(Z_{21222})_{12}\to \frac{4}{43},\, \allowbreak
(Z_{21222})_{21}\to \frac{15}{121},\, \allowbreak
(Z_{21222})_{22}\to \frac{8}{87},\, \allowbreak
(Z_{22122})_{11}\to \frac{49}{197},\, \allowbreak
(Z_{22122})_{12}\to \frac{5}{73},\, \allowbreak
(Z_{22122})_{22}\to \frac{1}{55},\, \allowbreak
(Z_{22222})_{11}\to \frac{16}{69},\, \allowbreak
(Z_{22222})_{12}\to \frac{19}{116},\, \allowbreak
(Z_{22222})_{22}\to \frac{23}{41},\, \allowbreak
(Z_{111111})_{11}\to \frac{19025}{73},\, \allowbreak
(Z_{111111})_{12}\to -\frac{150}{47},\, \allowbreak
(Z_{111111})_{22}\to \frac{22307}{82},\, \allowbreak
(Z_{111112})_{11}\to \frac{86}{115},\, \allowbreak
(Z_{111112})_{12}\to \frac{30}{49},\, \allowbreak
(Z_{111112})_{21}\to -\frac{101}{138},\, \allowbreak
(Z_{111112})_{22}\to -\frac{37}{63},\, \allowbreak
(Z_{111121})_{11}\to \frac{5}{72},\, \allowbreak
(Z_{111121})_{12}\to -\frac{1}{113},\, \allowbreak
(Z_{111121})_{21}\to \frac{62}{67},\, \allowbreak
(Z_{111121})_{22}\to \frac{23}{275},\, \allowbreak
(Z_{111122})_{11}\to \frac{51}{98},\, \allowbreak
(Z_{111122})_{12}\to -\frac{25}{113},\, \allowbreak
(Z_{111122})_{21}\to -\frac{3}{119},\, \allowbreak
(Z_{111122})_{22}\to \frac{251}{96},\, \allowbreak
(Z_{111211})_{11}\to \frac{38}{241},\, \allowbreak
(Z_{111211})_{12}\to -\frac{5}{64},\, \allowbreak
(Z_{111211})_{21}\to -\frac{4}{115},\, \allowbreak
(Z_{111211})_{22}\to \frac{5}{74},\, \allowbreak
(Z_{111212})_{11}\to \frac{4}{39},\, \allowbreak
(Z_{111212})_{12}\to \frac{11}{126},\, \allowbreak
(Z_{111212})_{21}\to -\frac{1}{112},\, \allowbreak
(Z_{111212})_{22}\to -\frac{1}{141},\, \allowbreak
(Z_{111221})_{11}\to \frac{11371}{127},\, \allowbreak
(Z_{111221})_{12}\to -\frac{406}{123},\, \allowbreak
(Z_{111221})_{21}\to -\frac{349}{138},\, \allowbreak
(Z_{111221})_{22}\to \frac{8227}{81},\, \allowbreak
(Z_{111222})_{11}\to \frac{17}{80},\, \allowbreak
(Z_{111222})_{12}\to \frac{21}{137},\, \allowbreak
(Z_{111222})_{21}\to -\frac{1}{68},\, \allowbreak
(Z_{111222})_{22}\to -\frac{1}{63},\, \allowbreak
(Z_{112112})_{11}\to \frac{201}{98},\, \allowbreak
(Z_{112112})_{12}\to -\frac{139}{70},\, \allowbreak
(Z_{112112})_{21}\to -\frac{13}{37},\, \allowbreak
(Z_{112112})_{22}\to \frac{40}{47},\, \allowbreak
(Z_{112121})_{11}\to \frac{13}{83},\, \allowbreak
(Z_{112121})_{12}\to \frac{1}{3352},\, \allowbreak
(Z_{112121})_{21}\to \frac{4}{105},\, \allowbreak
(Z_{112121})_{22}\to 0,\, \allowbreak
(Z_{112122})_{11}\to \frac{29}{119},\, \allowbreak
(Z_{112122})_{12}\to \frac{5}{71},\, \allowbreak
(Z_{112122})_{21}\to \frac{7}{109},\, \allowbreak
(Z_{112122})_{22}\to \frac{1}{44},\, \allowbreak
(Z_{112211})_{11}\to \frac{14181}{83},\, \allowbreak
(Z_{112211})_{12}\to \frac{3}{103},\, \allowbreak
(Z_{112211})_{22}\to \frac{14321}{84},\, \allowbreak
(Z_{112212})_{11}\to -\frac{3}{64},\, \allowbreak
(Z_{112212})_{12}\to \frac{1}{467},\, \allowbreak
(Z_{112212})_{21}\to -\frac{1}{33},\, \allowbreak
(Z_{112212})_{22}\to -\frac{1}{49},\, \allowbreak
(Z_{112221})_{11}\to \frac{7}{40},\, \allowbreak
(Z_{112221})_{12}\to -\frac{1}{158},\, \allowbreak
(Z_{112221})_{21}\to -\frac{7}{109},\, \allowbreak
(Z_{112221})_{22}\to \frac{27}{350},\, \allowbreak
(Z_{112222})_{11}\to \frac{21}{86},\, \allowbreak
(Z_{112222})_{12}\to \frac{17}{222},\, \allowbreak
(Z_{112222})_{21}\to \frac{1}{148},\, \allowbreak
(Z_{112222})_{22}\to \frac{8}{85},\, \allowbreak
(Z_{121112})_{11}\to \frac{2}{75},\, \allowbreak
(Z_{121112})_{12}\to \frac{11}{137},\, \allowbreak
(Z_{121112})_{21}\to \frac{1}{286},\, \allowbreak
(Z_{121112})_{22}\to \frac{1}{129},\, \allowbreak
(Z_{121121})_{11}\to \frac{19401}{113},\, \allowbreak
(Z_{121121})_{12}\to \frac{6}{43},\, \allowbreak
(Z_{121121})_{22}\to \frac{7672}{45},\, \allowbreak
(Z_{121122})_{11}\to \frac{26}{49},\, \allowbreak
(Z_{121122})_{12}\to -\frac{135}{106},\, \allowbreak
(Z_{121122})_{21}\to \frac{6}{91},\, \allowbreak
(Z_{121122})_{22}\to -\frac{3}{17},\, \allowbreak
(Z_{121212})_{11}\to \frac{7}{100},\, \allowbreak
(Z_{121212})_{12}\to \frac{5}{71},\, \allowbreak
(Z_{121212})_{21}\to 0,\, \allowbreak
(Z_{121212})_{22}\to 0,\, \allowbreak
(Z_{121221})_{11}\to -\frac{7}{102},\, \allowbreak
(Z_{121221})_{12}\to \frac{71}{75},\, \allowbreak
(Z_{121221})_{21}\to \frac{1}{281},\, \allowbreak
(Z_{121221})_{22}\to \frac{2}{23},\, \allowbreak
(Z_{121222})_{11}\to \frac{13}{121},\, \allowbreak
(Z_{121222})_{12}\to \frac{10}{127},\, \allowbreak
(Z_{121222})_{21}\to 0,\, \allowbreak
(Z_{121222})_{22}\to \frac{1}{3350},\, \allowbreak
(Z_{122112})_{11}\to \frac{131}{91},\, \allowbreak
(Z_{122112})_{12}\to \frac{8}{89},\, \allowbreak
(Z_{122112})_{21}\to -\frac{27}{59},\, \allowbreak
(Z_{122112})_{22}\to -\frac{47}{92},\, \allowbreak
(Z_{122122})_{11}\to \frac{37}{91},\, \allowbreak
(Z_{122122})_{12}\to -\frac{17}{67},\, \allowbreak
(Z_{122122})_{21}\to \frac{1}{754},\, \allowbreak
(Z_{122122})_{22}\to \frac{279}{106},\, \allowbreak
(Z_{122212})_{11}\to \frac{5}{64},\, \allowbreak
(Z_{122212})_{12}\to \frac{11}{177},\, \allowbreak
(Z_{122212})_{21}\to \frac{1}{436},\, \allowbreak
(Z_{122212})_{22}\to \frac{1}{286},\, \allowbreak
(Z_{122221})_{11}\to \frac{34973}{135},\, \allowbreak
(Z_{122221})_{12}\to -\frac{173}{63},\, \allowbreak
(Z_{122221})_{22}\to \frac{38353}{141},\, \allowbreak
(Z_{122222})_{11}\to \frac{25}{127},\, \allowbreak
(Z_{122222})_{12}\to \frac{16}{105},\, \allowbreak
(Z_{122222})_{21}\to \frac{1}{9811},\, \allowbreak
(Z_{122222})_{22}\to \frac{1}{3641},\, \allowbreak
(Z_{211112})_{11}\to \frac{22123}{85},\, \allowbreak
(Z_{211112})_{12}\to -\frac{217}{202},\, \allowbreak
(Z_{211112})_{22}\to \frac{28977}{112},\, \allowbreak
(Z_{211122})_{11}\to \frac{9}{115},\, \allowbreak
(Z_{211122})_{12}\to \frac{1}{43},\, \allowbreak
(Z_{211122})_{21}\to \frac{1}{19},\, \allowbreak
(Z_{211122})_{22}\to \frac{5}{131},\, \allowbreak
(Z_{211212})_{11}\to \frac{53}{56},\, \allowbreak
(Z_{211212})_{12}\to \frac{87}{92},\, \allowbreak
(Z_{211212})_{21}\to -\frac{73}{75},\, \allowbreak
(Z_{211212})_{22}\to -\frac{108}{113},\, \allowbreak
(Z_{211222})_{11}\to \frac{50}{47},\, \allowbreak
(Z_{211222})_{12}\to -\frac{31}{23},\, \allowbreak
(Z_{211222})_{21}\to -\frac{131}{92},\, \allowbreak
(Z_{211222})_{22}\to \frac{237}{49},\, \allowbreak
(Z_{212122})_{11}\to \frac{13}{111},\, \allowbreak
(Z_{212122})_{12}\to \frac{1}{34},\, \allowbreak
(Z_{212122})_{21}\to \frac{19}{161},\, \allowbreak
(Z_{212122})_{22}\to \frac{4}{135},\, \allowbreak
(Z_{212212})_{11}\to \frac{18284}{107},\, \allowbreak
(Z_{212212})_{12}\to \frac{54}{167},\, \allowbreak
(Z_{212212})_{22}\to \frac{2561}{15},\, \allowbreak
(Z_{212222})_{11}\to \frac{13}{106},\, \allowbreak
(Z_{212222})_{12}\to \frac{1}{19},\, \allowbreak
(Z_{212222})_{21}\to \frac{13}{106},\, \allowbreak
(Z_{212222})_{22}\to \frac{3}{58},\, \allowbreak
(Z_{221122})_{11}\to \frac{41456}{243},\, \allowbreak
(Z_{221122})_{12}\to -\frac{38}{85},\, \allowbreak
(Z_{221122})_{22}\to \frac{33873}{197},\, \allowbreak
(Z_{221222})_{11}\to \frac{2}{13},\, \allowbreak
(Z_{221222})_{12}\to \frac{5}{44},\, \allowbreak
(Z_{221222})_{21}\to \frac{3}{71},\, \allowbreak
(Z_{221222})_{22}\to \frac{3}{98},\, \allowbreak
(Z_{222222})_{11}\to \frac{15}{68},\, \allowbreak
(Z_{222222})_{12}\to \frac{7}{88},\, \allowbreak
(Z_{222222})_{22}\to \frac{54}{73}. \end{autobreak}
   \qedhere\popQED\end{align*}}

\end{example}

\subsection{Stopping criteria}\label{ssec:stop}
Finally, we present two  criteria for the stopping of the Lasserre-Parrilo lifting hierarchy.

\subsubsection{Positivstellensatz inspired stopping criterion}
The first criterion mirrors the analogous statement (\cite[Theorem 6.8]{HKM16}) in the $\Gat$-free context of matrix convex sets.

\def\al{\alpha}\def\be{\beta}
\def\FF{\mathbb C}
 Given $\alpha,\beta,\nuv\in\N$, and an $\ell\times\ell$ symmetric matrix-valued noncommutative polynomial $p$, set 
\begin{equation*}
  \QM_{\al, \beta}^{\nuv}(p):=
   \Sigma_{\al}^{\nuv}  +\Big\{ \sum_i^{\rm finite}  f_i^* pf_i :
   \  f_i \in \FF^{\ell\times \nuv}\ax_\beta \Big\}
  \  \subseteq \ \FF^{\nuv\times \nuv}\ax_{\max \{2\al, 2\beta +a\}},
\end{equation*}
 where $a=\deg(p)$ and $\Sigma_\al^\nuv$ denotes all $\nuv\times\nuv$ sums
of squares of degree at most $2\al$.
Clearly, if $f\in \QM_{\al,\beta}^{\nuv}(p)$ then $f|_{\cD_p}\succeq0$.
We call $\QM_{\al,\beta}^\nuv(p)$ the \df{truncated quadratic module} defined by $p$.
For notational convenience, we write 
{$\QM_{k}^\nuv
\subseteq  \FF^{\nuv\times \nuv}\ax_{2k}
$ for  $\QM_{k, \, \lfloor \frac{2k-a}{2}\rfloor}^\nuv.$}
We also introduce
\[
 \QM^\nuv(p):=\bigcup_{\al,\be} \QM_{\al,\beta}^\nuv(p),
\]
the \df{quadratic module} defined by $p$. 
If $\nuv=1$ we shall often omit the superscript $\nuv$.
Observe that $p$ is Archimedean if 
the convex cone $\QM^\nuv(p)$ has an order unit, i.e., for all symmetric $\nuv\times\nuv$ matrix-valued
polynomials $f$ there is $N\in\N$ with $N-f\in \QM^\nuv(p)$. (This notion is 
easily seen to be 
independent of $\nuv$, cf.~\cite[\S 6]{HKM13}.)

\begin{definition}
We say that $p$ has the \df{$\Gat$-positivity certificate property  ($\Gat$-PCP)}, if for some $N\in\N$, 
every 
$\nuv\in\N$ and every
$\Gat$-pencil $L^\Gat$ of size $\nuv\times\nuv$, we have
\[\pushQED{\qed}
L^\Gat|_{\cD_p}\succeq0 \quad\Rightarrow\quad L^\Gat \in \QM_{N}^\nuv(p).\qedhere\popQED
\]
\end{definition}

We refer the reader to \cite{Scw04,Nis07,LPR20,SL23} for the classical commutative study
of degree bounds needed in Positivstellensatz certificates. See
also \cite{HN09,HN10} for an application of these bounds to convexity and Lasserre-Parrilo lifts in the commutative.

The next theorem says if  $\Gat$-PCP holds, then one of the truncated
Lasserre--Parrilo lifts gives exactly the $\Gat$-convex hull of $\cD_p^\infty$.

\begin{proposition} \label{prop:PCP}
Suppose $\cD_p$ is bounded, 
{$p(0)\succeq 0$}, and $\Gat(0)=0$.
If $p$ has the $\Gat$-PCP, 
then \[\Gat\mhyphen\conv  (\cD_p^\infty)= \hat{\mathfrak{L}_p^\Gat}\Big(\cdot, \left\lceil \frac N2\right\rceil\Big).\]
\end{proposition}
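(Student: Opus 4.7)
The plan is as follows. The inclusion $\Gat\mhyphen\conv(\cD_p^\infty) \subseteq \hat{\mathfrak{L}_p^\Gat}(\cdot,\lceil N/2\rceil)$ is immediate from Theorem~\ref{th-cd}\ref{i:cd:a}, since $\Gat\mhyphen\conv(\cD_p^\infty) = \bigcap_d \hat{\mathfrak{L}_p^\Gat}(\cdot,d)$ is contained in every particular truncation. The nontrivial direction I would attack by combining a Hahn-Banach separation with the $\Gat$-PCP certificate.

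First I would reduce to the Archimedean case. Since $\cD_p$ is bounded by some constant $k$, replacing $p$ by $p\oplus(k^2-\sum_i x_i^2)$ leaves the semialgebraic set unchanged and renders $p$ Archimedean, so Corollary~\ref{cor:closedbd} together with Proposition~\ref{prop:candb} yields that $\bm K:=\Gat\mhyphen\opco(\cD_p^\infty)$ is SOT-closed and bounded, and the hypotheses $\Gat(0)=0$ and $p(0)\succeq 0$ give $0\in \bm K$. At finite level $n$, this operator $\Gat$-convex hull coincides with $\Gat\mhyphen\conv(\cD_p^\infty)(n)$. Arguing by contradiction, suppose $X\in\hat{\mathfrak{L}_p^\Gat}(n,\lceil N/2\rceil)$ with $X\notin \bm K(n)$. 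Theorem~\ref{hb-op} then supplies a monic $\Gat$-pencil $L^\Gat$ with $L^\Gat|_{\bm K}\succeq 0$ (so in particular $L^\Gat|_{\cD_p}\succeq 0$) and $L^\Gat(X)\not\succeq 0$.

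Let $\nuv$ denote the size of $L^\Gat$. By the $\Gat$-PCP,
\[
L^\Gat \;=\; \sum_i s_i^\ast s_i \;+\; \sum_j f_j^\ast\, p\, f_j \;\in\; \QM_N^\nuv(p),
\]
with degrees of the $s_i$ and $f_j$ as prescribed by~\eqref{eq:Malbeta}. Fix $Y\in\mathfrak{L}_p^\Gat(n,\lceil N/2\rceil)$ with $\hat Y=X$ and introduce the Riesz map $L_Y(q):=\sum_\alpha q_\alpha\otimes Y_\alpha$ on matrix-valued polynomials $q$ whose monomial indices lie in the available truncation of $Y$. The $\Gat$-moment-sequence identity~\eqref{eq:gmom} combined with linearity yields $L_Y(L^\Gat)=L^\Gat(\hat Y)=L^\Gat(X)$. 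On the other hand, the very computation appearing in~\eqref{eq:hankform}--\eqref{e:lift:2} inside the proof of Proposition~\ref{prop:lift} shows that the truncated Hankel positivity in~\eqref{eq:clamp1} forces $L_Y(s_i^\ast s_i)\succeq 0$, and an analogous computation with the localizing matrix $H^\Uparrow_{p,\lceil N/2\rceil}(Y)$ forces $L_Y(f_j^\ast p\, f_j)\succeq 0$. Adding these, $L^\Gat(X)=L_Y(L^\Gat)\succeq 0$, contradicting the separation.

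The main obstacle will be the degree bookkeeping: one must verify that the decomposition of $L^\Gat\in\QM_N^\nuv(p)$ furnished by the $\Gat$-PCP can be chosen with $s_i,\,f_j$ whose degrees are captured by the inequalities $H_{\lceil N/2\rceil+\lceil a/2\rceil}(Y)\succeq 0$ and $H^\Uparrow_{p,\lceil N/2\rceil}(Y)\succeq 0$ from~\eqref{eq:clamp1}, where $a=\deg p$. This parallels the accounting in \cite[Theorem~6.8]{HKM16} (the matrix-convex case $\Gat=x$) and should require no new ideas beyond the $\Gat$-moment identity $L_Y\circ \Gat = \Gat\circ \hat{Y}$ above.
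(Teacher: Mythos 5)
Your proposal follows the same route as the paper's own proof: the easy inclusion via the clamping-down theorem, then a Hahn--Banach separation of a purported outlier by a $\Gat$-pencil, the $\Gat$-PCP certificate for that pencil, and pairing against the truncated $\Gat$-Hankel and localizing matrices via the Riesz moment map to produce a contradiction. The only differences are cosmetic. First, your reduction step---replacing $p$ by $p\oplus(k^2-\sum_i x_i^2)$---is unnecessary: the paper simply observes that boundedness of $\cD_p$ together with the $\Gat$-PCP already forces $p$ to be Archimedean (indeed, the $\Gat$-pencils include the ordinary linear pencils $x_j\mapsto x_j$, so nonnegative linear pencils on the bounded set $\cD_p$ lie in the quadratic module, and that yields Archimedeanity). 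Your detour is harmless since the hull $\bm K$ depends only on $\cD_p^\infty$ and you never feed the modified polynomial into $\mathfrak{L}_p^\Gat$, but it adds a step one must be careful not to misuse. Second, you invoke the operator-level separation Theorem~\ref{hb-op} while the paper uses the matricial Theorem~\ref{t:thm:jp-intro} (with Remark~\ref{rem:Kprime}); either works at the finite level where the outlier lives. The degree accounting you defer is indeed the part that carries the substance: the $\Gat$-moment identity \eqref{eq:gmom} gives $\Phi_W^\nu(L^\Gat)=L^\Gat(\hat W)$ by linearity, while positivity of $H_{\eta+\lceil a/2\rceil}(W)$ and $H^\Uparrow_{p,\eta}(W)$ handle the sum-of-squares and $p$-weighted terms respectively, exactly as in \cite[Theorem~6.8]{HKM16}---this is precisely the computation the paper writes out.
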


\begin{proof}
Let $\eta= \lceil \frac N2\rceil$. 
Since $\cD_p$ is  bounded and $p$ has the $\Gat$-PCP, $p$ is Archimedean. 
By Theorem \ref{th-lift},
$\Gat\mhyphen\conv  (\cD_p^\infty)\subseteq\hat{\mathfrak{L}_p^\Gat}(\cdot,\eta)$. 

Now let $\nuv\in\N$ and  $Y\in\hat{\mathfrak{L}_p^\Gat}(\nuv,\eta)\setminus\Gat\mhyphen \conv  (\cD_p^\infty)$
 be given. Choose $W\in{\mathfrak{L}_p^\Gat}(\nuv,\eta)$ satisfying $\hat W=Y$.
 In particular, $H_\eta(W)\succeq0$ and $H^{\Uparrow}_{p,\eta}(W)\succeq0.$
 By the $\Gat-$Hahn-Banach Theorem \ref{t:thm:jp-intro} (cf.~Remark \ref{rem:Kprime}), there 
is a  $\Gat$-pencil $L^\Gat$ (of size  $\nuv$) with $L^\Gat|_{\Gat\mhyphen\conv  (\cD_p^\infty)}\succeq0$ and $L^\Gat(Y)\not\succeq0$. 
By the $\Gat$-PCP property for $p$, 
we have that $L^\Gat \in \QM_{N}^\nuv(p)$, i.e.,  
\begin{equation}\label{eq:symbol}
L^\Gat= \sum_k h_k^*h_k + \sum_{i=1}^r  f_i^* pf_i ,
\end{equation}
where $\deg(h_k)\leq \lfloor \frac N 2\rfloor$  and $2\deg (f_i)+\deg(p)\leq N$ for $i=1,\ldots,r$. 
Now apply the Riesz moment map 
\[
\Phi_W^\nuv:\C^{\nuv\times\nuv}\ax_{\leq N}\to M_{ \nuv^2}, 
\quad
\sum_{\al\in\ax} B_\al \al \mapsto \sum_{\al\in\ax} B_\al \otimes W_\al,
\]
 to \eqref{eq:symbol}:
\begin{equation}\label{eq:las2}
\Phi_W^\nuv(L^\Gat) = \sum_k \Phi_W^\nuv(h_k^*h_k) + \sum_{i} \Phi_W^\nuv( f_i^* pf_i ).
\end{equation}
Since {$H_\eta(W)\succeq0$ and $H^{\Uparrow}_{p,\eta}(W)\succeq0$}, the right hand side of \eqref{eq:las2} is positive semidefinite.
On the other hand, by linearity {in the $\gamma_j$ built into the definition of a $\Gat$-moment sequence,
equation~\eqref{eq:gmom} implies}
\[
  \Phi_W^\nuv(L^\Gat)= L^\Gat(\hat W)=L^\Gat(Y)\not\succeq0,
\] 
a contradiction.
\end{proof}

\subsubsection{Moment problem inspired stopping criterion}
\label{ssec:moment:stop}
Given $d,\eta\in\N$ with $d\leq\eta$, and a truncated moment sequence $Z=(Z_\al)_{|\al|\leq 2\eta}$, we say that the moment sequence $Z$ or its truncated Hankel matrix $H_\eta(Z)$ is \df{$d$-flat} if
$\rank H_{\eta-d}(Z)=\rank H_\eta(Z)$. 
Versions of this flatness condition are important in the classical {commutative} theory of moments  since
they imply the moment problem on $Z$ has a solution \cite{CF08,Lau09}.

\begin{proposition}\label{prop:MP}
Suppose $p$ is a  symmetric matrix-valued noncommutative polynomial and 
let $\delta=\deg\Gat$, 
$a=\deg p$ and $2\eta\ge \delta+2a.$
If each $Y\in\hat{\mathfrak{L}_p^\Gat}(\cdot,\eta)$ admits a lift $W\in{\mathfrak{L}_p^\Gat}(\cdot,\eta)$ that is $a$-flat, then 
\[
\hat{\mathfrak{L}_p^\Gat}(\cdot,\eta)=
\Gat\mhyphen\conv  (\cD_p) =
\Gat\mhyphen\conv  (\cD_p^\infty).
\]
\end{proposition}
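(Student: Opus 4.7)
The chain $\Gat\mhyphen\conv(\cD_p) \subseteq \Gat\mhyphen\conv(\cD_p^\infty) \subseteq \hat{\mathfrak{L}_p^\Gat}(\cdot, \eta)$ is immediate. The first inclusion holds because $\cD_p \subseteq \cD_p^\infty$; the second follows by truncating the construction of Remark \ref{rem lpdp}, since any $X = V^* Z V$ arising from $Z \in \cD_p^\infty$ and a $\Gat$-pair $(Z, V)$ produces a $\Gat$-moment sequence $Y_\alpha = V^* Z^\alpha V$ whose restriction to words of length at most $2\eta + a + 1$ belongs to $\mathfrak{L}_p^\Gat(\cdot, \eta)$. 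The substantive content of the proposition is therefore the reverse containment $\hat{\mathfrak{L}_p^\Gat}(\cdot, \eta) \subseteq \Gat\mhyphen\conv(\cD_p)$: given $Y$ in the truncated lift we must exhibit it as a $\Gat$-convex combination of a point from $\cD_p$ itself, not merely from the enlarged $\cD_p^\infty$.

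Fix $Y \in \hat{\mathfrak{L}_p^\Gat}(n, \eta)$ together with an $a$-flat lift $W \in \mathfrak{L}_p^\Gat(n, \eta)$. The plan is to run the GNS-type construction from the proof of Proposition \ref{prop:lift}, but now on the \emph{finite-dimensional} quotient made available by flatness. On $\mathcal{V} = \C\ax_{\leq \eta - a} \otimes \C^n$ the sesquilinear form $[s, t]_W = \sum_{\alpha, \beta} \langle W_{\beta^* \alpha} s_\alpha, t_\beta \rangle$ is well-defined (its indices are of length at most $2(\eta - a)$) and positive semidefinite since $H_\eta(W) \succeq 0$; let $\cN$ be its null space and set $\mathcal{W} = \mathcal{V}/\cN$. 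The flatness assumption $\rank H_{\eta - a}(W) = \rank H_\eta(W)$ says precisely that the natural map $\mathcal{V}/\cN \hookrightarrow (\C\ax_{\leq \eta} \otimes \C^n)/\cN'$ (with $\cN'$ the null space at level $\eta$) is an isomorphism. Consequently, although multiplication by $x_j$ raises degree by one, composing with reduction modulo $\cN$ returns to $\mathcal{W}$, yielding well-defined self-adjoint operators $T_j$ on the finite-dimensional Hilbert space $\mathcal{W}$; the symmetry follows from the word-level identity $[x_j s, t]_W = [s, x_j t]_W$.

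With $T = (T_1, \ldots, T_\gv)$ a matrix tuple and $V: \C^n \to \mathcal{W}$ the isometry $v \mapsto [1 \otimes v]$, a standard induction using flatness confirms $V^* T^\alpha V = W_\alpha$ for every word $\alpha$ within the allowed range. The bound $2\eta \geq \delta + 2a$ guarantees this identity both for $|\alpha| \leq \delta$ (needed to check the $\Gat$-pair condition) and for the indices $|\alpha| \leq 2(\eta - a) + a$ appearing when computing $p(T)$. Following equation~\eqref{e:lift:2}, the hypothesis $H_{p, \eta}^\Uparrow(W) \succeq 0$ then translates to $p(T) \succeq 0$, so $T \in \cD_p$. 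The $\Gat$-moment relation~\eqref{eq:gmom} combined with $V^* T^\alpha V = W_\alpha$ for $|\alpha| \leq \delta$ gives
\[
V^* \gamma_j(T) V \;=\; \sum_k \gamma_{j,k} V^* T^{m_{j,k}} V \;=\; \sum_k \gamma_{j,k} W_{m_{j,k}} \;=\; \gamma_j(\hat W) \;=\; \gamma_j(V^* T V),
\]
so $(T, V)$ is a $\Gat$-pair and $Y = V^* T V \in \Gat\mhyphen\conv(\cD_p)$ as required.

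The main technical obstacle will be the careful bookkeeping around the flatness-induced isomorphism $\mathcal{V}/\cN \cong (\C\ax_{\leq \eta} \otimes \C^n)/\cN'$: one must verify that iterated multiplications $T_{j_1} T_{j_2} \cdots T_{j_k}$ genuinely correspond to the class of $x_{j_1} \cdots x_{j_k} \otimes v$ in the quotient, a standard but delicate consequence of flatness. Getting the indexing to line up so that every needed entry of $W$ remains within the truncation level of $\mathfrak{L}_p^\Gat(\cdot, \eta)$ -- and in particular ensuring that the $p$-localizing data encoded in $H^\Uparrow_{p, \eta}(W)$ suffices to deduce $p(T) \succeq 0$ on the finite-dimensional $\mathcal{W}$ -- is precisely the role played by the hypothesis $2\eta \geq \delta + 2a$, and is where the proof has to do its real work.
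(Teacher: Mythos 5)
Your proposal is correct and follows essentially the same GNS-plus-flatness strategy that the paper uses (it proves the one-sided inclusion $\hat{\mathfrak{L}_p^\Gat}(\cdot,\eta)\subseteq\Gat\mhyphen\conv(\cD_p)$ as a lemma, then sandwiches). The only real difference is bookkeeping: you build the sesquilinear form on the \emph{smaller} space $\C\ax_{\leq\eta-a}\otimes\C^n$ and invoke the flatness isomorphism $\mathcal V/\cN\cong(\C\ax_{\leq\eta}\otimes\C^n)/\cN'$ to define the multiplication operators $T_j$, whereas the paper works on the \emph{larger} space $\C\ax_{\leq\eta}\otimes\C^n$ from the start and uses flatness twice to contract: once to conclude $\mathcal W$ is spanned by classes of degree $\leq\eta-1$ (which makes $Z_j$ well defined, exactly the point you flag as the ``main technical obstacle'' — one reduces $t$ modulo $\cN$ to degree $\leq\eta-1$ and then uses the symbolic identity $[x_j s,t]=[s,x_j t]$), and once more to span by degree $\leq\eta-a$ so that the localizing-matrix computation of equation \eqref{e:lift:2} forces $p(Z)\succeq 0$. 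Your version has the pedagogical advantage of making visible why the smaller truncation level is the ``correct'' finite-dimensional GNS space, at the cost of the slightly awkward up-and-back-down step through the isomorphism; the paper's version keeps the ambient space fixed and lets the two flatness consequences do the work sequentially. Either route leads to the same matrix tuple $T\in\cD_p$ and isometry $V$ with $(T,V)$ a $\Gat$-pair, and your chain-of-inclusions wrap-up matches the paper's proof of the proposition.
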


We split off the key step in establishing the proposition into a lemma that may be of independent interest.

\begin{lemma}\label{lem:MP}
Let $\delta=\deg\Gat$,
$a=\deg p$ and $2\eta \ge \delta+2a.$ 
If $Y\in\hat{\mathfrak{L}_p^\Gat}(\cdot,\eta)$ admits a lift $W\in{\mathfrak{L}_p^\Gat}(\cdot,\eta)$ that is 
{$a$-flat,}
then $Y\in\Gat\mhyphen\conv(\cD_p)$.
\end{lemma}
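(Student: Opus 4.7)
The strategy is to adapt the GNS-style construction used in Proposition~\ref{prop:lift}, with the flatness assumption replacing the Archimedean hypothesis as the source of bounded multiplication operators: flatness will now make the representation finite-dimensional rather than merely bounded. Given $Y\in \hat{\mathfrak{L}_p^\Gat}(\cdot,\eta)$ with an $a$-flat lift $W\in \mathfrak{L}_p^\Gat(n,\eta)$, I will form the truncated free vector space $\mathcal V = \C\langle x\rangle_{\le\eta}\otimes \C^n$, equip it with the positive semidefinite sesquilinear form $[s,t]_W = \sum_{\alpha,\beta}\langle W_{\beta^*\alpha}s_\alpha,t_\beta\rangle$ coming from $H_\eta(W)\succeq 0$, and pass to the quotient by the null space to obtain a finite-dimensional Hilbert space $\mathcal W$.

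Next I will use the $a$-flatness $\rank H_{\eta-a}(W)=\rank H_\eta(W)$ (which, for $a\ge 1$, implies $\rank H_{\eta-1}(W)=\rank H_\eta(W)$) to construct self-adjoint multiplication operators $Z_1,\ldots,Z_\gv$ on $\mathcal W$: each class $[s]\in \mathcal W$ has a representative $\tilde s\in \mathcal V_{\le\eta-1}$, so $x_j\tilde s\in \mathcal V_{\le\eta}$, and setting $Z_j[s]:=[x_j\tilde s]$ is well-defined by the same Cauchy-Schwarz / shift-invariance argument as in Proposition~\ref{prop:lift}, with Cauchy-Schwarz enabled by $H_{\eta+\lceil a/2\rceil}(W)\succeq 0$. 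Self-adjointness follows from $W_{\alpha^*}=W_\alpha^*$. The positivity $p(Z)\succeq 0$ then follows verbatim from the computation in equation~\eqref{e:lift:2}: for $[s]$ with representative $\tilde s\in \mathcal V_{\le\eta-a}$ (using the full $a$-flatness), iterated application of the $Z_j$ produces $p(Z)[s]=[p(x)\tilde s]\in \mathcal W$, and
\[
\langle p(Z)[s],[s]\rangle = \langle H^{\Uparrow}_{p,\eta-a}(W)\vec{\tilde s},\vec{\tilde s}\rangle \ge 0,
\]
since $H^\Uparrow_{p,\eta-a}(W)$ is a principal submatrix of $H^\Uparrow_{p,\eta}(W)\succeq 0$. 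Hence $Z=(Z_1,\ldots,Z_\gv)\in \cD_p$ on the finite-dimensional space $\mathcal W$.

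To conclude, I will set $Q\colon\C^n\to\mathcal W$, $Qv:=[\emptyset\otimes v]$; this is an isometry (since $W_\emptyset=I$) and iteration of the $Z_j$'s gives $Q^*Z^\alpha Q=W_\alpha$ for every word $\alpha$ of admissible degree. In particular $Q^*ZQ = \hat W = Y$. The hypothesis $2\eta\ge\delta+2a$ ensures that the $\Gat$-moment relations~\eqref{eq:gmom}, which involve words of length at most $\delta$, are faithfully transported to the operator level, so that, writing $\gamma_j=\sum_k\gamma_{j,k}m_{j,k}$,
\[
\gamma_j(Q^*ZQ)=\gamma_j(\hat W)=\sum_k\gamma_{j,k}W_{m_{j,k}}=\sum_k\gamma_{j,k}Q^*Z^{m_{j,k}}Q=Q^*\gamma_j(Z)Q.
\]
Thus $(Z,Q)$ is a $\Gat$-pair with $Z\in\cD_p$ and $Y=Q^*ZQ$, so $Y\in \Gat\mhyphen\conv(\cD_p)$.

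The main technical obstacle is the flat-extension step: verifying that the $a$-flatness together with $H_{\eta+\lceil a/2\rceil}(W)\succeq 0$ indeed yields well-defined self-adjoint multiplication operators on the finite-dimensional GNS space and that the values $Q^*Z^\alpha Q$ for large $|\alpha|$ coincide with the prescribed $W_\alpha$. This is a noncommutative Curto--Fialkow-type argument, and the interplay between the $a$-flatness (needed so that $p(Z)$ is meaningful on $\mathcal W$) and the degree bound $2\eta\ge\delta+2a$ (needed so that the $\Gat$-moment constraints reach high enough) is the technical crux.
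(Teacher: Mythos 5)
Your proposal is correct and follows essentially the same GNS-style construction as the paper's proof: form the quotient of $\C\ax_{\le\eta}\otimes\C^n$ by the null space of the truncated Hankel form, use $a$-flatness (in particular $\rank H_{\eta-1}(W)=\rank H_\eta(W)$) to make the left-multiplication operators $Z_j$ well-defined, invoke the localizing matrix together with the rank condition $\rank H_{\eta-a}(W)=\rank H_\eta(W)$ to conclude $p(Z)\succeq 0$, and read off $Y=Q^*ZQ$ with $(Z,Q)$ a $\Gat$-pair from the $\Gat$-moment relations \eqref{eq:gmom}. The only small imprecision in your framing is the phrase ``flatness will now make the representation finite-dimensional''; the GNS space $\mathcal W$ is already finite-dimensional because it is a quotient of the truncated space $\C\ax_{\le\eta}\otimes\C^n$ -- what flatness buys you is that every class admits a representative of degree at most $\eta-1$ (respectively $\eta-a$), which is what makes $Z_j$ well-defined (respectively makes the positivity $\langle p(Z)[s],[s]\rangle\ge0$ propagate to all of $\mathcal W$), not the finite-dimensionality itself.
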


\begin{proof}
The proof goes along the same lines as that of Theorem \ref{th-lift}, so we only give a sketch, leaving some of the details to the reader.

   Define a sesquilinear form $[ \cdot, \cdot]$ on
	$\mathcal{V} = \C\langle x\rangle_\eta  \otimes \C^n$ by
	\begin{equation*}
		[ s,t ]= \sum_{|\alpha|,|\beta|\le \eta}  \langle Y_{\beta^* \alpha} s_\alpha, t_\beta \rangle
	\end{equation*}
	where $s=\sum \alpha \otimes s_\alpha$ and $t=\sum \beta\otimes t_\beta$. 
	The assumption $H_\eta(W) \succeq 0$ implies $[s,s]\geq 0$ for all $s \in \mathcal{V},$ so that this sesquilinear  form is positive semidefinite.
	 As before, 
	\[
	\cN = \{s : [s,s]= 0\}
	\]
	is a subspace of $\mathcal{V}.$ Modding out $\cN$ produces a well-defined and positive semidefinite form
	$$
	\langle s,t \rangle = [ s + \cN,t+\cN ]_Y
	$$
	on the quotient $\mathcal{W}$ of $\mathcal{V}$ by $\cN,$ where, as is standard practice, $s\in \mathcal{V}$ is identified
	with its image $s+\cN$ in the quotient.  At this point, observe 
	$\langle \cdot,\cdot \rangle$ is a positive definite sesquilinear form on the finite-dimensional 
	 vector space $\mathcal{W}.$ Hence $\mathcal{W}$ is already a Hilbert space. 

      Since $W$ is $a$-flat, $\rank H_{\eta-1}(W)=\rank H_{\eta}(W).$  Hence $\mathcal{W}$ is spanned 
       by $\{\alpha\otimes h: |\alpha|\le \eta-1, \, h\in \C^n\}.$  If $s\in \cN$ has degree at most $\eta-1,$
        then $x_j s$ has degree at most $\eta$ and   $[x_j s,t]=0.$  It follows that $x_j$ determines
        a (well defined) operator $Z_j$ on $\mathcal{W}$. Since $\mathcal{W}$ is a finite-dimensional
         Hilbert space, $Z_j$ is a bounded operator. 
        If $s$ as in equation~\eqref{e:lift:2}
        has degree at most $\eta-a,$ then the computation of 
        equation~\eqref{e:lift:2} gives $\langle p(Z)s,s\rangle \ge 0.$
        Since $\rank H_{\eta-a}(W) =\rank H_\eta,$ polynomials of degree at most $\eta-a$ span
        $\mathcal{V}.$ Thus $p(Z)\succeq 0.$ Hence $Z\in \cD_p$ and, letting $V$ denote
        the usual isometry, $(Z,V)$ is a $\Gat$-pair and $V^*Z^\alpha V= W_\alpha$ for
        all $|\alpha|\le \eta.$ As a side remark, we can extend $W$ to a full $\Gat$-moment
        sequence via this formula.
\end{proof}

\begin{proof}[Proof of Proposition \ref{prop:MP}]
Simple corollary of Lemma \ref{lem:MP}.
	Using 
the obvious inclusion of \eqref{eq-cd} of
	Theorem~\ref{th-cd} (which does not require Archimedeanity), we can deduce
\begin{equation*}
\Gat\mhyphen\conv  (\cD_p) \subseteq
\Gat\mhyphen\conv  (\cD_p^\infty)
\subseteq
	\bigcap_{d=0}^\infty \hat{\mathfrak{L}_p^\Gat}(\cdot, d)  \subseteq 
	\hat{\mathfrak{L}_p^\Gat}(\cdot, \eta) \subseteq \Gat\mhyphen\conv  (\cD_p),
\end{equation*}
whence we have equalities throughout.
\end{proof}

\subsection{The operator \texorpdfstring{$\Gat$}{Gamma}-convex hulls} \label{ssec:moreop}

Most of the results in this section have their counterpart in the context of operator $\Gat$-convex sets as we now explain. 

Let $p \in M_{\mu}(\C\langle x \rangle)$ be 
a symmetric matrix-valued noncommutative polynomial 
 in $\gv$ variables, and consider its 
 free operator semialgebraic set $\cD_p^\infty,$ whose 
 operator $\Gat$-convex hull is denoted $\Gat\mhyphen\opco(\cD_p^\infty).$ Thus  $\Gat\mhyphen\opco(\cD_p^\infty)$
 is just $\Gat\mhyphen\conv(\cD_p^\infty)$ together with a new infinite level 
 $\Gat\mhyphen\opco(\cD_p^\infty)(\infty) ,$ where 
 a tuple  $X \in \gtupn$ lies in $\Gat\mhyphen\opco(\cD_p^\infty)(\infty) $ if there exists a tuple $Y \in \cD_p^\infty$ acting on some 
separable
	Hilbert space $\cH$ and an isometry $V: \cH_n \to \cH$ such that $(Y,V)$ is a $\Gat$-pair and $X = V^*YV.$
  By construction, this operator $\Gat$-convex hull is the smallest  operator 
  $\Gat$-convex  set containing $\cD_p^\infty.$

Since every $\Gat$-pencil $L^\Gat$ as in \eqref{eq: gen-sp} is a matrix-valued polynomial, 
 this definition \eqref{eq:operatorsa} applies to $L^\Gat$  yielding the \df{operator $\Gat$-spectrahedron}
$\cD_{L^\Gat}^\infty$ and then the 
\df{operator $\Gat$-spectrahedrop}
$\proj_x(\mathcal{D}_{L^\Gat}^\infty)^\infty$.

Further, the set of all $\Gat$-moment sequences
$\mathfrak{M}^\Gat$ is extended by {\CCBB  {allowing}} moment sequences of operators on a separable infinite-dimensional Hilbert space $\cH$ to yield the infinite level $\mathfrak{M}^\Gat(\infty)$.
To each $Y\in \mathfrak{M}^\Gat(\infty)$ we can assign, as in Definition \ref{def:Hankel},  the $\Gat$-Hankel ``matrix''
$H(Y)$ as in \eqref{eq:hankeldef}, the truncated 
$\Gat$-Hankel ``matrix'' $H_d(Y)$ as in \eqref{eq:hankeldeftrunc}, and their localizing counterparts.

Finally, 
$\mathfrak{L}^\Gat_p$ of Subsection \ref{sec-lift} is extended
to include the infinite level
\begin{equation*}
\mathfrak{L}^\Gat_p(\infty) = \{Y = (Y_{\alpha})_\alpha \in \mathfrak{M}^\Gat(\infty)  \ | \  \ H (Y) \succeq 0, \ H^{\Uparrow}_p(Y) \succeq 0\}.
\end{equation*}
The obtained lift is denoted $\mathfrak{L}^{\Gat,\infty}_p$.

 Given $Y \in \mathfrak{L}^{\Gat,\infty}_p(n)$ for $n\in\N\cup\{\infty\}$ let
\begin{equation*}
\hat{Y} = (Y_{x_1}, Y_{x_2}, \ldots, Y_{x_{\gv}}) \in \gtup_n
\end{equation*}
and denote
\[
\hat{\mathfrak{L}}_p^{\Gat,\infty} = \{\hat{Y} \ | \ Y \in \mathfrak{L}^{\Gat,\infty}_p\}.
\]

\begin{corollary} 
	If $p$ is Archimedean, then
	$$
	\Gat\mhyphen\opco(\mathcal{D}_p^\infty) = \hat{\mathfrak{L}}_p^{\Gat,\infty}.
	$$
\end{corollary}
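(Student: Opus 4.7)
The plan is to follow the template of Theorem~\ref{th-lift}, verifying that both Remark~\ref{r:Gat:pair:moment} and Proposition~\ref{prop:lift} admit straightforward extensions to the infinite level, and then observing that the argument of Theorem~\ref{th-lift} is insensitive to whether $n$ is finite or $\infty$.

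\medskip

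For the inclusion $\Gat\mhyphen\opco(\cD_p^\infty) \subseteq \hat{\mathfrak{L}}_p^{\Gat,\infty}$, let $X \in \Gat\mhyphen\opco(\cD_p^\infty)$. By definition, $X = V^* Z V$ for some $Z \in \cD_p^\infty$ acting on a separable Hilbert space $\cH$ and an isometry $V$ such that $(Z,V)$ is a $\Gat$-pair. Define $Y_\alpha = V^* Z^\alpha V$ for each word $\alpha$. As in Remark~\ref{r:Gat:pair:moment}, the sequence $Y=(Y_\alpha)_\alpha$ is a $\Gat$-moment sequence (since $(Z,V)$ is a $\Gat$-pair, the identity $\gamma_j(V^*ZV) = V^*\gamma_j(Z)V$ forces the constraint~\eqref{eq:gmom}), and the positivity conditions $H(Y)\succeq 0$ and $H_p^\Uparrow(Y)\succeq 0$ follow from $p(Z)\succeq 0$ exactly as in \eqref{e:pos:con}. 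Hence $Y \in \mathfrak{L}^{\Gat,\infty}_p$ with $\hat Y = X$, giving $X \in \hat{\mathfrak{L}}_p^{\Gat,\infty}$.

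\medskip

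For the reverse inclusion, I would adapt Proposition~\ref{prop:lift} verbatim to the operator level. Given $Y \in \mathfrak{L}^{\Gat,\infty}_p(n)$ with $n \in \N \cup \{\infty\}$, form the sesquilinear form $[\,\cdot,\cdot\,]_Y$ on $\mathcal{V}=\C\langle x\rangle \otimes \cH_n$ via \eqref{eq:hankform}. The assumption $H(Y) \succeq 0$ makes the form positive semidefinite; quotient out its radical $\cN$ to obtain an inner product space $\mathcal{W}$, and complete to a Hilbert space $\mathcal{H}'$. As in the proof of Proposition~\ref{prop:lift}, left multiplication $x_j$ descends to an operator $Z_j$ on $\mathcal{W}$, the self-adjointness property \eqref{e:lift:1} follows from $Y_{\alpha^*}=Y_\alpha^*$, and the Archimedean hypothesis on $p$ together with $H_p^\Uparrow(Y) \succeq 0$ (via the computation~\eqref{e:lift:2}) yields both $p(Z)\succeq 0$ and the uniform bound $\|Z_j\|\le k$. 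Boundedness allows the $Z_j$ to extend to bounded self-adjoint operators $T_j$ on $\mathcal{H}'$, and since $p(Z) \succeq 0$ on the dense subspace $\mathcal{W}$, a continuity argument gives $p(T)\succeq 0$. The canonical inclusion $Q : \cH_n \to \mathcal{H}'$, $v \mapsto \emptyset \otimes v$, is an isometry satisfying $Q^* T^\alpha Q = Y_\alpha$; the computation \eqref{e:QZQ} shows $(T,Q)$ is a $\Gat$-pair. Crucially, $\mathcal{H}'$ is separable whenever $\cH_n$ is (it is spanned by countably many vectors $\alpha \otimes e_k$), so $T \in \cD_p^\infty$. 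Therefore $\hat Y = Q^* T Q \in \Gat\mhyphen\opco(\cD_p^\infty)$.

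\medskip

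The main technical point to verify is that the GNS-style completion in the infinite-level case produces an operator in $\cD_p^\infty$, which hinges on $\mathcal{H}'$ being separable, and this is automatic since $\cH_n$ is separable and $\C\langle x\rangle$ is countable. Beyond this, every other step of Proposition~\ref{prop:lift}---in particular, the invariance of $\cN$ under left multiplication, the positivity $p(Z)\succeq 0$, the Archimedean norm bound extending $Z_j$ to $T_j$, and the verification of the $\Gat$-pair property---translates word for word from the finite-dimensional setting to the operator setting, since none of these arguments use finite dimensionality of $\cH_n$.
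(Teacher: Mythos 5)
Your proposal is correct and follows exactly the approach the paper takes: the paper's own proof simply says ``the same as Theorem \ref{th-lift}, with obvious modifications'' and identifies the GNS construction on $\C\ax\otimes\cH$ as the key change, while you supply those details (in particular the separability of the completed Hilbert space) and verify that no step of Proposition~\ref{prop:lift} or Remark~\ref{r:Gat:pair:moment} uses finite dimensionality of $\cH_n$.
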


\begin{proof}
The proof is the same as that of Theorem \ref{th-lift}, with obvious modifications. The main change is that in the GNS construction one now works with $\C\ax\otimes\cH$ instead of $\C\ax\otimes\C^n$, where $\cH$ denotes an infinite-dimensional Hilbert space. The routine details are left to the reader.
\end{proof}

Obvious modifications of the truncations \eqref{eq:clamp1} and projections \eqref{eq:clamp2} of Subsection \ref{ssec:step2} lead to the sets 
$\mathfrak{L}_p^{\Gat,\infty}(\cdot, d)$ and
$\hat{\mathfrak{L}}_p^{\Gat,\infty}(\cdot, d)$, respectively.
With this at hand, we can state our final corollary.

\begin{corollary}
	If $p$ is an Archimedean symmetric matrix-valued noncommutative polynomial, then 
	\begin{enumerate}[\rm (a)]
		\item 
     for every $n\in\N\cup\{\infty\},$
		\begin{equation}\label{eq-cdz}
			\bigcap_{d=0}^\infty \hat{\mathfrak{L}}_p^{\Gat,\infty}(n, d) = \hat{\mathfrak{L}}_p^{\Gat,\infty}(n) = \Gat\mhyphen\opco  (\cD_p^\infty)(n); 
		\end{equation}
		\item 
    for every $d$ there is a $\Gat$-pencil $\LG_d$ given by a tuple $A^{(d)}$ such that $\hat{\mathfrak{L}_p^{\Gat,\infty}}(\cdot, d)$ is the projection of $\cD^{\Gat,\infty}_{A^{(d)}}.$ 
	\end{enumerate}
	Hence, the operator $\Gat$-spectrahedrops $\hat{\mathfrak{L}}_p^{\Gat,\infty}(\cdot, d)$ give increasingly finer outer approximations of the operator $\Gat$-convex hull 
$\Gat\mhyphen\opco(\cD_p^\infty)$
	of $\cD_p^\infty.$
\end{corollary}

\begin{proof}
By Theorem \ref{th-cd} it suffices to treat the case
$n=\infty$.
To prove the nontrivial inclusion 
in the first equality of \eqref{eq-cdz},
	let $Z\in \bigcap_d\hat{\mathfrak{L}}_p^{\Gat,\infty}(\infty,d)$.
	For every $d$ there is a (truncated) moment sequence
	$Y^{(d)} = (Y^{(d)}_\alpha)\in {\mathfrak{L}}_p^{\Gat,\infty}(\infty;d)$ such that
	\[
	(Y^{(d)}_{x_1},\dots,Y^{(d)}_{x_{\gv} }) = Z.
	\]
	
	 By Lemma~\ref{lema:seqbded} (more precisely, its operator counterpart that is established with the same proof),  for a given word $\alpha,$ 
	the sequence $(Y^{(d)}_\alpha)_{|\alpha|\leq 2d+\deg(p)}$ is bounded. Since 
	 there are countably many such 
	sequences, there exists a moment sequence $(Y_\alpha)$ from 
$\cB(\cH)$
	and a subsequence $(d_k)_k$ of indices  such that
	$(Y^{(d_k)}_\alpha)$ WOT-converges to  $Y_\alpha \in 
\cB(\cH)$
	for each word $\alpha.$
	
	For each $k$ the sequence $Y^{(d_k)}$ satisfies \eqref{eq:112} with $d=d_k$ and hence for all $d\le d_k.$ Hence the limit $\Gat$-moment sequence 
	$Y = (Y_\alpha)_\alpha$ satisfies \eqref{eq:112} for all indices $d$  and thus $Y$ 
       satisfies \eqref{eq:111} and hence belongs to $\mathfrak{L}_p^{\Gat,\infty}(\infty).$ Thus, $Z\in \hat{\mathfrak{L}}_p^{\Gat,\infty}(\infty)$ since
	\[
	Z = (Y_{x_1},\dots,Y_{x_{\gv} }) = \hat{Y}.  \qedhere
	\]
\end{proof}

\begin{remark}
Finally, let us discuss the stopping criteria for the Lasserre-Parrilo lifts in the operator $\Gat$-convex context. 
The Positivstellensatz inspired version, Proposition~\ref{prop:PCP} extends to the operator $\Gat$-convex case mutatis mutandis
with the conclusion
\[
\Gat\mhyphen\opco  (\cD_p^\infty)= \hat{\mathfrak{L}}_p^{\Gat,\infty}\Big(\cdot, \left\lceil \frac N2\right\rceil\Big).
\]

A version of the moment inspired stopping criterion with a conclusion as in Proposition~\ref{prop:MP} requires 
 the addition of an Archimedean hypothesis on the polynomial $p$ and a reinterpretation of the flatness condition.
 Indeed, in this case, the vector space $\mathcal{W}$ with positive definite sesquilinear form $\langle \cdot,\cdot\rangle$
 as in the proof of Lemma~\ref{lem:MP}, will not (necessarily) be complete. It will be, in general, necessary to form the
 closure. Hence the Archimedean assumption is needed to guarantee that the operators $Z_j$ extend to bounded operators
 $T_j$ on the completion of $\mathcal{W}.$  
 
 The natural reformulation of the flatness condition of Subsection~\ref{ssec:moment:stop} in terms of a Schur
 complement extends readily to the operator (infinite-dimensional) setting.   Borrowing from the presentation in 
 \cite{Dritschel} (or see \cite[Chapter XVI]{FF90}),  for  a positive semidefinite  block operator matrix 
 \[
   D=\begin{pmatrix} A & B \\ B^* & C \end{pmatrix},
 \]
  the Schur complement of $A$ 
  is the largest positive semidefinite operator $X$ such that 
 \[
   \begin{pmatrix} A& B \\ B^* & C-X\end{pmatrix} \succeq 0. 
 \]
  Thus, if $0\preceq Y$ and 
 \[
   \begin{pmatrix} A& B \\ B^* & C-Y\end{pmatrix} \succeq 0,
 \]
  then $Y\preceq X.$  Factorizing the positive operator matrix  $D$ leads to 
  a contraction $G$ such that 
  { $B=A^{\nicefrac12} G C^{\nicefrac12}$}
  and then
   { $X=C^{\nicefrac12} (I-G^*G)C^{\nicefrac12}$} 
   is the desired Schur complement.
   
    There is another description of $X$ that is well suited to the present
    application. Namely, again assuming $D\succeq 0$ and letting $\cE$
     denote the space that $C$ acts on and $\cH$ the space that $A$ acts on,
 { \begin{equation}\label{eq:sesqui}
  \langle X g,g\rangle =\inf\left\{ \begin{pmatrix} A &  B \\ B^* & C \end{pmatrix} \, \begin{pmatrix} h\\ g\end{pmatrix},
      \begin{pmatrix} h\\ g\end{pmatrix} \mid h \in \cH\right\},
      \quad g\in\cE.
 \end{equation}
 }
  Consider the positive semidefinite sesquilinear form 
\[
 [ \begin{pmatrix} h\\ g\end{pmatrix}, \begin{pmatrix} h^\prime \\ g^\prime \end{pmatrix} ]
    = \langle D  \begin{pmatrix} h\\ g\end{pmatrix}, \begin{pmatrix} h^\prime \\ g^\prime \end{pmatrix} \rangle .
\]
From the description \eqref{eq:sesqui} it is clear that if $X=0,$ then 
 the equivalence classes of $h\in \cH \subseteq \cH\oplus \cE$ in the quotient are dense in the 
    Hilbert space obtained in the usual way by modding out null vectors and forming the completion. 
  
   Given $n,d,\eta\in\N$ with $d\leq\eta$, 
   a truncated moment sequence $Z=(Z_\al)_{|\al|\leq 2\eta}$ with $Z_\alpha\in M_n$ 
      is $d$-flat 
    if and only if, setting $D=H_\eta(Z)$ and $A=H_{\eta-d}(Z),$ the Schur complement of
    $A$ (in $D$) is $0.$  Thus, we make this Schur complement condition the definition
    of $d$-flat when the $Z_\alpha$ are allowed to be operators on an infinite-dimensional Hilbert space.
  With these hypotheses,  the conclusion of Proposition \ref{prop:MP} 
in the operator $\Gat$-convex setting is
\[ \pushQED{\qed}
\hat{\mathfrak{L}}_p^{\Gat,\infty}(\cdot,\eta)=
\Gat\mhyphen\opco  (\cD_p) =
\Gat\mhyphen\opco  (\cD_p^\infty).
\qedhere\popQED
\]
\end{remark}

\newpage

\appendix

\section{An operator convex set as an nc convex set}
\label{sec:app}
   This appendix  explains how a weak$^*$-closed operator convex set can be viewed as an nc convex set.
     Let $\sR=\sR_\gv$ denote  $\gv$-dimensional \df{row Hilbert space}. Thus $\sR\subseteq \cB(\C^\gv)=M_{\gv}$ is the operator space consisting
     of $\gv\times \gv$ matrices with zero entries except along the first row.  It is common to view $\sR$ as $M_{1,\gv}.$ 
      Likewise, let $\sC\subseteq \cB(\C^\gv)=M_{\gv}$ denote $\gv$-dimensional   \df{column Hilbert space}.   As an 
       operator space, $\sR$ is endowed with the following matrix norm structure it inherits from the 
       space $M_n(\cB(\C^\gv)) = \cB(\C^n\otimes \C^\gv).$   An element $X$ in $M_n(\sR)$ is a 
       $1\times \gv$ block matrix with entries from $M_n,$
   \[
      X=\begin{pmatrix} X_1 & \cdots & X_{\gv} \end{pmatrix}, 
  \]
   and      $\|X\|=\|X\|_r,$ 
   the norm of $X,$ is the norm of $X$ as an operator  $\C^\gv \otimes \C^n\to \C^n.$ In particular,
         $\|X\|\le 1$ if and only if $\sum X_j X_j^* \preceq I_n.$  Similar considerations apply for the column space $\sC$. 
         
   In general, the (standard) dual $E^*$ of an operator space $E$ is the operator space $\Cb(E,\C)$  of completely 
    bounded maps from $E$ to $\C.$ 
    The matrix norm structure on $E^*$ is determined via the {(isometric)} identification
    of $M_n(E^*)$ with $\Cb(E,M_n),$ the space of completely bounded maps from $E$ into $M_n;$
    that is,  for $F=(f_{j,k}) \in M_n(E^*),$
  \[
   \|F\| =\| (f_{j,k})\| = \sup \Big\{ \| \big [ f_{j,k} (Y(a,b)) \big ]_{a,b} \|  \, : \,  Y=(Y(a,b)) \in M_m(E), \, \|Y\|=1,\, m\in\N\Big\}.
  \]
     One finds (see \cite[Section I.3.4]{ER00} or \cite[Proposition 14.9]{Pa})
     that $\sC^* =\sR,$ the operator space dual,  completely isometrically via the
     canonical  mapping $\gamma:\sR\to \sC^*,$ where, for 
 \[
  x= \begin{pmatrix} x_1  & \ldots & x_\gv\end{pmatrix} \in \sR,
 \]
   $\gamma[x]: \sC\to \C$ is given by $\gamma[x](y)   = \sum_j x_j \, y_j.$
 In particular,   for  $X\in M_n(\sR)$ and $Y\in M_m(\sC),$
\[
   1_m\otimes (1_n\otimes \gamma)[X](Y)   =\begin{pmatrix} \gamma[X(j,k)](Y(a,b)) \end{pmatrix}= 
    \left ( \sum_\ell X_\ell(j,k) \, Y_\ell(a,b) \right ) =  \sum X_\ell \otimes Y_\ell,
\]
 where $X(j,k) = \begin{pmatrix} X_1(j,k) & \ldots & X_\gv(j,k) \end{pmatrix}.$
  Further, 
\begin{equation}
\label{e:dual:norm}
 \begin{split}
 \|X\|  & =\|\sum X_\ell \otimes e_\ell^*\|_{M_{n,n\gv}} =\sup\{ \| \sum X_\ell \otimes Y_\ell \| : m\in\N, \, Y\in M_m(\sC), \,  \|Y\|=1\}
  \\ &  =\| 1_n\otimes \gamma[X]\|_{\Cb(\sC,M_n)}.
 \end{split}
\end{equation}
 To obtain the second equality in equation~\eqref{e:dual:norm}, observe that choosing $Y_\ell=e_1 e_\ell^*$ shows the supremum is
  at least $\|X\|.$ For the reverse inequality, note given $Y\in M_m(\sC)$ with $\|Y\|=1,$  that 
\[
 \widehat{X} = \sum_a X_a \otimes e_a^* \otimes I_m, \  \ \  \widehat{Y}=\sum_b  I_n \otimes e_b  \otimes Y_b
\]
 have norm $\|X\|$ and $\|Y\|=1$ respectively, since, for instance $\widehat{Y}^* \widehat{Y} = I_n \otimes \sum Y_j^* Y_j \preceq I \otimes \|Y\|^2.$
 It follows that
\[
 \|X\| =\|X\|\, \|Y\| =\|\widehat{X}\|\, \|\widehat{Y}\| \ge \| \widehat{X} \, \widehat{Y}\| = \|\sum X_\ell \otimes Y_\ell\|.
\]
Thus  the supremum is at most $\|X\|.$  Hence $\sR$ is a dual operator space with distinguished predual $\sC.$

    The convention in \cite{DK}, specialized to  our setting  where  $\bm K \subseteq \mathbb{S}^\gv,$  is to endow
     $K_m\subseteq \mathbb{S}^\gv_m\subseteq M_m(\sR)$ with the relative topology inherited from
      $M_m(\sR),$ where the latter is identified with $\Cb(\sC,M_m)$ given the \df{point-\wstar} topology.
      The point-weak topology is the  topology in which a net $\varphi_\lambda$ converging  to $\varphi$  means $\varphi_\lambda(y)$ converges
       to $\varphi(y)$ \wstar for each $y\in \sC.$     
        When $m\in \N,$ both $\sC$ and $M_m$
      are finite-dimensional, so this topological consideration is trivial.  The case $m=\infty$ requires
      some explanation.   By definition (see for instance \cite{DK,ER00}),  $M_\infty$ (resp. $M_\infty(\sR)$) 
      consists of the infinite matrices $X=(x_{j,k})_{j,k=1}^\infty$  with $x_{j,k}\in \C$
     (resp.~$x_{j,k}\in \sR$) for which there is a uniform bound on the norms of all finite submatrices.  As is readily checked,
      $M_\infty=\cB(\ell^2),$ and we identify $\cB(\ell^2)$  with $\cB(\cHi)$ (recall the convention that $\cHi$ is a separable infinite-dimensional Hilbert space).
        Likewise, $$M_\infty(\sR) = \sR\otimes \cB(\cHi)= \cB(\cHi^{\gv},\cHi).\footnote{Note that an $A\in \sR\otimes \cB(\cHi)$ is identified with 
        	the row operator  $A=\begin{pmatrix} A_1 &  \ldots & A_\gv\end{pmatrix},$ where $A_j\in \cB(\cH).$ By contrast a $B\in \sC\otimes \cB(\cHi)$
        	is identified with a column operator with entries from $\cB(\cHi)$ so that $\sC\otimes \cB(\cHi)=\cB(\cHi,\cHi^{\gv}).$}$$
        The topology on $M_\infty(\sR)$ is then the  \df{point-\wstar} topology on $\Cb(\sC,M_\infty) = \Cb(\sC,\cB(\cHi)).$
        Thus,  a net $\varphi_\lambda$ from $\Cb(\sC,\cB(\cHi))$ converges in this topology
         if and only if  for each $y\in \sC,$ 
         every $(Y_{\lambda,j})_\lambda$ converges \wstar to some $Y_j\in \cB(\cHi)$ (with $\cB(\cHi)$ dual to trace class),
          where
   \[
       \begin{pmatrix} Y_{\lambda,1} & \ldots & Y_{\lambda,\gv} \end{pmatrix}  
          =\begin{pmatrix} \varphi_\lambda(y)[e_1] & \ldots \varphi_\lambda(y)[e_\gv]\end{pmatrix}
          =         \varphi_\lambda(y).
   \] 
    Thus, the {point-\wstar}  topology on  $M_\infty(\sR)=\Cb(\sC,\cB(\cH)),$  is the same as the  \wstar topology on $\cB(\cHi^{\gv},\cHi)$
    defined via trace class duality, which we note  is uniquely determined since $\cB(\cHi)$ is a von Neumann algebra.

    The definition of a compact \df{nc convex} set  with cardinal upper bound $\aleph_0$ from \cite[Definition~2.2.1]{DK} reads as follows.  
    Given an operator space $E,$ a graded set $\bm K =(K_n)_{1\le n\le\infty}$ 
    with $K_n\subseteq M_n(E)$ is nc convex if it is closed under countable direct sums of norm bounded families 
    and isometric compressions. More concretely, given $1\le m,n\le\infty,$ if $X_j\in \bm K$ for $j\in \N,$ then $\oplus X_j \in \bm K;$ and if
     $X\in K_n$ and $V:\cH_m\to \cH_n$ is an isometry, then $V^* XV\in K_m.$  Finally, assuming  $E$ is a dual operator space
      with distinguished predual $E_*,$ the nc convex set $\bm K$ is compact if
      each $K_n$ is compact (in the point-\wstar topology). Here, 
$\Cb(E_*,M_n)$ is endowed with the point-\wstar topology, and
$K_n\subseteq M_n(E)$ is endowed with the relative topology after identifying $M_n(E)$ with $\Cb(E_*,M_n)$.
         Thus, a \wstar closed and (norm) bounded  operator convex set $\bm{K}\subseteq \mathbb{S}^\gv \subseteq M(\sR)$ is a compact  nc convex set over
       the operator space $\sR.$  See also \cite[Example~16.4(3)]{Dbook} for a closely related example.

 \section{Proof of Pascoe's SOT-BW-MUST}
 
 As the title of this section suggests and for the reader's convenience, this section contains a 
 streamlined  proof of Pascoe's SOT-BW-MUST
(\cite{Pascoe}) tailored to the version of that result  presented here, Theorem~\ref{t:sot-bw-must}. 
  Accordingly, suppose $\cH$ is a separable infinite-dimensional  Hilbert space and $(X^{(j)})_j$ is  a norm bounded sequence 
 from $\cB(\cH)^\gv.$  The first objective is to prove that there exists a subsequence $(Y^{(n)})_n$ of $(X^{(j)})_j$
 and a sequence of unitary operators $(U_n)$ such that $(U_n^*)_n$ and $(U_n^* Y^{(n)} U_n)_n$ both SOT converge.
 
 Let $\{e_1,e_2,\dots\}$ denote an orthonormal basis for $\cH.$ Let $\cH_k=\spann\{e_1,\dots,e_k\}.$ 
 Thus $\cH_0=\cup_{k=1}^\infty \cH_k$ is dense in $\cH.$ Recall the evaluations $Y^\alpha$ of  a word $\alpha$ in $\gv$
   noncommuting variables  at a $Y\in B(\cH)^\gv$ 
   described in Subsection~\ref{subsec polys}. 
 For each $k,j\in \N,$ let
\[
 \cH_k(Y)  = \spann \{Y^\alpha  \, e_n:  1\le n \le k, \, 0\le |\alpha|\le k\},
\] 
 where $|\alpha|$ is the length of the word $\alpha.$ For instance,
\[
 \cH_{1}(Y)  =\spann \{e_1, Y_1 e_1, \dots Y_\gv  e_1\}.
\]
By construction, $\cH_k\subseteq \cH_k(Y)$ and 
\[
 Y_\ell   \cH_k(Y)  \subseteq \cH_{k+1}(Y) 
\]
 for all $k\in \N$ and $1\le \ell\le \gv.$ Moreover,
 \begin{equation*}
   \dim \cH_{k-1}(Y) < \dim \cH_{k}(Y) \le d_k :=k\sum_{j=0}^k \gv^j
 \end{equation*}
 and 
 \[
\dim \cH_k(Y)-\dim \cH_{k-1}(Y)
\leq
(k-1)g^k+\sum_{j=0}^k g^j
=d_k-d_{k-1},
\]
independent of $Y.$

 Let  $\cH_Y=\cup_{k=1}^\infty \cH_k(Y).$ 
   From the observations in the previous paragraph,
   we can define, inductively, an isometry $W:\cH_Y\to \cH_0$ satisfying
 \begin{equation}
  \label{e:JEP:0}
  \cH_k \subseteq W \, \cH_k(Y) \subseteq \cH_{d_k},
 \end{equation}
  which then extends to an isometry  $W$ from  $\cH$ onto $\cH.$ 
   Indeed, define $W$ on $\cH_1(Y)$ by $We_1=e_1$ and then
   extend $W$ to an isometry  from $\cH_1(Y)$ to  $\cH_{d_1}=\cH_{\gv+1}$ by choosing any isometry
   from $\cH_1(Y) \ominus \cH_1$ to $\cH_{\gv+1}\ominus \cH_1,$
   which is possible since $\dim \cH_1(Y) -\dim \cH_1 \le  \gv = \dim \cH_{\gv+1} -\dim \cH_1.$
   Now suppose  $W:\cH_k(Y)\to \cH_{d_k}$ has been defined satisfying the inclusions
   in equation~\eqref{e:JEP:0}. If 
   $e_{k+1}$ is in $W\cH_k(Y),$ then extend $W$ to $\cH_{k+1}(Y)$ by choosing any isometry from the
   at most $d_{k+1}-d_k$  dimensional space $\cH_{k+1}(Y) \ominus \cH_k(Y)$
   to the at least  $d_{k+1}-d_k$ dimensional space $\cH_{d_{k+1}} \ominus W\cH_k(Y).$ Otherwise, 
    let $P$ denote the projection onto  $W\cH_k(Y)\subseteq \cH_{d_k}$  and let
    $e$ denote the unit vector in the direction of $e_{k+1}-Pe_{k+1}.$  Thus $e\in [W\cH_k(Y)]^\bot.$
    On the other hand, 
    since $e_{k+1}, \, P e_{k+1} \in \cH_{d_{k+1}}$  it follows that $e$ is in $\cH_{d_{k+1}}$ and
     thus $e\in \cH_{d_{k+1}}\ominus W\cH_k(Y).$ 
    Because $\dim \cH_k(Y) <\dim \cH_{k+1}(Y),$
    there exists a unit vector $f\in \cH_{k+1}(Y) \ominus \cH_k(Y).$ 
     Now choose any isometry from $\cH_{k+1}(Y) \ominus \cH_k(Y)$ 
     into $\cH_{d_{k+1}} \ominus W\cH_{k}(Y)$ that sends $f$ to $e$ to extend $W$
      to an isometry $W:\cH_{k+1}(Y) \to \cH_{d_{k+1}}.$ To complete the induction
    argument, note that both $Wf=e$ and $Pe_{k+1}$ are in $W\cH_{k+1}(Y)$  
    and hence  $e_{k+1} \in W\cH_{k+1}(Y)$ so that the inclusion $\cH_{k+1} \subseteq W \cH_{k+1}(Y)$
     also holds.

  Letting $V$ denote the unitary $W^*,$ from equation~\eqref{e:JEP:0} we  obtain,
\begin{equation}
\label{e:JEP:1}
 \cH_k \subseteq V^* \,  \cH_k(Y) \subseteq \cH_{d_k},
\end{equation}
  for all $k\in \N.$  It follows that 
\begin{equation*}
  V \cH_k \subseteq \cH_k(Y) \subseteq V\cH_{d_k},
\end{equation*}
  and also 
\begin{equation*}
  V^* \cH_k \subseteq  V^* \cH_k(Y) \subseteq \cH_{d_k},
 \end{equation*}
 since $\cH_k \subseteq \cH_k(Y).$  Finally,
\begin{equation}
\label{e:JEP:4}
 V^* Y V \cH_k \subseteq V^* Y \cH_k(Y) \subseteq V^* \cH_{k+1}(Y) \subseteq \cH_{d_{k+1}}.
\end{equation}
  
 Returning to the sequence $(X^{(j)}),$ for each $j$ there exists a unitary $V_j\in \cB(\cH)$ such that
  equations~\eqref{e:JEP:1} through \eqref{e:JEP:4} hold with $X^{(j)}$ and $V_j$
   in place of $Y$ and $V.$  
 Let $S^{(j)} = V_j^* X^{(j)} V_j$ and  let $J_k:\cH_k\to \cH_{d_{k+1}}$ denote the inclusion. 
  The sequence $(S^{(j)}J_k, V_j^* J_k)_j$ is a sequence of $(\gv+1)$-tuples of operators between the finite-dimensional Hilbert
   spaces $\cH_k$ and $\cH_{d_{k+1}}$ and thus has a (norm) convergent  subsequence. By a standard diagonalization argument with respect to the
    parameter $k,$
    there exists a subsequence $(T^{(n)},U_n^*)_n$ of $(S^{(j)}, V_j^*)_j$ such that $(U_n^* h)_n$ and $(T^{(n)}_\ell \, h)$ converges in $\cH$
    for each $h\in \cup_k \cH_k =\cH_0$ and $1\le \ell\le \gv.$   Since $(S^{(j)}, V_j^*)_j$ is a bounded sequence, so is $(T^{(n)}, U_n^*)_n.$
    Thus for each $\ell,$ the sequence $(T^{(n)}_\ell)_n$ is a norm-bounded sequence that is  pointwise
    Cauchy on the dense subset $\cH_0$ of $\cH$,  and similarly for $(U_n^*)_n.$  Hence    $(T^{(n)}_\ell)_n$ converges SOT to some bounded operator $T\in \cB(\cH)$
    and $U_n^*$ converges SOT to some bounded operator $U^*.$ Since $\left ( (U_n^*)^*=U_n\right )_n$ converges WOT to $U,$ it follows
     that $(U_n U_n^*)$ converges WOT to $U U^*$ and also to the identity. Thus $U^*$ is an  isometry.  Finally, $(Y_n= U_n T^{(n)} U_n^*)_n$
      is a subsequence of $(X^{(j)})_j$ and $(U_n^* Y^{(n)} U_n)$ converges SOT to $T,$ establishing the first part of the result.
      
   To prove the last statement of the proposition, suppose $S\subseteq \cB(\cH)^\gv$ is SOT-closed and bounded and closed under isometric
   conjugation.  Since $S$ is assumed bounded and $\cH$ is separable, to prove $S$ is WOT compact, it suffices to prove that $S$ is WOT sequentially compact.
    Given a sequence $(X^{(j)})_j$ from $S$ there exists a subsequence $(Y^{n})_n$ of $(X^{(j)})_j$ and a sequence $(U_n)_n$ of unitary operators 
     on $\cH$ such that $(U_n^*)_n$ converges SOT to an isometry $V$ and $(U_n^* Y^{(n)} U_n)$ converges SOT to some operator $T.$
     Since $S$ is closed under isometric conjugation, each $U_n^* Y^{(n)} U_n$ is in $S$ and therefore, since $S$ is SOT-closed, $T$  and $V^*TV$
     are in $S.$  Moreover,
     $(U_n \left [ U_n^* Y^{(n)}  U_n \right] U_n^*)_n$ converges WOT to $V^* TV\in S.$   Thus $(Y^{(n)})_n$ converges WOT to $V^* TV\in S$
      and the proof is complete.
 \qed
 
 {\CCBB
\section{Categorical duality}\label{app:categorical-duality}

In this section, we establish the full categorical duality associated with Theorem \ref{th:g-dual}; see Theorem \ref{th:gamma-categorical-duality} below. Specifically, we prove that this correspondence {between $\Gat$-convex sets and $\Gat$-operator systems} preserves and reflects structural maps, showing that two $\Gamma$-convex sets $\bm{K}$ and $\bm{L}$ are isomorphic if and only if their dual counterparts $\widehat{\bm{K}}$ and $\widehat{\bm{L}}$ are isomorphic  under the appropriate notion of isomorphism.

\subsection{Morphisms in the \texorpdfstring{$\Gat$}{Gamma}-convex setting}

Throughout this section 
let $\bm{K},\bm{L}\subseteq \mathbb{S}^g$ be compact $\Gamma$-convex sets satisfying the assumptions of Theorem \ref{th:g-dual}(b); that is,  $0\in K_1$ and  
      {$\bm K =\Gat^{-1}(\overline{\matco}(\Gat(\bm K))),$ and likewise for $\bm L$}. 
Set
\[
   \widehat{Y}_{\bm{K}}=\bigoplus_{X\in \bm{K}}X,
   \qquad
   \widehat{Y}_{\bm{L}}=\bigoplus_{Y\in \bm{L}}Y.
\]
Recall the associated $\Gamma$-operator systems
\[
   \widehat{\bm{K}}
   =
   \operatorname{span}\{I,\gamma_1(\widehat{Y}_{\bm{K}}),\ldots,\gamma_r(\widehat{Y}_{\bm{K}})\},
   \qquad
   \widehat{\bm{L}}
   =
   \operatorname{span}\{I,\gamma_1(\widehat{Y}_{\bm{L}}),\ldots,\gamma_r(\widehat{Y}_{\bm{L}})\}.
\]

\begin{definition} 
Let $\bm{K},\bm{L}\subseteq \mathbb{S}^g$ be $\Gamma$-convex sets.  A level-preserving map
$\Phi:\bm{K}\to \bm{L}$
is \df{$\Gamma$-coordinate affine} if there are scalars
$\lambda_{j0},\lambda_{j1},\ldots,\lambda_{jr}$, for  $1\leq j\leq r$, such that
for every $X\in \bm{K}$,
\[
   \gamma_j(\Phi(X))
   =
   \lambda_{j0}I+\sum_{\ell=1}^r \lambda_{j\ell}\gamma_\ell(X),
   \qquad 1\leq j\leq r.
\]
Equivalently, there is an affine linear map $\Lambda:\mathbb{S}^r\to \mathbb{S}^r$
such that
\[
   \Gamma(\Phi(X))=\Lambda(\Gamma(X)),
   \qquad X\in \bm{K}.
\]
\end{definition}

\begin{remark}\rm
Each $\Gamma$-coordinate affine map {$\Phi$}  is 
{matrix $\Gamma$-affine}; i.e., 
\begin{equation}\label{eq:Gammaaffine}
   \Phi(V^*XV)=V^*\Phi(X)V
\end{equation}
whenever $X\in \bm{K}$ and $(X,V)$ is a $\Gamma$-pair.

Indeed, 
\[
 \Gat(\Phi(V^*XV))= \Lambda(\Gamma(V^*XV)) = \Lambda(V^* \Gamma(X)V) = V^* \Lambda(\Gamma(X))V = V^* \Gamma(\Phi(X)) V.
\]
Since $\gamma_i=x_i$ for $1\leq i\leq \gv$, comparison of the first $\gv$
coordinates yields \eqref{eq:Gammaaffine}.
\hfill \qed
 \end{remark}

On the other side, one needs a mild composition-stability condition.

\begin{definition}
Let $\Psi:\cR_A^\Gamma\to \cR_B^\Gamma$
be a $\Gamma$-ucp map.  We say that $\Psi$ is \df{admissible} if for every
$n\in\mathbb N$ and every $\Gamma$-ucp map
$\phi:\cR_B^\Gamma\to M_n,$
the composition
\[
   \phi\circ\Psi:\cR_A^\Gamma\to M_n
\]
is again $\Gamma$-ucp.
\end{definition}

{By item (3) of Remark~\ref{r:221}, 
the map that implements the isomorphism 
between $\mathcal{R}$ and  $\widehat{\widecheck{\cR}}$
in item~\ref{i:g-dual:a} of Theorem \ref{th:g-dual} 
is admissible.}

\begin{example}\rm
A $\Gamma$-ucp map need not be admissible.  Let
$
   \Gamma=(x,y,xy+yx,i(xy-yx)).
$
Let
\[
   S=\begin{pmatrix}1&0\\0&-1\end{pmatrix},
   \qquad
   T=\begin{pmatrix}0&1\\1&0\end{pmatrix}.
\]
Set $B=(S,T)$.  Then
\[
   ST+TS=0,
   \qquad
   i(ST-TS)\neq 0,
\]
and
\[
   \mathcal R_B^\Gamma
   =
   \operatorname{span}\{I,S,T,ST+TS,i(ST-TS)\}
   =
   M_2.
\]

Let $\tau:M_2\to \mathbb C$ denote the normalized trace.  Since
\[
   \tau(S)=\tau(T)=\tau(ST+TS)=\tau(i(ST-TS))=0,
\]
we have
\[
   \tau(\gamma_j(B))
   =
   \gamma_j(\tau(S),\tau(T)),
   \qquad 1\leq j\leq 4.
\]
Thus $\tau:\mathcal R_B^\Gamma\to\mathbb C$ is $\Gamma$-ucp.

Now let
\[
   A_1=S \oplus 0 = \begin{pmatrix}
      1&0&0\\
      0&-1&0\\
      0&0&0
   \end{pmatrix},
   \qquad
   A_2=S \oplus 1 = \begin{pmatrix}
      1&0&0\\
      0&-1&0\\
      0&0&1
   \end{pmatrix}.
\]
Then 
$I,A_1,A_2$ are linearly independent.  Moreover,
\[
   A_1A_2+A_2A_1
   =
   \begin{pmatrix}
      2&0&0\\
      0&2&0\\
      0&0&0
   \end{pmatrix},
   \qquad
   i(A_1A_2-A_2A_1)=0.
\]

Define a unital $*$-homomorphism
\[
   \Theta:C^*(A_1,A_2)\to M_2
\]
by compressing to the first two diagonal coordinates:
\[
   \Theta\!
      \begin{pmatrix}
         d_1&0&0\\
         0&d_2&0\\
         0&0&d_3
      \end{pmatrix}
   =
   \begin{pmatrix}
      d_1&0\\
      0&d_2
   \end{pmatrix}.
\]
Let
\[
   \Psi=\Theta|_{\mathcal R_A^\Gamma}:
   \mathcal R_A^\Gamma\to \mathcal R_B^\Gamma.
\]
Then $\Psi$ is ucp.  
 Moreover,
\[
\begin{split}
   \Psi(A_1)& =S,\\ \Psi(A_2)&=S, \\
   \Psi(A_1A_2+A_2A_1)&=2I
   =
   SS+SS, \\
   \Psi(i(A_1A_2-A_2A_1))& =0
   =
   i(SS-SS).
\end{split}
\]
Hence $\Psi$ is $\Gamma$-ucp.

However, the composition $\tau\circ\Psi:\mathcal R_A^\Gamma\to\mathbb C$
is not $\Gamma$-ucp.  Indeed,
\[
\begin{split}
   (\tau\circ\Psi)(A_1)
   & =
   (\tau\circ\Psi)(A_2)
   =
   0,\\
   (\tau\circ\Psi)(A_1A_2+A_2A_1) &
   =
   \tau(2I)
   =
   2,\\
   \gamma_3\bigl((\tau\circ\Psi)(A_1),(\tau\circ\Psi)(A_2)\bigr)
 &  =
   0\cdot 0+0\cdot 0
   =
   0.
\end{split}
\]
Thus $\tau\circ\Psi$ fails the $\Gamma$-concomitant identity.  Therefore
$\Psi$ is $\Gamma$-ucp but not admissible.
\end{example}

\begin{proposition}\label{prop:morph-sets}
\label{prop:gamma-affine-induces-gamma-ucp} 
If
$\Phi:\bm{K}\to \bm{L}$
is $\Gamma$-coordinate affine, 
then $\Phi$ induces an admissible $\Gamma$-ucp map
$\Psi_\Phi:\widehat{\bm{L}}\to \widehat{\bm{K}}$
given by
\[
   \Psi_\Phi(\gamma_j(\widehat{Y}_{\bm{L}}))
   =
   \gamma_j(\Phi(\widehat{Y}_{\bm{K}})),
   \qquad 1\leq j\leq r.
\]
\end{proposition}

\begin{proof}
Since $\Phi$ is $\Gamma$-coordinate affine, each
$\gamma_j(\Phi(\widehat{Y}_{\bm{K}}))$ lies in $\widehat{\bm{K}}$. To check that $\Psi_\Phi$ is well-defined  suppose
\[
   a_0I+\sum_{j=1}^r a_j\gamma_j(\widehat{Y}_{\bm{L}})=0.
\]
Thus, 
\[
   a_0I+\sum_{j=1}^r a_j\gamma_j(Y)=0,
\]
for all  $Y\in \bm{L}.$
Since $\Phi(X)\in \bm{L}$ for every $X\in \bm{K}$, it follows that, for all $X\in \bm K,$
\[
   a_0I+\sum_{j=1}^r a_j\gamma_j(\Phi(X))=0.
\]
Taking the direct sum over $X\in \bm{K}$ gives
\[
   a_0I+\sum_{j=1}^r a_j\gamma_j(\Phi(\widehat{Y}_{\bm{K}}))=0.
\]
Thus $\Psi_\Phi$ is well-defined.

We next show that $\Psi_\Phi$ is completely positive.  Let
\[
   M=C_0\otimes I+\sum_{j=1}^r C_j\otimes \gamma_j(\widehat{Y}_{\bm{L}})
   \in M_d(\widehat{\bm{L}})
\]
be positive, which means that
\[
   C_0\otimes I+\sum_{j=1}^r C_j\otimes \gamma_j(Y)\succeq 0
\]
for every $Y\in \bm{L}$.  Hence, for every $X\in \bm{K}$,
\[
   C_0\otimes I+\sum_{j=1}^r C_j\otimes \gamma_j(\Phi(X))\succeq 0.
\]
Taking the direct sum over all $X\in \bm{K}$, we obtain
\[
   \Psi_\Phi^{(d)}(M)
   =
   C_0\otimes I+\sum_{j=1}^r C_j\otimes \gamma_j(\Phi(\widehat{Y}_{\bm{K}}))
   \succeq 0.
\]
Thus $\Psi_\Phi$ is ucp.

Next, since $\gamma_i=x_i$ for $1\leq i\leq \gv$,
\[
   \Psi_\Phi(\widehat{Y}_{\bm{L}})
   =
   \bigl(
      \Psi_\Phi(\widehat{Y}_{\bm{L},1}),\ldots,\Psi_\Phi(\widehat{Y}_{\bm{L},g})
   \bigr)
   =
   \Phi(\widehat{Y}_{\bm{K}}).
\]
Therefore,
\[
   \Psi_\Phi(\gamma_j(\widehat{Y}_{\bm{L}}))
   =
   \gamma_j(\Phi(\widehat{Y}_{\bm{K}}))
   =
   \gamma_j(\Psi_\Phi(\widehat{Y}_{\bm{L}})),
\]
so $\Psi_\Phi$ is a $\Gamma$-concomitant.  Hence $\Psi_\Phi$ is
$\Gamma$-ucp.

{\CCBB
It remains to prove that $\Psi_\Phi$ is admissible. Let
$\phi:\widehat{\bm K}\to M_n$ be a $\Gamma$-ucp map and set
\[
   Z=\phi(\widehat Y_{\bm K})
   =
   \bigl(\phi(\widehat Y_{\bm K,1}),\ldots,
          \phi(\widehat Y_{\bm K,\gv})\bigr).
\]
Then
$
   Z\in W^\Gamma(\widehat{\bm K})_n= K_n
$
by the standing assumptions on $\bm K$. Hence $\Phi(Z)$ is defined.

We show that $\phi\circ\Psi_\Phi$ is $\Gamma$-ucp. Since
$\Phi$ is $\Gamma$-coordinate affine, for each $1\leq j\leq r$ there
are scalars $\lambda_{j0},\lambda_{j1},\ldots,\lambda_{jr}$ such that
\[
   \gamma_j(\Phi(X))
   =
   \lambda_{j0}I+\sum_{\ell=1}^r
   \lambda_{j\ell}\gamma_\ell(X),
   \qquad X\in \bm K.
\]
Thus
\[
\begin{aligned}
   (\phi\circ\Psi_\Phi)(\gamma_j(\widehat Y_{\bm L}))
   &=
   \phi\bigl(\gamma_j(\Phi(\widehat Y_{\bm K}))\bigr)  \\
   &=
   \lambda_{j0}I_n+
   \sum_{\ell=1}^r
   \lambda_{j\ell}\phi(\gamma_\ell(\widehat Y_{\bm K})) \\
   &=
   \lambda_{j0}I_n+
   \sum_{\ell=1}^r
   \lambda_{j\ell}\gamma_\ell(Z) \\
   &=
   \gamma_j(\Phi(Z)).
\end{aligned}
\]
On the other hand, since $\gamma_i=x_i$ for $1\leq i\leq \gv$,
\[
   (\phi\circ\Psi_\Phi)(\widehat Y_{\bm L})
   =
   \phi(\Phi(\widehat Y_{\bm K}))
   =
   \Phi(Z).
\]
Therefore, for all $1\leq j \leq r,$
\[
   (\phi\circ\Psi_\Phi)(\gamma_j(\widehat Y_{\bm L}))
   =
   \gamma_j\bigl((\phi\circ\Psi_\Phi)(\widehat Y_{\bm L})\bigr).
\]
Hence $\phi\circ\Psi_\Phi$ is $\Gamma$-ucp. Since $\phi$ was arbitrary,
$\Psi_\Phi$ is admissible.}
\end{proof}

\begin{proposition}\label{prop:morph-opsys}
Let $A\in \mathbb{S}_\mathcal{H}^g$ and $B\in \mathbb{S}_\mathcal{K}^g$.  If
$\Psi:\cR_A^\Gamma\to \cR_B^\Gamma$
is an admissible $\Gamma$-ucp map, then $\Psi$ induces a continuous 
$\Gamma$-coordinate affine map
\[
   \Phi_\Psi:W^\Gamma(B)\to W^\Gamma(A)
\]
defined as follows. For 
\[
   Y=\phi(B)
   =
   \bigl(\phi(B_1),\ldots,\phi(B_{\gv} )\bigr)
   \in W_n^\Gamma(B),
\]
where $\phi:\cR_B^\Gamma\to M_n$ is $\Gamma$-ucp, set
\[
   \Phi_\Psi(Y)
   =
{\phi\circ\Psi}(A)
   =
   \bigl(
      \phi(\Psi(A_1)),\ldots,\phi(\Psi(A_{\gv} ))
   \bigr).
\]
\end{proposition}

\begin{proof}
The map is well-defined.  Indeed, a $\Gamma$-ucp map
$\phi:\cR_B^\Gamma\to M_n$ is determined by the tuple
$Y=\phi(B),$
because
\[
   \phi(\gamma_j(B))=\gamma_j(Y),
   \qquad 1\leq j\leq r.
\] 
Thus the formula for $\Phi_\Psi(Y)$ depends only on $Y$, not on the chosen
$\phi$.
Since $\Psi$ is admissible, the composition $\phi\circ\Psi$ is
$\Gamma$-ucp.  Therefore
$\Phi_\Psi(Y)
   =
   {\phi\circ\Psi}(A)
   \in W_n^\Gamma(A).$

We next show that $\Phi_{\Psi}$ is $\Gamma$-coordinate affine. Since each $\gamma_j(A)$ lies in $\mathcal{R}_A^\Gamma$ and  $\Psi: \mathcal{R}_{A}^{\Gamma} \rightarrow \mathcal{R}_{B}^{\Gamma}$, the image $\Psi(\gamma_j(A))$  lies in $\mathcal{R}_{B}^{\Gamma} = \operatorname{span}\{I, \gamma_1(B), \dots, \gamma_r(B)\}.$ Therefore, there exist scalars $\lambda_{j0}, \lambda_{j1}, \dots, \lambda_{jr}$ such that
\begin{equation}\label{eq:psi_expansion}
\Psi(\gamma_j(A)) = \lambda_{j0}I + \sum_{l=1}^{r} \lambda_{jl}\gamma_l(B), \quad 1 \le j \le r.
\end{equation}
Applying $\phi$ to both sides of \eqref{eq:psi_expansion} yields
\begin{equation}\label{eq:phi_applied}
(\phi \circ \Psi)(\gamma_j(A)) = \lambda_{j0}I_n + \sum_{l=1}^{r} \lambda_{jl}\gamma_l(Y),
\end{equation}
where we used the fact that $\phi(I) = I_n$ and $\phi(\gamma_l(B)) = \gamma_l(\phi(B)) = \gamma_l(Y).$ 

Because $\Psi$ is admissible, the composition $\phi \circ \Psi$ is a $\Gamma$-ucp map and thus
\begin{equation}\label{eq:commute}
\textstyle (\phi \circ \Psi)(\gamma_j(A)) = \gamma_j\big((\phi \circ \Psi)(A)\big) = \gamma_j(\Phi_{\Psi}(Y)).
\end{equation}
Combining \eqref{eq:phi_applied} and \eqref{eq:commute}, we obtain that for all $1 \le j \le r,$
\begin{equation}\label{eq:c-aff}
\gamma_j(\Phi_{\Psi}(Y)) = \lambda_{j0}I_n + \sum_{l=1}^{r} \lambda_{jl}\gamma_l(Y),
\end{equation}
proving that $\Phi_{\Psi}$ is $\Gamma$-coordinate affine.

Continuity of $\Phi_{\Psi}$ now follows directly from \eqref{eq:c-aff}. Recalling that $\gamma_i(X) = X_i$ for $1 \le i \le \gv,$ we have
\[
\Phi_{\Psi}(Y)_i = \lambda_{i0}I_n + \sum_{l=1}^{r} \lambda_{il}\gamma_l(Y), \quad 1 \le i \le \gv.
\]
Because each coordinate component $\Phi_{\Psi}(Y)_i$ is an affine linear function in the entries of $\Gamma(Y)$, the map $\Phi_{\Psi}$ is continuous on each level.
\end{proof}

\begin{remark}\label{rem:above}\rm

The  $\Gamma$-coordinate affine hypothesis in
Proposition~\ref{prop:gamma-affine-induces-gamma-ucp} is necessary.  Indeed, merely
being a matrix $\Gat$-affine map is too weak. For example, let
$
   \Gamma=(x,x^2)
$
and let
\[
   \bm{K}=\bm{L}=\{X\in\bbS^1 \mid \|X\|\leq 1\}.
\]
The map
\[
   \Phi(X)=X^3
\]
is matrix $\Gamma$-affine, since $\Gamma$-pairs for
$\Gamma=(x,x^2)$ are precisely compressions to reducing subspaces for
$X$.  However, $\Phi$ does not induce a map
\[
   \widehat{\bm{L}}=\spann\{I,Y,Y^2\}
   \to
 \widehat{\bm{K}}=\spann\{I,Y,Y^2\}.
\]
Indeed, such a map would have to send $Y$ to $Y^3$, but $Y^3$ does not
belong to $\operatorname{span}\{I,Y,Y^2\}$.    
Hence preservation of $\Gamma$-pairs alone does not necessarily give a
well-defined dual map on the associated $\Gamma$-operator systems.\hfill\qed
\end{remark}

An exception to the point made in Remark \ref{rem:above} is matrix convexity as matrix affine maps are clearly $\Gamma$-coordinate affine (with $\Gamma(x)=x$).
Similarly, every ucp map is automatically admissible, since ucp maps
compose. These two observations give rise to the following corollary.

\begin{corollary}
\begin{enumerate}[\rm(1)]
    \item Let $\bm{K}$ and $\bm{L}$ be compact matrix convex sets with $0\in K_1,L_1$. Any matrix affine map $\Phi: \bm{K} \rightarrow \bm{L}$ induces a ucp map $\Psi: \widehat{\bm{L}} \rightarrow \widehat{\bm{K}}$ between their corresponding operator systems.
    \item Let $\cR_A$ and $\cR_B$ be operator systems generated by tuples $A$ and $B,$ respectively. Any ucp  map $\Psi: \cR_A \rightarrow \cR_B$ induces a continuous matrix affine map $\Phi: W(B) \rightarrow W(A)$ between the corresponding matrix ranges.
\end{enumerate}
\end{corollary}

 \subsection{Functorial duality}

 The map $F$ sending a compact $\Gamma$-convex set $\bm{K}$ to the $\Gamma$-operator system $\widehat{\bm{K}}$ and a $\Gamma$-coordinate affine map to the corresponding admissible $\Gamma$-ucp map (as described 
in Propositions \ref{prop:morph-sets} and 
\ref{prop:morph-opsys}, respectively) is a {contravariant} functor. Indeed, it is clear that $F$ preserves identities and the next lemma shows that it reverses composition.

\begin{lemma}\label{lem:preceding}
For any pair of $\Gamma$-coordinate affine maps $\Phi_1: \bm{K} \to \bm{L}$ and $\Phi_2: \bm{L} \to \bm{M}$  
\begin{equation}\label{eq:compose}
    F(\Phi_2 \circ \Phi_1) = F(\Phi_1) \circ F(\Phi_2),
\end{equation}
which translates under Proposition \ref{prop:morph-sets} to  $\Psi_{\Phi_2 \circ \Phi_1} = \Psi_{\Phi_1} \circ \Psi_{\Phi_2}.$
\end{lemma}

\begin{proof}
Since $\Psi_{\Phi_2 \circ \Phi_1}$ and $\Psi_{\Phi_1} \circ \Psi_{\Phi_2}$ are  unital linear maps defined on the operator system $\widehat{{\bm{M}}},$ they are uniquely and completely determined by their values on the generators $\gamma_j(\widehat{Y}_{\bm{M}})$ for $1 \le j \le r.$

The left-hand side of \eqref{eq:compose} evaluated on an arbitrary generator yields
\begin{equation}\label{eq:LHScompose}
   \text{LHS} = F(\Phi_2 \circ \Phi_1)(\gamma_j(\widehat{Y}_{\bm{M}})) = \Psi_{\Phi_2 \circ \Phi_1}(\gamma_j(\widehat{Y}_{\bm{M}})) = \gamma_j\bigl((\Phi_2 \circ \Phi_1)(\widehat{Y}_{\bm{K}})\bigr)  =\gamma_j\Big( \bigoplus_{X \in {\bm{K}}} \Phi_2(\Phi_1(X)) \Big),
\end{equation}
since $\widehat{Y}_{\bm{K}} = \bigoplus_{X \in {\bm{K}}} X.$
On the other hand, the right-hand side of \eqref{eq:compose} yields
\begin{equation}\label{eq:RHScompose}
\begin{split}
   \text{RHS} & = \bigl(F(\Phi_1) \circ F(\Phi_2)\bigr)(\gamma_j(\widehat{Y}_{\bm{M}})) = (\Psi_{\Phi_1} \circ \Psi_{\Phi_2})(\gamma_j(\widehat{Y}_{\bm{M}})) = \Psi_{\Phi_1}\bigl( \Psi_{\Phi_2}(\gamma_j(\widehat{Y}_{\bm{M}})) \bigr)\\ & = \Psi_{\Phi_1}\bigl( \gamma_j(\Phi_2(\widehat{Y}_{\bm{L}})) \bigr).
\end{split}
\end{equation}
Because $\Phi_2$ is a $\Gamma$-coordinate affine map, there exist fixed scalar coefficients $\lambda_{j0}, \lambda_{j1}, \dots, \lambda_{jr}$ such that for every $Y \in {\bm{L}},$
\begin{equation}\label{eq:coord-aff}
    \gamma_j(\Phi_2(Y)) = \lambda_{j0}I + \sum_{l=1}^r \lambda_{jl}\gamma_l(Y)
\end{equation}
and since $\widehat{Y}_{\bm{L}} = \bigoplus_{Y \in {\bm{L}}} Y,$ we have
\begin{equation*}
    \gamma_j(\Phi_2(\widehat{Y}_{\bm{L}})) = \lambda_{j0}I + \sum_{l=1}^r \lambda_{jl}\gamma_l(\widehat{Y}_{\bm{L}}).
\end{equation*}
Plugging this into the expression \eqref{eq:RHScompose} for the right-hand side gives
\begin{equation*}
    \text{RHS} = \Psi_{\Phi_1}\Big( \lambda_{j0}I + \sum_{l=1}^r \lambda_{jl}\gamma_l(\widehat{Y}_{\bm{L}}) \Big) = \lambda_{j0}I + \sum_{l=1}^r \lambda_{jl} \Psi_{\Phi_1}(\gamma_l(\widehat{Y}_{\bm{L}})) = \lambda_{j0}I + \sum_{l=1}^r \lambda_{jl} \gamma_l(\Phi_1(\widehat{Y}_{\bm{K}})).
\end{equation*}

Continuing the computation \eqref{eq:LHScompose} of the left-hand side, note that \eqref{eq:coord-aff} implies
\begin{equation*}
    \gamma_j\bigl(\Phi_2(\Phi_1(X))\bigr) = \lambda_{j0}I + \sum_{l=1}^r \lambda_{jl}\gamma_l(\Phi_1(X))
\end{equation*}
since for every $X \in {\bm{K}}$, we have  $\Phi_1(X) \in {\bm{L}}$. Hence,
\begin{align*}
\text{LHS} & =  \gamma_j\Big( \bigoplus_{X \in {\bm{K}}} \Phi_2(\Phi_1(X)) \Big) =   \bigoplus_{X \in {\bm{K}}} \gamma_j\bigl(\Phi_2(\Phi_1(X))\bigr)\\ & =  \lambda_{j0}I + \sum_{l=1}^r \lambda_{jl} \Big( \bigoplus_{X \in {\bm{K}}} \gamma_l(\Phi_1(X)) \Big) = \lambda_{j0}I + \sum_{l=1}^r \lambda_{jl} \gamma_l(\Phi_1(\widehat{Y}_{\bm{K}})).
\end{align*}

Therefore, $\text{LHS} = \text{RHS}$ on all generators $\gamma_j(\widehat{Y}_{\bm{M}})$ and $F$ indeed reverses composition.
\end{proof}

We  summarize the discussion of this section as follows:
 
\begin{theorem}\label{th:gamma-categorical-duality}
Let $\Gat\mathsf{Conv}_\Gamma$ denote the category whose objects are compact
$\Gamma$-convex sets $\bm K\subseteq \mathbb S^g$ satisfying the hypotheses
of Theorem~\ref{th:g-dual}(b), and whose morphisms are 
$\Gamma$-coordinate affine maps.  Let $\mathsf{OS}_\Gamma$
denote the category of the corresponding  $\Gamma$-operator systems,
with admissible $\Gamma$-ucp maps as morphisms.

The assignments
\[
   \bm K\mapsto \widehat{\bm K},
   \qquad
   \Phi\mapsto \Psi_\Phi
\]
and
\[
   \mathcal R_A^\Gamma\mapsto W^\Gamma(A),
   \qquad
   \Psi\mapsto \Phi_\Psi
\]
define contravariant functors
\[
   \Gat\mathsf{Conv}_\Gamma
   \longleftrightarrow
   \mathsf{OS}_\Gamma.
\]
These functors are mutually inverse up to the natural identifications
\[
   W^\Gamma(\widehat{\bm K})=\bm K
\quad
\text{and}
\quad
   \widehat{\,W^\Gamma(A)\,}\cong \mathcal R_A^\Gamma
\]
from Theorem~\ref{th:g-dual}.  Hence
$\Gat\mathsf{Conv}_\Gamma$ and $\mathsf{OS}_\Gamma$ are
contravariantly equivalent categories.

In particular, two such $\Gamma$-convex sets $\bm K$ and $\bm L$ are
isomorphic by a $\Gamma$-coordinate affine bijection if and only if their
dual $\Gamma$-operator systems $\widehat{\bm K}$ and
$\widehat{\bm L}$ are isomorphic by admissible $\Gamma$-ucp maps.
\end{theorem}

\begin{proof}
Proposition~\ref{prop:morph-sets} shows that every $\Gamma$-coordinate affine
map $\Phi:\bm K\to\bm L$ induces an admissible $\Gamma$-ucp map
\[
   \Psi_\Phi:\widehat{\bm L}\to\widehat{\bm K}.
\]
Lemma \ref{lem:preceding} shows that this assignment reverses composition, and it
clearly sends identity maps to identity maps.  Thus
$\bm K\mapsto \widehat{\bm K}$ is a contravariant functor.

Conversely, Proposition~\ref{prop:morph-opsys} shows that every admissible
$\Gamma$-ucp map
\[
   \Psi:\mathcal R_A^\Gamma\to\mathcal R_B^\Gamma
\]
induces a continuous $\Gamma$-coordinate affine map
\[
   \Phi_\Psi:W^\Gamma(B)\to W^\Gamma(A),
\]
again contravariantly.  The object-level identifications are exactly those of
Theorem~\ref{th:g-dual}.  Under these identifications, the formulas defining
$\Psi_\Phi$ and $\Phi_\Psi$ show that applying the two constructions in
succession recovers the original morphism.  Hence the two contravariant functors
are mutually inverse up to natural isomorphism.
\end{proof}

}

  \end{document}